\DeclareSymbolFont{cyrletters}{OT2}{wncyr}{m}{n}
\DeclareMathSymbol{\Sha}{\mathalpha}{cyrletters}{"58}
\numberwithin{equation}{section}
\newtheorem{lemma}{Lemma}[section]
\newtheorem{theorem}[lemma]{Theorem}
\newtheorem{proposition}[lemma]{Proposition}
\newtheorem{corollary}[lemma]{Corollary}
\newtheorem{remark}[lemma]{Remark}
\theoremstyle{definition}
\newtheorem{mydef}[lemma]{Definition}
\newtheorem{setup}[lemma]{Setup}
\newtheorem{example}[lemma]{Example}
\newtheorem{notat}[lemma]{Notation}
\newcommand{\Z}{\mathbb{Z}}
\newcommand{\Q}{\mathbb{Q}}
\newcommand{\C}{\mathbb{C}}
\newcommand{\QQ}{\Q}
\newcommand{\FFF}{\mathbb{F}}
\newcommand{\Qbar}{\overline{\Q}}
\newcommand{\R}{\mathbb{R}}
\newcommand{\FF}{\mathbb{F}}
\newcommand{\Hom}{\mathrm{Hom}}
\newcommand{\Frob}{\textup{Frob}}
\newcommand\Gal{\mathrm{Gal}}
\newcommand{\Sel}{\textup{Sel}}
\newcommand{\Vplac}{\mathscr{V}}
\newcommand{\CTP}{\textup{CTP}}
\newcommand{\Redei}[3]{\left[#1,\, #2,\,#3\right]}
\newcommand{\mfp}{\mathfrak{p}}
\newcommand{\mfq}{\mathfrak{q}}
\newcommand{\mfa}{\mathfrak{a}}
\newcommand{\mfb}{\mathfrak{b}}
\newcommand{\mfr}{\mathfrak{r}}
\newcommand{\mff}{\mathfrak{f}}
\newcommand{\mfh}{\mathfrak{h}}
\newcommand{\mfB}{\mathfrak{B}}
\newcommand{\ovp}{\overline{\mfp}}
\newcommand{\ovQQ}{\Qbar}
\newcommand{\class}[1]{\left[#1\right]}
\newcommand{\symb}[2]{\left[#1,\, #2\right]}
\newcommand{\isoarrow}{\xrightarrow{\,\,\sim\,\,}}
\newcommand{\res}{\textup{res}}
\newcommand{\inv}{\textup{inv}}
\title{\vspace{-\baselineskip}\sffamily\bfseries Sums of rational cubes and the $3$-Selmer group}
\author[1]{Peter Koymans\thanks{Institute for Theoretical Studies, ETH Zurich, 8092 Zurich, Switzerland, peter.koymans@eth-its.ethz.ch}}
\author[2]{Alexander Smith\thanks{Department of Mathematics, UCLA, Los Angeles, CA, asmith13@math.ucla.edu}}
\affil[1]{ETH Zurich}
\affil[2]{UCLA}
\date{\today}
\begin{document}

\maketitle

\begin{abstract}

Recently, Alp\"oge--Bhargava--Shnidman \cite{ABS} determined the average size of the $2$-Selmer group in the cubic twist family of any elliptic curve over $\Q$ with $j$-invariant $0$. We obtain the distribution of the $3$-Selmer groups in the same family. As a consequence, we improve their upper bound on the density of integers expressible as a sum of two rational cubes. Assuming a $3$-converse theorem, we also improve their lower bound on this density.

The $\sqrt{-3}$-Selmer group in this cubic twist family is well-known to be large, which poses significant challenges to the methods previously developed by the second author \cite{Smi22a, Smi22b}. We overcome this problem by strengthening the analytic core of these methods. Specifically, we prove a ``trilinear large sieve'' for an appropriate generalization of the classical R\'edei symbol, then use this to control the restriction of the Cassels--Tate pairing to the $\sqrt{-3}$-Selmer groups in these twist families.
\end{abstract}

\section{Introduction}
Given nonzero integers $d, n$, take $E_{d, n}/\Q$ to be the elliptic curve with Weierstrass form
\[E_{d, n}: y^2 = x^3 + dn^2.\]
If we fix $d$ and allow $n$ to vary, this defines a cubic twist family of rational elliptic curves. We are interested in how the ranks of these curves are distributed across this twist family. In practice, this means we are interested in understanding the distribution of Selmer groups in these families.

Much of the recent progress towards determining the distribution of Selmer groups in families of elliptic curves over number fields has come from two main methods, and both apply to this problem. These might be called the ``geometry of numbers'' method and the ``Galois invariant submodule'' method.

The geometry of numbers method was first developed by Bhargava and Shankar to determine the average size of $\Sel^n E$ over the family of all rational elliptic curves ordered by na\"{i}ve height  for $2 \le n \le 5$ \cite{BS1, BS2, BS3, BS4}. In summary, this method starts with a parameterization of some set of tuples $(E, \phi)$, where $E$ is an elliptic curve in some family and $\phi$ is a Selmer element, as the integral points in some fundamental domain in a nice algebraic variety, then uses the geometry of numbers and related analytic techniques to count the number of such tuples up to a given height.

This method was applied to the problem of finding the average size of the $2$-Selmer group in a cubic twist family by Alp\"{o}ge, Bhargava, and Shnidman \cite{ABS}. We start by recalling these results.

\begin{mydef}
Given $w \in \pm 1$, a nonzero integer $d$, and a positive real number $H$, define
\[
\mathscr{T}(w, d, H) = \left\{ n \in \Z_{>0} \,:\,\, n < H \,\text{ and }\, w(E_{d, n}) = w\right\},
\]
where $w(E_{d, n})$ denotes the global root number of $E_{d, n}$. We note that the natural density of each root number is $1/2$ in this family \cite[Theorem 1.4]{ABS}; that is
\begin{equation}
\label{eq:root_number_equi}
\lim_{H \to \infty} \frac{\#\mathscr{T}(+1, d, H)}{H} = \lim_{H \to \infty} \frac{\#\mathscr{T}(-1, d, H)}{H} = 1/2.
\end{equation}
\end{mydef}


\begin{theorem}[\cite{ABS}]
Fix $w \in \pm 1$ and a nonzero integer $d$. Then
\[\lim_{H \to \infty} \frac{\sum_{ n \in \mathscr{T}(w, d, H)} \# \Sel^2 E_{d, n}}{\sum_{ n \in \mathscr{T}(w, d, H)}  1}  = 3.\]
\end{theorem}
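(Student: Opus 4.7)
The plan is to adapt the geometry of numbers framework of Bhargava--Shankar to the cubic twist family, parametrizing nontrivial $2$-Selmer elements of $E_{d,n}$ by integer orbits on an appropriate coregular representation. For elliptic curves with $j$-invariant $0$, the natural choice is the space $V$ of binary quartic forms $f \in \Z[x, z]$ of vanishing $I$-invariant and $J$-invariant a fixed rational multiple of $dn^2$, acted on by a suitable subgroup of $\mathrm{GL}_2(\Z)$. A locally soluble such form encodes a $2$-covering of $E_{d,n}$, and conversely every nontrivial element of $\Sel^2 E_{d,n}$ arises in this way. Setting up this orbit parametrization --- and verifying that ``locally soluble'' in the representation-theoretic sense matches the usual Selmer condition at every place --- is the first step.

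With the parametrization in hand, I would estimate the number of $\mathrm{GL}_2(\Z)$-orbits of bounded height on $V(\Z)$ by counting lattice points in a fundamental domain for the real group action, using Bhargava's averaging trick over a compact piece of the group together with cuspidal-tail estimates. This should extract an asymptotic of the form $\alpha \cdot H$, where $\alpha$ is a product of an archimedean volume with $p$-adic volumes $c_p$ arising from the local solubility conditions. Summing the nontrivial orbit count over $n \in \mathscr{T}(w, d, H)$ and dividing by $\# \mathscr{T}(w, d, H)$ should then yield an average of $\prod_v c_v$; a place-by-place computation comparing local orbit counts with the local image of $2$-descent on $E_{d,n}$ should give $\prod_v c_v = 2$, and including the trivial $2$-Selmer class one obtains the advertised average of $1 + 2 = 3$. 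The restriction to a fixed root number $w$ and a fixed $d$ is harmless because the local masses $c_p$ are insensitive to these choices once the parametrization is set up equivariantly.

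The main obstacle will be the uniformity of the sieve required to isolate locally soluble orbits and to discard ``reducible'' orbits that do not give genuine $2$-Selmer elements. Since the minimal discriminant of $E_{d,n}$ grows like $n^4$ and carries an obvious squarefull factor coming from the $n^2$ in the Weierstrass equation, Ekedahl's squarefree sieve does not apply off the shelf; one must upgrade either to Bhargava's geometric sieve or to a sieve tailored to the squarefull discriminants of this twist family. A secondary technical issue is the degenerate nature of the $I = 0$ locus in the space of binary quartic forms: the generic stabilizer jumps, so extra care is required both in defining the fundamental domain and in matching $\mathrm{GL}_2(\Z)$-orbits with $2$-Selmer elements --- reflecting the extra $\sqrt{-3}$-isogeny present on $E_{d,n}$, which must be quotiented out correctly in the parametrization.
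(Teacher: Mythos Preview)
This theorem is not proved in the paper; it is merely \emph{quoted} from Alp\"oge--Bhargava--Shnidman \cite{ABS} as background in the introduction, to contrast the geometry-of-numbers approach with the Galois-invariant-submodule method that the present paper develops for $3$-Selmer groups. There is therefore no proof in this paper to compare your proposal against.

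Your sketch is broadly in the spirit of what \cite{ABS} actually does --- a coregular orbit parametrization followed by lattice-point counting, Bhargava-style averaging, cusp estimates, and a sieve for local solubility --- so the overall architecture is right. One point of detail: \cite{ABS} does not use the $I=0$ slice of binary quartic forms. They instead parametrize $2$-Selmer elements of curves $y^2 = x^3 + k$ by $\mathrm{SL}_3$-orbits on the space of symmetric integer $3\times 3$ matrices whose $2\times 2$ minors agree with those of a fixed matrix depending on $k$ (equivalently, pairs of ternary quadratic forms with one form fixed). The binary-quartic locus $I=0$ is problematic precisely for the reason you identify --- the stabilizer jumps and the cusp geometry degenerates --- and \cite{ABS} circumvents this by passing to a different representation. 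So while your plan is a reasonable heuristic outline, the specific parametrization you propose is not the one that makes the argument go through, and the ``secondary technical issue'' you flag is in fact a primary obstruction that forces a change of representation rather than merely extra care.
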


\subsection{The Galois invariant submodule method}
This paper instead applies the Galois invariant submodule method, which we overview now. A standard application of this method starts with the observation that, in a quadratic twist family of an elliptic curve $E$ over a number field, the $2$-torsion submodule is Galois invariant. In particular, the $2$-Selmer groups in this family may be viewed as the subgroups of $H^1(G_F, E[2])$ cut out by a varying collection of local conditions. This approach was first applied by Heath-Brown to study $2$-Selmer groups in the quadratic twist family of the congruent number curve \cite{HB}, and the extension to higher Selmer groups was later developed by the second author \cite{Smi22a, Smi22b}.

To apply this to the problem at hand, we note that $E_{d, n}$ has CM by the ring $\Z[\mu_3] \subseteq \Q(\sqrt{-3})$. Multiplication by $\sqrt{-3}$  defines a rational isogeny
\[
\sqrt{-3}: E_{d, n} \to E_{-27d, n},
\]
of degree $3$. The Galois structure of the kernel of this isogeny does not depend on $n$, so we always have a natural embedding
\[
\Sel^{\sqrt{-3}} E_{d, n} \hookrightarrow H^1\left(G_F,\, E_{d, 1}\left[\sqrt{-3}\right]\right).
\]
This Selmer group is analogous to the $2$-Selmer group in a quadratic twist family, but is unwieldy in a way that makes higher Selmer work more difficult. Specifically, we have the following proposition.

\begin{proposition}
\label{prop:blowup}
Choose a nonzero integer $d$ and a real number $\delta \in (0, 1)$. Then, if $d$ is a square, we have
\begin{equation}
\label{eq:coinflipA}    
\dim_{\FFF_3} \Sel^{\sqrt{-3}} E_{d, n} = 0 \quad \textup{and} \quad \dim_{\FFF_3} \Sel^{\sqrt{-3}} E_{-27d, n} > \left(\log \log  3n \right)^{\delta}
\end{equation}
for 100\% of positive integers $n$. Equivalently, if $-27d$ is a square, we have
\begin{equation}
\label{eq:coinflipB}
\dim_{\FFF_3} \Sel^{\sqrt{-3}} E_{d, n} > \left(\log \log  3n \right)^{\delta} \quad \textup{and} \quad \dim_{\FFF_3} \Sel^{\sqrt{-3}} E_{-27d, n} = 0
\end{equation}
for 100\% of positive integers $n$.

Otherwise, if neither $d$ nor $-27d$ is a square, and if $\delta$ lies in $(0, 1/2)$, we find that \eqref{eq:coinflipA} holds for 50\% of positive integers $n$ and that \eqref{eq:coinflipB} holds for 50\% of positive integers $n$.
\end{proposition}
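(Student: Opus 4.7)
The approach is to exploit the asymmetry that when $d$ is a square, say $d = a^2$, the curve $E_{d,n}$ carries the rational 3-torsion point $(0, an)$ spanning $\ker \sqrt{-3}$. Thus $E_{d,n}[\sqrt{-3}] \cong \Z/3$ as a trivial $G_{\Q}$-module, while the Weil pairing forces $E_{-27d, n}[\sqrt{-3}] \cong \mu_3$. This contrast between the trivial module and the Kummer-theoretic module $\mu_3$ is what makes the two Selmer groups behave very differently; the case when $-27d$ is a square is entirely symmetric. For the large Selmer group, Kummer theory identifies $H^1(G_{\Q}, E_{-27d,n}[\sqrt{-3}])$ with $\Q^*/(\Q^*)^3$, and a standard local analysis shows that every Selmer class is represented by $\alpha \in \Q^*$ cubically supported only at primes dividing $6dn$. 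Imposing the remaining local conditions yields $\dim_{\FFF_3} \Sel^{\sqrt{-3}} E_{-27d, n} \ge \omega(n) - C(d)$, where $\omega(n)$ counts distinct prime divisors and $C(d)$ absorbs losses at the places $2, 3, \infty$ and at primes dividing $d$. The Erd\H{o}s--Kac theorem then delivers $\omega(n) > (\log \log 3n)^\delta + C(d)$ for 100\% of $n$, giving the lower bound in \eqref{eq:coinflipA}.

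For the small Selmer group I would combine the previous step with the Greenberg--Wiles formula
\[
\dim \Sel^{\sqrt{-3}} E_{-27d, n} - \dim \Sel^{\sqrt{-3}} E_{d, n} = \omega(n) + O(1),
\]
together with the matching upper bound $\dim \Sel^{\sqrt{-3}} E_{-27d, n} \le \omega(n) + O(1)$ coming from the same Kummer description (now bounding elements by the support set), to conclude that $\dim \Sel^{\sqrt{-3}} E_{d, n}$ is bounded and is in fact an explicit sum of local defects at the primes $p \mid n$. Each local defect reduces to a single cubic residue condition at $p$, and a Chebotarev-style density argument shows these defects vanish simultaneously for 100\% of $n$, yielding $\dim \Sel^{\sqrt{-3}} E_{d, n} = 0$ as asserted.

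For the generic case (neither $d$ nor $-27d$ a square), $E_{d,n}[\sqrt{-3}]$ and $E_{-27d, n}[\sqrt{-3}]$ are nontrivial quadratic characters, so inflation--restriction to the quadratic resolvents $\Q(\sqrt{d})$ and $\Q(\sqrt{-3d})$ reduces each Selmer computation to a Kummer calculation over a quadratic base. The bad primes $p \mid n$ split into two Chebotarev classes according to the splitting behavior in the relevant resolvent, and only the ``split'' class contributes rank to the Selmer group; this halves the effective prime count on average and explains the weaker exponent $\delta < 1/2$. The sign of the resulting Greenberg--Wiles defect is controlled by a product of quadratic symbols in $p \mid n$ that equidistributes over $n$, producing the $50/50$ split between \eqref{eq:coinflipA} and \eqref{eq:coinflipB}. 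The main obstacle throughout is the sharp upgrade from $\dim \Sel^{\sqrt{-3}} E_{d, n} = O(1)$ to $\dim \Sel^{\sqrt{-3}} E_{d, n} = 0$ for 100\% of $n$: this demands precise control over the local correction terms in the Greenberg--Wiles formula, most likely via an explicit computation of the local Kummer image at each prime $p \mid n$ using the rational 3-torsion point, followed by a density argument.
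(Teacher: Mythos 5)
Your plan is built around a quantitative Greenberg--Wiles comparison that is off by a factor of $2$, and this changes the logic qualitatively. When $d$ is a square, the module $M_d = E_{d,n}[\sqrt{-3}]$ is trivial and $M_{-3d} = E_{-27d,n}[\sqrt{-3}] \cong \mu_3$, as you say. But the local factor in the Tamagawa ratio at a bad prime $p \nmid 6d$ is $|H^0(G_p, M_d)| / |H^0(G_p, M_{-3d})|$, which equals $3$ when $p \equiv 2 \bmod 3$ and $1$ when $p \equiv 1 \bmod 3$. Hence $\log_3 \mathcal{T}(E_{-27d, n})$ is roughly the count of primes $p \mid n$ with $p \equiv 2 \bmod 3$, which is about $\omega(n)/2$, not $\omega(n)$. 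Correspondingly, the Selmer group $\Sel^{\sqrt{-3}} E_{-27d, n}$ sits inside $\Q(S, 3)$ (dimension $\approx \omega(n)$) but each $p \mid n$ with $p \equiv 1 \bmod 3$ contributes a genuine codimension-one local condition, so the naive lower bound for $\dim \Sel^{\sqrt{-3}} E_{-27d, n}$ is $\omega(n)/2 - O(1)$, not $\omega(n) - O(1)$.

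Once you correct both figures, your argument for the small side collapses: the upper bound $\dim \Sel^{\sqrt{-3}} E_{-27d, n} \le \omega(n) + O(1)$ together with $\dim \Sel^{\sqrt{-3}} E_{-27d, n} - \dim \Sel^{\sqrt{-3}} E_{d, n} = \omega(n)/2 + O(1)$ only gives $\dim \Sel^{\sqrt{-3}} E_{d, n} \le \omega(n)/2 + O(1)$, which is unbounded. The real content of $\dim \Sel^{\sqrt{-3}} E_{d, n} = 0$ for 100\% of $n$ is that the big Selmer group is typically \emph{as small as it can be} given the Tamagawa ratio, and this is not accessible by a Chebotarev-style density argument: you would have to control a conjunction of roughly $\omega(n)/2$ local conditions simultaneously, which is exactly the regime where unconditional Chebotarev fails. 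In the paper this is handled by the first-moment computation over grids of twists (Theorem~\ref{tFirstMoment}), whose proof rests on the bilinear sieve for generalized Legendre symbols from \cite{Smi22a}; the average of $|\Sel^{\sqrt{-3}} E_{-27d, n}|$ over a filtered grid is shown to be close to $|H^0(G_{\Q}, M_d)| + |H^0(G_{\Q}, M_{-3d})| \cdot \mathcal{T}$, i.e.\ to the sum of the two trivial bounds, which by a Markov-type argument forces the Selmer group to attain the minimal size except on a negligible set. Your Erd\H{o}s--Kac step for the large side and the $50/50$ balancing via the random-walk sign in the remaining case are on the right track, but without the first-moment / bilinear-sieve input the key conclusion $\dim \Sel^{\sqrt{-3}} E_{d,n} = 0$ for 100\% of $n$ is unproved.
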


This behavior, where a pair of isogenous curves will have one trivial Selmer group and one enormous Selmer group between them, is not unusual; it is encountered in the $2$-Selmer groups in the quadratic twist families of elliptic curves with partial $2$-torsion, for example \cite[Theorem 1.4]{Smi22b}. But it is a more fundamental issue here, as we cannot avoid it by focusing on the curve where the Selmer group is small. To directly bound the $3$-Selmer rank of $E_{d, n}$ using $\sqrt{-3}$-Selmer ranks, we use the exact sequence
\begin{equation}
\label{eq:Sel3decomp}
H^0\left(G_{\QQ}, E_{-27d, n}\left[\sqrt{-3}\right]\right)\to \Sel^{\sqrt{-3}} E_{d, n}  \to  \Sel^{3} E_{d, n} \to \Sel^{\sqrt{-3}} E_{-27d, n}
\end{equation}
to derive the relation
\[\dim_{\FFF_3} \Sel^3 E_{d, n} \,\le\, \dim_{\FFF_3}\Sel^{\sqrt{-3}} E_{d, n} \,+\, \dim_{\FFF_3}\Sel^{\sqrt{-3}} E_{-27d, n}.\]
This relation invokes the $\sqrt{-3}$-Selmer rank of both isogenous curves, so any higher work needs to reckon with the fact that typically one of these groups is large.

\subsection{Our results}
Nonetheless, we can make some progress towards understanding the $3$-Selmer groups. To account for the behavior of Proposition \ref{prop:blowup}, we will use the following variation on the definition of the $3$-Selmer rank.

\begin{mydef}
Given nonzero integers $d, n$, we define
\[
r_3(E_{d, n}) = \begin{cases}-1 + \dim_{\FFF_3} \Sel^3 E_{d, n} &\text{ if } d \text{ is square} \\
-1 + \dim_{\FFF_3} \Sel^3 E_{-27d, n} &\text{ if } -27d \text{ is square}\\
\min\left(\dim_{\FFF_3} \Sel^3 E_{d, n},\, \dim_{\FFF_3} \Sel^3 E_{-27d, n}\right) &\text{ otherwise. }\end{cases}
\]
This is always an upper bound for the rank of $E_{d, n}$.
\end{mydef}

\begin{theorem}
\label{thm:main}
Fix $w \in \pm 1$ and a nonzero integer $d$, and take
\[\alpha_0 = \prod_{k=0}^{\infty} ( 1- 3^{-2k-1}) \approx .6390.\]
Then, for any integer $r \ge 0$ satisfying $(-1)^r = w$, we have
\[\lim_{H \to \infty} \frac{\# \left\{ n \in \mathscr{T}(w, d, H)\,:\,\, r_3(E_{d, n}) =  r \right\} }{\#\mathscr{T}(w, d, H)} = \alpha_0 \cdot 3^{\frac{-r(r-1)}{2}}\cdot \prod_{k=1}^r \left(1 - 3^{-k}\right)^{-1}.\]
\end{theorem}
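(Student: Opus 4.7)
The plan is to reduce the distribution of $r_3(E_{d,n})$ to the corank distribution of a Cassels--Tate-type pairing between the two $\sqrt{-3}$-Selmer groups, and then to analyze this pairing via an equidistribution theorem for a generalization of the R\'edei symbol.

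Starting from the exact sequence \eqref{eq:Sel3decomp} together with its counterpart for $\hat\phi\colon E_{-27d, n} \to E_{d, n}$, one can express $\dim_{\FFF_3} \Sel^3 E_{d, n}$ in terms of $\dim_{\FFF_3} \Sel^{\sqrt{-3}} E_{d,n}$ and the (co)rank of a canonical Cassels--Tate pairing
\[
\langle \,,\, \rangle_{\CTP}\colon \Sel^{\sqrt{-3}} E_{d,n} \times \Sel^{\sqrt{-3}} E_{-27d, n} \to \FFF_3.
\]
In the regimes of Proposition \ref{prop:blowup}, one of the two Selmer groups is typically trivial while the other grows at least like $(\log\log 3n)^{\delta}$; the definition of $r_3(E_{d, n})$ is calibrated so that $r_3(E_{d, n})$ agrees (up to an explicit rational-torsion correction) with the dimension of the kernel of $\langle \,,\, \rangle_{\CTP}$ on the nontrivial side. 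This converts the theorem into a statement about the distribution of the corank of $\langle \,,\, \rangle_{\CTP}$.

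To compute $\langle \,,\, \rangle_{\CTP}$ analytically, I would represent Selmer classes by explicit elements of $K^\times/(K^\times)^3$ with $K = \Q(\sqrt{-3})$ satisfying local conditions, and unfold the pairing into a sum over places of local Hilbert-type symbols involving an auxiliary witness $c$ (for the solvability of one of the inputs in a cubic $K$-extension). This produces a trilinear R\'edei-type symbol $\Redei{a}{b}{c}$ that evaluates the Cassels--Tate pairing. The analytic heart of the argument is then a \emph{trilinear large sieve} for these R\'edei symbols: as $n$ varies over $\mathscr{T}(w, d, H)$, the matrix of values $\Redei{a_i}{b_j}{c_k}$ (with $\{a_i\}, \{b_j\}$ running over bases of the two Selmer groups and $c_k$ auxiliary) should equidistribute, in the sense of all joint moments, to a uniformly random $\FFF_3$-valued bilinear form. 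Given this equidistribution, a standard computation in the style of Bhargava--Kane--Lenstra--Poonen--Rains produces the distribution $\alpha_0 \cdot 3^{-r(r-1)/2} \prod_{k=1}^r (1 - 3^{-k})^{-1}$ for the kernel dimension. The parity constraint $(-1)^r = w$ then follows from the root-number parity formula for the $3$-Selmer rank in this family (using \eqref{eq:root_number_equi} to match the densities on both sides).

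The main obstacle is the trilinear large sieve itself. Unlike in the prior $2$-descent work \cite{Smi22a, Smi22b}, the $\sqrt{-3}$-Selmer groups grow with $n$ (Proposition \ref{prop:blowup}), so bases of these groups cannot be chosen with bounded support or bounded discriminant. Controlling the resulting off-diagonal correlations of R\'edei symbols as $n$ varies will require strengthening the analytic core of \cite{Smi22a, Smi22b} to the trilinear setting with carefully tracked uniformity in the number and norms of the ramified primes entering each symbol.
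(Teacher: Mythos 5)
Your high-level strategy matches the paper's: reduce the theorem to equidistribution of a Cassels--Tate-type pairing, control that pairing by a trilinear large sieve for generalized R\'edei symbols, use an (alternating) random matrix heuristic to produce the constant $\alpha_0 \cdot 3^{-r(r-1)/2}\prod_{k=1}^{r}(1-3^{-k})^{-1}$, and handle the parity constraint via the known $3$-parity theorem (which is what ``root number parity formula'' amounts to). That is exactly the architecture of Sections 4--6 of the paper (together with the grid-of-twists machinery from Smith's earlier work, which you allude to but do not spell out).

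There is, however, a genuine gap in the way you set up the pairing, and it is not cosmetic. You propose a Cassels--Tate pairing
\[
\langle\,,\,\rangle_{\CTP}\colon \Sel^{\sqrt{-3}}E_{d,n}\times \Sel^{\sqrt{-3}}E_{-27d,n}\to \FFF_3,
\]
between the two isogeny Selmer groups, and you claim $r_3(E_{d,n})$ should be read off from the dimension of its kernel ``on the nontrivial side.'' This cannot work as stated: by Proposition~\ref{prop:blowup}, one of the two groups (say $\Sel^{\sqrt{-3}}E_{d,n}$) is typically trivial, in which case your pairing is identically zero and its right kernel is the \emph{entire} group $\Sel^{\sqrt{-3}}E_{-27d,n}$ --- which grows without bound and is certainly not $r_3(E_{d,n})$. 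Likewise, your proposed ``matrix of values $\Redei{a_i}{b_j}{c_k}$ with $\{a_i\},\{b_j\}$ running over bases of the two Selmer groups'' has one empty index set in that regime. The pairing the paper actually uses is the \emph{alternating} Cassels--Tate pairing
\[
\CTP_{d,n}\colon \Sel^{\sqrt{-3}}E_{-27d,n}\times \Sel^{\sqrt{-3}}E_{-27d,n}\to \tfrac13\Z/\Z
\]
on the single large Selmer group, whose kernel equals the image of $\Sel^3 E_{d,n}$ in \eqref{eq:Sel3decomp}; from there $\dim\Sel^3 E_{d,n}=\dim\Sel^{\sqrt{-3}}E_{d,n}+\dim\ker\CTP_{d,n}$. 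The alternating structure is essential: a random bilinear form (which is what your model as written equidistributes to) gives a corank distribution with leading constant $\prod_{k\ge 1}(1-3^{-k})\approx 0.5601$, not $\alpha_0=\prod_{k\ge 0}(1-3^{-2k-1})\approx 0.6390$, and the wrong $r$-dependence. So the pairing you need to equidistribute is an alternating form on one growing space (indexed by one basis $\{\phi_i\}$ with entries $\Redei{\phi_i}{\phi_j}{c}$ antisymmetric in $i,j$), not a rectangular matrix between two bases. Fixing this is not a detail --- it is exactly what forces the trilinear, rather than bilinear, symbol to appear, and it determines which random model the sieve must target.

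A secondary point: your route to the symbol (``unfold the pairing into local Hilbert-type symbols for explicit elements of $K^\times/(K^\times)^3$'') is plausible in spirit, but the paper works with the abstractly defined generalized R\'edei symbol (Definition~\ref{def:Redei}, built from a trilinear module pairing and cochains, and identified with a Poitou--Tate/Cassels--Tate pairing), together with Proposition~\ref{pCT} which expresses a \emph{difference} of Cassels--Tate pairings at two twists as a single generalized R\'edei symbol. That difference structure is what makes the sum over a grid class amenable to the trilinear sieve; an absolute (rather than relative) unfolding in terms of Hilbert symbols would still need some analogue of this comparison step to be usable.
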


The following corollary is an immediate consequence of our main theorem.
Write $r(E)$ for the rank of an elliptic curve $E$.

\begin{corollary}
Fix a nonzero integer $d$. We then have
$$
\liminf_{H \to \infty} \frac{\# \left\{ n \in \mathscr{T}(+1, d, H)\,:\,\, r(E_{d, n}) =  0 \right\}}{\#\mathscr{T}(+1, d, H)} \geq 0.6390.
$$
\end{corollary}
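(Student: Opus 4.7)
The proof is essentially a direct reading of Theorem \ref{thm:main} at $r = 0$, combined with the observation built into the definition of $r_3$ that it is always an upper bound on the Mordell-Weil rank $r(E_{d,n})$. I would therefore proceed in just two steps.

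First, I would record the containment
\[
\{n \in \mathscr{T}(+1, d, H) : r_3(E_{d,n}) = 0\} \subseteq \{n \in \mathscr{T}(+1, d, H) : r(E_{d,n}) = 0\},
\]
which follows from $r(E_{d,n}) \leq r_3(E_{d,n})$ together with the non-negativity of Mordell-Weil rank. The inequality $r(E_{d,n}) \leq r_3(E_{d,n})$ is the assertion made at the end of the definition of $r_3$; for completeness I would verify it in the three cases. When neither $d$ nor $-27d$ is a square, each of $\dim_{\FFF_3}\Sel^3 E_{d,n}$ and $\dim_{\FFF_3}\Sel^3 E_{-27d,n}$ bounds $r(E_{d,n}) = r(E_{-27d,n})$ (the two curves are isogenous), so their minimum does as well. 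When $d = c^2$, the curve $E_{d,n}$ carries the rational 3-torsion point $(0, cn)$, so $\dim_{\FFF_3}E_{d,n}(\Q)[3] \geq 1$, giving $r(E_{d,n}) + 1 \leq \dim_{\FFF_3}\Sel^3 E_{d,n}$, which is exactly the $-1$ appearing in the definition of $r_3$. The case $-27d$ square is handled symmetrically using the isogenous curve $E_{-27d,n}$.

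Second, I would apply Theorem \ref{thm:main} with $w = +1$ and $r = 0$. Since $(-1)^0 = +1$, the parity hypothesis is satisfied, and the expression on the right-hand side collapses to $\alpha_0 \cdot 3^0 \cdot (\text{empty product}) = \alpha_0 \approx 0.6390$. This yields
\[
\lim_{H \to \infty} \frac{\#\{n \in \mathscr{T}(+1, d, H) : r_3(E_{d,n}) = 0\}}{\#\mathscr{T}(+1, d, H)} = \alpha_0,
\]
and combining with the containment from the first step gives the stated liminf lower bound. No genuine obstacle arises; the corollary really is immediate from Theorem \ref{thm:main} once one parses the definition of $r_3$, and the entire mathematical content of the bound lives in the $r = 0$ coefficient of the main theorem.
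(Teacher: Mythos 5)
Your proof is correct and takes essentially the same approach as the paper, which dismisses the corollary as "an immediate consequence" of Theorem \ref{thm:main}: you simply instantiate the theorem at $r=0$ and use the bound $r(E_{d,n}) \le r_3(E_{d,n})$. Your additional verification of why $r_3$ dominates the rank in each of the three cases---the isogeny invariance of rank when neither $d$ nor $-27d$ is square, and the rational $3$-torsion point $(0, cn)$ contributing $+1$ to $\dim_{\FF_3}\Sel^3$ when $d = c^2$---is accurate and makes explicit what the paper folds into the remark "This is always an upper bound for the rank of $E_{d,n}$" at the end of the definition.
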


Combining this with \eqref{eq:root_number_equi} gives
\[
\liminf_{H \to \infty} \frac{\# \left\{ n \in \mathscr{T}(+1, d, H)\,:\,\, r(E_{d, n}) =  0 \right\}}{H} \geq 0.3195.
\]
Since the elliptic curves $x^3 + y^3 = n$ and $y^2 = x^3 - 432n^2$ are isomorphic for any nonzero integer $n$, the case $d = -432$ of this gives

\begin{corollary}
At least $31.95\%$ of positive integers are not the sum of two rational cubes.
\end{corollary}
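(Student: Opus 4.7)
The plan is to specialize the displayed inequality immediately above to $d = -432$, and then reinterpret ``rank zero'' as ``not a sum of two rational cubes'' via the given isomorphism $E_{-432, n} \cong \{x^3 + y^3 = n\}$.

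First, I would take $d = -432$ in the inequality
\[\liminf_{H \to \infty} \frac{\#\left\{ n \in \mathscr{T}(+1, d, H)\,:\,\, r(E_{d, n}) = 0 \right\}}{H} \geq 0.3195\]
to obtain that the lower density of positive integers $n$ with $r(E_{-432, n}) = 0$ is at least $0.3195$. Note that nothing beyond the preceding corollary, the root-number equidistribution \eqref{eq:root_number_equi}, and the arithmetic $0.6390/2 = 0.3195$ is being used here.

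Next, I would convert this rank statement into a statement about rational solutions of $x^3 + y^3 = n$. Under the isomorphism cited in the excerpt, any rational pair $(a, b)$ with $a^3 + b^3 = n$ produces a rational point on $E_{-432, n}$; the only such pairs that can fail to give a point of infinite order are those mapping to the torsion subgroup of $E_{-432, n}(\Q)$. A standard reduction argument (inspecting reductions at primes of good reduction and using the explicit classification of possible torsion structures on curves of $j$-invariant $0$) shows that $E_{-432, n}(\Q)_{\textup{tors}}$ is nontrivial only when $n$ lies in a density-zero set (essentially when $n$ or $-n$ is a rational cube). Hence, for all but density-zero many $n$, having $r(E_{-432, n}) = 0$ forces the nonexistence of any representation $n = a^3 + b^3$ with $a, b \in \Q$.

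Combining these two steps gives the corollary. The only nontrivial input is the density-$0.3195$ bound itself, which is already furnished by the previous corollary together with \eqref{eq:root_number_equi}; the remaining translation is a routine but classical torsion computation, and I expect no real obstacle beyond correctly isolating the finitely many (or density-zero) $n$ for which the curve acquires nontrivial torsion.
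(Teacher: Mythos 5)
Your proposal matches the paper's (implicit) argument: the paper simply asserts the corollary immediately after the displayed $0.3195$ bound and the isomorphism $x^3 + y^3 = n \cong y^2 = x^3 - 432 n^2$, leaving the translation from ``rank zero'' to ``no rational solutions'' unspoken. The one detail you add --- checking that nontrivial torsion on $E_{-432, n}$ only occurs on a density-zero set of $n$ --- is a needed (if routine) observation that the paper elides, so your proof is if anything slightly more complete. One small imprecision: your characterization of when $E_{-432, n}(\Q)_{\mathrm{tors}}$ is nontrivial (``essentially when $n$ or $-n$ is a rational cube'') misses the case $n = 2m^3$, where $(m,m)$ is a $2$-torsion point (e.g.\ $n = 2$ with $1^3 + 1^3 = 2$ and rank zero). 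This does not affect the argument, since the full exceptional set (cube-free parts in $\{1,2\}$ together with non-cube-free $n$) still has density zero, but the classification should be stated correctly.
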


This improves on the bounds of  Alp\"oge--Bhargava--Shnidman \cite[Theorem 1.1]{ABS}, which proves that at least $1/6 \approx 16.67\%$ of positive integers are not the sum of two rational cubes. 

By taking advantage of a $2$-converse theorem of Burungale and Skinner \cite[Corollary A.2]{ABS}, these authors also show that at least $2/21 \approx 9.52\%$ of positive integers are the sum of two rational cubes. We do not have access to an appropriate $3$-converse theorem, so we may only provide a conditional result here.

\begin{corollary}
Suppose the implication
\begin{equation}
\label{eq:3converse}
r_3(E_{-432, n}) = 1 \implies r(E_{-432, n})  = 1 
\end{equation}
holds for all positive integers $n$. Then at least $47.92\%$ of positive integers are the sum of two rational cubes.
\end{corollary}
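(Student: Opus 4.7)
The plan is to specialize Theorem \ref{thm:main} to $d = -432$, $w = -1$, $r = 1$, feed the resulting proportion into the hypothesized $3$-converse theorem \eqref{eq:3converse}, and then pass from the $w = -1$ subfamily to $\Z_{>0}$ using \eqref{eq:root_number_equi}.

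Evaluating the right-hand side of Theorem \ref{thm:main} at $r = 1$ is immediate: the exponent $-r(r-1)/2$ vanishes and $\prod_{k=1}^{1}(1 - 3^{-k})^{-1} = 3/2$, so
\[
\lim_{H \to \infty} \frac{\# \left\{ n \in \mathscr{T}(-1, -432, H)\,:\,\, r_3(E_{-432, n}) =  1 \right\} }{\#\mathscr{T}(-1, -432, H)} \;=\; \tfrac{3}{2}\alpha_0 \;\approx\; 0.9585.
\]
Only this single value of $r$ is needed, since the hypothesized converse theorem only concerns $r_3 = 1$. I note in passing that for $d = -432$ one has $-27d = 108^2$, so $r_3(E_{-432,n})$ is computed through $\dim_{\FFF_3}\Sel^3 E_{11664, n}$; this case-distinction in the definition is invisible to the statement of Theorem \ref{thm:main}, whose right-hand side is the same in all three cases.

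By the hypothesis \eqref{eq:3converse}, every such $n$ satisfies $r(E_{-432, n}) = 1$. Under the standard isomorphism between $E_{-432, n}$ and the cubic $x^3 + y^3 = n$, a rational point of infinite order corresponds to a representation of $n$ as a sum of two rational cubes, so each such $n$ lies in the desired set.

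Finally, \eqref{eq:root_number_equi} gives $\#\mathscr{T}(-1, -432, H) \sim H/2$, so the lower density of positive integers expressible as a sum of two rational cubes is at least $\tfrac{1}{2} \cdot \tfrac{3}{2}\alpha_0 = \tfrac{3}{4}\alpha_0 \approx 0.4793$, which is the claimed $47.92\%$. There is no real obstacle: the entire corollary is a direct numerical extraction from Theorem \ref{thm:main}, the converse-theorem hypothesis \eqref{eq:3converse}, and the equidistribution \eqref{eq:root_number_equi}, with the only point of care being the remark about $-27 d$ being square so that the correct case in the definition of $r_3$ is used.
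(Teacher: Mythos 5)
Your proposal is correct and follows the argument that the paper clearly intends (the paper does not write out a proof, treating the corollary as an immediate consequence of Theorem \ref{thm:main}): specialize to $d=-432$, $w=-1$, $r=1$, compute $\tfrac{3}{2}\alpha_0 \approx 0.9585$ as the density of $r_3 = 1$ within $\mathscr{T}(-1,-432,H)$, apply \eqref{eq:3converse} to pass to rank exactly one, use the isomorphism with $x^3 + y^3 = n$, and halve via \eqref{eq:root_number_equi}. The side remark about $-27d = 108^2$ being the relevant square case in the definition of $r_3$, and hence which $\Sel^3$ group is meant, is exactly the right point of care and is handled correctly.
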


Unfortunately, and unlike in the case of the corresponding $2$-converse statement, the hypothesis \eqref{eq:3converse} is not known for even a positive density of positive integers $n$ such that $w(E_{-432, n}) = -1$. 

We note that assuming either the BSD conjecture or the Shafarevich--Tate conjecture would suffice to prove that every elliptic curve with global root number $-1$ has positive rank, making them inappropriate hypotheses for this corollary.

\subsection{The trilinear large sieve}
The Cassels--Tate pairing \cite{MS22a} defines an alternating pairing
\[\CTP_{d, n}: \Sel^{\sqrt{-3}} E_{-27d, n} \times \Sel^{\sqrt{-3}} E_{-27d, n} \to \tfrac{1}{3}\Z/\Z\]
whose kernel equals the image of $\Sel^{3} E_{d, n}$ in \eqref{eq:Sel3decomp}. Assuming $-27d$ is not a square, the exact sequence \eqref{eq:Sel3decomp} gives
\[\dim_{\FFF_3} \Sel^{3} E_{d, n} = \dim_{\FFF_3} \Sel^{\sqrt{-3}} E_{d, n} + \dim\left(\ker \CTP_{d, n}\right).\]
If $d$ is square, Proposition \ref{prop:blowup} implies that
\[\dim_{\FFF_3} \Sel^{3} E_{d, n} = \dim\left(\ker \CTP_{d, n}\right)\]
for 100\% of positive integers $n$. If neither $d$ nor $-27d$ is square, Proposition \ref{prop:blowup} instead gives the relationship
\[\min\left(\dim_{\FFF_3} \Sel^{3} E_{d, n}, \,\dim_{\FFF_3} \Sel^{3} E_{-27d, n}\right) \le \max\left(\dim \ker \CTP_{d, n},\, \dim \ker \CTP_{-27d, n}\right),\]
with equality for 100\% of positive integers $n$. Our main goal is then to understand how the pairing $\CTP_{d, n}$ varies as $n$ varies.

To use the Galois invariant submodule method, we should look for relationships between the Cassels--Tate pairings for collections of cubic twists $E_{d, n}$ whose $\sqrt{-3}$-Selmer groups have overlapping images in $H^1\left(G_{\QQ},\, E_{d, 1}\left[\sqrt{-3}\right]\right)$. These relationships will express the sum of Cassels--Tate pairings over some collection of twists in terms of splitting condition for some prime in some Galois extension of $\QQ$. Following the method, we wish to control these splitting conditions using a technique from analytic number theory, then use this to control the distribution of kernels of the Cassels--Tate pairing.

At this level of generality, our approach is the same as in \cite{Smi22a}. The novelty of our work lies in the analytic tool we use to control the splitting conditions.

In the preprint \cite{Smi17}, which was the initial form of the two-part \cite{Smi22a, Smi22b}, the analytic tool at the core of the argument is the unconditional Chebotarev density theorem. This is also the tool at the core of \cite{KoyPag21, KoyPag22}.

In \cite[Section 5]{Smi22a}, this tool is replaced with a result on bilinear character sums that can reasonably either be classified as an averaged form of the Chebotarev density theorem or as a generalized ``large sieve''. This tool generalizes the following result of Jutila

\begin{theorem}[{\cite[Lemma 3]{Juti79}}]
Given integers $d_1, d_2$ with $d_2$ odd, take $\left(\frac{d_1}{d_2}\right)$ to be the Jacobi symbol, the multiplicative generalization of the Legendre symbol. If $d_2$ is even, we take this symbol to be $0$.

Then, given real numbers $H_1, H_2 \ge 3$, and given any functions
\[
a_1: \Z \to \mathbb{C} \quad\textup{and}\quad a_2: \Z \to \mathbb{C}
\]
of magnitude bounded by $1$, we have
\[
\left|\sum_{|d_1| < H_1} \sum_{|d_2| < H_2}a_1(d_1)a_2(d_2) \left(\frac{d_1}{d_2}\right)\right|\ll H_1 H_2 \cdot (\log H_1H_2)^3 \cdot (H_1^{-1/4} + H_2^{-1/4}).
\]
\end{theorem}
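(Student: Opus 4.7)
The plan is to bound this double character sum by combining Cauchy--Schwarz with the Pólya--Vinogradov inequality for incomplete character sums, exploiting quadratic reciprocity to obtain a symmetric estimate. Splitting the sum over $d_1, d_2$ according to their residues modulo $4$ allows one to use quadratic reciprocity to freely interchange the roles of $d_1$ and $d_2$ (up to sign factors), so without loss of generality I would assume $H_1 \le H_2$; the target then reduces to showing that the sum $S$ in question satisfies $|S| \ll H_1^{3/4} H_2 (\log(H_1 H_2))^3$.

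The main manoeuvre is Cauchy--Schwarz in $d_2$ followed by expansion:
\[|S|^2 \,\le\, H_2 \sum_{d_1,d_1'} a_1(d_1)\,\overline{a_1(d_1')} \sum_{|d_2|<H_2}\left(\frac{d_1 d_1'}{d_2}\right).\]
The pairs $(d_1, d_1')$ with $d_1 d_1'$ a perfect square form the diagonal, numbering $O(H_1 \log H_1)$ (parametrizing $d_1 = ku^2$, $d_1' = kv^2$ with $k$ squarefree), and on each of these the inner sum in $d_2$ is trivially bounded by $H_2$; their total contribution to $|S|^2$ is $O(H_1 H_2^2 \log H_1)$, whose square root already sits within the target bound. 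For the off-diagonal, the inner sum is a partial sum of a non-principal real Dirichlet character of conductor dividing $4|d_1 d_1'|$, and Pólya--Vinogradov bounds it by $O(\sqrt{|d_1 d_1'|}\log(H_1 H_2)) = O(H_1 \log(H_1 H_2))$.

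A single application of this pointwise bound on the off-diagonal gives only $|S|^2 \ll H_1^3 H_2 \log^2(H_1 H_2)$, which is insufficient in the balanced regime $H_1 \asymp H_2$; this is the main obstacle. To extract the $H_1^{-1/4}$ saving, one applies an additional Cauchy--Schwarz on the pair $(d_1, d_1')$ and uses the self-dual identity $\sum_{d_1,d_1'} T'(d_1 d_1')^2 = \sum_{d_2,d_2'} T(d_2 d_2')^2$, where
\[T'(n) := \sum_{|d_2|<H_2}\left(\frac{n}{d_2}\right), \qquad T(m) := \sum_{|d_1|<H_1}\left(\frac{d_1}{m}\right),\]
and then applies the same diagonal-plus-Pólya--Vinogradov strategy to the dual sum with the roles of $H_1$ and $H_2$ exchanged. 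The saving of $H_1^{-1/4}$ is exactly ``half'' of the square-root cancellation that Pólya--Vinogradov provides for a single character sum, the halving being an artefact of the Cauchy--Schwarz step; analogously, applying Cauchy--Schwarz in $d_1$ first yields the $H_2^{-1/4}$ term, and combining the two directions produces the stated symmetric bound. The three factors of $\log(H_1 H_2)$ collect one logarithm from each of the two Pólya--Vinogradov applications and one divisor-type log from the diagonal count.
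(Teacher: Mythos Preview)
The paper does not give its own proof of this statement; it is cited as Jutila's Lemma~3 and stated without argument. So the comparison is only with Jutila's original proof, not with anything in the paper itself.

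Your setup is standard and the first steps are correct: reciprocity to assume $H_1\le H_2$, Cauchy--Schwarz in $d_2$, the diagonal bound $O(H_1 H_2^2\log)$ is fine, and P\'olya--Vinogradov on the off-diagonal gives $|S|^2\ll H_1^3 H_2\log^2$, sufficient only for $H_1\le H_2^{2/3}$. The gap is in your fix for the balanced range. After your second Cauchy--Schwarz and the self-dual identity you must bound $\sum_{d_2,d_2'}T(d_2d_2')^2$ with $T(m)=\sum_{|d_1|<H_1}\bigl(\tfrac{d_1}{m}\bigr)$. But P\'olya--Vinogradov applied to $T(m)$ gives $T(m)\ll |m|^{1/2}\log|m|$, and since $|m|=|d_2d_2'|$ ranges up to $H_2^2$, this is at best $O(H_2\log H_2)$ --- no better than the trivial bound $|T(m)|\le 2H_1$ in the regime $H_1\le H_2$. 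Feeding either bound back in yields $|V|\ll H_1^2H_2$, hence $|S|^2\ll H_1^2H_2^2$, which is the trivial estimate. The self-dual identity is correct but does not by itself produce new cancellation: P\'olya--Vinogradov saves in terms of the \emph{modulus}, and after dualising the modulus has grown (to size $\sim H_2^2$) while the sum has shrunk (to length $H_1$), so the saving evaporates.

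What is missing is a mechanism that couples the conductor of the character to the summation length more carefully --- for instance, grouping pairs $(d_1,d_1')$ by the squarefree kernel $k$ of $d_1d_1'$ so that the P\'olya--Vinogradov bound $\sqrt{k}\log k$ can be summed against the count of pairs with that kernel, or alternatively a fourth-moment argument in $d_2$. Your outline contains the right ingredients but not yet this step.
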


The result \cite[Theorem 5.2]{Smi22a} generalizes this bilinear result to handle symbols defined over arbitrary number fields, rather than just $\Q$. To control the distribution of the Cassels--Tate pairing with minimal error, it was necessary to apply this result for symbols defined over number fields of degree as large as possible. This was the limiter of the strength of the equidistribution results for this method in \cite{Smi22a}, giving results stronger than can be proved with unconditional Chebotarev alone but weaker than could be useful for the problem of controlling $3$-Selmer ranks in cubic twist families of elliptic curves over $\Q$.

The key observation of this paper is that, to control $3$-Selmer ranks, we should go one step further and prove a \emph{trilinear} character bound. Unlike bilinear results, no such trilinear result has been proved before. Perhaps the simplest possible example is the following, which follows from Theorem \ref{thm:trilinear} and Remark \ref{rmk:supersolvable}.
\begin{theorem}
\label{thm:classic_trilinear_LS}
Given integers $d_1, d_2, d_3$, take $\Redei{d_1}{d_2}{d_3} \in \{-1, 0, 1\}$ to be the R\'edei symbol, as defined in Example \ref{ex:Redei}. Choose three functions
\[
a_{12}, a_{13}, a_{23}: \Z \times \Z \to \mathbb{C}
\]
of magnitude at most $1$. Then, given real numbers $H_1, H_2, H_3 \ge 3$, we have
\begin{multline*}
\left|\sum_{|d_1| < H_1} \sum_{|d_2| < H_2} \sum_{|d_3| < H_3} a_{12}(d_1,d_2)\cdot a_{13}(d_1, d_3)\cdot  a_{23}(d_2, d_3) \cdot \Redei{d_1}{d_2}{d_3}\right| \\
\ll H_1H_2H_3 \cdot \log(H_1H_2H_3)^{1792} \cdot \left( H_1^{-1/512} + H_2^{-1/512} + H_3^{-1/512}\right)
\end{multline*}
with the implicit constant absolute.
\end{theorem}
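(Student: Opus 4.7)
The plan is to reduce the trilinear R\'edei symbol sum to a bilinear symbol sum via Cauchy--Schwarz, and then invoke the generalized bilinear large sieve of \cite[Theorem 5.2]{Smi22a} over an auxiliary quadratic extension. The central algebraic input is the multilinearity of the R\'edei symbol in each of its three arguments: whenever the relevant symbols are defined, one has
\[
\Redei{d_1}{d_2}{d_3 \cdot d_3'} = \Redei{d_1}{d_2}{d_3} \cdot \Redei{d_1}{d_2}{d_3'},
\]
together with the analogous identities in the first and second slots. This is what lets the squared terms arising from Cauchy--Schwarz telescope back into single R\'edei symbols, rather than degenerating into more complicated Galois-theoretic invariants for which no character-sum bound is available.

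Write the left-hand side as $S$. First apply Cauchy--Schwarz in the pair $(d_1, d_2)$ against $d_3$ to eliminate the weight $a_{12}$, obtaining
\[
|S|^2 \,\le\, H_1 H_2 \cdot \sum_{d_1, d_2}\,\left|\sum_{d_3} a_{13}(d_1, d_3)\, a_{23}(d_2, d_3)\, \Redei{d_1}{d_2}{d_3}\right|^2.
\]
Expanding the square in $(d_3, d_3')$ and applying multilinearity in the third argument collapses the product $\Redei{d_1}{d_2}{d_3} \Redei{d_1}{d_2}{d_3'}$ into $\Redei{d_1}{d_2}{d_3 d_3'}$. For each fixed pair $(d_3, d_3')$ one is then left with a bilinear sum
\[
\sum_{d_1, d_2} \alpha(d_1)\,\beta(d_2)\,\Redei{d_1}{d_2}{d_3 d_3'},
\]
where $\alpha(d_1) = a_{13}(d_1, d_3)\overline{a_{13}(d_1, d_3')}$ and $\beta(d_2) = a_{23}(d_2, d_3)\overline{a_{23}(d_2, d_3')}$. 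By class field theory, for fixed parameter $c = d_3 d_3'$ the map $(d_1, d_2) \mapsto \Redei{d_1}{d_2}{c}$ is a quadratic symbol attached to a ramified quadratic extension depending on $c$, which is exactly the type of object the generalized bilinear large sieve \cite[Theorem 5.2]{Smi22a} is designed to estimate.

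Applying that bilinear bound for each fixed $(d_3, d_3')$ yields a power saving in $H_1$ and $H_2$. Summing over the $H_3^2$ pairs and taking square roots produces a bound of the shape $H_1 H_2 H_3 \cdot \log^{O(1)}(H_1 H_2 H_3) \cdot (H_1^{-\eta} + H_2^{-\eta})$ for some explicit small $\eta > 0$; to recover the symmetric $H_3^{-\eta}$ term we repeat the argument with the roles of the three variables permuted, exploiting the symmetry of the R\'edei symbol, and take the minimum. The main obstacle I anticipate is establishing the uniformity of the bilinear bound in the parameter $c = d_3 d_3'$: the symbol $\Redei{d_1}{d_2}{c}$ is defined through class field theoretic data that varies with $c$, so one must verify that \cite[Theorem 5.2]{Smi22a} applies with implicit constants polynomial in $\log c$, and that the degenerate triples where the R\'edei local conditions fail (so the symbol is $0$) are handled compatibly with the reciprocity identities. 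Tracking these issues through the two Cauchy--Schwarz reductions, together with the savings inherited from the bilinear theorem in \cite{Smi22a}, is what produces the specific exponents $1792$ and $1/512$ appearing in the statement.
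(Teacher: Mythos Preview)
Your opening Cauchy--Schwarz step mirrors the paper (which actually applies it twice, Proposition~\ref{prop:CS}, together with the positivity Lemma~\ref{lem:trilinear_positivity} to handle the fact that the symbol is only \emph{partially} trilinear---your identity $[d_1,d_2,d_3 d_3']=[d_1,d_2,d_3][d_1,d_2,d_3']$ fails whenever one side is zero, and that failure must be controlled by an inequality, not swept into the coefficients). But the real gap is the step that follows.

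Your central claim is that for fixed $c=d_3d_3'$ the map $(d_1,d_2)\mapsto[d_1,d_2,c]$ is ``a quadratic symbol attached to a ramified quadratic extension depending on $c$'' and hence falls under \cite[Theorem~5.2]{Smi22a}. This is not correct. The R\'edei symbol records the splitting of $c$ in a $D_4$ (or $C_4$) extension $F_{d_1,d_2}/\Q$ that varies with \emph{both} remaining arguments; it is not a power-residue or spin symbol over any fixed number field. Using reciprocity to write $[c,d_1,d_2]$ instead makes it a genuine character in $d_2$, but one whose governing field $F_{c,d_1}$ is a degree-$8$ nonabelian extension of $\Q$ depending on $d_1$, and whose discriminant grows like $|c\,d_1|$. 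The bilinear sieve in \cite{Smi22a} does not accept such input over $\Q$, and if you pass to $\Q(\sqrt{c})$ to abelianize, the implicit constant depends on that base field---hence on $c$, which ranges up to $H_3^2$. The uniformity ``polynomial in $\log c$'' that you flag as an obstacle is in fact the entire difficulty, and \cite[Theorem~5.2]{Smi22a} does not supply it; this is precisely why the paper says the bilinear tool is too weak here.

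What the paper does instead: after the double Cauchy--Schwarz it fixes $\psi_2=\phi_2-\phi_2'$, $\psi_3=\phi_3-\phi_3'$ and rewrites the inner sum $\sum_{\phi_1}[\phi_1,\psi_2,\psi_3]$ as a sum of a multiplicative function $G_y$ governed by a class function $\gamma_y$ on an explicit metabelian extension $E_y/\Q$ of bounded degree (Lemmas~\ref{lem:to_multiplicative}--\ref{lem:gamma_props}). It then decomposes $\gamma_y$ into monomial characters (Lemma~\ref{lem:monomial_decomp}) and feeds the whole \emph{family} of resulting Artin $L$-functions, indexed by $y=(\psi_2,\psi_3)$, into the averaged large sieve of \cite[Theorem~5.6]{LOSmith}. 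That theorem is designed exactly for this situation---characters of bounded degree but varying, large conductor---and the averaging over $y$ is what produces the saving. The constants $1/512$ and $1792$ then drop out of the supersolvable bookkeeping in Remark~\ref{rmk:supersolvable} with $o_1=o_2=o_3=2$ and $K=F=\Q$. Your route, by contrast, never averages over the outer variables and so cannot exploit this mechanism.
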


This problem was previously considered in \cite{IKZ}, where a density result was obtained conditional on GRH when $d_1, d_2, d_3$ are restricted to primes. Our work unconditionally recovers the main result in \cite{IKZ} with a power-saving error term.

In Section \ref{sec:GRS}, we give a construction for a trilinear symbol that generalizes the R\'{e}dei symbol, and we prove the appropriate generalization of Theorem \ref{thm:classic_trilinear_LS} for this symbol in Section \ref{sec:trilinear}. In Section \ref{sCubicTwist} and Section \ref{sec:Leg} we lay the algebraic foundations for studying the $\sqrt{-3}$-Selmer group and $3$-Selmer group. We use these foundations to study their statistical behavior in Section \ref{sFinal}.

\subsection*{Acknowledgements}
The first author gratefully acknowledges the support of Dr. Max R\"ossler, the Walter Haefner Foundation and the ETH Z\"urich Foundation. The second author served as as Clay Research Fellow during the writing of this paper, and would like to thank the Clay Mathematics Institute for their support.

\section{Generalized R\'edei symbols}
\label{sec:GRS}
We start with some notation.

\begin{notat}
Fix a number field $F$ and an algebraic closure $\overline{F}$. Take $G_F = \Gal(\overline{F}/F)$ to be the absolute Galois group of $F$. Given a place $v$ of $F$, fix an algebraic closure $\overline{F_v}$ of the completion $F_v$, and take $G_v = \Gal(\overline{F_v}/F_v)$ to be the corresponding absolute Galois group. 

All cohomology groups are continuous cochain cohomology. We take $Z^i$ and $C^i$ as notation for the associated spaces of $i$-cocycles and $i$-cochains.

A finite $G_F$-module $M$ is a discrete, finite abelian group with a continuous action of $G_F$. For such a module, we will define
$$
\Sha^i(G_F, M) = \ker\left( H^i(G_F, M) \to \prod_{v \text{ of } F} H^i(G_v, M)\right)
$$
for $i = 1, 2$, where the inclusion of $G_v$ in $G_F$ corresponds to some choice of an embedding of  $\overline{F}$ in $\overline{F_v}$. These groups do not depend on the choice of this inclusion.

Given a positive integer $n$, we take $\mu_n$ to be the group of $n^{th}$ roots of unity in $\overline{F}$.
\end{notat}

\subsection{Construction of the symbol}
\begin{mydef}
\label{def:Redei}
With $F$ fixed as above, choose finite $G_F$ modules $M_1$, $M_2$, and $M_3$, and choose an equivariant homomorphism
\begin{equation}
\label{eq:M1M2M3_pairing}
M_1 \otimes M_2 \otimes M_3 \to \overline{F}^{\times}.
\end{equation}
Here and for the rest of the paper, the tensor product is taken over $\Z$ and $M_1 \otimes M_2 \otimes M_3$ is endowed with the structure of a $G_F$-module via $g(m_1 \otimes m_2 \otimes m_3) := gm_1 \otimes gm_2 \otimes gm_3$.

Choose elements $\phi_i \in H^1(G_F, M_i)$ for $i\le 3$. We define a \emph{generalized R\'edei symbol}
\[
\Redei{\phi_1}{\phi_2}{\phi_3} \,\in\, \{0\} \cup \{z \in \mathbb{C}\,:\,\, |z| =  1\} 
\]
as follows:

Take $S$ to be the minimal set of places of $F$ containing all archimedean primes, all primes dividing the order of some $M_i$, all primes where the action of $G_F$ on some $M_i$ is ramified, and all primes where some $\phi_i$ is ramified. We call $S$ the \emph{bad primes} for this symbol.

For each place $v$ of $F$, we choose a corresponding inclusion of the algebraic closure $\overline{F}$ into $\overline{F_v}$, and we take $G_v \subseteq G_F$ to be the corresponding inclusion of Galois groups. We write $\text{inv}_v$ for the standard map $H^2\big(G_v,\,{\overline{F_v}}^{\times}\big) \to \Q/\Z$. 

With this set, we say the \emph{symbol condition holds at $v$} if
\begin{itemize}
\item For some $i \le 3$, $\phi_i$ has trivial restriction to $H^1(G_v, M_i)$; and
\item Given any permutation $(i, j, k)$ of $(1, 2, 3)$, and given any $x \in H^0(G_v, M_i)$, the cup product
\[
x \cup  \phi_j \cup \phi_k \in H^2\big(G_v, \overline{F_v}^{\times}\big)
\]
with respect to the properly-permuted variant of \eqref{eq:M1M2M3_pairing} is trivial. 
\end{itemize}
By local Tate duality, the second condition is equivalent to the condition that the cup product $\phi_j \cup \phi_k \in H^2(G_F, M_i^{\vee})$, defined with respect to the homomorphism $M_j \otimes M_k \rightarrow M_i^\vee$ induced by \eqref{eq:M1M2M3_pairing}, is trivial at $G_v$. This cup product is also trivial at $G_v$ for $v$ outside $S$ since $\phi_j$ and $\phi_k$ are unramified at such a $v$. So we find that the claim
\[\phi_j \cup \phi_k \in \Sha^2(G_F, M_i^{\vee}) \quad\text{ for each permutation } (i, j, k) \text{ of } (1, 2, 3)\]
is equivalent to the second part of the symbol condition holding for each $v$ in $S$.

If the symbol condition does not hold  for some $v \in S$, we define $[\phi_1, \phi_2, \phi_3] = 0$.

Otherwise, choose a cocycle $\overline{\phi_i} \in Z^1(G_F, M_i)$ representing $\phi_i$ for each $i \le 3$. Take $F_S \subset \overline{F}$ to be the maximal extension of $F$ ramified only at places over $S$, take $\mathcal{O}_{F, S}$ to be its ring of integers, and take $G_{F, S} = \textup{Gal}(F_S/F)$. From \cite[(8.3.11)]{Neuk07}, we may choose a cochain
\[
\epsilon \in C^2\left(G_{F, S}, \mathcal{O}_{F, S}^{\times}\right)
\]
satisfying
\[
d\epsilon = \overline{\phi_1} \cup \overline{\phi_2} \cup \overline{\phi_3},
\]
where the cup product is defined with respect to \eqref{eq:M1M2M3_pairing}.

For each $v \in S$, choose $i$ so $\phi_i$ is trivial at $G_v$, and choose $x \in M_i$ satisfying
\[
\text{res}_{G_v} \overline{\phi_i} = dx.
\]
Taking $j, k$ so $(i, j, k)$ is a permutation of $(1, 2, 3)$, we then define a $2$-cochain
\begin{equation}
\label{eq:fv}
f_v := (-1)^{i+1} \cdot \psi_1 \cup \psi_2 \cup \psi_3\quad\text{with}\quad \psi_i = x,\,\,\,\,\, \psi_j  = \textup{res}_{G_v}\overline{\phi_j},\,\text{ and }\, \psi_k  = \textup{res}_{G_v}\overline{\phi_k}.
\end{equation}
Then $\textup{res}_{G_v}\epsilon -f_v$ is a cocycle, and we may define
\[
\Redei{\phi_1}{\phi_2}{\phi_3} = \exp\left( 2\pi i \cdot \sum_{v \in S}\text{inv}_v\left(\textup{res}_{G_v}\epsilon -f_v\right)\right).
\]
\end{mydef}

\begin{lemma}
\label{lem:symbol_well_defined}
The symbol $\Redei{\phi_1}{\phi_2}{\phi_3}$ is well-defined. 
\end{lemma}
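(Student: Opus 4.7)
The plan is to verify that $\sum_{v \in S}\inv_v\!\left(\res_{G_v}\epsilon - f_v\right) \in \Q/\Z$ does not depend on any of the choices made in Definition \ref{def:Redei}: the cocycle representatives $\overline{\phi_i}$, the global cochain $\epsilon$, the local index $i$ at each $v \in S$, and the local $0$-cochain $x$. I would vary them in that order, from most local to most global: first the local $x$ and $i$ with all other data fixed, then the global $\epsilon$, and finally the cocycle representatives.

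Replacing $x$ by $x + y$ with $y \in M_i^{G_v}$ alters $f_v$ by $(-1)^{i+1} y \cup \overline{\phi_j} \cup \overline{\phi_k}$; as a class in $H^2(G_v, \overline{F_v}^\times)$ this is the image of $y \in H^0(G_v, M_i)$ under cupping with $\phi_j \cup \phi_k$, so the second bullet of the symbol condition forces its invariant to vanish. If both $\phi_1$ and $\phi_2$ are trivial at $G_v$, the two candidates $f_v^{(1)} = x_1 \cup \overline{\phi_2} \cup \overline{\phi_3}$ and $f_v^{(2)} = -\overline{\phi_1} \cup x_2 \cup \overline{\phi_3}$ differ by $d(x_1 \cup x_2 \cup \overline{\phi_3})$ via the graded Leibniz rule, so the choice of index does not affect the invariant either.

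Altering $\epsilon$ by a cocycle $\eta \in Z^2(G_{F, S}, \mathcal{O}_{F, S}^\times)$ shifts the sum by $\sum_{v \in S}\inv_v(\res_{G_v}\eta)$; since $\eta$ is unramified outside $S$, this equals the total local invariant over all places of $F$ of the Brauer class obtained from $\eta$ via $\mathcal{O}_{F, S}^\times \hookrightarrow \overline{F}^\times$, which vanishes by global reciprocity for $\text{Br}(F)$. Finally, replacing $\overline{\phi_1}$ by $\overline{\phi_1} + dy_1$ for $y_1 \in M_1$ changes the global triple cup product by $d(y_1 \cup \overline{\phi_2} \cup \overline{\phi_3})$, so we correspondingly adjust $\epsilon$ by $y_1 \cup \overline{\phi_2} \cup \overline{\phi_3}$. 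At each $v \in S$: if $i = 1$ is used locally, shifting $x$ by $y_1$ adjusts $f_v$ by precisely this amount and cancels the change in $\epsilon$; if instead $i = 2$ is used, a short Leibniz computation identifies the residual change as $d(y_1 \cup x_2 \cup \overline{\phi_3})$, again a local coboundary. Symmetric arguments handle $\overline{\phi_2}$ and $\overline{\phi_3}$.

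The main technical obstacle is just the careful bookkeeping of signs, matching the $(-1)^{i+1}$ in the definition of $f_v$ with the Koszul signs from the graded Leibniz rule; the only conceptual inputs are the second bullet of the symbol condition (guaranteeing the local invariants used in Step 1 vanish) and the sum-of-invariants reciprocity law for $\text{Br}(F)$ (used for Step 3).
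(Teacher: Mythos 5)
Your proposal correctly handles four of the choices that need to be verified — the local $0$-cochain $x$, the local index $i$, the global $2$-cochain $\epsilon$, and the cocycle representatives $\overline{\phi_i}$ — and does so along the same lines as the paper (the paper invokes Poitou--Tate duality for the $\epsilon$ step, but your appeal to the Hasse reciprocity law plus vanishing of unramified Brauer classes is a correct alternative, and indeed the paper itself uses the fact that classes from $H^2(G_v/I_v, \mathcal{O}_{F,S}^\times) = 0$ have trivial invariant later in the linearity proof).

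However, there is a genuine gap: you omit the verification that the symbol does not depend on the choice of embeddings $\iota_v: \overline{F} \hookrightarrow \overline{F_v}$ for $v \in S$, equivalently on the choice of primes of $\overline{F}$ lying over the places in $S$. This choice is implicit in Definition \ref{def:Redei} via the inclusions $G_v \subseteq G_F$, and it affects both what $\res_{G_v}$ means and which elements $x$ are eligible. Replacing $\iota_v$ by $\iota_v \circ \tau$ for $\tau \in G_F$ conjugates $G_v$ inside $G_F$, and the resulting change in $\res_{G_v}\epsilon - f_v$ is not obviously a coboundary; the paper devotes the final and most delicate portion of its proof to this point, using the conjugation map $\mathrm{conj}_\tau^G$ and the homotopy $h_\tau$ from \cite{MS22b} to show that the contributions from the conjugated $\epsilon$ and the adjusted $f_v$ cancel. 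Without this step the well-definedness claim is incomplete, since the symbol must be an invariant of $(\phi_1, \phi_2, \phi_3)$ alone, not of the auxiliary embedding data.
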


\begin{proof}
We need to check that the definition is independent of the choice of the cochains $f_v$, the choice of the cochain $\epsilon$, the choice of the cocycles $\overline{\phi_i}$, and the choice of the inclusion of $\overline{F}$ in $\overline{F_v}$ for each place $v$ in $S$. Except for the last part, our proof will follow the basic framework of the well-definedness of the Cassels--Tate pairing \cite[Proposition 3.3]{MS22a}.

First, given $v \in S$, suppose we choose $x, x'$ so $dx = dx' = \textup{res}_{G_v} \overline{\phi_i}$. Then $x - x'$ lies in $H^0(G_v, M_i)$, so
\[
\text{inv}_v\left((x - x') \cup \phi_j \cup \phi_k\right) = 0
\]
by the second part of the symbol condition. So the choice of $x$ does not matter.

At the same step, suppose $\textup{res}_{G_v}\phi_j$ is also trivial, and choose $y \in M_j$ whose coboundary equals $\textup{res}_{G_v}\overline{\phi_j}$. Swapping $i$ and $j$ and replacing $x$ with $y$ in \eqref{eq:fv} defines a new cochain $f'_v$. But we see that
\[
f_v - f'_v \in \pm d(\alpha_1 \cup \alpha_2 \cup \alpha_3) \quad\text{with}\quad \alpha_i = x, \,\,\,\, \alpha_j  = y,\,\text{ and }\,\, \alpha_k  = \textup{res}_{G_v} \overline{\phi_k}.
\]
In short, $f_v$ is determined up to a coboundary, so its choice does not affect the symbol.

By Poitou--Tate duality, we see that the choice of $\epsilon$ does not affect the symbol.

Now suppose we replace $\overline{\phi_1}$ with $\overline{\phi_1} + dx$, with $x \in M_1$. We then may adjust $\epsilon$ to $\epsilon' = \epsilon + x \cup \overline{\phi_2} \cup \overline{\phi_3}$. For each $v \in S$, if the permutation $(i, j, k)$ corresponding to $v$ has $i = 1$, $f_v$ may be adjusted by the restriction of $\epsilon' - \epsilon$. If $i = 2$, $f_v$ may be replaced with
\[
f_v' = f_v - dx \cup y \cup \textup{res}_{G_v} \overline{\phi_3}
\]
for some $y$ satisfying $dy = \textup{res}_{G_v} \overline{\phi_2}$, and $- dx \cup y \cup \textup{res}_{G_v} \overline{\phi_3}$ differs from the restriction of $\epsilon' - \epsilon$ by a coboundary. This also holds for $i = 3$, so we find that the choice of $\overline{\phi_1}$ does not affect the pairing. We may similarly verify that the choices of $\overline{\phi_2}$ and $\overline{\phi_3}$ do not affect the pairing.

This leaves the choice of embeddings $\iota_v: \overline{F} \hookrightarrow \overline{F_v}$ to consider. We consider how the symbol changes if we replace $\iota_v$ with $\iota_v \circ \tau$ for a given $\tau \in G_F$. Since the proof of Proposition \ref{prop:rec} below does not depend on invariance of field embeddings, we may assume $\phi_1$ is trivial at $G_v$.

From the above work, we may therefore assume that the cocycle $\overline{\phi_1}$ is trivial on the set $\iota_v^\ast(G_v) := \{\iota_v^{-1} \circ \sigma \circ \iota_v : \sigma \in G_v\}$. Then
\[ 
\overline{\phi_1}(\tau^{-1} \sigma \tau) = -(\tau^{-1} \sigma \tau - 1) \overline{\phi_1}(\tau^{-1})
\]
for $\sigma \in \iota_v^\ast(G_v)$. Taking $x = -\overline{\phi_1}(\tau^{-1})$ in the definition of $f_v$ with respect to $\iota_v \circ \tau$, we find that the claim reduces to
$$
\textup{inv}_v\left(\textup{res}_{G_v}\left(\tau \epsilon - \epsilon - \tau f_v\right)\right) = 0\quad\text{with}\quad f_v = -\overline{\phi_1}(\tau^{-1}) \cup \overline{\phi_2} \cup \overline{\phi_3},
$$
where the implicit field embedding here is $\iota_v$, so that $\textup{res}_{G_v}$ is to be interpreted as the restriction to the subgroup $\iota_v^\ast(G_v)$ of $G_F$. In the above equation, $\tau \epsilon$ and $\tau f_v$ are shorthand for the map $\text{conj}_\tau^G$ introduced in \cite[p. 9]{MS22b}. Here we implicitly view $f_v$ as an element of $C^2(G_F, \overline{F}^\times)$ so that the conjugation map is defined on $f_v$.

Using the map $h_{\tau}$ appearing in \cite[(3.2)]{MS22b}, we find that $\textup{res}_{G_v}(\tau \epsilon - \epsilon)$ differs from
\[
\res_{G_v} h_{\tau}\left(\overline{\phi_1} \cup \overline{\phi_2} \cup \overline{\phi_3}\right) = \textup{res}_{G_v} \left(\overline{\phi_1}(\tau) \cup \tau \overline{\phi_2} \cup \tau\overline{\phi_3}\right)
\]
by a coboundary, where the equality follows from \cite[(3.3)]{MS22b}. This cancels the contribution from $f_v$, establishing the independence from the choice of $\iota_v$.
\end{proof}

\begin{proposition}[Trilinearity and reciprocity]
\label{prop:rec}
Take $M_1, M_2, M_3$ and the homomorphism \eqref{eq:M1M2M3_pairing} as above. Given $\phi_1, \phi_1' \in H^1(G_F, M_1)$, $\phi_2 \in H^1(G_F, M_2)$, and $\phi_3 \in H^1(G_F, M_3)$, if $\Redei{\phi_1}{\phi_2}{\phi_3}$ is nonzero, then
\[
\Redei{\phi_1 + \phi_1'}{\phi_2}{\phi_3} = \Redei{\phi_1}{\phi_2}{\phi_3} \cdot \Redei{\phi_1'}{\phi_2}{\phi_3}.
\]
Furthermore, we have the identities
\[
\Redei{\phi_2}{\phi_1}{\phi_3} = \Redei{\phi_1}{\phi_3}{\phi_2} = \overline{{\Redei{\phi_1}{\phi_2}{\phi_3}}},
\]
where the homomorphisms used to define $\Redei{\phi_2}{\phi_1}{\phi_3}$ and $\Redei{\phi_1}{\phi_3}{\phi_2}$ are given by swapping inputs in \eqref{eq:M1M2M3_pairing}.
\end{proposition}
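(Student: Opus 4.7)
My plan is to prove both parts of the proposition by carefully tracking how the ingredients in Definition~\ref{def:Redei} behave under addition of cocycles (for trilinearity) or under swapping two factors (for reciprocity). The key underlying facts are the bilinearity of the cup product (for trilinearity) and its graded commutativity (for reciprocity). Lemma~\ref{lem:symbol_well_defined} gives me the flexibility to absorb coboundary discrepancies and to make whatever consistent local choices are convenient.

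For trilinearity, I would first handle the edge case where $\Redei{\phi_1'}{\phi_2}{\phi_3} = 0$. Since $\Redei{\phi_1}{\phi_2}{\phi_3}$ is nonzero, bilinearity of the cup product together with a short case analysis at each $v \in S$ shows that $\Redei{\phi_1 + \phi_1'}{\phi_2}{\phi_3} = 0$ as well, so both sides of the claimed identity vanish. In the main case, where both symbols on the right are nonzero, I pick cocycle representatives $\overline{\phi_1}, \overline{\phi_1'}, \overline{\phi_2}, \overline{\phi_3}$ and use $\overline{\phi_1} + \overline{\phi_1'}$ as the representative of $\phi_1 + \phi_1'$, together with $\epsilon + \epsilon'$ as the global cochain for the sum symbol. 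At each $v \in S$, I exercise the freedom granted by Lemma~\ref{lem:symbol_well_defined} to use the same index $i$ as for the two summand symbols; when $i = 1$ is forced, both $\phi_1$ and $\phi_1'$ are trivial at $G_v$ and I may take $x_1 + x_1'$ as the relevant $0$-cochain. Bilinearity of the cup product then gives $f_v^{\mathrm{sum}} = f_v + f_v'$ on the nose, so the local invariants add, and exponentiating the additive relation on the global sum yields the multiplicative identity.

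For reciprocity, I exploit graded commutativity of the cup product. Let $s\colon M_2 \otimes M_1 \otimes M_3 \to M_1 \otimes M_2 \otimes M_3$ denote the $G_F$-equivariant swap of the first two factors, so that the pairing defining $\Redei{\phi_2}{\phi_1}{\phi_3}$ is $p \circ s$, where $p$ is the pairing in \eqref{eq:M1M2M3_pairing}. Standard graded commutativity of the cup product on $1$-cocycles yields
\[
(p \circ s)\bigl(\overline{\phi_2}\cup\overline{\phi_1}\cup\overline{\phi_3}\bigr) \,+\, p\bigl(\overline{\phi_1}\cup\overline{\phi_2}\cup\overline{\phi_3}\bigr) \,=\, d\gamma
\]
for a suitable $\gamma \in C^2(G_{F,S}, \mathcal{O}_{F,S}^\times)$, so I may take $\epsilon^{\mathrm{swap}} = -\epsilon + \gamma$ as the global cochain for the swapped symbol. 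The same identity, applied locally at each $v \in S$ to formula~\eqref{eq:fv}, shows that $(p\circ s)(f_v^{\mathrm{swap}}) + p(f_v)$ and $\mathrm{res}_{G_v}(\gamma)$ differ by a coboundary, so $\mathrm{res}_{G_v}(\epsilon^{\mathrm{swap}}) - f_v^{\mathrm{swap}} \equiv -(\mathrm{res}_{G_v}\epsilon - f_v)$ at every $v \in S$. Every local invariant is therefore negated, and exponentiating the negated sum gives the complex conjugate of the original symbol. The case $\Redei{\phi_1}{\phi_3}{\phi_2}$ follows from the same argument after swapping the last two slots.

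The main obstacle is the bookkeeping around local choices. Definition~\ref{def:Redei} permits the auxiliary index $i$ and the $0$-cochain $x$ to vary from place to place, and one has to make these choices consistently across the two or three symbols appearing in each identity so that the local contributions actually exhibit the desired additive or sign-flipping behavior. Lemma~\ref{lem:symbol_well_defined} guarantees this can be done without affecting the final symbol values, so the technical heart of the proof reduces to the cochain-level identities $f_v^{\mathrm{sum}} = f_v + f_v'$ and $\mathrm{res}_{G_v}(\epsilon^{\mathrm{swap}}) - f_v^{\mathrm{swap}} \equiv -(\mathrm{res}_{G_v}\epsilon - f_v) \pmod{\mathrm{image}\,d}$, both of which follow from bilinearity and graded commutativity of the cup product.
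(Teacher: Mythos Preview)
Your reciprocity argument is essentially the paper's: the cochain $\gamma$ you posit is exactly (up to sign) the paper's $(\overline{\phi_1}\cdot\overline{\phi_2})\cup\overline{\phi_3}$, where $\overline{\phi_1}\cdot\overline{\phi_2}$ is the pointwise tensor $\sigma\mapsto\overline{\phi_1}(\sigma)\otimes\overline{\phi_2}(\sigma)$, and the local check that $f_v+f_v^{\mathrm{swap}}$ agrees with $\res_{G_v}\gamma$ modulo coboundaries is what the paper carries out case-by-case in the three possibilities $i=1,2,3$.

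Your trilinearity argument has a real gap: you treat the three symbols as if they shared a single set $S$ of bad primes, but the bad sets $S$, $S'$, $S''$ for $\Redei{\phi_1}{\phi_2}{\phi_3}$, $\Redei{\phi_1'}{\phi_2}{\phi_3}$, and $\Redei{\phi_1+\phi_1'}{\phi_2}{\phi_3}$ are in general different (each contains the ramification of the relevant first argument). Two issues follow. First, when you write ``when $i=1$ is forced, both $\phi_1$ and $\phi_1'$ are trivial at $G_v$'', this is only immediate once you know the symbol conditions hold for \emph{both} summand symbols at \emph{every} $v\in S\cup S'$; for $v\in S\setminus S'$ this needs a separate argument (the paper observes that $\phi_1'$ is then unramified at $v$ and checks the symbol condition directly). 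Second, and more seriously, even granting consistent local choices, the symbol $\Redei{\phi_1}{\phi_2}{\phi_3}$ is a sum of invariants over $v\in S$, not over $v\in S\cup S'$, so ``$f_v^{\mathrm{sum}}=f_v+f_v'$ and the local invariants add'' does not finish the job: you must show that $\inv_v(\res_{G_v}\epsilon - f_v)=0$ for $v\in S'\setminus S$ and likewise with primes swapped. The paper does this by noting that at such $v$ the class $\res_{G_v}\epsilon - f_v$ is unramified, hence lies in $H^2(G_v/I_v,\mathcal{O}_{F,S}^\times)=0$. Without this step the additivity of the exponents does not follow.

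A smaller point: your handling of the degenerate case $\Redei{\phi_1'}{\phi_2}{\phi_3}=0$ by ``a short case analysis at each $v\in S$'' again presumes a single bad set. The paper avoids this entirely by first proving the identity in the nonzero case and then observing that if $\Redei{\phi_1'}{\phi_2}{\phi_3}=0$ while $\Redei{\phi_1}{\phi_2}{\phi_3}\ne 0$, nonvanishing of $\Redei{\phi_1+\phi_1'}{\phi_2}{\phi_3}$ would, via the already-proved linearity, force $\Redei{\phi_1'}{\phi_2}{\phi_3}\ne 0$.
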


\begin{proof}
We start with the proof of linearity. We suppose to start that $\Redei{\phi_1'}{\phi_2}{\phi_3}$ is nonzero.

Take $S$ to be the set of bad primes of $\Redei{\phi_1}{\phi_2}{\phi_3}$ and $S'$ to be the set of bad primes of $\Redei{\phi_1'}{\phi_2}{\phi_3}$. Choose the tuple
\[
\left(\overline{\phi_1}, \,\overline{\phi_2}, \,\overline{\phi_3}, \,\epsilon\right)
\]
as in Definition \ref{def:Redei}, and choose $\overline{\phi_1'}$ and $\epsilon'$ as in this definition for evaluating $\Redei{\phi'_1}{\phi_2}{\phi_3}$.

Choose $v \in S$. We claim the symbol condition holds at $v$ for $\Redei{\phi'_1}{\phi_2}{\phi_3}$. This is clear if $v$ is in $S'$. Otherwise, $\phi_1$ must be ramified at $v$, and $\phi_i$ must be trivial at $v$ for $i$ either $2$ or $3$. We also see that $\phi'_1$, $\phi_2$, and $\phi_3$ are all unramified at $v$, so the second part of the symbol condition holds at $v$ for $\Redei{\phi'_1}{\phi_2}{\phi_3}$. The claim follows. Symmetrically, the symbol conditions hold at $v \in S'$ for $\Redei{\phi_1}{\phi_2}{\phi_3}$. We conclude that the symbol conditions hold at all $v \in S \cup S'$ for $\Redei{\phi_1 + \phi'_1}{\phi_2}{\phi_3}$.

If neither $\phi_2$ nor $\phi_3$ are trivial at $v \in S \cup S'$, then both $\phi_1$ and $\phi'_1$ are trivial at $v$. So we may choose $(f_v)_{v \in S \cup S'}$ as in Definition \ref{def:Redei} for $\Redei{\phi_1}{\phi_2}{\phi_3}$ and $(f'_v)_{v \in S \cup S'}$ for $\Redei{\phi_1'}{\phi_2}{\phi_3}$ such that, for each $v$, $f_v$ and $f_v'$ are defined with respect to the same permutation of $(1, 2, 3)$. With this set, $\Redei{\phi_1 + \phi_1'}{\phi_2}{\phi_3}$ can be calculated from the tuple
\[
\left(\epsilon + \epsilon', (f_v + f'_v)_{v \in S \cup S'}\right).
\]
Therefore, in case $\Redei{\phi_1'}{\phi_2}{\phi_3}$ is nonzero, linearity is a consequence of the observation that
\[
\textup{inv}_v\left(\textup{res}_{G_v}\epsilon - f_v\right)= 0 \quad\text{for }\, v \in S' \backslash S\quad\text{and}\quad\textup{inv}_v\left(\textup{res}_{G_v}\epsilon' - f'_v\right)= 0 \quad\text{for }\, v \in S \backslash S',
\]
which follows from the fact that these cocycle classes come from $H^2(G_v/I_v, \mathcal{O}_{F, S}^{\times}) = 0$ and $H^2(G_v/I_v, \mathcal{O}_{F, S'}^{\times}) = 0$, respectively.

Now suppose that $\Redei{\phi'_1}{\phi_2}{\phi_3}$ is $0$. By what we have just proved, one of $\Redei{\phi'_1 + \phi_1}{\phi_2}{\phi_3}$ and $\Redei{\phi_1}{\phi_2}{\phi_3}$ must then also be $0$. So $\Redei{\phi'_1 + \phi_1}{\phi_2}{\phi_3} = 0$, giving linearity in this case.

We turn to reciprocity. We will show that
\[
\Redei{\phi_2}{\phi_1}{\phi_3} = \overline{\Redei{\phi_1}{\phi_2}{\phi_3}},
\]
with the proof of the other identity being similar. The identity is clear from the definitions if one of the symbols is zero. Otherwise, choose 
\[
\left(\overline{\phi_1}, \overline{\phi_2}, \overline{\phi_3}, \epsilon, (f_v)_{v \in S}\right)
\]
for calculating $\Redei{\phi_1}{\phi_2}{\phi_3}$.

Define
\[
\overline{\phi_1} \cdot \overline{\phi_2} \in C^1(G_F, M_1 \otimes M_2)
\]
by $\overline{\phi_1} \cdot \overline{\phi_2}(\sigma) = \overline{\phi_1}(\sigma) \otimes \overline{\phi_2}(\sigma)$. We may calculate
\[
d(\overline{\phi_1} \cdot \overline{\phi_2}) = -\overline{\phi_1} \cup \overline{\phi_2} - \overline{\phi_2} \cup \overline{\phi_1}.
\]
So we find that we may calculate $\Redei{\phi_2}{\phi_1}{\phi_3}$ from a tuple of the form
\[
\left(\overline{\phi_2},\, \overline{\phi_1}, \,\overline{\phi_3},\, -\epsilon - \left(\overline{\phi_1} \cdot \overline{\phi_2}\right) \cup \overline{\phi_3} ,\, (f'_v)_{v \in S}\right).
\]
We now focus on the $f_v$ and $f'_v$. First, given $v \in S$, suppose $\textup{res}_{G_v}\phi_1$ is trivial, and choose $x \in M_1$ so $dx = \textup{res}_{G_v} \overline{\phi_1}$. An explicit cochain calculation gives
\[
dx \cdot \textup{res}_{G_v} \overline{\phi_2} = \textup{res}_{G_v} \overline{\phi_2} \cup x - x \cup \textup{res}_{G_v} \overline{\phi_2},
\]
so we find that we may select $f_v$ and $f'_v$ such that
\begin{equation}
\label{eq:res_fvfvp}
-\textup{res}_{G_v}\left(\left(\overline{\phi_1} \cdot \overline{\phi_2}\right) \cup \overline{\phi_3} \right)= f_v + f_v'.
\end{equation}
A similar calculation holds if $\phi_2$ is trivial at $v$. Finally, if $\phi_3$ is trivial at $v$, with $\textup{res}_{G_v} \overline{\phi_3} = dx$, we may take $f_v = \textup{res}_{G_v}(\overline{\phi_1} \cup \overline{\phi_2}) \cup x$ and $f'_v = \textup{res}_{G_v}(\overline{\phi_2} \cup \overline{\phi_1}) \cup x$. Noting that
\[
d\left(\textup{res}_{G_v}\left(\overline{\phi_1} \cdot \overline{\phi_2}\right) \cup x\right) = - \textup{res}_{G_v}\left(\left(\overline{\phi_1} \cdot \overline{\phi_2}\right) \cup \overline{\phi_3} \right) - f_v - f'_v,
\]
we find that \eqref{eq:res_fvfvp} still holds modulo coboundaries. So
\[
\Redei{\phi_1}{\phi_2}{\phi_3} \cdot \Redei{\phi_2}{\phi_1}{\phi_3} = 1,
\]
and we are done.
\end{proof}

\subsection{Some examples of symbols}
Our first example justifies calling these symbols generalized R\'edei symbols.

\begin{example}
\label{ex:Redei}
Take $F = \Q$, take $M_1 =  M_2 = M_3 = \mu_2$, and take \eqref{eq:M1M2M3_pairing} to be the unique isomorphism from $\mu_2 \otimes \mu_2  \otimes \mu_2$ to $\mu_2$. The Kummer map defines an isomorphism 
\[ \Q^{\times}/(\Q^{\times})^2 \xrightarrow{\,\,\sim\,\,} H^1(G_{\Q}, \mu_2),\]
so we may think of the symbol defined in Definition \ref{def:Redei} as a map
\begin{equation}
\label{eq:discount_Redei}
\Redei{\,\,\,}{\,\,}{\,\,}: \,\Q^{\times}/\left(\Q^{\times}\right)^2 \,\times \,\Q^{\times}/\left(\Q^{\times}\right)^2 \,\times \,\Q^{\times}/\left(\Q^{\times}\right)^2 \,\xrightarrow{\quad}\, \pm 1 \cup \{0\}.
\end{equation}
This almost recovers the R\'edei symbol as codified by Stevenhagen in \cite[Section 7]{Stevenhagen}, which we write as $\Redei{a}{b}{c}_\text{R}$. Specifically, we have the following:
\end{example}

\begin{proposition}
Choose squarefree integers $a, b, c$. Then, if $\Redei{a}{b}{c}_{\textup{R}}$ is defined, the symbol \eqref{eq:discount_Redei} satisfies
\[
\Redei{a}{b}{c} = 
\begin{cases} 
\Redei{a}{b}{c}_{\textup{R}} &\textup{ if at least one of } a, b, c \textup{ is } 1 \bmod 8 \\
0 &\textup{ otherwise.} 
\end{cases}
\]
If $\Redei{a}{b}{c}_{\textup{R}}$ is not defined, $\Redei{a}{b}{c} = 0$.
\end{proposition}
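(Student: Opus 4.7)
The plan is to first characterize when the generalized symbol is non-zero by analyzing the symbol condition, then to match its value with Stevenhagen's classical construction. For the characterization, I will use that under the Kummer identification $H^1(G_\Q, \mu_2) \cong \Q^\times/(\Q^\times)^2$, the cup product via $\mu_2 \otimes \mu_2 \to \mu_2$ becomes the Hilbert symbol in $H^2(G_\Q, \mu_2) \cong \textup{Br}(\Q)[2]$. Since $\Sha^2(G_\Q, \mu_2) = 0$ by Hasse--Minkowski, the second part of the symbol condition --- that each $\phi_j \cup \phi_k$ lies in $\Sha^2(G_\Q, \mu_2)$ --- is equivalent to the global vanishing of the three Hilbert symbols $(a, b)$, $(b, c)$, $(a, c)$. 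This is exactly the hypothesis under which Stevenhagen's $\Redei{a}{b}{c}_{\textup{R}}$ is defined, handling the case where the classical symbol is undefined. The first part of the symbol condition requires that at every bad place $v$, at least one of $a, b, c$ is a square in $\Q_v^\times$, which I will show is automatic at $v = \infty$ (since the global Hilbert triviality forces at most one of $a, b, c$ to be negative) and at every odd ramified prime $p \mid abc$ (since the tame Hilbert symbol formula combined with the global vanishing and Stevenhagen's compatibility hypotheses forces some $a, b, c$ coprime to $p$ to be a square mod $p$). At $p = 2$, however, a squarefree integer is a square in $\Q_2^\times$ if and only if it is odd and $\equiv 1 \pmod 8$, so the symbol condition at $2$ holds precisely when one of $a, b, c$ is $\equiv 1 \pmod 8$. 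This accounts for the dichotomy stated in the proposition.

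Suppose next that all symbol conditions hold, and without loss of generality that $a \equiv 1 \pmod 8$. I will make explicit choices for the data $(\overline{\phi_1}, \overline{\phi_2}, \overline{\phi_3}, \epsilon, (f_v)_v)$ adapted to Stevenhagen's construction. The vanishing of $\phi_1 \cup \phi_2$ in $H^2(G_\Q, \mu_2)$ lifts $\overline{\phi_1}$ through the exact sequence $1 \to \mu_2 \to \mu_4 \to \mu_2 \to 1$; the resulting $\Z/4$-extension is Stevenhagen's dihedral governing field $L = \Q(\sqrt{a}, \sqrt{b}, \sqrt{\alpha})$, where $\alpha \in \Q(\sqrt{b})$ satisfies $N(\alpha) = a x_0^2$ for a rational solution of $a x_0^2 + b y_0^2 = z_0^2$. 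Writing $\overline{\phi_1} \cup \overline{\phi_2} = d\eta$ for an explicit cochain $\eta$ built from $L$, taking $\epsilon = \eta \cup \overline{\phi_3}$, and unwinding the definitions, the invariant sum $\sum_v \inv_v(\res_{G_v}\epsilon - f_v)$ collapses via local class field theory to the image of $\Frob_c \in \Gal(L/\Q)$ under a projection to $\{\pm 1\}$. This is precisely Stevenhagen's recipe for $\Redei{a}{b}{c}_{\textup{R}}$.

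The main obstacle is the cochain manipulation in the second step, which must reconcile the symmetric cohomological definition of $\Redei{a}{b}{c}$ with Stevenhagen's asymmetric construction. A cleaner alternative may be to leverage trilinearity and reciprocity: Proposition \ref{prop:rec}, together with the classical reciprocity law for Stevenhagen's symbol, reduces the identification to a small generating set of triples --- for instance, triples of prime discriminants satisfying the Hilbert symbol conditions --- on which both symbols can be computed by hand. The bookkeeping at $p = 2$ is where the $1 \pmod 8$ hypothesis genuinely intervenes: without it, the local cochain $f_2$ cannot be constructed from any $\phi_i$, and the symbol vanishes by definition.
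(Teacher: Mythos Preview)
Your approach is essentially the paper's: characterize nonvanishing via the Hilbert-symbol hypotheses plus the $1 \bmod 8$ condition at $2$, then build $\epsilon$ by cupping $\chi_c$ with an explicit cochain trivializing $\chi_a \cup \chi_b$ coming from the governing quartic field $F_{a,b}$, and read off the result as a product of Frobenius values. One small discrepancy: the paper uses reciprocity to reduce to $c \equiv 1 \bmod 8$ rather than $a$, precisely because with $\epsilon = \eta \cup \chi_c$ the local terms $\res_{G_v}\epsilon - f_v$ collapse cleanly only when $\chi_c$ is the argument guaranteed to be locally trivial at $2$; with your assumption $a \equiv 1 \bmod 8$ you would either need to swap the roles (trivialize $\chi_b \cup \chi_c$ and cup with $\chi_a$ on the left) or carry an extra contribution at $v=2$ through the computation.
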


\begin{proof}
Given a nonzero integer $d$, take $\chi_d$ to be the image of $d$ under the Kummer map to $H^1(G_{\QQ}, \mu_2)$.

By \cite[(1) and (2)]{Stevenhagen}, the symbol $\Redei{a}{b}{c}_\text{R}$ is defined if and only if the associated Hilbert symbols satisfy
\[
(a, b)_v = (a, c)_v = (b, c)_v = 1 \quad \text{for all rational places } v
\]
and if no rational place ramifies in all three of $\Q(\sqrt{a})/\Q$, $\Q(\sqrt{b})/\Q$, and $\Q(\sqrt{c})/\Q$. The condition on Hilbert symbols is equivalent to the second part of the symbol condition holding for all places $v$.

Given $v \ne 2$, if $\Q(\sqrt{a})/\Q$ is ramified at $v$ but $\Q(\sqrt{b})/\Q$ is not, the condition on Hilbert symbols forces the latter extension to split at $v$. Adjusting this observation as necessary for the other symbol arguments, we find that the symbol condition holds at all bad $v$ except potentially $2$ if $\Redei{a}{b}{c}_\text{R}$ is defined.

The first part of the symbol condition at $2$ amounts to requiring that one of $a, b, c$ equals $1$ mod $8$. To summarize, we have
\[
\Redei{a}{b}{c} \ne 0 \iff \Redei{a}{b}{c}_\text{R} \text{ is defined and at least one of } a, b, c \text{ is } 1 \bmod 8. 
\]
Now suppose that $\Redei{a}{b}{c}$ is nonzero. We may assume that none of $a, b, c$ are equal to $1$, as linearity implies that both $\Redei{a}{b}{c}$ and $\Redei{a}{b}{c}_\text{R}$ would otherwise equal $1$.

Using reciprocity for $\Redei{a}{b}{c}$ and $\Redei{a}{b}{c}_\text{R}$ \cite[Theorem 1.1]{Stevenhagen}, we may assume $c$ is $1$ mod $8$. Take $F_{a,b}$ to be one of the minimally ramified quartic extensions of $\Q(\sqrt{ab})/\Q$ defined in \cite[Definition 7.6]{Stevenhagen}. Take $\sigma$ to be a generator of $\Gal(F_{a, b}/\Q(\sqrt{ab}))$, and take $\tau$ to be an element in $\Gal(F_{a, b}/\Q)$ that generates $\Gal(\Q(\sqrt{a}, \sqrt{b})/\Q(\sqrt{a}))$. If this latter Galois group is trivial, we take $\tau =1$. The group $\Gal(F_{a, b}/\Q)$ is either dihedral of  order $8$ or cyclic of order $4$.

We define a map $\gamma: \Gal(F_{a, b}/\Q) \to \pm 1$ by
\[
\gamma(\sigma^k) = \gamma(\tau \sigma^k) = 
\begin{cases} 
+1 &\text{ if } k \equiv 0, 1 \bmod 4 \\ 
-1 &\text{ if } k \equiv 2, 3 \bmod 4
\end{cases}
\]
This map satisfies $\gamma(\sigma^2 \alpha) = - \gamma(\alpha)$ for all $\alpha \in G_{\QQ}$, so $d\gamma$ factors through $\Gal(\Q(\sqrt{a}, \sqrt{b})/\Q)$. It may then be calculated directly that $d \gamma = \chi_a \cup \chi_b$; this calculation also appears in the proof of \cite[Proposition 2.1]{Smi16}. In Definition \ref{def:Redei}, we then may take $\epsilon = \gamma \cup \chi_c$. The definition of the symbol and \cite[Section XIV.1.3]{Serre} give
\[
\Redei{a}{b}{c} = \exp\left(2 \pi i \sum_{v \in S} \inv_v (\res_{G_v}\left(\gamma \cup\chi_c)\right)\right) = \prod_{v \mid c} \gamma(\Frob\, v),
\]
where $S$ is the bad set of primes for the symbol, and where we say $\infty \mid c$ if $c$ is negative. This agrees with the definition of the R\'{e}dei symbol.
\end{proof}

Besides the R\'edei symbol, our definition of a generalized R\'edei symbol was influenced by the definition of the Poitou--Tate pairing and the Cassels--Tate pairing. We cover these relationships next.

\begin{example}
Take $M_1, M_2, M_3$ and $\phi_1, \phi_2, \phi_3$ as in Definition \ref{def:Redei}. We suppose that 
\[
\phi_2 \cup \phi_3 \in \Sha^2(G_F, M_1^{\vee}) \quad \text{and} \quad \phi_1\in\Sha^1(G_F, M_1),
\]
where the cup product is defined from the homomorphism $M_2 \otimes M_3 \to M_1^{\vee}$ coming from \eqref{eq:M1M2M3_pairing}.

The Poitou--Tate pairing is a perfect pairing
\[
\langle\,\,,\,\, \rangle:\,\Sha^1(G_F, M_1) \times \Sha^2(G_F, M_1^{\vee}) \to \Q/\Z
\]
first defined by Poitou \cite{Poitou} and Tate \cite[Theorem 3.1]{Tate}. With Tate's sign convention, the identity
$$
\Redei{\phi_1}{\phi_2}{\phi_3} = \exp\left(-2\pi i \langle \phi_1, \phi_2 \cup \phi_3 \rangle\right)
$$
follows from the definitions.
\end{example}

\begin{example}
Take $M_1, M_2, M_3$ and $\phi_1, \phi_2, \phi_3$ as in Definition \ref{def:Redei}, and choose a pairing \eqref{eq:M1M2M3_pairing}. We suppose the associated symbol $\Redei{\phi_1}{\phi_2}{\phi_3}$ is nonzero.

The pairing corresponds to a $G_F$-homomorphism $M_1 \to \Hom(M_2, M_3^{\vee})$, and we take $\chi$ to be a cocycle representing the image of $\phi_1$ in $Z^1(G_F, \Hom(M_2, M_3^{\vee}))$. We then may construct an exact sequence of $G_F$-modules
\begin{equation}
\label{eq:chi_ext}
0 \to M_3^{\vee} \xrightarrow{\iota} M \xrightarrow{\pi} M_2 \to 0
\end{equation}
such that there is a linear section $s: M_2 \to M$ of $\pi$ satisfying
\[\sigma s(\sigma^{-1}x) - s(x) \,=\, \iota\big(\chi(\sigma)(x)\big)\quad\text{for all }\, x \in M_2 \text{ and }\sigma \in G_F.\]
An equivalent abstract approach to constructing this extension of $G_F$-modules is to take the image of $\chi$ in $\text{Ext}^1_{G_F}(M_2, M_3^{\vee})$ under an edge map corresponding to a certain spectral sequence \cite[Example 0.8]{Milne ADT}.

For each place $v$ of $F$, we define a subspace of local conditions $\mathscr{W}_v \subseteq H^1(G_v, M)$ as follows:
\begin{itemize}
\item If $v$ is outside $S$, take $\mathscr{W}_v$ to be the set of unramified cocylce classes.
\item Otherwise, if $\phi_2$ is trivial at $G_v$, take $\mathscr{W}_v = 0$.
\item Otherwise, if $\phi_3$ is trivial at $G_v$, take $\mathscr{W}_v = H^1(G_v, M)$.
\item Otherwise, $\phi_1$ must be trivial at $v$, so we may choose $h_v$ in the image of $M_1$ inside $\Hom(M_2, M_3^{\vee})$ such that the section 
\[s_v =s + \iota \circ h_v\]
is $G_v$-equivariant. We then take $\mathscr{W}_v = s_v(H^1(G_v, M_2))$.
\end{itemize}
We then assign $M_2$ and $M_3^{\vee}$ local conditions so \eqref{eq:chi_ext} is exact in the category of modules with local conditions of \cite{MS22a}. Taking $\text{CTP}$ to be the associated Cassels--Tate pairing as defined in \cite[Definition 3.2]{MS22a}, we may check that
\[\Redei{\phi_1}{\phi_2}{\phi_3} = \exp\big(-2\pi i\cdot\CTP(\phi_2,\, \phi_3)\big).\]
\end{example}
\section{Trilinear character sums}
\label{sec:trilinear}

\begin{notat}
\label{notat:trilinear_setup}
Choose a number field $F$, choose finite $G_F$ modules $M_1$, $M_2$, and $M_3$, and choose an equivariant homomorphism 
\[
P: M_1 \otimes M_2 \otimes M_3 \to \overline{F}^{\times}
\]
as in \eqref{eq:M1M2M3_pairing}.

Choose subsets $M_i^* \subseteq M_i$ for $i \le 3$ that are closed under $G_F$ and under multiplication by $\widehat{\Z}^{\times}$. We assume that, for any $(m_1, m_2, m_3)$ in $M_1^* \times M_2^* \times M_3^*$, there are $\tau_1, \tau_2, \tau_3 \in G_F$ with
\begin{equation}
\label{eq:rough_ramification}
P(\tau_1 m_1, \tau_2m_2, \tau_3 m_3) \ne 1.
\end{equation}
Choose a finite set of places $\Vplac_0$ of $F$ including all archimedean places, all places dividing some $\# M_i$ and all places ramified in the field of definition of some $M_i$. Given $i \le 3$ and $\phi \in H^1(G_F, M_i)$, we say $\phi$ has \emph{acceptable ramification} if, for every prime $\mfp$ of $F$ outside $\Vplac_0$, and given any prime $\ovp$ of $\overline{F}$ dividing $\mfp$, the ramification-measuring homomorphism \cite[Definition 3.4]{Smi22a}
\[
\mathfrak{R}_{\ovp, M_i}: H^1(G_F, M_i) \to M_i(-1)^{G_{\mfp}}
\] 
corresponding to $\ovp$ maps $\phi$ either to $0$ or to an element in $M_i^*(-1)$, where this last object is the subset of the Tate twist $M_i(-1)$ corresponding to the subset $M_i^*$ of $M_i$.

Given $\phi$ in $H^1(G_F, M_i)$, we take $\mathfrak{f}(\phi)$ to be the product of all primes of $F$ outside $\Vplac_0$ where $\phi$ is ramified, and we take $H(\phi)$ to be the rational norm of this ideal. 

We take $o_i = \# M_i$ for $i \le 3$.
\end{notat}

\begin{theorem}
\label{thm:trilinear}
Take all notation as in Notation \ref{notat:trilinear_setup}. Choose positive numbers $H_1, H_2, H_3 > 3$. For $i \le 3$, take $X_i$ to be the set of cocycle classes $\phi$ in $H^1(G_F,M_i)$ with acceptable ramification that satisfy $H(\phi) \le H_i$.

For $(i, j) \in \{(1, 2), (1, 3), (2, 3)\}$, let $a_{ij}$ be a function
\[
a_{ij}: X_i \times X_j \to \{z \in \C\,:\,\, |z| \le 1\}.
\]
Then
\begin{equation}
\label{eq:trilinear_bound}
\left| \sum_{\phi_1 \in X_1} \sum_{\phi_2 \in X_2} \sum_{\phi_3 \in X_3} a_{12}(\phi_1, \phi_2) a_{13}(\phi_1, \phi_3) a_{23}(\phi_2, \phi_3)\Redei{\phi_1}{\phi_2}{\phi_3}\right|
\end{equation}
\[
\ll H_1H_2H_3 \cdot (\log H_1H_2H_3)^C \cdot \left(H_1^{-c} + H_2^{-c} + H_3^{-c}\right),
\]
with
\[
C = 7[K:\Q]^3 \cdot \max(o_1, o_2, o_3)^{9} \quad\textup{and}\quad c = \frac{1}{64 [K:F]^3 \cdot \max(o_1, o_2, o_3)^7}, 
\]
where the implicit constant depends only on $F$, $\Vplac_0$, and the $M_i$.
\end{theorem}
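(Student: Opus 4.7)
The plan is to reduce the trilinear sum (\ref{eq:trilinear_bound}) via iterated Cauchy--Schwarz to an auxiliary sixfold sum of generalized R\'edei symbols with no weight functions, and then to estimate that auxiliary sum via an equidistribution statement extending the bilinear character-sum bound \cite[Theorem 5.2]{Smi22a} to the Galois extensions arising here. Write $T$ for the triple sum inside the absolute value in (\ref{eq:trilinear_bound}). I apply Cauchy--Schwarz three times in succession, first on the outer pair $(\phi_1,\phi_2)$ with the $\phi_3$-sum inside, then on $(\phi_2,\phi_3,\phi_3')$ with the $\phi_1$-sum inside, and finally on $(\phi_1,\phi_1',\phi_3,\phi_3')$ with the $\phi_2$-sum inside. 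Each squaring absorbs one of $a_{12},a_{13},a_{23}$ into a factor of magnitude at most $1$ and produces a conjugate pair of R\'edei symbols differing in a single slot; Proposition \ref{prop:rec} collapses each such pair via
\[
\Redei{\phi_1}{\phi_2}{\phi_3}\cdot\overline{\Redei{\phi_1}{\phi_2}{\phi_3'}} = \Redei{\phi_1}{\phi_2}{\phi_3-\phi_3'}
\]
and its two analogues. Tracking all prefactors yields
\[
|T|^{8} \;\ll\; (\#X_1\cdot \#X_2 \cdot \#X_3)^{6}\cdot \Sigma, \qquad \Sigma \;=\; \sum \Redei{\phi_1-\phi_1'}{\phi_2-\phi_2'}{\phi_3-\phi_3'},
\]
where the sum defining $\Sigma$ ranges over $(\phi_i,\phi_i')\in X_i\times X_i$ for $i=1,2,3$.

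I then split $\Sigma$ into its diagonal part, where $\phi_i=\phi_i'$ for at least one index $i$, and its off-diagonal part $\Sigma_{\text{off}}$, where every $\psi_i:=\phi_i-\phi_i'$ is nonzero. The diagonal is bounded by crude counting and, after taking eighth roots, is absorbed into the stated error. For $\Sigma_{\text{off}}$, fix $(\psi_1,\psi_2)$; the final example of Section \ref{sec:GRS} identifies the map $\psi_3\mapsto \Redei{\psi_1}{\psi_2}{\psi_3}$ with a Cassels--Tate pairing on an extension of $G_F$-modules built from $\psi_1$. Unwinding this identification expresses the symbol as a product $\prod_{\mfp\mid\mff(\psi_3)}\chi_{\psi_1,\psi_2}(\Frob_{\mfp})$, up to a controlled number of local correction factors at the bad primes in $\Vplac_0$ and at primes dividing the conductors of $\psi_1$ and $\psi_2$, where $\chi_{\psi_1,\psi_2}$ is a class function on the Galois group of a finite solvable extension $L_{\psi_1,\psi_2}/F$ whose degree is polynomial in $\max_i o_i$. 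Hypothesis (\ref{eq:rough_ramification}) is exactly what forces $\chi_{\psi_1,\psi_2}$ to be nontrivial on the relevant Frobenius classes for a positive-proportion set of $(\psi_1,\psi_2)$; without it, no cancellation would be available.

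The main obstacle is therefore the analytic input: I must prove a bilinear large-sieve bound for $\sum_{\psi_1,\psi_2}\bigl|\sum_{\psi_3}\chi_{\psi_1,\psi_2}(\Frob_{\mfp})\bigr|$ with a power saving uniform in all parameters. My approach is to decompose the solvable extension $L_{\psi_1,\psi_2}/F$ into a bounded chain of cyclic layers of prime-power order at most $\max_i o_i$, then to apply a bilinear large-sieve estimate in the spirit of \cite[Theorem 5.2]{Smi22a} to each layer. Each layer contributes a genuine power saving, while the total losses are polynomial in $[K:F]$ and $\max_i o_i$. Substituting the resulting bound into $|T|^8$ and taking the eighth root yields (\ref{eq:trilinear_bound}) with the stated constants: the logarithmic exponent $C=7[K:\Q]^3\max_i o_i^{9}$ accumulates the Selberg-sieve losses across the iterated Cauchy--Schwarz and the cyclic-layer decomposition, while the saving $c=1/(64[K:F]^3\max_i o_i^{7})$ equals the per-layer large-sieve saving divided by eight, with an additional factor of eight reflecting a further Cauchy--Schwarz inside the large-sieve step.
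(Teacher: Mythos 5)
Your reduction tracks the spirit of the paper's argument, but it contains a structural misstep and then a genuine gap. The paper applies Cauchy--Schwarz only twice (Proposition \ref{prop:CS}) and deliberately leaves the $\phi_1$-sum undifferenced and with unit weights, arriving at $S = \sum_{\phi_2,\phi_2',\phi_3,\phi_3'} \bigl|\sum_{\phi_1} \Redei{\phi_1}{\phi_2-\phi_2'}{\phi_3-\phi_3'}\bigr|$. Your third Cauchy--Schwarz produces the unweighted-looking sixfold sum $\Sigma$, but when you expand, the sum over $(\psi_1,\psi_2,\psi_3)$ carries multiplicities $w_i(\psi_i)=\#\{(\phi,\phi')\in X_i^2: \phi-\phi'=\psi_i\}$ that are not bounded by $1$; these weights reintroduce precisely the kind of coefficient the Cauchy--Schwarz was meant to remove, so the third application gains nothing and makes the remaining sum harder to normalize. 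The step that does real work in the paper comes after this point, and it is this step your proposal leaves unresolved: to bound the averaged Chebotarev error $\sum_{\psi_1,\psi_2}\bigl|\sum_{\psi_3}\cdots\bigr|$ you invoke a ``cyclic-layer decomposition plus per-layer bilinear sieve,'' but this is not a theorem and it is not how the argument can close. The group $\Gal(E/F)$ controlling the class function is solvable but not abelian, and decomposing it into cyclic layers does not decompose the class function into objects to which \cite[Theorem 5.2]{Smi22a} applies; there is also no mechanism proposed for keeping the per-layer errors uniform over the family $(\psi_1,\psi_2)$, which is the entire point of a large sieve.

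What the paper actually does instead is (i) convert the $\phi_1$-sum into a multiplicative function $G$ on ideals of $F$ via Lemma \ref{lem:to_multiplicative} and Definition \ref{defn:Cheb_mult}, carefully handling the cases where the symbol vanishes identically and the ``ramification factors'' at primes of $\mff_2\mff_3$; (ii) prove the monomial decomposition Lemma \ref{lem:monomial_decomp}, which expresses the resulting class function $\gamma$ on $\Gal(E_y/F)$ as a short combination of characters induced from linear characters of subgroups containing the abelian normal subgroup $N_y$, with effective control on the degrees and on the kernels avoiding marked subgroups; and (iii) apply the Chebotarev large sieve for faithful Artin induction \cite[Theorem 5.6]{LOSmith} to the resulting double sum over ideals $\mfa$ and parameters $y$. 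Steps (ii) and (iii) are the novel analytic core of the theorem and do not appear, even implicitly, in your proposal. Without an explicit analogue of Lemma \ref{lem:monomial_decomp} and a Chebotarev-type large sieve applicable to the induced characters it produces, the claimed power saving in the off-diagonal of $\Sigma$ is unsupported, and the constants $C$ and $c$ in the statement cannot be derived.
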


The proof of this result starts with a sequence of reductions performed over the next few subsections.

\subsection{Applying Cauchy--Schwarz}
Our first goal will be to remove the terms $a(\phi_i, \phi_j)$ appearing in the sum. In a manner familiar to those who have studied the large sieve \cite[Ch. 7]{IwaniecKowalski}, we do this by applying Cauchy--Schwarz to our trilinear sum and expanding the resulting squares of sums as sums of individual terms. This starts with the following lemma, which is necessary because the generalized R\'edei symbol is incompletely trilinear.

\begin{lemma}
\label{lem:trilinear_positivity}
Take all notation as in Notation \ref{notat:trilinear_setup}. Choose $\phi_1 \in H^1(G_F, M_1)$ and $\phi_2 \in H^1(G_F, M_2)$, choose a finite subset $Y$ of $H^1(G_F, M_3)$, and choose a function $a: Y \to \C$. Then

\begin{equation}
\label{eq:trilinear_positivity}
\left|\sum_{\phi \in Y} a(\phi) \cdot \Redei{\phi_1}{\phi_2}{\phi}\right|^2 \le \sum_{\phi, \phi' \in Y}  a(\phi) \cdot \overline{a(\phi')} \cdot \Redei{\phi_1}{\phi_2}{\phi - \phi'}.
\end{equation}

\end{lemma}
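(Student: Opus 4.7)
The plan is to identify both sides of \eqref{eq:trilinear_positivity} with sums over the subgroup where $\phi \mapsto \Redei{\phi_1}{\phi_2}{\phi}$ is nonzero, and then apply Cauchy--Schwarz coset-by-coset.

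Set $\chi(\phi) := \Redei{\phi_1}{\phi_2}{\phi}$ and $T := \{\phi \in H^1(G_F, M_3) : \chi(\phi) \neq 0\}$. Using Proposition \ref{prop:rec} (linearity in the first argument, transferred to the third slot via reciprocity), I would first verify that $\Redei{\phi_1}{\phi_2}{0} = 1$ and that $T$ is a subgroup of $H^1(G_F, M_3)$: whenever $\chi(\phi)$ is nonzero, the identity $\chi(\phi+\phi') = \chi(\phi)\chi(\phi')$ holds, from which closure under addition and inversion follow in the usual way. The same linearity shows that $\chi|_T$ is a homomorphism into the unit circle of $\C$, so for $\phi, \phi' \in T$,
\[
\chi(\phi)\overline{\chi(\phi')} = \chi(\phi - \phi') = \Redei{\phi_1}{\phi_2}{\phi - \phi'},
\]
while $\Redei{\phi_1}{\phi_2}{\phi - \phi'} = 0$ whenever $\phi$ and $\phi'$ lie in different cosets of $T$.

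Because $\chi$ vanishes off $T$, the LHS of \eqref{eq:trilinear_positivity} rewrites as
\[
\Big|\sum_{\phi \in Y \cap T} a(\phi)\chi(\phi)\Big|^2 = \sum_{\phi, \phi' \in Y \cap T} a(\phi)\overline{a(\phi')}\,\Redei{\phi_1}{\phi_2}{\phi - \phi'}.
\]
On the RHS, the sum over $(\phi,\phi') \in Y \times Y$ partitions by cosets of $T$; cross-coset pairs contribute zero by the previous paragraph. The block corresponding to the identity coset exactly matches the expression above. For each other coset $c$ meeting $Y$, fix a representative $\phi_c \in Y \cap c$; since $\phi - \phi_c, \phi' - \phi_c \in T$ for all $\phi, \phi' \in Y \cap c$, linearity on $T$ gives $\Redei{\phi_1}{\phi_2}{\phi - \phi'} = \chi(\phi - \phi_c)\overline{\chi(\phi'-\phi_c)}$, so the $c$-block contributes
\[
\Big|\sum_{\phi \in Y \cap c} a(\phi)\,\chi(\phi - \phi_c)\Big|^2 \ge 0.
\]
Summing over cosets then exhibits the RHS as the LHS plus a sum of non-negative squared magnitudes, which yields the desired inequality.

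The main subtlety, and the reason this is not an immediate Cauchy--Schwarz application, is that the generalized Rédei symbol is only \emph{conditionally} trilinear. The argument therefore requires the preliminary check that the vanishing set of $\chi$ is a subgroup, and it must be organized so that the values of the symbol on differences $\phi - \phi'$ with $\phi, \phi' \notin T$ but $\phi - \phi' \in T$ reassemble into manifestly non-negative block-contributions; the coset bookkeeping is what makes this work.
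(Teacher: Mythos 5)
Your overall strategy is the same as the paper's: partition the double sum over $Y \times Y$ into blocks on which the symbol is nonzero, use linearity to identify each block as a non-negative squared magnitude, and observe that the LHS matches exactly one block. The paper organizes this via an equivalence relation $\phi \sim \phi' \iff \Redei{\phi_1}{\phi_2}{\phi - \phi'} \neq 0$ while you speak of cosets of the subgroup $T$; these are the same decomposition, and your linearity manipulations on $T$ (including $\chi(\phi)\overline{\chi(\phi')} = \chi(\phi-\phi')$, using that nonzero symbol values have modulus $1$) are correct.

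There is one genuine gap: you assert as a fact to be ``verified'' that $\Redei{\phi_1}{\phi_2}{0} = 1$, and the whole construction hinges on it ($T$ must contain $0$ to be a subgroup; the cosets must actually be cosets). But this is not always true, and Proposition \ref{prop:rec} does not yield it. Linearity gives $\chi(0) = \chi(0)^2$ only when $\chi(0) \neq 0$. Examining Definition \ref{def:Redei}, even with $\phi_3 = 0$ the \emph{second} symbol condition can fail: it requires, for every place $v$ and every permutation $(i,j,k)$, that the local cup products $x \cup \phi_j \cup \phi_k$ vanish, which in particular forces $\phi_1 \cup \phi_2$ to lie in $\Sha^2(G_F, M_3^{\vee})$. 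If that cup product is not locally trivial everywhere, then $\Redei{\phi_1}{\phi_2}{\phi}$ vanishes for \emph{every} $\phi$, your set $T$ is empty, and the coset bookkeeping is vacuous. You need to treat this case separately: if $\Redei{\phi_1}{\phi_2}{0} = 0$ then every term on both sides of \eqref{eq:trilinear_positivity} is zero and the inequality is $0 \le 0$. The paper dispenses with this in its first sentence before proceeding exactly as you do; adding that one observation closes the gap in your argument.
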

\begin{proof}
If $\Redei{\phi_1}{\phi_2}{0}$ is $0$, then both sides of \eqref{eq:trilinear_positivity} are equal to $0$. So we suppose this symbol is nonzero.

Define a relation on $Y$ by taking $\phi \sim \phi'$ if $\Redei{\phi_1}{\phi_2}{\phi - \phi'}$ is nonzero. This is an equivalence relation, with transitivity and symmetry following from Proposition \ref{prop:rec} and reflexivity following from our above assumption. The right hand side of \eqref{eq:trilinear_positivity} then equals
\[
\sum_{V \in  Y/\sim} \,\,\sum_{\phi, \phi' \in V} a(\phi) \cdot \overline{a(\phi')} \cdot \Redei{\phi_1}{\phi_2}{\phi - \phi'} = \sum_{V \in Y/\sim} \left| \sum_{\phi \in V} a(\phi) \Redei{\phi_1}{\phi_2}{\phi - \phi_V}\right|^2  
\]
by linearity of the generalized R\'edei symbol, where $Y/\sim$ denotes the set of equivalence classes of $Y$ considered as a subset of the power set of $Y$ and $\phi_V$ is any representative of $V$ for each $V$. Take $V_0$ to be the set of $\phi \in Y$ such that $\Redei{\phi_1}{\phi_2}{\phi}$ is nonzero. This is either the empty set or an element of $Y/\sim$. The left hand side of \eqref{eq:trilinear_positivity} is
\[
\left|\sum_{\phi \in V_0} a(\phi)\cdot  \Redei{\phi_1}{\phi_2}{\phi}\right|^2,
\]
and the result follows from
\[
\sum_{\substack{V \in Y/\sim\\ V \ne V_0}} \left| \sum_{\phi \in V} a(\phi) \Redei{\phi_1}{\phi_2}{\phi - \phi_V}\right|^2 \ge 0.
\]
\end{proof}

\begin{proposition}
\label{prop:CS}
In the situation of Theorem \ref{thm:trilinear}, we have
\begin{align}
\nonumber
&\left| \sum_{\phi_1 \in X_1} \sum_{\phi_2 \in X_2} \sum_{\phi_3 \in X_3} a_{12}(\phi_1, \phi_2) a_{13}(\phi_1, \phi_3) a_{23}(\phi_2, \phi_3)\Redei{\phi_1}{\phi_2}{\phi_3}\right| \\
&\qquad\le |X_1| ^{3/4} \cdot  | X_2|^{1/2} \cdot |X_3|^{1/2} \cdot S^{1/4},
\label{eq:CS_full}
\end{align}
where we have taken
\[
S = \sum_{\phi_2, \phi'_2 \in X_2}  \sum_{\phi_3, \phi'_3 \in X_3}\left|\sum_{\phi_1 \in X_1} \Redei{\phi_1}{\phi_2 - \phi'_2}{\phi_3 - \phi'_3}  \right|.
\]
\end{proposition}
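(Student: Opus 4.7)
The strategy is to prove the equivalent inequality $|T|^4 \le |X_1|^3 |X_2|^2 |X_3|^2 \cdot S$, where $T$ denotes the trilinear sum on the left-hand side of \eqref{eq:CS_full}. I would do this by interleaving two applications of Cauchy--Schwarz with two applications of Lemma~\ref{lem:trilinear_positivity}, absorbing one of the weight functions $a_{12}$, $a_{13}$, $a_{23}$ at each step. The key enabling observation is that, although Lemma~\ref{lem:trilinear_positivity} is stated with the ``variable'' argument in the third slot, its proof uses only the trilinearity and reciprocity guaranteed by Proposition~\ref{prop:rec}, both of which are symmetric across the three slots; consequently, the analogous inequality holds with the variable argument in any slot.

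Write $T = \sum_{\phi_1,\phi_2} a_{12}(\phi_1,\phi_2)\,g(\phi_1,\phi_2)$, where $g(\phi_1,\phi_2) = \sum_{\phi_3 \in X_3} a_{13}(\phi_1,\phi_3)\,a_{23}(\phi_2,\phi_3)\,\Redei{\phi_1}{\phi_2}{\phi_3}$. Cauchy--Schwarz on $(\phi_1,\phi_2) \in X_1 \times X_2$, together with $|a_{12}| \le 1$, gives $|T|^2 \le |X_1||X_2|\sum_{\phi_1,\phi_2}|g(\phi_1,\phi_2)|^2$. Lemma~\ref{lem:trilinear_positivity} applied to each $|g|^2$ (varying $\phi_3$) bounds it termwise by a sum in which the R\'{e}dei symbol is $\Redei{\phi_1}{\phi_2}{\phi_3 - \phi_3'}$; rearranging the order of summation so that $(\phi_1,\phi_3,\phi_3')$ is outermost and using $|a_{13}| \le 1$ yields
\[
|T|^2 \le |X_1||X_2|\sum_{\phi_1,\phi_3,\phi_3'}\Bigl|\sum_{\phi_2\in X_2} a_{23}(\phi_2,\phi_3)\overline{a_{23}(\phi_2,\phi_3')}\,\Redei{\phi_1}{\phi_2}{\phi_3 - \phi_3'}\Bigr|.
\]
Squaring this and applying Cauchy--Schwarz to the outer sum (indexed over $|X_1||X_3|^2$ triples) then gives
\[
|T|^4 \le |X_1|^3|X_2|^2|X_3|^2\sum_{\phi_1,\phi_3,\phi_3'}\Bigl|\sum_{\phi_2}a_{23}(\phi_2,\phi_3)\overline{a_{23}(\phi_2,\phi_3')}\,\Redei{\phi_1}{\phi_2}{\phi_3 - \phi_3'}\Bigr|^2.
\]

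To finish, I apply Lemma~\ref{lem:trilinear_positivity} once more, this time with $\phi_2$ as the variable argument and $\phi_1$, $\phi_3 - \phi_3'$ fixed in the other two slots; this is the step that uses the slot-symmetry remarked above. The resulting bound replaces each inner squared modulus by a double sum over $(\phi_2,\phi_2')$ whose R\'{e}dei symbol is $\Redei{\phi_1}{\phi_2 - \phi_2'}{\phi_3 - \phi_3'}$. After re-ordering so that $\phi_1$ is innermost, using $|a_{23}| \le 1$ on the $a_{23}(\phi_2,\phi_3)\overline{a_{23}(\phi_2,\phi_3')}$ factors, and taking absolute values, the outer sum becomes exactly $S$, yielding $|T|^4 \le |X_1|^3|X_2|^2|X_3|^2 S$ as desired. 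No single step in the argument is hard; the whole proof is careful bookkeeping designed so that the two Cauchy--Schwarz steps contribute precisely the exponent pattern $|X_1|^3|X_2|^2|X_3|^2$ required by the claim.
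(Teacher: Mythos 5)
Your proof is correct and follows essentially the same route as the paper: two applications of Cauchy--Schwarz interleaved with two applications of Lemma~\ref{lem:trilinear_positivity}, yielding the exponents $|X_1|^3|X_2|^2|X_3|^2$ on the fourth power. Your explicit remark that the second application of the lemma requires the variable slot to be moved from the third to the second position (justified by the slot-symmetry from Proposition~\ref{prop:rec}) is a detail the paper leaves implicit, but otherwise the two arguments are the same.
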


\begin{proof}
The left hand side of \eqref{eq:CS_full} is at most
\[\sum_{\phi_1 \in X_1} \sum_{\phi_2 \in X_2}  \left| \sum_{\phi_3 \in X_3} a_{13}(\phi_1, \phi_3) a_{23}(\phi_2, \phi_3) \cdot \Redei{\phi_1}{\phi_2}{\phi_3} \right|.\]
By the Cauchy--Schwarz inequality, this is at most $S_1^{1/2}\cdot S_2^{1/2}$, where
\begin{align*}
&S_1 = \sum_{\phi_1 \in X_1} \sum_{\phi_2 \in X_2}  1     
\qquad\text{and}\\
&S_2 = \sum_{\phi_1 \in X_1} \sum_{\phi_2 \in X_2} \left| \sum_{\phi_3 \in X_3} a_{13}(\phi_1, \phi_3) a_{23}(\phi_2, \phi_3) \cdot \Redei{\phi_1}{\phi_2}{\phi_3} \right|^2.
\end{align*}
By Lemma \ref{lem:trilinear_positivity}, we have
\[S_2 \le \sum_{\phi_1 \in X_1} \sum_{\phi_2 \in X_2} \sum_{\phi_3, \phi'_3 \in X_3}  a_{13}(\phi_1, \phi_3)a_{23}(\phi_2, \phi_3) \cdot  \overline{a_{13}(\phi_1, \phi_3')a_{23}(\phi_2, \phi_3')}  \cdot \Redei{\phi_1}{\phi_2}{\phi_3 - \phi'_3}.\]
Again applying Cauchy--Schwarz, we find that $S_2 \le S_3^{1/2} \cdot S_4^{1/2}$ with
\begin{align*}
&S_3 = \sum_{\phi_1 \in X_1}\sum_{\phi_3, \phi'_3 \in X_3} 1 \qquad \text{and}\\
&S_4 = \sum_{\phi_1 \in X_1}\sum_{\phi_3, \phi'_3 \in X_3} \left| \sum_{\phi_2 \in X_2}   a_{23}(\phi_2, \phi_3)\cdot  \overline{a_{23}(\phi_2, \phi_3')} \cdot \Redei{\phi_1}{\phi_2}{\phi_3 - \phi'_3} \right|^2.
\end{align*}
Another application of Lemma \ref{lem:trilinear_positivity} gives that $S_4 \le S$, and the result follows.
\end{proof}

\subsection{Towards an averaged sum of multiplicative functions}
In the context of Theorem \ref{thm:trilinear}, we will take $e_0$ to be exponent of $M_1 \oplus M_2 \oplus M_3$ considered as an abelian group. We will then take $K$ to be the minimal Galois extension of $F$ containing $\mu_{e_0}$ such that each $M_i$ is a $\Gal(K/F)$ module.

\begin{lemma}
\label{lem:basic_assumptions}
Suppose that Theorem \ref{thm:trilinear} holds subject to the additional assumptions that $H_1 \ge H_2 \ge H_3$ and that $(K/F, \Vplac_0, e_0)$ is an unpacked tuple in the sense of  \cite[Definition 3.4]{Smi22a}. Then the theorem holds without these additional assumptions. 
\end{lemma}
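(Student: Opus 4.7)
The proof proceeds by two independent reductions, carried out in order. First, the ordering $H_1 \ge H_2 \ge H_3$ follows from the permutation symmetry of the generalized R\'edei symbol. Second, the unpackedness of the tuple is arranged by enlarging $\Vplac_0$ and absorbing the set-theoretic change into the coefficient functions.

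For the first reduction, Proposition \ref{prop:rec} shows that, under any permutation $\pi$ of the indices $(1, 2, 3)$, the symbol $\Redei{\phi_{\pi(1)}}{\phi_{\pi(2)}}{\phi_{\pi(3)}}$ (computed with respect to the correspondingly permuted pairing) equals $\Redei{\phi_1}{\phi_2}{\phi_3}$ or its complex conjugate according to the sign of $\pi$. Choose $\pi$ so that $H_{\pi(1)} \ge H_{\pi(2)} \ge H_{\pi(3)}$ and relabel the modules $M_i$, the pairing $P$, the sets $X_i$, and the coefficient functions $a_{ij}$ accordingly; this reduces the assertion to the reordered case. The target bound \eqref{eq:trilinear_bound} is symmetric in $(H_1, H_2, H_3)$, and the absolute value on the left-hand side absorbs any conjugation sign, so the step is cost-free.

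For the second reduction, choose a finite set $\Vplac_0' \supseteq \Vplac_0$ of places of $F$ such that $(K/F, \Vplac_0', e_0)$ is unpacked; the difference $\Vplac_0' \setminus \Vplac_0$ is finite and depends only on $F, K, \Vplac_0$, and $e_0$. Define $X_i'$ using $\Vplac_0'$ in place of $\Vplac_0$ but keeping the same height bound $H_i$. Enlarging the set of bad places only weakens the acceptable ramification condition and can only shrink the conductor $\mff(\phi)$, so $X_i \subseteq X_i'$ for each $i$. Now introduce modified coefficients
\begin{align*}
\tilde{a}_{12}(\phi_1, \phi_2) &= a_{12}(\phi_1, \phi_2) \cdot \mathbf{1}_{X_1}(\phi_1), \\
\tilde{a}_{13}(\phi_1, \phi_3) &= a_{13}(\phi_1, \phi_3) \cdot \mathbf{1}_{X_3}(\phi_3), \\
\tilde{a}_{23}(\phi_2, \phi_3) &= a_{23}(\phi_2, \phi_3) \cdot \mathbf{1}_{X_2}(\phi_2),
\end{align*}
each of magnitude at most one, where we extend $a_{ij}$ arbitrarily from $X_i \times X_j$ to $X_i' \times X_j'$. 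Substituting these into the sum over $X_1' \times X_2' \times X_3'$ reproduces the original sum exactly, so applying the unpacked version of Theorem \ref{thm:trilinear} yields the claimed bound, with implicit constant depending on $F$, $\Vplac_0'$, and the $M_i$, hence ultimately on only $F$, $\Vplac_0$, and the $M_i$.

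The main potential obstacle is ensuring that the modified coefficients remain properly decomposable into three functions of pairs of variables. This is automatic here because the acceptable ramification condition and the conductor bound are both properties of each $\phi_i$ individually, so the indicator $\mathbf{1}_{X_i}$ can be bundled into either of the two $\tilde{a}_{ij}$ that depend on $\phi_i$. The quantitative constants $C$ and $c$, which depend only on $[K:F]$ and the sizes $o_i$, are unaffected by either reduction.
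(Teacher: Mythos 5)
Your proof is correct and follows essentially the same approach as the paper: enlarge $\Vplac_0$ to a canonically chosen unpacked superset, absorb the restriction to the original summation ranges into the coefficient functions via indicator factors (the acceptable-ramification and conductor conditions being per-$\phi_i$, so $X_i \subseteq X_i'$), and use the permutation identities of Proposition~\ref{prop:rec} together with the symmetry of the target bound to reduce to $H_1 \ge H_2 \ge H_3$. The paper performs these two reductions in the opposite order and spreads the indicator over both arguments of each $a'_{ij}$ rather than distributing one indicator per pair function, but these are cosmetic differences.
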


\begin{proof}
Take $M_1, M_2, M_3$, $P$, and $(K/F, \Vplac_0, e_0)$ as in Notation \ref{notat:trilinear_setup}, with $K$ and $e_0$ defined as above. We wish to prove that Theorem \ref{thm:trilinear} holds for these objects conditionally on the assumptions of the lemma.

Take $\Vplac$ to be a minimal set of places of $F$ such that $(K/F, \Vplac, e_0)$ is unpacked and such that, if a given finite prime $\mfp$ lies in $\Vplac$, then all finite primes of norm at most $N_F(\mfp)$ lie in $\Vplac$. This condition uniquely determines $\Vplac$.
We take $X'_i$ to be the set of $\phi$ with acceptable ramification that satisfy $H(\phi) \le H_i$ with respect to the set of places $\Vplac_0 \cup \Vplac$. By taking
\[a'_{ij}(\phi_i, \phi_j) = \begin{cases} a_{ij}(\phi_i, \phi_j) &\text{ if }\phi_i \in X_i\text{ and } \phi_j \in X_j \\ 0&\text{ otherwise,}\end{cases}\]
we find that \eqref{eq:trilinear_bound} equals
 \[\left| \sum_{\phi_1 \in X_1'} \sum_{\phi_2 \in X_2'} \sum_{\phi_3 \in X_3'} a'_{12}(\phi_1, \phi_2) a'_{13}(\phi_1, \phi_3) a'_{23}(\phi_2, \phi_3)\Redei{\phi_1}{\phi_2}{\phi_3}\right|.\]
By the assumptions of the lemma, Theorem \ref{thm:trilinear} gives an estimate for this sum under the condition that $H_1 \ge H_2 \ge H_3$. Since $\Vplac$ was uniquely determined, the implicit constant of this estimate depends only on $F$, the $M_i$, and $\Vplac_0$. By permuting the indices $1,2,3$ and applying Proposition \ref{prop:rec} as necessary, we see that this estimate also holds outside the case that $H_1 \ge H_2 \ge H_3$.
\end{proof}

\begin{mydef}
\label{defn:lift_ramfact}
Take all notation as in Theorem \ref{thm:trilinear} and $K$ and $e_0$ as above. We assume $(K/F, \Vplac_0, e_0)$ is unpacked.

Given a prime $\mfp$ of $F$ outside $\Vplac_0$, we call a given homomorphism $\chi_{\mfp}: H^1(G_F, M_1) \to \mathbb{C}^{\times}$ a \emph{ramification factor for $\mfp$} if it factors through the ramification-measuring homomorphism $\mathfrak{R}_{\ovp, M_1}$ defined in \cite[Section 3]{Smi22a} for some prime $\ovp$ of $\overline{F}$ dividing $\mfp$. Given a squarefree ideal $\mfh$ of $F$ indivisible by any prime in $\Vplac_0$, we call $\chi: H^1(G_F, M_1) \to \mathbb{C}^{\times}$ a \emph{ramification factor for $\mfh$} if it is a product of ramification factors for the primes dividing $\mfh$.

Choose $\phi_2 \in H^1(G_F, M_2)$ and $\phi_3 \in H^1(G_F, M_3)$, and choose cocycles $\overline{\phi_2}$ and $\overline{\phi_3}$ representing these classes. Take $\mff_2 = \mff(\phi_2)$ and $\mff_3 = \mff(\phi_3)$. Take $E$ to be the maximal extension of $F$ ramified only at places in $\Vplac_0$ and primes dividing $\mff_2 \cdot \mff_3$. We then call
\[g \in C^1(\Gal(E/F), M_1^{\vee})\]
a \emph{lift} for $(\phi_2, \phi_3)$ if
\[dg = \overline{\phi_2} \cup \overline{\phi_3}.\]
Given $\phi_1 \in H^1(G_F, M_1)$, take $\Vplac(\phi_1)$ to be the set of primes dividing $\mff(\phi_1)$ that do not divide $\mff_2 \cdot \mff_3$. For $v$ in this set, take $\symb{\phi_1}{g}_v$ to be $0$ if neither $\phi_2$ nor $\phi_3$ is trivial at $v$, or if
\[\res_{G_v}(\phi_1 \cup \phi_3) \ne 0 \,\text{ in }\,  H^2(G_v, M_2^{\vee}) \quad\text{or}\quad \res_{G_v}(\phi_1 \cup \phi_2) \ne 0 \,\text{ in } \, H^2(G_v, M_3^{\vee}).\]
Otherwise, we may choose $x \in M_i$ such that $\res_{G_v} \overline{\phi_i} = dx$ either for $i =  2$ or for $i = 3$. Taking $f_v$ to be $x \cup \res_{G_v}(\overline{\phi_3})$ in the former case and $- \res_{G_v}(\overline{\phi_2}) \cup x$ in the latter case, we then set
\[\symb{\phi_1}{g}_v = \exp\left( 2\pi i \cdot \inv_v( \phi_1 \cup \left(f_v - \text{res}_{G_v} g)\right)\right).\]
We then define
\[\symb{\phi_1}{g} = \prod_{v \in \Vplac(\phi_1)} \symb{\phi_1}{g}_v.\]
\end{mydef}
Mimicking the proof of Lemma \ref{lem:symbol_well_defined} shows that $[\phi_1, g]$ does not depend on the implicit choice of local embeddings $\overline{F} \hookrightarrow \overline{F_v}$ for $v \in \Vplac(\phi_1)$.

\begin{lemma}
\label{lem:to_multiplicative}
Suppose we are in the situation of Definition \ref{defn:lift_ramfact}. Then either
\begin{equation}
\label{eq:trivial_line}
\sum_{\phi_1 \in X_1} \Redei{\phi_1}{\phi_2}{\phi_3} = 0
\end{equation}
or there is a lift $g$ of $(\phi_2, \phi_3)$ and a ramification factor $\chi$  for $\mff_2 \cdot \mff_3$ such that
\[
\left|\sum_{\phi_1 \in X_1} \Redei{\phi_1}{\phi_2}{\phi_3} \right| \,\le \,\left|\sum_{\phi_1\in X_1} \chi(\phi_1) \cdot \symb{\phi_1}{g}\right|.
\]
\end{lemma}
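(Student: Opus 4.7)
First I dispose of the trivial case: if $\sum_{\phi_1 \in X_1}\Redei{\phi_1}{\phi_2}{\phi_3}= 0$, then \eqref{eq:trivial_line} holds and we are done. Otherwise there is some $\phi_1^* \in X_1$ with $\Redei{\phi_1^*}{\phi_2}{\phi_3} \neq 0$, and the second part of the symbol condition for this triple---in its $\Sha^2$-reformulation from Definition \ref{def:Redei}---forces $\phi_2 \cup \phi_3$ to vanish in $H^2(\Gal(E/F), M_1^\vee)$. By the $H^3$-vanishing statement \cite[(8.3.11)]{Neuk07}, any cocycle representative $\overline{\phi_2} \cup \overline{\phi_3}$ then bounds in $C^1(\Gal(E/F), M_1^\vee)$, producing a lift $g$ in the sense of Definition \ref{defn:lift_ramfact}. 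Pulling $g$ back to the appropriate Galois group and setting $\epsilon := -\overline{\phi_1} \cup g$, the graded Leibniz rule gives $d\epsilon = \overline{\phi_1} \cup \overline{\phi_2} \cup \overline{\phi_3}$, so $\epsilon$ is a valid bounding cochain in Definition \ref{def:Redei}.

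With this choice of $\epsilon$, the bad set splits as $S = \Vplac(\phi_1) \sqcup (S \cap S_2)$ where $S_2 := \Vplac_0 \cup \{v : v \mid \mff_2\mff_3\}$. For $v \in \Vplac(\phi_1)$ both $\phi_2$ and $\phi_3$ are unramified at $v$, and the symbol condition forces one of them to be trivial on $G_v$. A direct sign computation in both cases $i = 2$ and $i = 3$ of the definition of $f_v^{\mathrm{Red}}$ shows that $f_v^{\mathrm{Red}} = -\res \overline{\phi_1} \cup f_v^{\mathrm{new}}$, where $f_v^{\mathrm{new}}$ is the $1$-cochain of Definition \ref{defn:lift_ramfact}. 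Combined with $\res \epsilon = -\res \overline{\phi_1} \cup \res g$, this yields
\[
\res \epsilon - f_v^{\mathrm{Red}} \;=\; \res \overline{\phi_1} \cup \bigl(f_v^{\mathrm{new}} - \res g\bigr),
\]
so the local factor $\exp(2\pi i\cdot\inv_v(\res \epsilon - f_v^{\mathrm{Red}}))$ at $v$ is exactly $\symb{\phi_1}{g}_v$. Multiplying over $v \in \Vplac(\phi_1)$ recovers $\symb{\phi_1}{g}$.

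For $v \in S \cap S_2$ two sub-cases arise. If $\phi_1$ is trivial on $G_v$, I pick $x \in M_1$ with $dx = \res \overline{\phi_1}$; expanding $d(x \cup \res g) = dx \cup \res g + x \cup d(\res g)$ and using $dg = \overline{\phi_2} \cup \overline{\phi_3}$ shows $\res \epsilon - f_v^{\mathrm{Red}} = -d(x \cup \res g)$, so the local contribution vanishes. Otherwise the local factor has the shape $\exp(2\pi i\cdot \inv_v(\phi_1 \cup \gamma_v))$ for a specific cocycle class $\gamma_v \in H^1(G_v, M_1^\vee)$ built from $g$ and $f_v^{\mathrm{new}}$. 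For primes $v \mid \mff_2\mff_3$ outside $\Vplac_0$, Tate local duality plus the tame orthogonality of unramified classes lets me arrange, by modifying $g$ by a global cocycle, that $\gamma_v$ is unramified; the pairing then factors through $\mathfrak{R}_{\ovp, M_1}(\phi_1)$ and defines a local ramification factor. Taking the product over $v \mid \mff_2\mff_3$ produces the ramification factor $\chi(\phi_1)$.

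The main obstacle I foresee is the residual contribution from $v \in \Vplac_0$, whose local factor genuinely depends on $\phi_1|_{G_v}$ rather than on ramification data. I plan to handle this by exploiting the remaining freedom in $g$: any further modification of $g$ by a global cocycle $h$ shifts $\symb{\phi_1}{g}$ by a ramification factor supported on $\Vplac(\phi_1)$ (by the reciprocity calculation underlying Lemma \ref{lem:symbol_well_defined}), so after a judicious choice of $h$ the $\Vplac_0$-phase is absorbed and only the ramification factor $\chi$ remains. Once this is done, $\Redei{\phi_1}{\phi_2}{\phi_3} = \chi(\phi_1) \cdot \symb{\phi_1}{g}$ whenever the symbol is non-zero, and the asserted inequality follows by the triangle inequality.
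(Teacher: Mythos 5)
Your argument has several genuine gaps, the most serious of which is in the very first step.

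You assert that the nonvanishing of $\Redei{\phi_1^*}{\phi_2}{\phi_3}$ forces $\phi_2 \cup \phi_3$ to vanish in $H^2(\Gal(E/F), M_1^{\vee})$. This is not what the symbol condition gives you: a nonzero symbol only guarantees $\phi_2 \cup \phi_3 \in \Sha^2(G_F, M_1^{\vee})$, i.e.\ local triviality at every place. That class can well be nonzero globally, in which case no lift $g$ exists at all. The paper handles exactly this situation with Poitou--Tate duality: the kernel of $H^2(\Gal(E/F), M_1^{\vee}) \to \prod_{v \in \Vplac} H^2(G_v, M_1^{\vee})$ is dual to $\Sha^1(G_F, M_1)$, and since $X_1$ is closed under addition by $\Sha^1(G_F, M_1)$ and the symbol is linear in $\phi_1$ (Proposition~\ref{prop:rec}), any nonzero global obstruction forces $\sum_{\phi_1 \in X_1} \Redei{\phi_1}{\phi_2}{\phi_3} = 0$. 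You therefore need the dichotomy ``either the sum vanishes, or a lift exists''; you cannot simply conclude a lift exists.

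A second, independent problem is that your proposed identity $\Redei{\phi_1}{\phi_2}{\phi_3} = \chi(\phi_1)\cdot\symb{\phi_1}{g}$ only holds when the left side is nonzero, but the two expressions have different vanishing loci. The generalized R\'edei symbol vanishes if the symbol condition fails at any $v$ in $\Vplac_0$ or dividing $\mff_2\mff_3$, whereas $\symb{\phi_1}{g}$ only detects failures at $v \in \Vplac(\phi_1)$. So there can be $\phi_1 \in X_1$ with $\Redei{\phi_1}{\phi_2}{\phi_3} = 0$ but $\chi(\phi_1)\symb{\phi_1}{g} \ne 0$, and then $\bigl|\sum_{\phi_1}\Redei{\phi_1}{\phi_2}{\phi_3}\bigr| \le \bigl|\sum_{\phi_1}\chi(\phi_1)\symb{\phi_1}{g}\bigr|$ does not follow from the triangle inequality (the extra terms can interfere destructively). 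Finally, your plan for the $\Vplac_0$-contribution (``absorb it by a judicious modification of $g$'') is exactly the step where an obstruction may appear and cannot always be cleared; when it cannot, the sum must vanish, and you have no mechanism to see this. The paper resolves all three difficulties at once by proving an exact averaging formula $\Redei{\phi_1}{\phi_2}{\phi_3} = \frac{1}{\#\mathscr{L}}\sum_{(h_v)_v \in \mathscr{L}}\symb{\phi_1}{(h_v)_v}$ over a product of local condition spaces $\mathscr{L}_v$ (extending up to primes of norm $H_1$, not just $S$), bounding the sum by the maximizing choice of $(h_v)_v$, and only then invoking the unpacked condition together with a second application of Poitou--Tate duality to either produce the global $\psi$ realizing $(h_v)_v$ or conclude that the sum is zero. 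That averaging step, not present in your proposal, is what converts an identity (valid only where $\Redei\ne 0$) into the inequality the lemma requires.
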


\begin{proof}
Take $\Vplac$ to be the set of places in $\Vplac_0$ together with the primes dividing $\mff_2 \cdot \mff_3$.

If $\Redei{0}{\phi_2}{\phi_3} = 0$, then $\Redei{\phi_1}{\phi_2}{\phi_3} = 0$ for all $\phi_1$ and the lemma is vacuous. So we may assume that $\Redei{0}{\phi_2}{\phi_3}$ is nonzero. From the symbol conditions, $\phi_2 \cup \phi_3$ then lies in
\[
\ker\left(H^2(\Gal(E/F), \, M_1^{\vee}) \to \prod_{v \in \Vplac} H^2(G_v, M_1^{\vee})\right).
\]
Under Poitou--Tate duality, this group is dual to
\[
\ker\left(H^1(G_F, M_1) \to \prod_{v \in \Vplac} H^1(G_v, M_1) \times \prod_{v \not \in \Vplac} H^1(I_v, M_1)\right),
\]
which equals $\Sha^1(G_F, M_1)$ since $(K/F, \Vplac_0, e_0)$ is unpacked. So, if $\phi_2 \cup \phi_3$ is nontrivial in $H^2(\Gal(E/F), M_1^{\vee})$, it follows that
\[
\sum_{\phi \in \Sha^1(G_F, M_1)} \Redei{\phi}{\phi_2}{\phi_3} = 0,
\]
and \eqref{eq:trivial_line} follows from Proposition \ref{prop:rec} and the fact that $X_1$ is closed under addition by $\Sha^1(G_F, M_1)$. So we may assume $\phi_2 \cup \phi_3$ is trivial. Equivalently, we may assume that there is some lift $g$ of $(\phi_2, \phi_3)$.

Take $\Vplac_{\max}$ to be the union of the set $\Vplac$ and the set of primes of $F$ of norm at most $H_1$. For $v \in \Vplac_{\max}$, define a subset $\mathscr{L}_v$ of $H^1(G_v, M_1^{\vee})$ as follows:

\begin{itemize}
\item If $\phi_2$ is trivial at $v$, choose $x \in M_2$ so $\res_{G_v} \overline{\phi_2} = dx$, take 
\[h_{0v} = x \cup \res_{G_v} \overline{\phi_3} - \res_{G_v} g,\] and take $\mathscr{L}_v $ to be the coset $h_{0v} + \mathscr{L}_{0v}$ with
\[\mathscr{L}_{0v} = H^0(G_v, M_2) \cup \res_{G_v}\phi_3.\]
\item Otherwise, if $\phi_3$ is trivial at $v$, choose $x \in M_3$ so $\res_{G_v} \overline{\phi_3} = dx$, take
\[h_{0v} = - \res_{G_v} \overline{\phi_2} \cup x - \res_{G_v} g,\]
and take $\mathscr{L}_v = h_{0v} + \mathscr{L}_{0v}$ with 
\[\mathscr{L}_{0v} =\res_{G_v} \phi_2 \cup H^0(G_v, M_3). \]
\item Otherwise, if $v$ is not a bad prime for $\Redei{0}{\phi_2}{\phi_3}$, take $\mathscr{L}_v = H^1(G_v/I_v, M_1^{\vee})$.
\item Otherwise, take $\mathscr{L}_v = H^1(G_v, M_1^{\vee})$.
\end{itemize}
We then take $\mathscr{L} = \prod_{v \in \Vplac_{\max}} \mathscr{L}_v$.
Given $(h_v)_v \in \mathscr{L}$ and $\phi_1 \in X_1$, we then define a symbol
\[
\symb{\phi_1}{(h_v)_v} = 
\begin{cases} 
0 &\text{ if } \symb{\phi_1}{g} = 0 \\ 
\exp\left(2\pi i \cdot \sum_{v \in \Vplac_{\max}} \inv_v(\phi_1 \cup h_v)\right)&\text{ otherwise.}
\end{cases} 
\]
We note that the zeroness of the symbol $\symb{\phi_1}{(h_v)_v}$ does not depend on $g$, as the condition $\symb{\phi_1}{g} = 0$ depends only on $\phi_1, \phi_2, \phi_3$ but not on the choice of $g$. We claim that
\begin{equation}
\label{eq:averaged_Redei}
\Redei{\phi_1}{\phi_2}{\phi_3} = \frac{1}{\# \mathscr{L}} \sum_{(h_v)_v \in \mathscr{L}} \symb{\phi_1}{(h_v)_v}.
\end{equation}
This claim splits into cases depending on whether the left hand side is nonzero or not. If it is nonzero, it is straightforward to show that the choice of $(h_v)_v \in \mathscr{L}$ does not affect $\symb{\phi_1}{(h_v)_v}$ from the symbol conditions, and the equality follows from the definition of the generalized R\'edei symbol.

If it is zero, then the symbol condition fails for some $v \in \Vplac_{\max}$. This means that either
\begin{itemize}
    \item $\phi_1$, $\phi_2$, and $\phi_3$ are all nontrivial at $v$, and either $\phi_1$ is ramified at $v$ or $v$ is a bad prime for $[0, \phi_2, \phi_3]$; or
    \item $\phi_2$ is trivial at $v$ but $\phi_1 \cup \phi_3$ is non-trivial in $H^2(G_v, M_2^\vee)$; or
    \item $\phi_3$ is trivial at $v$ but $\phi_1 \cup \phi_2$ is non-trivial in $H^2(G_v, M_3^\vee)$.
\end{itemize}
In each of these cases, we find that \eqref{eq:averaged_Redei} has right hand side $0$ from the definition of $\mathscr{L}_v$. This gives \eqref{eq:averaged_Redei}.

In particular,
\[
\left|\sum_{\phi_1 \in X_1} \Redei{\phi_1}{\phi_2}{\phi_3} \right|  \le \max_{(h_v)_v \in \mathscr{L}} \left| \sum_{\phi_1 \in X_1} \symb{\phi_1}{(h_v)_v}\right|.
\] 
Choose $(h_v)_v$ attaining the maximum on the right hand side.

Since $(K/F, \Vplac_0, e_0)$ is unpacked, we may choose $\psi \in H^1(G_F, M_1^{\vee})$ such that $h_{v} - \res_{G_v} \psi$ is unramified for $v$ outside $\Vplac_0$.  From the definition of the $h_v$, $\psi$ is unramified outside $\Vplac$.

Suppose that there is no such choice of $\psi$ such that 
\[
h_{v} - \res_{G_v} \psi = 0 \quad\text{ for all }v \in \Vplac_0.
\]
From Poitou--Tate duality, there then is some
\[
\phi \in W := \ker \left(H^1(G_F, M_1) \to \prod_{v \not\in \Vplac_0} H^1(I_v, M_1)\right)
\]
such that
\[
\sum_{v \in \Vplac_0} \inv_v(\phi \cup h_v) \ne 0.
\]
But then
\[
\sum_{\phi \in W} \symb{\phi + \phi_1}{(h_v)_v} = 0\,\text{ for } \,\phi_1 \in X_1, \quad\text{so}\quad \sum_{\phi_1 \in X_1} \symb{ \phi_1}{(h_v)_v} = 0, 
\]
and \eqref{eq:trivial_line} holds. So we may assume that we may choose $\psi$ so $h_v - \res_{G_v} \psi$ is trivial for $v \in \Vplac_0$.

We have $\sum_v \inv_v(\phi_1 \cup \psi) = 0$. As a result, we have
\[
\symb{\phi_1}{(h_v)_v} = \symb{\phi_1}{g - \psi} \cdot \exp\left(2\pi i \cdot \sum_{v \mid \mff_2 \cdot \mff_3} \inv_v(\phi_1 \cup (h_v - \res_{G_v} \psi))\right)
\]
for $\phi_1 \in X_1$.
Since $h_v - \res_{G_v} \psi$ is unramified for $v \mid \mff_2 \cdot \mff_3$, this final term takes the form of a ramification factor $\chi(\phi_1)$ for $\mff_2 \cdot \mff_3$. $g - \psi$ is a lift of $(\phi_2, \phi_3)$, and the result follows.
\end{proof}

\begin{mydef}
\label{defn:Cheb_mult}
In the context of Lemma \ref{lem:to_multiplicative}, choose a lift $g$ of $(\phi_2, \phi_3)$ and a ramification factor $\chi$ of $\mff_2 \cdot \mff_3$.

We define a map $\gamma: G_F \to \C$ as follows. To begin, choose $\sigma \in G_F$. If
\begin{align}
\label{eq:triviality_condition}
\res_{\langle \sigma \rangle }(\phi_2) \ne 0 \,\text{ in }\, H^1(\langle \sigma \rangle, M_2)\quad\text{and}\quad \res_{\langle \sigma \rangle }(\phi_3) \ne 0 \,\text{ in }\, H^1(\langle \sigma \rangle, M_3)
\end{align}
we take $\gamma(\sigma) = 0$.

Otherwise, choose $j$ either $2$ or $3$ and $x \in M_j$ such that $\overline{\phi_j}(\sigma) = (\sigma - 1 )x$. If $j = 2$, take $f(\sigma)$ in $M_1^{\vee}$ to be $x \cdot \overline{\phi_3}(\sigma)$. If $j = 3$, instead take $f(\sigma) = - \sigma x \cdot \overline{\phi_2}(\sigma)$. Here we use $\cdot$ to signify both the map $M_2 \otimes M_3 \rightarrow M_1^\vee$ and $M_3 \otimes M_2 \rightarrow M_1^\vee$. 

Let $\mu_\infty$ be the Galois module consisting of all the roots of unity inside $\overline{F}^\times$. Fix an identification between $\mu_\infty(-1)$ and $\Q/\Z$, so that the pairing $P$ from \eqref{eq:M1M2M3_pairing} becomes
\[
M_1(-1) \otimes M_2 \otimes M_3 \rightarrow \Q/\Z.
\]
Take $M'$ to be the set of $m$ in $M_1^*(-1)^{\langle \sigma \rangle}$ satisfying
\begin{align}
\label{eq:inertia_condition}
m \cdot \overline{\phi_j}(\sigma) \in (\sigma - 1) \cdot \Hom(M_{5 - j}, \QQ/\Z)\quad\text{for } j = 2,3,
\end{align}
and take
\[
\gamma(\sigma) = \sum_{m \in M'} \exp\left( -2\pi i \cdot m \cdot (f(\sigma) - g(\sigma))\right).
\]

Take $X_{10}$ to be the subset of cocycle classes in $H^1(G_F, M_1)$ with acceptable ramification, so $X_1$ is a subset of $X_{10}$. Given any ideal $\mfh$ of $F$, take $X_{10}(\mfh)$ to be the subset of $\phi_1$ in $X_{10}$ such that $\mfh = \mff(\phi_1)$. We then define
\[
G(\mfh) =  \frac{1}{\# X_{10}((1))} \sum_{\phi_1 \in X_{10}(\mfh)} \chi(\phi_1) \cdot \symb{\phi_1}{g}.
\]
\end{mydef}

\noindent Relating $G$ and $\gamma$ will require the following piece of bookkeeping.

\begin{lemma}
\label{lem:explicit_loc_Tate}
Take all notation as in Definition \ref{defn:lift_ramfact}, and choose a prime $\ovp$ of $\overline{F}$ not over a prime in $\Vplac_0$. Define the associated ramification section $\mathfrak{R}_{\ovp, M}$ as in \cite[Section 3]{Smi22a}. Then, given $\phi \in H^1(G_F, M)$ and $\psi \in H^1(G_F, M^{\vee})$ with $\psi$ unramified at $\ovp \cap F$,
we have
\begin{equation}
\label{eq:inv_formula}
\inv_{F \cap \ovp}(\phi \cup \psi) = -\mathfrak{R}_{\ovp, M}(\phi) \cdot \psi(\Frob_F \ovp),
\end{equation}
where the cup product is with respect to the evaluation pairing between $M$ and $M^{\vee}$, and where the product on the right is the evaluation pairing between $M(-1)$ and $M^{\vee}$.
\end{lemma}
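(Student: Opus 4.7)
The plan is to reduce everything to a purely local statement at $v := F \cap \ovp$, then invoke the standard description of local Tate duality for modules whose order is prime to the residue characteristic. The two sides of \eqref{eq:inv_formula} depend only on $\res_{G_v} \phi$ and $\res_{G_v} \psi$, so I may work inside $G_v$ throughout. Because $v$ lies outside $\Vplac_0$, the residue characteristic of $v$ does not divide $\#M$, so wild inertia acts trivially on $M$ and $M^{\vee}$; in particular, $H^2(G_v/I_v, M^{I_v}) = 0$ since $G_v/I_v$ has cohomological dimension one, so the inflation-restriction sequence
\[
0 \to H^1(G_v/I_v,\, M^{I_v}) \to H^1(G_v, M) \to H^1(I_v, M)^{G_v/I_v} \to 0
\]
is short exact. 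Via the tame character $I_v^{\text{tame}} \to \widehat{\Z}'(1)$, the rightmost term is canonically identified with $M(-1)^{G_v}$, and by the definition of $\mathfrak{R}_{\ovp, M}$ in \cite[Section 3]{Smi22a} the composition $H^1(G_v, M) \to M(-1)^{G_v}$ is precisely the ramification-measuring map.

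Dually, the hypothesis that $\psi$ is unramified at $v$ means $\psi|_{G_v}$ is inflated from the unique element of $H^1(G_v/I_v, (M^{\vee})^{I_v})$ whose value on $\Frob_F \ovp$ is $\psi(\Frob_F \ovp) \in (M^{\vee})^{G_v}$. Under the cup product $H^1(G_v, M) \otimes H^1(G_v, M^{\vee}) \to H^2(G_v, \mu_n) \cong \tfrac{1}{n}\Z/\Z$, the unramified and tamely-ramified pieces of the inflation-restriction filtration pair against each other by local Tate duality; concretely, the induced pairing
\[
M(-1)^{G_v} \otimes H^1(G_v/I_v, (M^{\vee})^{I_v}) \to \tfrac{1}{n}\Z/\Z,\qquad m \otimes \eta \longmapsto m \cdot \eta(\Frob_F \ovp),
\]
is a perfect pairing (see, e.g., the discussion of unramified duality in Milne's \emph{Arithmetic Duality Theorems}). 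Applying this to $\mathfrak{R}_{\ovp, M}(\phi)$ and $\psi$ immediately gives the desired formula up to sign.

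The only step requiring actual work is fixing the sign. I would resolve this by a direct cocycle calculation: pick a tame generator $\tau$ of $I_v^{\text{tame}}/I_v^{\text{wild}}$ and a Frobenius lift $\sigma$, choose cocycle representatives for $\phi$ and $\psi$ adapted to the decomposition above (so that $\phi(\sigma) = 0$ encodes the purely tame contribution and $\psi(\tau) = 0$ encodes unramifiedness), and evaluate the standard $2$-cocycle $(g_1, g_2) \mapsto \phi(g_1) \cdot g_1 \psi(g_2)$ against the fundamental class that generates $H_2(G_v/I_v \cdot \langle \tau\rangle, \Z)$. The resulting scalar matches the evaluation pairing on the right of \eqref{eq:inv_formula}, with the sign $-1$ tracking the conventions chosen in the definitions of $\inv_v$ and $\mathfrak{R}_{\ovp, M}$ in \cite{Smi22a}. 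Beyond this sign verification, the argument is a standard unwinding of local Tate duality, so I expect no conceptual obstacle.
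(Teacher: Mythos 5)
Your outline is conceptually sound but takes a different route than the paper, and the part you flag as "the only step requiring actual work" is exactly where your sketch becomes thin. The paper never uses the inflation--restriction filtration or a direct $2$-cocycle evaluation. Instead it exploits the fact that $m := \mathfrak{R}_{\ovp,M}(\phi)$ is a $G_\mfp$-equivariant map $\mu_{e_0}\to M$: taking $\phi_0 \in H^1(G_\mfp,\mu_{e_0})$ to be the class of a uniformizer, $m(\phi_0)-\phi$ is unramified and so pairs trivially with the unramified $\psi$, whence naturality of the cup product gives $\inv_v(\phi\cup\psi)=\inv_v(\phi_0\cup m^\vee(\psi))$. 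Anticommutativity of connecting homomorphisms with cup products (Neukirch--Schmidt--Wingberg, Prop.\ 1.4.5) then converts this to $-\inv_v(\pi\cup\delta m^\vee(\psi))$ with $\delta$ the Bockstein for $0\to\Z\to\Z\to\Z/e_0\Z\to 0$, at which point the formula and its sign fall out of a one-line citation of Serre's \textit{Local Fields}, XIV.1.3. This avoids any explicit cocycle calculation and produces the sign automatically.

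Two concrete concerns about your plan for the sign. First, your statement of the perfect pairing $M(-1)^{G_v}\otimes H^1(G_v/I_v,(M^\vee)^{I_v})\to\tfrac{1}{n}\Z/\Z$ as ``$m\otimes\eta\mapsto m\cdot\eta(\Frob_F\ovp)$'' with a citation to Milne is essentially restating the lemma; Milne supplies nondegeneracy (mutual orthogonality of unramified classes), but the explicit form of the pairing is what you are being asked to prove, so you still owe a computation. Second, the proposed computation --- pairing the cup-product $2$-cocycle ``against the fundamental class that generates $H_2(G_v/I_v\cdot\langle\tau\rangle,\Z)$'' --- is not how $\inv_v$ is standardly computed, and the notation is off: $\inv_v$ is the isomorphism $H^2(G_v,\mu_n)\xrightarrow{\sim}\tfrac{1}{n}\Z/\Z$ obtained by restricting to the maximal unramified extension and identifying $H^2(\hat\Z,\Z)\cong\Q/\Z$ via evaluation at Frobenius, not a cap-product with an $H_2$-class. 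You can certainly carry out a bare-hands verification on the semidirect product $\hat\Z\ltimes\hat\Z'(1)$ (or a large finite quotient of it), but that is several lines of tedious bookkeeping, and the danger is exactly a misplaced sign. The paper's reduction via naturality and the connecting map is both shorter and safer; I would recommend adopting it rather than finishing the cocycle calculation.
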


\begin{remark}
Let us specify some of the implicit identifications in \eqref{eq:inv_formula}. Recall that $\mu_{e_0}$ are the roots of unity inside $\overline{F}^\times$, and also recall that $M^\vee = \Hom(M, \mu_{e_0}) = \Hom(M, \overline{F}^\times)$ and $M(-1) = \Hom(\hat{\Z}(1), M) = \Hom(\mu_{e_0}, M)$. Then the cup product on the left of \eqref{eq:inv_formula} naturally ends in $\Hom(\mu_{e_0}, \mu_{e_0})$. We identify $\Hom(\mu_{e_0}, \mu_{e_0})$ with $\Z/e_0\Z$, which we view as a subset of $\Q/\Z$ by sending $1$ to $1/e_0$.
\end{remark}

\begin{proof}
Take $F_{\mfp}$ to be the completion of $F$ at the place $\ovp$, and take $G_{\mfp} \subseteq G_F$ to be the associated absolute Galois group.  Take $m = \mathfrak{R}_{\ovp, M}(\phi)$. Then $m$ defines a $G_{\mfp}$-equivariant map from $\mu_{e_0}$ to $M$. If we take $\phi_0 \in H^1(G_{\mfp}, \mu_{e_0})$ to be associated with a uniformizer of $F_{\mfp}$, we find that $m(\phi_0) - \phi$ is unramified. Naturality of the cup product gives
\[
\inv_{F \cap \ovp}(\phi \cup \psi) = \inv_{F \cap \ovp}(\phi_0 \cup m^{\vee}(\psi)).
\]
The compatibility of the cup product with connecting maps \cite[Proposition 1.4.5]{Neuk07} gives that this equals $-\inv_{F \cap \ovp}(\pi \cup \delta m^{\vee}(\psi))$, where $\delta m^{\vee}(\psi) \in H^2(G_{\mfp}, \Z)$ is defined from the connecting map associated to the short exact sequence
\[
0 \to \Z \to \Z \to \Z/e_0 \Z \to 0,
\]
where $\pi$ is a uniformizer for $F_{\mfp}$, and where the cup product is as in \cite[XIV.1]{Serre}. But this is just $-m^{\vee}(\psi)(\Frob_F \ovp)$ \cite[XIV.1.3]{Serre}, and the result follows.
\end{proof}

\begin{lemma}
In the context of Definition \ref{defn:Cheb_mult}, suppose $G(\mfh)$ is nonzero for some ideal $\mfh$. Then $G$ is a multiplicative function on the integral ideals of $F$. 
Furthermore, this multiplicative function satisfies 
\begin{align*}
&G(\mfp) = \gamma( \Frob\, \mfp) \quad\textup{ for all primes } \mfp \textup{ outside } \Vplac_0 \cup \{\mfp \,:\,\, \mfp \mid \mff_2 \cdot \mff_3\},\\
&G(\mfp^k) = 0\quad\textup{for all primes } \mfp\textup{ of } F \textup{ and all } k \ge 2,\textup{ and }\\
&G(\mfp) = 0\quad\textup{for all primes } \mfp \in \Vplac_0.
\end{align*}
\end{lemma}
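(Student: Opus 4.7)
The vanishing claims $G(\mfp^k) = 0$ for $k \geq 2$ and $G(\mfp) = 0$ for $\mfp \in \Vplac_0$ are immediate from the definition of the conductor $\mff(\phi_1)$ in Notation \ref{notat:trilinear_setup}: $\mff(\phi_1)$ is always a product of distinct primes lying outside $\Vplac_0$, so the sets $X_{10}(\mfp^k)$ (for $k \geq 2$) and $X_{10}(\mfp)$ (for $\mfp \in \Vplac_0$) are empty and the corresponding sums vanish.

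For multiplicativity, the strategy is to factor the summand $\chi(\phi_1) \cdot \symb{\phi_1}{g}$ into a product of local contributions, one per prime at which $\phi_1$ may ramify, with each factor depending on $\phi_1$ only through $\mathfrak{R}_{\ovp, M_1}(\phi_1)$. Writing $\chi = \prod_{\mfq \mid \mff_2\mff_3} \chi_\mfq$ (since $\chi$ is a ramification factor for $\mff_2\mff_3$) and $\symb{\phi_1}{g} = \prod_{v \in \Vplac(\phi_1)} \symb{\phi_1}{g}_v$ (Definition \ref{defn:lift_ramfact}) gives the desired product structure; the local dependence of $\symb{\phi_1}{g}_v$ on $\mathfrak{R}_{\overline{v}, M_1}(\phi_1)$ follows from Lemma \ref{lem:explicit_loc_Tate}, once one observes that the $1$-cocycle $f_v - \res_{G_v} g$ is unramified at $v$ (because $\phi_2, \phi_3$, and $g$ are, and $x$ may be chosen fixed by inertia). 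Next, the ramification map $\Psi: X_{10}(\mfh) \to \prod_{\mfp \mid \mfh}(M_1^*(-1)^{G_\mfp} \setminus \{0\})$ has fibers equal to translates of $X_{10}((1))$; surjectivity of $\Psi$ follows from the unpacked hypothesis secured by Lemma \ref{lem:basic_assumptions} combined with the Poitou--Tate / Greenberg--Wiles sequence, the nonvanishing of some $G(\mfh)$ ensuring that the relevant global obstructions vanish. Dividing by $\#X_{10}((1))$ then converts the sum over $X_{10}(\mfh)$ into a product, giving $G(\mfh) = \prod_{\mfp \mid \mfh} G(\mfp)$ for squarefree $\mfh$ coprime to $\Vplac_0$. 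Together with $G((1)) = 1$ (the empty product, since $\chi$ and $\symb{\cdot}{g}$ both evaluate to $1$ on $X_{10}((1))$), this establishes multiplicativity.

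For the explicit value at $\mfp$ outside $\Vplac_0 \cup \{\mfq : \mfq \mid \mff_2\mff_3\}$, each $\chi_\mfq$ factor is $1$ on $X_{10}(\mfp)$, so $G(\mfp) = (\#X_{10}((1)))^{-1} \sum_{\phi_1 \in X_{10}(\mfp)} \symb{\phi_1}{g}_\mfp$; by surjectivity of $\Psi$ this rewrites as a sum over $m \in M_1^*(-1)^{G_\mfp} \setminus \{0\}$. Lemma \ref{lem:explicit_loc_Tate} evaluates each summand as $\exp(-2\pi i \cdot m \cdot (f_\mfp - \res_{G_\mfp} g)(\Frob_F \mfp))$, while Definition \ref{defn:Cheb_mult} produces $\gamma(\Frob\,\mfp) = \sum_{m \in M'} \exp(-2\pi i \cdot m \cdot (f(\Frob\,\mfp) - g(\Frob\,\mfp)))$. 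One then checks that the index sets coincide (the local cup-product triviality conditions in Definition \ref{defn:lift_ramfact} that determine when $\symb{\phi_1}{g}_\mfp$ is nonzero match the inertial conditions defining $M'$ in Definition \ref{defn:Cheb_mult}) and that the cochain evaluations $f_\mfp(\Frob\,\mfp) = f(\Frob\,\mfp)$ agree for a compatible choice of $x$. Both sides vanish simultaneously when both $\phi_2$ and $\phi_3$ are nontrivial on $\langle \Frob\,\mfp \rangle$.

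The main obstacle is the careful cohomological matching in the last step: aligning the local $x \in M_j$ chosen with $dx = \res_{G_\mfp} \overline{\phi_j}$ (in the definition of $f_v$) with the global $x$ satisfying $\overline{\phi_j}(\Frob\,\mfp) = (\Frob\,\mfp - 1)x$ (in the definition of $f(\sigma)$), and showing that the local cup-product vanishing of $\phi_1 \cup \phi_{5-j}$ in $H^2(G_\mfp, M_j^\vee)$ translates precisely into the coboundary condition $m \cdot \overline{\phi_j}(\sigma) \in (\sigma - 1)\Hom(M_{5-j}, \Q/\Z)$ defining $M'$. The Greenberg--Wiles surjectivity of $\Psi$ underlying multiplicativity also requires care to derive from the Poitou--Tate nine-term sequence in the unpacked setting.
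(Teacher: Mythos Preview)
Your approach is correct and closely parallels the paper's. For multiplicativity, the paper argues slightly more directly: it observes that $\phi_1 \mapsto \chi(\phi_1)\,[\phi_1,g]$ is a homomorphism, uses the nonvanishing hypothesis to conclude it is trivial on $X_{10}((1))$, and then exploits the surjection $X_{10}(\mfa) \oplus X_{10}(\mfb) \twoheadrightarrow X_{10}(\mfa\mfb)$ (for coprime $\mfa,\mfb$) whose fibers are $X_{10}((1))$-translates, this surjection being furnished by the unpacked condition. Your local-factorization route is a valid alternative, but note that surjectivity of your map $\Psi$ follows immediately from the unpacked condition via the explicit sections $\mfB_{\mfp,M_1}$---no Poitou--Tate or Greenberg--Wiles is needed here---and that your direct verification that $\chi$ and $[\cdot,g]$ are identically $1$ on $X_{10}((1))$ actually shows $G((1))=1$ unconditionally, so the lemma's nonvanishing hypothesis is automatically satisfied. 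The computation of $G(\mfp)$ for $\mfp$ outside $\Vplac_0$ and not dividing $\mff_2\mff_3$ is handled identically in both arguments: parametrize $X_{10}(\mfp)$ via the surjection onto $M_1^*(-1)^{\langle\Frob\,\mfp\rangle}$, translate the vanishing conditions for $[\phi_1,g]_\mfp$ into the membership condition for $M'$ via local Tate duality, and evaluate the surviving terms with Lemma~\ref{lem:explicit_loc_Tate}.
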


\begin{proof}
For any ideal $\mfh$ of $F$, we see that $X_{10}(\mfh)$ is closed under addition by elements of $X_{10}((1))$. Since $\phi_1 \mapsto \chi(\phi_1) \cdot \symb{\phi_1}{g}$ is a homomorphism, we find that $G$ is nonzero on some ideal if and only if it equals $1$ on $X_{10}((1))$. So we suppose this is the case.

Given coprime ideals $\mfa$, $\mfb$ of $H$, the map $(\phi_{1a}, \phi_{1b}) \mapsto \phi_{1a} + \phi_{1b}$ defines a surjection
\[
X_{10}(\mfa) \oplus X_{10}(\mfb) \to X_{10}(\mfa \cdot \mfb)
\]
since $(K/F, \Vplac_0, e_0)$ was assumed to be unpacked. The fibers of this map take the form
\[
\{(\phi_{1a} + \phi, \phi_{1b} - \phi)\,:\,\,\phi \in X_{10}((1))\}
\]
for some $(\phi_{1a}, \phi_{1b})$ in the domain, so
\[
G(\mfa \cdot \mfb) = \frac{1}{(\# X_{10}((1)))^2} \sum_{\phi_{1a} \in X_{10}(\mfa)} \sum_{\phi_{1b} \in X_{10}(\mfb)} \chi(\phi_{1a} + \phi_{1b}) \cdot \symb{\phi_{1a} + \phi_{1b}}{g}   = G(\mfa) \cdot G(\mfb). 
\]
That is, $G$ is multiplicative. That it is zero for primes in $\Vplac_0$ and non-squarefree ideals is clear from the definition of $\mff(\phi_1)$.

So it remains to calculate $G(\mfp)$ in the case that $\mfp$ is a prime outside $\Vplac_0$ that does not divide $\mff_2 \cdot \mff_3$. Choose a prime $\ovp$ of $\overline{F}$ dividing $\mfp$, and take $\sigma$ to be a Frobenius element corresponding to $\ovp$. Since the symbol $[\phi, g]$ was not effected by the implicit choice of local embeddings, we may assume that the choice of embedding $\overline{F} \hookrightarrow \overline{F_{\mfp}}$ corresponds to $\ovp$.

The ramification-measuring homomorphism $\mathfrak{R}_{\ovp, M_1}$ at $\ovp$ defines a surjection
\[
X_{10}(\mfp) \to M_1^*(-1)^{\langle \sigma \rangle}
\]
since $(K/F, \Vplac_0, e_0)$ is unpacked (\cite[Definition 3.4]{Smi22a}).

Take $M'$ to be the subset of $M_1^*(-1)^{\langle \sigma \rangle}$ defined in Definition \ref{defn:Cheb_mult}. If $\phi \in X_{10}(\mfp)$ satisfies $\mathfrak{R}_{\ovp, M_1}(\phi) \not \in M'$, we find that $\symb{\phi}{g}_{\mfp} = 0$. Indeed, equation \eqref{eq:inertia_condition} is equivalent to $m \cdot \overline{\phi_j}(\sigma)$ being orthogonal to $H^0(G_\mfp, M_{5 - j})$ (which are the $\langle \sigma \rangle$-fixed points of $M_{5 - j}$). Then local Tate duality and Lemma \ref{lem:explicit_loc_Tate} show that this condition is equivalent to $\phi \cup \phi_j$ being trivial in $H^2(G_\mfp, M_{5 - j}^\vee)$ for $j =2, 3$, giving the claim. 

Furthermore, if equation \eqref{eq:triviality_condition} holds, then we see that $\gamma(\sigma) = 0$ and $\symb{\phi}{g}_{\mfp} = 0$, because neither $\phi_2$ nor $\phi_3$ is locally trivial at $\mfp$.

Suppose now that $\phi$ maps to some $m \in M'$ and that \eqref{eq:triviality_condition} does not hold. We need to calculate $\chi(\phi) \cdot \symb{\phi}{ g}$. The term $\chi(\phi)$ is just $1$, 
so we are left calculating 
\[
\symb{\phi}{g} = \symb{\phi}{g}_{\mfp} = \exp\left(2\pi i\cdot \inv_{\mfp}(\phi \cup (f_{\mfp} - \res_{G_{\mfp}}g))\right) = \exp(-2\pi i \cdot m \cdot(f_{\mfp}(\sigma) - g(\sigma))),
\]
where the final equality is by Lemma \ref{lem:explicit_loc_Tate}.

The choice of $f_{\mfp}$ does not affect this expression since $m$ is invariant under $\sigma$. In particular, we may choose $f_{\mfp}$ so it satisfies $f(\sigma) = f_{\mfp}(\sigma)$. The result follows from taking the sum over $M'$.
\end{proof}

We now collect the properties we need of the map $\gamma$. For a prime $\mfp$ of $F$, we write $I_{\mfp}$ for the inertia group of one fixed prime $\mfq$ of $\overline{\Q}$ above $\mfp$. We shall only use this notation when the choice of $\mfq$ does not matter.

\begin{notat}
With all notation as in Definition \ref{defn:Cheb_mult}, and given distinct primes $\mfp_2, \mfp_3$ of $F$, we take $N(\mfp_2, \mfp_3)$ to be the minimal normal subgroup of $G_F$ containing
\[
\symb{\sigma_2\tau_2\sigma_2^{-1}}{\sigma_3\tau_3\sigma_3^{-1}}\quad\text{for all } \tau_2 \in I_{\mfp_2},\quad\tau_3 \in I_{\mfp_3},\quad\text{and}\quad \sigma_2, \sigma_3 \in G_F.
\]
We call this subgroup \emph{marked} if $\mfp_2 \nmid \mff_3$ and $\mfp_3 \nmid \mff_2$, and if we have
\[
\mathfrak{R}_{\ovp_2, M_2}(\phi_2) \in M_2^*(-1) \quad\text{and}\quad \mathfrak{R}_{\ovp_3, M_3}(\phi_3) \in M_3^*(-1)
\]
for some primes $\ovp_2, \ovp_3$ over $\mfp_2, \mfp_3$.
\end{notat}

\begin{lemma}
\label{lem:gamma_props}
In the context of Definition \ref{defn:Cheb_mult}, consider the minimal Galois extension $L/F$ containing $K$ such that $\phi_2$ and $\phi_3$ are trivial, and take $E/F$ to be the minimal Galois extension containing $L$ such that $\gamma$ is the inflation of a cochain on $\Gal(E/F)$. Then $\Gal(E/L)$ is abelian, and
\[
\#\Gal(E/L) \le \# M_1 \quad \quad \textup{and} \quad \quad \# \Gal(L/K) \le \# M_2 \cdot \# M_3.
\]
Furthermore, $N(\mfp_2, \mfp_3)$ is a subgroup of $G_L$; take $N$ to be its image in $\Gal(E/L)$. If $\mfp_2 \cdot \mfp_3$ does not divide $\mff_2 \cdot \mff_3$, then $N$ is trivial.

If instead $N(\mfp_2, \mfp_3)$ is marked, we have
\begin{equation}
\label{eq:dead_over_marked}
\sum_{\tau \in N} \gamma(\tau\sigma) = 0 \quad \quad \textup{for any } \sigma \in G_F.
\end{equation}
\end{lemma}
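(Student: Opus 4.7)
The plan is to construct a candidate field $E_0 \supseteq L$ explicitly as the fixed field of $\ker(g|_{G_L})$, verify its Galois-theoretic properties, and then reduce the character sum to orthogonality on the abelian quotient $N$.

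First I would pin down $L$. Since $G_K$ acts trivially on $M_2$ and $M_3$, the restrictions $\overline{\phi_j}|_{G_K}\colon G_K \to M_j$ are honest homomorphisms, so the combined map $(\overline{\phi_2}, \overline{\phi_3})|_{G_K}\colon G_K \to M_2 \oplus M_3$ has kernel exactly $G_L$. A direct cocycle manipulation shows $G_L$ is normal in $G_F$, giving that $L/F$ is Galois with $\#\Gal(L/K) \le \#M_2 \cdot \#M_3$. For $E$, I would set $G_{E_0} := \ker(g|_{G_L})$. The relation $dg = \overline{\phi_2}\cup\overline{\phi_3}$ degenerates on $G_L$, making $g|_{G_L}$ a cocycle, and the trivial action of $G_L \subseteq G_K$ on $M_1^\vee$ turns it into a homomorphism. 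A further computation, using $dg(\sigma,\sigma^{-1})$ to rewrite $\sigma g(\sigma^{-1})$, yields $g(\sigma\tau\sigma^{-1}) = \sigma g(\tau)$ for $\tau \in G_L$, so $\ker(g|_{G_L})$ is stable under $G_F$-conjugation. Thus $E_0/F$ is Galois and $\Gal(E_0/L) \hookrightarrow M_1^\vee$ is abelian of order at most $\#M_1$. I would then check that $\gamma$ inflates from $\Gal(E_0/F)$ by verifying that $f(\sigma)$, $M'(\sigma)$, and $g(\sigma)$ each depend only on the coset of $\sigma$ in $\Gal(E_0/F)$; this uses that $G_{E_0}$ acts trivially on $M_j$ and $M_1^\vee$, that $\overline{\phi_j}$ vanishes on $G_{E_0}$, and that $g|_{G_{E_0}}=0$. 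Minimality then places the true $E$ inside $E_0$, so the bounds and the abelianness carry down.

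For $N(\mfp_2, \mfp_3) \subseteq G_L$, I would reduce to $\mfp_2, \mfp_3 \notin \Vplac_0$, the only case that the subsequent trilinear analysis requires. Then $I_{\mfp_j} \subseteq G_K$, so each generator $[\alpha, \beta]$ lies in $G_K$; since $\overline{\phi_j}|_{G_K}$ is a homomorphism into an abelian group, $\overline{\phi_j}([\alpha, \beta]) = 0$, placing $[\alpha, \beta] \in G_L$. A parallel commutator computation, starting from $g(xy) = g(x) + g(y) - \overline{\phi_2}(x)\cdot\overline{\phi_3}(y)$ on $G_K$, yields the clean identity
\[
g([\alpha, \beta]) \;=\; \overline{\phi_2}(\beta)\cdot\overline{\phi_3}(\alpha)\,-\,\overline{\phi_2}(\alpha)\cdot\overline{\phi_3}(\beta).
\]
If $\mfp_2 \nmid \mff_2\mff_3$, then $\phi_2, \phi_3$ are unramified at $\mfp_2$, so writing the inertia cocycle as a coboundary and using $\alpha \in G_K$ forces $\overline{\phi_j}(\alpha) = 0$ for $j=2,3$; both terms vanish and $[\alpha,\beta] \in G_{E_0} \subseteq G_E$. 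The case $\mfp_3 \nmid \mff_2\mff_3$ is symmetric, so the image $N$ is trivial in $\Gal(E/L)$.

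Finally, for the vanishing sum in the marked case, using $g(\tilde\tau\sigma) = g(\tilde\tau) + g(\sigma)$ for $\tilde\tau \in G_L$ together with the $G_L$-invariance of $f(\sigma)$ and $M'(\sigma)$, and noting that $\gamma$ factoring through $\Gal(E/F)$ lets me replace the sum over $N$ by a sum over $N(\mfp_2, \mfp_3)$, I would factor
\[
\sum_{\tilde\tau \in N(\mfp_2, \mfp_3)} \gamma(\tilde\tau\sigma) = \sum_{m \in M'(\sigma)} \exp\!\left(-2\pi i\,m\cdot(f(\sigma)-g(\sigma))\right)\cdot\sum_{\tilde\tau \in N(\mfp_2, \mfp_3)} \exp\!\left(2\pi i\,m\cdot g(\tilde\tau)\right).
\]
Since $g|_{N(\mfp_2,\mfp_3)}$ is a homomorphism to $M_1^\vee$, orthogonality kills the inner sum unless $m\cdot g(\tilde\tau) = 0$ for every $\tilde\tau$. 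The main obstacle, and heart of the lemma, is ruling out any such $m \in M'(\sigma) \subseteq M_1^*(-1)$. In the marked case, $\mfp_2 \nmid \mff_3$ and $\mfp_3 \nmid \mff_2$ give $\overline{\phi_3}(\alpha) = 0 = \overline{\phi_2}(\beta)$, so the identity above collapses to $g([\alpha, \beta]) = -\overline{\phi_2}(\alpha)\cdot\overline{\phi_3}(\beta)$; moreover, the marked hypothesis $\mathfrak{R}_{\ovp_j, M_j}(\phi_j) \in M_j^*(-1)$ guarantees that as $\tau_j$ ranges over $I_{\mfp_j}$ the element $\overline{\phi_j}(\tau_j)$ attains some $m_j^* \in M_j^*$. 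Varying $\sigma_2, \sigma_3 \in G_F$ then makes $g([\alpha,\beta])$ sweep every $-(\sigma_2 m_2^*)\cdot(\sigma_3 m_3^*)$, so annihilation would force $P(m, \sigma_2 m_2^*, \sigma_3 m_3^*) = 1$ for all $\sigma_2, \sigma_3 \in G_F$, contradicting the non-degeneracy assumption \eqref{eq:rough_ramification} on $(M_1^*, M_2^*, M_3^*)$.
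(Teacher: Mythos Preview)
Your proposal is correct and follows essentially the same strategy as the paper: both arguments hinge on the relations $g(\tau\sigma)=g(\tau)+g(\sigma)$ and $g(\sigma\tau)=\sigma g(\tau)+g(\sigma)$ for $\tau\in G_L$, the explicit commutator formula $g([\tau_2,\tau_3])=-\overline{\phi_2}(\tau_2)\cdot\overline{\phi_3}(\tau_3)$, and the non-degeneracy assumption \eqref{eq:rough_ramification} to force the character sum to vanish. The only notable differences are cosmetic: you build an explicit field $E_0$ and bound $E\subseteq E_0$, whereas the paper argues directly that $g$ injects $\Gal(E/L)$ into $M_1^\vee$; and for the triviality of $N$ when $\mfp_2\mfp_3\nmid\mff_2\mff_3$ the paper invokes centrality of $\Gal(E/L)$ in $\Gal(E/K)$ while you compute $g$ on the commutator directly. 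One phrasing to tighten: ``replace the sum over $N$ by a sum over $N(\mfp_2,\mfp_3)$'' is awkward since the latter group need not be finite; what you really use is that $m\cdot g$ descends to a well-defined homomorphism on the finite group $N$ for each $m\in M'(\sigma)$, and that this homomorphism is nontrivial because it is nontrivial on the image of some explicit commutator.
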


\begin{proof}
The cocycle class $\phi_k$ restricts over $G_K$ to a homomorphisms to $M_k$ for $k = 2, 3$ fixed by the natural action of $\Gal(K/F)$ on $H^1(G_K, M_k)$, so $\Gal(L/K)$ injects into $M_2 \times M_3$. The bound on $[L: K]$ follows.

Similarly, the map $g$ used to define $\gamma$ is seen to satisfy
\begin{equation}
\label{eq:g_E_L}
g(\tau \sigma) = g(\tau) + g(\sigma)\quad \text{and}\quad g(\sigma\tau) = \sigma g(\tau) + g(\sigma)  \quad\text{for } \tau \in G_L\quad\text{and }\,  \sigma \in G_F.
\end{equation}
Since $\gamma(\tau \sigma)$ is thus determined in this case by $\gamma(\sigma)$ and $g(\tau)$, we see that $g$ defines an injective homomorphism from  $\Gal(E/L)$ into $M_1^{\vee}$. So $\Gal(E/L)$ is an abelian group of order at most $\# M_1$. These relations also imply that $\Gal(E/L)$ is central in $\Gal(E/K)$.

Now choose primes $\mfp_2, \mfp_3$ outside $\Vplac_0$, and choose $\tau_k \in I_{\mfp_k}$ for $k = 2, 3$. Then $\tau_2$ and $\tau_3$ lie in $G_K$ since $K/F$ is unramified at $\mfp_2$ and $\mfp_3$. Therefore $\sigma_2 \tau_2 \sigma_2^{-1}$ and $\sigma_3 \tau_3 \sigma_3^{-1}$ also lie in $G_K$. Since $L/K$ is abelian, we find that $[\sigma_2 \tau_2 \sigma_2^{-1}, \sigma_3 \tau_3 \sigma_3^{-1}]$ lies in $G_L$.

If $\mfp_k$ does not divide $\mff_2 \cdot \mff_3$ for $k$ either $2$ or $3$, then $\sigma_k \tau_k \sigma_k^{-1}$ lies in $G_L$, and $[\sigma_2 \tau_2 \sigma_2^{-1}, \sigma_3 \tau_3 \sigma_3^{-1}]$ maps into $G_E$ since $\Gal(E/L)$ was central in $\Gal(E/K)$. In this case, we find that $N$ maps to $0$ in $\Gal(E/L)$.

If instead $N(\mfp_2, \mfp_3)$ is marked, choose $\tau_k$ topologically generating the tame quotient of $I_{\mfp_k}$ for $k = 2, 3$. Another cocycle calculation gives 
$$
g( \symb{\tau_2}{\tau_3}) = - \phi_2(\tau_2) \cdot \phi_3(\tau_3).  
$$
By conjugating, given any $\sigma_2$ and $\sigma_3$ in $G_F$, we find that there is some $\tau$ in $N(\mfp_2, \mfp_3)$ such that
\begin{align}
\label{eq:GammaNotZero}
g(\tau) = \sigma_2 \phi_2(\tau_2) \cdot \sigma_3 \phi_3(\tau_3).
\end{align}
Since $N$ is inside $G_L$, we note that $f( \tau\sigma ) = f(\sigma)$ for $\sigma \in G_F$ and $\tau \in N$, and the set $M'$ defined in Definition \ref{defn:Cheb_mult} is the same for $\sigma$ as for $\tau\sigma$. Furthermore, we also see that $\gamma$ is either identically zero on the coset $N \sigma $ or is always nonzero on this coset. Henceforth we assume that $\gamma$ is always nonzero on $ N\sigma$, as the lemma is trivial otherwise. But then equation \eqref{eq:GammaNotZero} and \eqref{eq:rough_ramification} show that, for any $m$ in $M'$, $m \cdot g(\tau)$ is nonzero for some choice of $\tau$ in $N$. The identity \eqref{eq:dead_over_marked} follows from the definition of $\gamma$ and \eqref{eq:g_E_L}.
\end{proof}

\subsection{The proof of Theorem \ref{thm:trilinear}}
To finish the proof of Theorem \ref{thm:trilinear}, we first prove a couple additional lemmas.

\begin{lemma}
\label{lem:monomial_decomp}
Take $G$ to be a finite group, take $N$ to be an abelian normal subgroup of $G$, and take $N_1, \dots, N_k$ to be normal subgroups of $G$ contained in $N$. Take $\phi: G \to \mathbb{\C}$ to be a class function satisfying
\[
\sum_{\tau \in N_i} \phi(\sigma \tau) = 0 \quad\textup{for all } \,\sigma \in G\,\textup{ and } \, i \le k.
\]
Then there is a finite sequence $H_1, \dots, H_m$ of subgroups of $G$ containing $N$ and, for $i \le m$, a choice of linear character $\psi_i$ of $H_i$ and a complex number $a_i$ such that
\[\phi = \sum_{i = 1}^m a_i \cdot \textup{Ind}^G_{H_i} \psi_i,\]
such that
\[[G: H_i] \cdot |a_i| \le \max_{g \in G} |\phi(g)| \quad\textup{for}\quad i \le m\quad\textup{and}\quad\sum_{i \le m} [G: H_i]^2 \le |G|^2 \cdot [G:N],\]
and such that $N_j$ is not a subgroup of $\ker \psi_i$ for any $j \le k$ or $i \le m$.
\end{lemma}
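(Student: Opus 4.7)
The plan is to Fourier-analyze $\phi$ with respect to the abelian normal subgroup $N$. For each character $\chi$ of $N$, set $\phi_\chi(g) = |N|^{-1}\sum_{\tau \in N}\phi(g\tau)\overline{\chi(\tau)}$; a direct calculation yields $\phi_\chi(g\tau) = \chi(\tau)\phi_\chi(g)$ for $\tau \in N$, and Fourier inversion on $N$ gives $\phi = \sum_{\chi}\phi_\chi$. Using normality of $N_j$ in $G$, the hypothesis $\sum_{\tau \in N_j}\phi(\sigma\tau) = 0$ upgrades to $\sum_{\tau \in N_j}\phi(\sigma\tau\nu) = 0$ for every $\nu \in N$; expanding via the Fourier decomposition and invoking linear independence of characters of $N$ then forces $\phi_\chi \equiv 0$ whenever $\chi$ is trivial on some $N_j$. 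Because $\phi$ is a class function, the identity $\phi_{s\cdot\chi}(sgs^{-1}) = \phi_\chi(g)$ shows that the $G$-orbit sums $\phi_\mathcal{O} = \sum_{\chi \in \mathcal{O}}\phi_\chi$ are themselves class functions, and $\phi = \sum_{\mathcal{O}\text{ good}}\phi_\mathcal{O}$, where an orbit $\mathcal{O}$ is \emph{good} if its characters are nontrivial on each $N_j$.

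For each good orbit with representative $\chi$ and stabilizer $H_\chi \supseteq N$, the plan is to apply Clifford theory. In the ``nice'' case where $\chi$ extends to a linear character of $H_\chi$, the space of ``type-$\chi$'' class functions on $G$ is spanned by the $[H_\chi : N]$ irreducibles $\textup{Ind}_{H_\chi}^G \tilde\psi$ as $\tilde\psi$ ranges over the extensions of $\chi$, so I would write $\phi_\mathcal{O} = \sum_{\tilde\psi}a_{\tilde\psi}\textup{Ind}_{H_\chi}^G\tilde\psi$; the required condition $\psi_i|_{N_j} \ne 1$ is automatic from $\chi|_{N_j} \ne 1$ and normality of $N_j$. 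In the ``bad'' case when $\chi$ admits no linear extension to $H_\chi$, I would descend via a Brauer- or Artin-type induction on $H_\chi/N$: any projective representation of $H_\chi/N$ with cocycle determined by $\chi$ is a rational combination of characters induced from subgroups where the cocycle vanishes (for example, subgroups with cyclic quotient over $N$, where $\chi$ always extends linearly), and pulling these back yields the required expansion in terms of $\textup{Ind}_H^G\psi$ with $N \subseteq H \subseteq H_\chi$ and $\psi$ a linear character of $H$.

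For the counting bound, each good orbit contributes at most $[H_\chi : N]$ terms, each with $[G:H_i] \le [G:N]$, and $\sum_\mathcal{O}[H_\chi : N] \le |N|$, yielding the even stronger bound $\sum_i[G:H_i]^2 \le |G|\cdot[G:N]$. For the coefficient inequality $[G:H_i]|a_i| \le \max_g|\phi(g)|$, Frobenius reciprocity gives $a_i = |H_i|^{-1}\sum_{h \in H_i}\phi(h)\overline{\psi_i(h)}$; combined with the $N$-equivariance of the relevant $\chi$-component this reduces to a Fourier coefficient on $H_i/N$, and the factor of $[G:H_i]$ is then extracted by connecting the individual coefficient back to $\phi_\mathcal{O}(1) = [G:H_\chi]\sum_{\tilde\psi}a_{\tilde\psi}$, whose sup-norm control gives the desired bound. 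The main obstacle is the bad Clifford case, where the standard irreducible decomposition is unavailable and a Brauer- or Artin-type induction argument on $H_\chi/N$ is needed to remain within \emph{linear} characters of subgroups containing $N$; a secondary difficulty is extracting the sharp coefficient bound, which is stronger than the naive $L^2$-orthogonality estimate and requires choosing the decomposition with some care.
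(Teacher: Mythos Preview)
Your Clifford-theoretic approach has two genuine gaps, and the paper's proof takes a quite different route that sidesteps both.

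\textbf{The coefficient bound fails for the irreducible decomposition.} Frobenius reciprocity only gives $|a_{\tilde\psi}| \le \max|\phi|$, not $[G:H_\chi]\,|a_{\tilde\psi}| \le \max|\phi|$; your proposed fix via $\phi_{\mathcal O}(1) = [G:H_\chi]\sum_{\tilde\psi}a_{\tilde\psi}$ controls only the \emph{sum} of the coefficients over an orbit, not each one individually. Concretely, take $G = S_4$, $N = V_4$, $k = 0$, and $\phi = \rho - \mathbf{1}$ with $\rho$ the standard $3$-dimensional irreducible. One checks $\max_g|\phi(g)| = 2$, yet $\rho = \mathrm{Ind}_{D_4}^{S_4}\tilde\psi$ for a linear $\tilde\psi$ and $[S_4:D_4]\cdot|a_\rho| = 3 > 2$. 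So the irreducible decomposition simply is not the decomposition the lemma asks for. (Your counting argument also has a slip: $\sum_{\mathcal O}[H_\chi:N]$ can be $|G|$, not $|N|$, e.g.\ when $N$ is central.)

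\textbf{The bad Clifford case is unresolved.} When $\chi$ does not extend linearly to $H_\chi$, invoking Brauer or Artin induction on $H_\chi/N$ produces virtual combinations with no a priori control on the coefficients, and both the coefficient bound and the counting bound can be lost.

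\textbf{The paper's argument.} Rather than decomposing into irreducibles, the paper partitions $G = \bigsqcup_{H \in \mathscr{H}} H^*$, where $\mathscr{H}$ is the set of subgroups $H \supseteq N$ with $H/N$ cyclic and $H^* = \{h \in H : hN \text{ generates } H/N\}$. Setting $\phi^*_H := \phi \cdot \mathbf{1}_{H^*}$ gives
\[
\phi \;=\; \sum_{H \in \mathscr{H}} \phi^*_H \;=\; \sum_{H \in \mathscr{H}} \frac{1}{[G:H]}\,\mathrm{Ind}_H^G\,\phi^*_H,
\]
and this built-in factor $1/[G:H]$ is exactly what yields the coefficient bound. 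One then expands each $\phi^*_H$ in the irreducible characters of $H$. Because $H/N$ is cyclic, its Schur multiplier vanishes, so every irreducible of $H$ is induced from a \emph{linear} character of a subgroup of $H$ containing $N$; those induced from a proper subgroup vanish identically on $H^*$, so only linear characters of $H$ itself survive, with Fourier coefficients trivially bounded by $\max|\phi|$. The $N_j$-vanishing hypothesis then kills those $\psi$ with $N_j \subseteq \ker\psi$, and the counting bound follows from $|\mathscr H|\le [G:N]$ together with $[G:H]^2\cdot|H|\le |G|\cdot[G:N]$. The bad Clifford case never arises because the argument only ever does Clifford theory inside these cyclic-over-$N$ subgroups.
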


\begin{proof}
Take $\mathscr{H}$ to be set of subgroups $H$ of $G$ containing $N$ such that $H/N$ is cyclic. For $H$ in $\mathscr{H}$, take $H^*$ to be the set of $\sigma$ in $H$ such that $\sigma N$ is a generator for $H/N$, and define a function $\phi^*_H:G \to \C$ by
\[\phi^*_H(\sigma) = \begin{cases} \phi(\sigma) &\text{ if } \sigma \in H^* \\ 0 &\text{ otherwise.}\end{cases}\]
Then
\begin{equation}
\label{eq:H_decomp}
\phi = \sum_{H \in \mathscr{H}} \phi^*_H = \sum_{H \in \mathscr{H}} \frac{1}{[G: H]} \text{Ind}^G_H \phi^*_H.
\end{equation}
For $H$ in $\mathscr{H}$, the irreducible characters of $H$ are all induced from linear characters of some subgroup of $H$ containing $N$ \cite[Theorem 6.22]{Isaacs}. If an irreducible character $\chi$ of $H$ is induced from a proper subgroup of $H$, then $\chi$ is trivial on $H^*$ and the coefficient of $\chi$ in the decomposition of $\phi^*$ is $0$. If $\chi$ is a linear character, this coefficient is also zero if $\ker \chi$ contains any $N_j$. So
\[\phi^*_H = \sum_{i \le r} a_i \psi_i,\]
where the $\psi_i$ are linear characters of $H$ whose kernels contain no $N_j$. The $a_i$ have magnitude at most $\max_{g \in G} |\phi(g)|$, and $r$ is at most $|H|$ and $[G: H]^2 \cdot r$ is at most $|G| \cdot [G: N]$. The result now follows from \eqref{eq:H_decomp}.
\end{proof}

We record one simple estimate for multiplicative functions.

\begin{lemma}
\label{lem:simple_multiplicative_bound}
Choose a number field $F$ and real numbers $C_2 \ge C_1 \ge 1$. Choose a multiplicative function $f$ on the integral ideals of $F$ such that
\[
|f(\mfp)| \le C_1\quad\textup{and}\quad |f(\mfp^k)| \le C_2 \quad\textup{for } k > 1.
\]
Then, for $H > 3$, we have
\[
\left|\sum_{\substack{N_F(\mfh) \le H}} f(\mfh) \right| \ll H (\log H)^{C_1 - 1},
\]
where the implicit constant depends on $C_1$, $C_2$, and $F$.
\end{lemma}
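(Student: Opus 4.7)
The plan follows the classical Landau--Wirsing recipe, adapted to the number field $F$. First, I would reduce to a nonnegative multiplicative function: define $g$ multiplicatively by $g(\mfp) = C_1$ for every prime and $g(\mfp^k) = C_2$ for $k \ge 2$. Since $|f(\mfh)| \le g(\mfh)$ for every integral ideal $\mfh$, it suffices to bound $\sum_{N_F(\mfh) \le H} g(\mfh)$.

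Next, I would separate the squarefull part. Every integral ideal factors uniquely as $\mfh = \mfh_1 \mfh_2$ with $\mfh_1$ squarefree, $\mfh_2$ squarefull, and $\gcd(\mfh_1, \mfh_2) = (1)$. Using the standard estimate that the number of squarefull ideals of norm at most $y$ is $O_F(y^{1/2})$, together with the elementary bound $g(\mfh_2) \ll_{\varepsilon, C_2} N_F(\mfh_2)^{\varepsilon}$, the total contribution of those $\mfh$ with $\mfh_2 \ne (1)$ is $O(H(\log H)^{C_1 - 1})$ once one factors out the (already-desired) bound for the squarefree sum. The problem therefore reduces to proving
\[
T(H) \,:=\, \sum_{\substack{N_F(\mfh) \le H \\ \mfh \text{ squarefree}}} C_1^{\omega(\mfh)} \,\ll\, H (\log H)^{C_1 - 1}.
\]

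For $T(H)$ I would use the Levin--Fainleib identity. Multiplying through by $\log N_F(\mfh)$ and expanding it as $\sum_{\mfp \mid \mfh} \log N_F(\mfp)$ gives
\[
\sum_{\substack{N_F(\mfh) \le H \\ \mfh \text{ squarefree}}} C_1^{\omega(\mfh)} \log N_F(\mfh) \,=\, C_1 \sum_{N_F(\mfp) \le H} \log N_F(\mfp) \cdot T_{\mfp}\!\left(H/N_F(\mfp)\right),
\]
where $T_{\mfp}$ is the analogous sum restricted to ideals coprime to $\mfp$. Combined with the Mertens-type estimate $\sum_{N_F(\mfp) \le x} (\log N_F(\mfp))/N_F(\mfp) = \log x + O_F(1)$ (a consequence of the prime ideal theorem for $F$) and partial summation, this produces the exponent $C_1 - 1$. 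Equivalently, one can bypass the induction by observing that the Dirichlet series $\sum_{\mfh \text{ squarefree}} C_1^{\omega(\mfh)} N_F(\mfh)^{-s} = \prod_{\mfp} (1 + C_1 N_F(\mfp)^{-s})$ factors as $\zeta_F(s)^{C_1} \cdot E(s)$ with $E$ holomorphic and bounded in a neighborhood of $\Re(s) = 1$, and then invoking the Selberg--Delange theorem in the Dedekind-zeta setting.

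The only real subtlety is that $C_1$ need not be an integer, so the logarithm power must be handled by a differential-inequality argument (or by the Selberg--Delange machinery), rather than by a straightforward combinatorial induction; this is exactly the classical heart of the Landau--Wirsing estimate and is entirely routine in the number-field setting.
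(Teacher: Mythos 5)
Your argument is correct, but it takes a genuinely different route from the paper. The paper's proof is a one-liner: after the triangle inequality, one pushes forward along the norm map by defining $g(n) = \sum_{N_F(\mfh)=n} |f|(\mfh)$, which is a nonnegative multiplicative function on $\Z_{\ge 1}$ whose values at primes have $\sum_{p\le H} g(p)/p \le C_1 \log\log H + O(1)$ (higher-degree primes of $F$ contribute $O(1)$), and then invokes Shiu's Brun--Titchmarsh theorem for multiplicative functions to get $\sum_{n\le H} g(n) \ll \tfrac{H}{\log H}\exp(\sum_{p\le H}g(p)/p) \ll H(\log H)^{C_1-1}$. You instead majorize $|f|$ by a canonical model $g$, strip off the squarefull part (cheap, since squarefull ideals of norm $\le y$ number $O(y^{1/2})$), and run the Levin--Fainleib identity on the squarefree sum over ideals of $F$ directly, closing the iteration either by a differential inequality or by citing Selberg--Delange. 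Both work. The paper's route buys brevity: Shiu's theorem already handles arbitrary prime-power growth, so no squarefull/squarefree splitting is needed, and non-integer $C_1$ is automatic. Your route stays intrinsic to $F$ and is perhaps pedagogically clearer, but the ``only real subtlety'' you flag -- that $C_1$ need not be an integer -- is in fact the entire analytic content, and it is worth noting that one does not need the full Selberg--Delange machinery for a one-sided bound: the elementary Halberstam--Richert upper-bound (Tenenbaum, Theorem III.3.5) or indeed Shiu's theorem over $\Q$ after a norm push-forward, as the paper does, both handle it more economically than an asymptotic theorem.
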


\begin{proof}
This follows from the triangle inequality and Shiu's theorem \cite{Shiu} applied to the function $g$ defined by
\[
g(n) = \sum_{N_F(\mfh) = n} |f|(\mfh),
\]
where $|f|$ is the multiplicative function given by $|f|(\mfh) := |f(\mfh)|$.
\end{proof}

We now turn to proof of Theorem \ref{thm:trilinear}. By Lemma \ref{lem:basic_assumptions}, we may assume $(K/F, \Vplac_0, e_0)$ is unpacked and that $H_1 \ge H_2 \ge H_3$.

Now take $S_0$ to be the left hand side of \eqref{eq:trilinear_bound}. By Proposition \ref{prop:CS}, we have
\[
S_0 \le |X_1|^{3/4} \cdot |X_2|^{1/2} \cdot |X_3|^{1/2} \cdot S_1^{1/4}
\]
with
\[
S_1 =  \sum_{\phi_2, \phi'_2 \in X_2}  \sum_{\phi_3, \phi'_3 \in X_3}\left|\sum_{\phi_1 \in X_1} \Redei{\phi_1}{\phi_2 - \phi'_2}{\phi_3 - \phi'_3}  \right|.
\]
Take $Y_0$ to be the set of $y = (\phi_2, \phi'_2, \phi_3, \phi'_3)$ in $X_2 \times X_2 \times X_3 \times X_3$ such that
\[
\sum_{\phi_1 \in X_1} \Redei{\phi_1}{\phi_2 - \phi'_2}{\phi_3 - \phi'_3} \ne 0.
\]
If $y$ is in $Y_0$, we may use Lemma \ref{lem:to_multiplicative} to choose a ramification factor $\chi_y$ for $\mff(\phi_2  - \phi'_2) \cdot \mff(\phi_3 - \phi'_3)$ and a lift $g_y$ of $(\phi_2 - \phi'_2, \phi_3 - \phi'_3)$ such that
\[
\left|\sum_{\phi_1 \in X_1} \Redei{\phi_1}{\phi_2 - \phi'_2}{\phi_3 - \phi'_3} \right| \le \left|\sum_{\phi_1 \in X_1} \chi_y(\phi_1) \cdot \symb{\phi_1}{g_y} \right|.
\]
We take $G_y$ and $\gamma_y: \Gal(E_y/F) \to \C$ to be the functions defined from $g_y$ and $\chi_y$ as in Definition \ref{defn:Cheb_mult}, with $E_y$ defined as in Lemma \ref{lem:gamma_props}. We take $\Vplac_y$ to be the set of places in $\Vplac_0$ together with the primes dividing $\mff(\phi_2 - \phi'_2) \cdot \mff(\phi_3 - \phi'_3)$.

Define $Y$ to be the subset of $Y_0$ where $G_y$ is a multiplicative function. Then we have $S_1 \le \# X_{10}((1)) \cdot S_2$, where we have taken
\[
S_2 = \sum_{y \in Y}  \left| \sum_{N_F(\mfh) \le H_1} G_y(\mfh)\right|.
\]
For a given $y \in Y$, take $N_y$ to be the subgroup of $\sigma \in \Gal(E_y/K)$ such that $\phi_2(\sigma) = 0$. One may check that $\gamma_y(\sigma \tau) = \gamma_y(\tau \sigma)$ for $\sigma, \tau$ in this group, so we find it is abelian.

Given $y, z \in Y$, write $y \sim z$ if there is an isomorphism of groups $\Gal(E_{y}/F) \isoarrow \Gal(E_{z}/F)$ that identifies the maps $\gamma_{y}$ and $\gamma_z$, which identifies the collection of images of marked subgroups of $\Gal(E_y/F)$ with the collection of images of marked subgroups of $\Gal(E_z/F)$, and which identifies $N_y$ with $N_z$. Under this relationship, $Y$ splits into $\ll 1$ equivalence classes, where the implicit constant depends just on $F$, $M_1$, $M_2$, and $M_3$. Taking
\[
S_3 = \max_Z \sum_{y \in Z} \left| \sum_{N_F(\mfh) \le H_1} G_y(\mfh) \right| 
\]
where $Z$ varies over the equivalence classes in $Y$, we then have $S_1 \ll S_3$. Choose $Z$ for which this maximum is attained.

For $y \in Z$, we have
\[
\# \Gal(E_y/F) \le [K:F] \cdot o_1 \cdot o_2 \cdot o_3 \quad\text{and}\quad \# \Gal(E_y/F)/ N_y \le [K:F] \cdot o_2.
\]
By Lemma \ref{lem:monomial_decomp} and the definition of the equivalence class $Z$, there is a positive integer $k$, complex coefficients $a_1, \dots, a_k$, and a choice of a linear character $\psi_{iy}$ on a subgroup $H_{iy}$ of $\Gal(E_y/F)$ containing $N_y$ for each $i \le k$ and $y \in Z$ such that
\[
\gamma_y = \sum_{i \le k} a_i \cdot \text{Ind}^{\Gal(E_y/F)}_{H_{iy}} \psi_{iy},
\]
such that
\begin{equation}
\label{eq:sumsquaresbears}
\sum_{i \le k} [\Gal(E_y/F): H_{iy}]^2 \le [K:F]^3 \cdot  o_1^2 \cdot o_2^3 \cdot o_3^2, 
\end{equation}
such that $|a_i| \cdot [\Gal(E_y/F) : H_{iy}]$ is at most $o_1$ for all $i \le k$ and $y \in Z$, and such that the kernel of $\psi_{iy}$ contains no marked subgroup of $\Gal(E_y/F)$ for any $i \le k$ and $y \in Z$. 
 
For $i \le k$ and $y \in z$, we take the notation
\[
\mu_{iy} = \text{Ind}_{H_{iy}}^{\Gal(E_y/F)} \psi_{iy},
\]
and we define a multiplicative function $U_{iy}$ supported on the squarefree ideals of $F$ by
\[
U_{iy}(\mfp) = 
\begin{cases}
\mu_{iy}(\Frob\, \mfp) &\text{for }\, \mfp \not \in \Vplac_y \\ 
\gamma_y(1)^{-1} \cdot G_y(\mfp) \cdot \mu_{iy}(1) &\text{for }\, \mfp \in \Vplac_y.
\end{cases}
\]
We also take $A_i$ to be the multiplicative function supported on squarefree ideals that equals $a_i$ on all prime ideals.

Then, for any squarefree $\mfh$, we have
\[
G_y(\mfh) = \sum_{\mfa_1 \dots \mfa_k = \mfh} \prod_{i \le k} A_i(\mfa_i) U_{iy}(\mfa_i).
\]
We can then bound $S_3$ by an expression of the form
\begin{align*}
\sum_{\substack{N_F(\mfh) \leq H_1}} \left| \sum_{y \in Z} a(y) G_y(\mfh)\right|& \le \sum_{\substack{\mfa_1, \dots, \mfa_k  \\ N_F(\mfa_1 \dots \mfa_k) \le H_1}}\left| \sum_{y \in Z} a(y) \cdot \prod_{i \le k} A_i(\mfa_i) \cdot U_{iy}(\mfa_i)\right| \\
&\le \sum_{\substack{i_1, \dots, i_k \in \Z_{\ge 0} \\ i_1 + \dots + i_k \le \log H_1 + k}} \sum_{\substack{\mfa_1, \dots, \mfa_k \\ N_F(\mfa_j) \le e^{i_j}}} \left| \sum_{y \in Z} a(y) \cdot \prod_{i \le k} A_i(\mfa_i) \cdot U_{iy}(\mfa_i)\right|,
\end{align*}
where $a(y)$ is a complex number of magnitude $1$ for $y \in Z$. The final sum here is over at most $(\log H_1 +2k)^{k}$ choices of $i_1, \dots, i_k$. Choose $i_1, \dots, i_k$ where the associated summand is maximized and take $H_{1j} = e^{i_j}$.

There is some $j \le k$ where
\[
\log H_{1j}/ (\deg \mu_{jy})^2
\]
is maximized, where $y$ is arbitrary in $Z$. Permuting if necessary, we may assume $j = 1$, and we take $d = \deg \mu_{1y}$. Then
\[
\log H_{11} + \dots + \log H_{1k} \le (\deg \mu_{1y}^2 + \dots + \deg \mu_{ky}^2) \frac{\log H_{11}}{d^2}.
\]
Taking $H_{10} = e^k H_{1}/(H_{12} \dots H_{1k}) $, we then have $H_{10} \ge H_{11}$ and
$$
\log H_{10} \ge \frac{d^2 \cdot \log H_1}{[K:F]^3 \cdot o_1^2 \cdot o_2^3 \cdot o_3^2},
$$
where the denominator comes from \eqref{eq:sumsquaresbears}.



By Lemma \ref{lem:simple_multiplicative_bound} and the bounds on the $a_i$, we have
\[
\sum_{\substack{\mfa_2, \dots, \mfa_k \\ N_F(\mfa_j) \le H_{1j}\text{ for } j \ge 2}} \left|\prod_{i \le k} A_i(\mfa_i)  \cdot U_{iy}(\mfa_i)\right| \ll H_{12}\cdot \,\dots\, \cdot H_{1k} \cdot (\log H_1)^{k \cdot o_1}.
\]
The exponent of $\log H_1$ in this inequality is at most $[K : F]^2 \cdot o_1^2 \cdot o_2^2 \cdot o_3$. We find
\[
S_3 \ll H_1/H_{10} \cdot (\log H_1)^{2[K:F]^2 \cdot o_1^2 \cdot o_2^2 \cdot o_3}\cdot  \max_a \sum_{\substack{\mfa\\ N_F(\mfa) \le H_{10}}} \left| \sum_{y \in Z} a(y) \cdot A_1(\mfa) \cdot U_{1y}(\mfa)\right|,
\]
where the maximum is taken over all functions $a: Z \to \C$ of magnitude at most $1$. Choose $a$ maximizing this expression, and take
\[
S_4 = \sum_{\substack{\mfa\\ N_F(\mfa) \le H_{10}}} \left| \sum_{y \in Z} a(y) \cdot A_1(\mfa) \cdot U_{1y}(\mfa)\right|.
\]

We would like to apply \cite[Theorem 5.6]{LOSmith} to bound $S_4$. To do so, we need to estimate the size of the set $D \subseteq Z \times Z$ consisting of 
\[
(y, z) = \big((\phi_{20}, \phi_{20}', \phi_{30}, \phi_{30}'),\, (\phi_{21}, \phi_{21}', \phi_{31}, \phi_{31}')\big)
\]  
such that $\mu_{1y} \cdot \overline{\mu_{1z}}$  has nonzero average over $\Gal(E_{y}E_{z}/F)$. If $(y, z)$ lies in $D$, $\Gal(E_y/F)$ can contain a marked subgroup $H$ only if the image of $H$ in $\Gal(E_z/F)$ is nontrivial by Lemma \ref{lem:gamma_props}. In particular, if we take
\[
\mfh = \mff(\phi_{20}) \cdot \mff(\phi_{20}') \cdot \mff(\phi_{30}) \cdot \mff(\phi_{30}')\cdot \mff(\phi_{21}) \cdot \mff(\phi_{21}') \cdot \mff(\phi_{31}) \cdot \mff(\phi_{31}'),
\]
there either is no prime $\mfp_2$ such that
\[
\mfp_2 \mid \mff(\phi_{20}) \quad\text{and}\quad \mfp_2 \nmid \mfh\cdot \mff(\phi_{20})^{-1}
\]
or there is no prime $\mfp_3$ such that
\[
\mfp_3 \mid \mff(\phi_{30}) \quad\text{and}\quad \mfp_3 \nmid \mfh\cdot \mff(\phi_{30})^{-1}.
\]
Taking $\mfh_0$ to be the squarefree part of $\mfh$, we thus have
\[
N_F(\mfh_0) \le H_2^4 H_3^3.
\]
If we call this ideal the radical of $(y, z)$, then the number of elements of $D$ with this same radical is bounded by
\[
\ll (\# M_2 \cdot \# M_3)^{4 \omega(\mfh_0)}.
\]
So Lemma \ref{lem:simple_multiplicative_bound} gives
\[
\#D \ll H_2^4 H_3^3 \cdot (\log H_2)^{o_2^4 o_3^4}.
\]
A simpler application of Lemma \ref{lem:simple_multiplicative_bound} gives
\[
\# Z \ll H_2^2 H_3^2 \cdot (\log H_2)^{2o_2 + 2o_3}.
\]
To bound $S_4$, we will apply \cite[Theorem 5.6]{LOSmith} with 
\[
Q = C_1 H_2^{2d} H_3^{2d},\quad H = C_2 H_{10}, \quad t= 8 [K:F]^3 \cdot o_1^2 \cdot o_2^3 \cdot o_3^2,\quad\text{and}\quad b_{\mfa} = A_1(\mfa),
\]
where $C_1 \gg 1$ is a sufficiently large constant given $K/F$, $\Vplac_0$, and the $M_i$, and where $C_2 \gg 1$ is sufficiently large given this data and the choice of $C_1$. If $C_1$ is sufficiently large, we note that $\Delta_F^d \cdot N_{F/\QQ} \mff(\mu_{1y})$ is at most $Q$ for $y \in Z$, so the cited theorem may be applied to $G_y$.

With this choice of $t$ and $A_1$, we see that the function $G(\mfr, \mfb)$ defined in \cite[Theorem 5.6]{LOSmith} may be expressed in the form $H(\mfr \mfb)$ for a multiplicative function $H$ satisfying 
\[
|H(\mfp)| \le d t |a_1| \quad\text{and}\quad |H(\mfp^k)| \le C
\]
for all primes $\mfp$ of $F$ and all $k \ge 1$, where $C> 0$ does not depend on the $H_i$. Lemma \ref{lem:simple_multiplicative_bound} then gives that the $A_{0t}$ defined in \cite[Theorem 5.6]{LOSmith} satisfies
\[
A_{0t} \ll (\log H_1)^{d^2t^2 |a_1|^2}.
\]
Next,
\[
H_{10}^{-1/4} Q^{d/4t}\le \exp\left(\frac{-2d^2 \log H_1}{t} + \frac{ 2d^2 (\log H_2 + \log H_3)}{4t}  \right) \le H_1^{-1/t}.
\]
So the diagonal terms dominate in \cite[Theorem 5.6]{LOSmith}, which finally gives
\[
S_4 \ll (\log H_1)^{\kappa} H_{10}H_2^2 H_3^2 \cdot H_3^{-1/2t}\quad\text{with}\quad\kappa =  27d^2t^{-1} [F:\Q] + d^2t|a_1|^2 + 2o_2 + 2o_3 +\tfrac{1}{2}t^{-1}o_2^4o_3^4.
\]
We have $\kappa \le 16 [K:\Q]^3 \cdot o_1^4 o_2^3 o_3^2$, so
\[
S_3 \ll (\log H_1)^{18 [K:\Q]^3 o_1^4 o_2^3 o_3^2}\cdot  H_1 H_2^2 H_3^2 \cdot H_3^{-1/2t}.
\]
After bounding the $|X_i|$ using Lemma \ref{lem:simple_multiplicative_bound}, we then get
\[
S_0 \ll  (\log H_1)^{7[K:\Q]^3 o_1^4 o_2^3 o_3^2}\cdot H_1H_2 H_3\cdot H_3^{-1/8t}, 
\]
giving the theorem.
\qed

\begin{remark}
\label{rmk:supersolvable}
In the case that the groups $\Gal(E_y/F)$ are supersolvable, the characters $\mu_{iy}$ defined above may be restricted to be irreducible characters \cite[Theorem 6.22]{Isaacs} if we allow the complex coefficients $a_i$ to have magnitude at most $o_1$. In this case, $\sum_i \deg \mu_{iy}^2$ is at most $[K:F] \cdot o_1o_2o_3$, allowing us to take
\[
c = \frac{1}{64[K:F] \cdot o_1 o_2o_3 }\quad\textup{and}\quad C = 7 [K:\Q]^3 \cdot o_1^2 o_2^3 o_3^3.
\]
For example, this condition is satisfied if the $M_i$ are cyclic modules and $K/F$ is chosen to be minimal so the $M_i$ and $\mu_{o_i}$ are $\Gal(K/F)$-modules.
\end{remark}

\section{Cubic twist families}
\label{sCubicTwist}
Throughout this section, we will fix a nonnegative integer $d$, so our goal is to study the $3$-Selmer groups among the cubic twists $E_{d, n}$ and $E_{-27d, n}$ with $n$ varying.

\subsection{\texorpdfstring{Galois structure of $E_{d, n}$}{Galois structure of E}}
It will be convenient to fix a primitive third root of unity $\zeta$ and a square root $\sqrt{d}$ in $\ovQQ$. For every nonzero integer $n$, we define the action of $\mu_3$ on $E_{d, n}$ by
\[
\zeta (x, y) = (\zeta x, y).
\]
For each nonzero integer $n$, we may choose a cube root $\sqrt[3]{dn^2/16}$ to define an isomorphism of elliptic curves
\[
\beta_{d, n}: E_{d, n} \to E_{1, 4}
\]
over $\QQ\left(\mu_3, \sqrt[3]{2d^2n}, \sqrt{d}\right)$ by
\[
(x, y) \mapsto \left(\frac{x}{\sqrt[3]{dn^2/16}},\, \,\frac{y}{\tfrac{1}{4}n \cdot \sqrt{d}}\right).
\]
Changing the cube root $\sqrt[3]{dn^2/16}$ varies this isomorphism by an automorphism from $\mu_3$.

Define $\chi_{2, d}$ to be the quadratic character associated to $\QQ(\sqrt{d})/\QQ$ and take $\chi_{3, dn^2/16}: G_{\QQ} \to \mu_3$ to be the cocycle defined by
\[\chi_{3, dn^2/16} (\sigma) = \frac{\sigma \left(\sqrt[3]{dn^2/16}\right)}{\sqrt[3]{dn^2/16}}.\]
Modulo coboundaries in $B^1(G_{\QQ}, \mu_3)$, the map $\chi_{3, dn^2/16}$ is the image of $dn^2/16$ under the Kummer map to $H^1(G_{\QQ}, \mu_3)$. In any case, we may calculate
\begin{equation}
\label{eq:twist}
\beta_{d,n}(\sigma z) = \chi_{3, dn^2/16}(\sigma)\chi_{2,d}(\sigma) \sigma \beta_{d, n}(z)\quad\text{for all}\quad z \in E_{d, n}\left(\ovQQ\right) \,\,\text{ and }\,\, \sigma \in G_{\Q}.
\end{equation}
The curve $E_{1, 4}$ is identifiable as the curve 27.a4 in the LMFDB database, which has two distinct rational $3$-isogenies \cite{LMFDB, LMFDB2}, so we have an isomorphism 
\[E_{1, 4}[3]  \cong \FFF_3 \oplus \mu_3\]
of Galois modules. The point $(0, 4)$ lies in $E_{1, 4}\left[\sqrt{-3}\right]$ and is $G_{\QQ}$ stable, so it generates the submodule $\FFF_3 \oplus 0$ of this direct sum.

Fix a generator $e_1$ for $\mu_3$ in this direct sum. Viewing $\zeta$ as automorphism on $E_{1, 4}$, we see that $e_2 = (\zeta - 1) e_1$ is a generator for the submodule $\FFF_3$ of $E_{1, 4}[3]$.

Then $\beta_{d, n}^{-1}(e_1)$ and $\beta_{d, n}^{-1}(e_2)$ are a basis for the $\FFF_3$ vector space $E_{d, n}[3]$. Taking $z = \beta_{d, n}^{-1}(e_1)$, we have
\[\sigma z = \chi_{2,\, -3d}(\sigma) z + \chi_{2,\, -3d}(\sigma) \cdot (\chi_{3,dn^2/16}(\sigma) - 1) z\quad\text{and}\quad
\sigma \big((\zeta - 1)z\big) = \chi_{2, d}(\sigma) \cdot (\zeta - 1) z\]
for all $\sigma$ in $G_{\QQ}$.

Given any nonzero integer $m$, we define a Galois module $M_{m}$ over $\QQ$ to be the quadratic twist of the trivial module $\FFF_3$ by the faithful character of $\Gal(\QQ(\sqrt{m})/\QQ)$, and we take $\alpha_m$ to be the image of $1$ in this twist. The above calculations show that we have an exact sequence
\begin{equation}
\label{eq:Edn3}
\begin{tikzcd}
0 \arrow{r} & M_{d} \arrow{r}{\iota} & E_{d, n}[3] \arrow{r} & \arrow[bend right=20, swap]{l}{\,s} M_{-3d} \arrow{r} & 0,\end{tikzcd}
\end{equation}
where the embedding $\iota:M_d \to E_{d, n}[3]$ takes $\alpha_d$ to $(\zeta - 1) z$, where the map to $M_{-3d}$ takes $z$ to $\alpha_{-3d}$, and where $s$ is the section of abelian groups  taking $\alpha_{-3d}$ to $z$. Given $\sigma \in G_{\QQ}$, we then have
\begin{equation}
\label{eq:section_behavior}
\sigma s(\sigma^{-1} \alpha_{-3d})  - s(\alpha_{-3d}) =  \frac{\chi_{3, dn^2/16}(\sigma) - 1}{\zeta - 1} \cdot \iota(\alpha_d).
\end{equation}
Choosing another nonzero integer $n'$, we may use this same construction for $E_{d, n'}[3]$. We then have a commutative diagram of abelian groups
\begin{equation}
\label{eq:beta_relation}
\begin{tikzcd}
    0 \arrow{r} & M_d \arrow[d, equal] \arrow{r}  & E_{d, n}[3] \arrow{d}{\beta_{d, n'}^{-1} \circ \beta_{d, n}} \arrow{r} & M_{-3d} \arrow{r} \arrow[d, equal]& 0 \\
    0 \arrow{r} & M_d \arrow{r} & E_{d, n'}[3] \arrow{r} & M_{-3d} \arrow{r} & 0.
\end{tikzcd}
\end{equation}

\subsection{Local conditions}
\label{ssec:loc_cond}
From \eqref{eq:Edn3}, we may view $\Sel^{\sqrt{-3}} E_{d, n}$ as a subgroup of the form 
\[\ker\left(H^1(G_{\QQ}, M_{d}) \to \prod_{v\text{ of }\Q} H^1(G_v, M_{d})/\mathscr{W}_v(E_{d, n})\right),\]
where
\[\mathscr{W}_v(E_{d, n}) = \ker\left(H^1(G_v, M_{d}) \to H^1(G_v, E_{d, n})\right).\]
Recall that we still have $d$ fixed. For all rational places $v$, we note that $\mathscr{W}_v$ depends just on the class of $n$ in $\QQ_v^{\times}/ (\QQ_v^{\times})^3$.

Consider $p \ne 2, 3$, and take $k$ to be the valuation of $dn^2$ at $p$, so $p^k || dn^2$.
If $k$ is divisible by $6$, then $\mathscr{W}_p$ is the set of unramified cocycle classes. Otherwise, $\mathscr{W}_p$ is the image of $H^0\left(G_p, E_{-27d,n}[3^{\infty}]\right)$ under the connecting map in the long exact sequence
\[H^0\left(G_p, E_{d, n}[3^{\infty}]\right) \xrightarrow{\,\,\sqrt{-3}\,\,} H^0\left(G_p, E_{-27d, n}[3^{\infty}]\right) \to H^1\left(G_p, M_{d}\right).\]
By considering how \eqref{eq:twist} behaves at the inertia subgroup for $p$, this reduces to the image of the connecting map
\[H^0(G_p, M_{-3d}) \to H^1(G_p, M_{d})\]
corresponding to $E_{d, n}$. From \eqref{eq:section_behavior}, this connecting map is given by the cup product with the image of $\chi_{3, \,dn^2/16}$ in $H^1(G_p, \mu_3)$, where the associated bilinear map $M_{-3d} \otimes \mu_3 \to M_{d}$ is defined to take $\alpha_{-3d} \otimes \zeta$ to $\alpha_{d}$. As a result, for $p$ a rational prime other than $2$, $3$, or $\infty$, we have
\begin{equation}
\label{eq:loc_Tama}
\left|\mathscr{W}_p(E_{d, n})\right| = 
\begin{cases}
\left|H^0(G_p, M_{-3d})\right|  &\text{ if  $E_{d, n}$ has bad reduction at $p$ } \\ 
\left|H^0(G_p, M_{d})\right| &\text{ otherwise.} 
\end{cases}
\end{equation}
We define the \emph{Tamagawa ratio} for $E_{d, n}$ by
\begin{align}
\label{eTamagawa}
\mathcal{T}(E_{d, n}) = \frac{\left| H^0(G_{\Q}, M_{d})\right|}{\left| H^0(G_{\Q}, M_{-3d})\right|} \cdot \prod_{v} \frac{\left|\mathscr{W}_v(E_{d, n})\right|}{\left|H^0(G_v, M_{d})\right|},
\end{align}
where the product is over all rational places of $\Q$. This is defined so that the Greenberg--Wiles formula, which we take from \cite[Theorem 8.7.9]{Neuk07}, takes the form
\begin{equation}
\label{eq:Tama_relation}
\left|\Sel^{\sqrt{-3}} E_{d, n}\right| = \left|\Sel^{\sqrt{-3}} E_{-27d, n}\right| \cdot \mathcal{T}(E_{d, n}).
\end{equation}

\subsection{Relationships among Cassels--Tate pairings}
Now choose two nonzero integers $n, n'$. By viewing both $\Sel^{\sqrt{-3}} E_{-27d, n}$ and $\Sel^{\sqrt{-3}} E_{-27d, n'}$ as subgroups of $H^1(G_{\Q}, M_{-3d})$ using \eqref{eq:beta_relation} at $-27d$, we may ask about
\[
\CTP_{d, n}(\phi, \psi) - \CTP_{d, n'}(\phi, \psi) \quad\text{ for } \phi, \psi \in \Sel^{\sqrt{-3}} E_{-27d, n} \cap \Sel^{\sqrt{-3}} E_{-27d, n'}.
\]
A typical idea is that this difference should be controllable if $n$ and $n'$ vary at just one prime.

\begin{proposition}
\label{pCT}
Given a nonzero integer $d$, there is an equivariant isomomorphism
\begin{equation}
\label{eq:sign?}
\mu_3 \otimes M_{-3d} \otimes M_{-3d} \to \mu_3
\end{equation}
such that, for any  nonzero integer $m$, any $k \in \{1, 2\}$ and for any pair of primes $p, q$ such that 
\[
p/q \in (\Q_r^{\times})^3 \quad\textup{ for all primes } r \mid 6dm,
\]
we have
\[
\exp\big(2\pi i\left(\CTP_{d, mp^k}(\phi, \psi) - \CTP_{d, mq^k}(\phi, \psi)\right)\big) = \Redei{\chi_{3,\, p/q}}{\phi}{\psi}^k,
\]
for all $\phi, \psi$ in the intersection of $\Sel^{\sqrt{-3}} E_{-27d, \, mp^k}$ and $\Sel^{\sqrt{-3}} E_{-27d, \, mq^k}$, and where the generalized R\'{e}dei symbol is defined from the homomorphism \eqref{eq:sign?}.
\end{proposition}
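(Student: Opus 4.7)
The strategy is to recognize each pairing $\CTP_{d, n}$ individually as a generalized R\'edei symbol using the final example of Section 2, and then take the quotient for $n = mp^k$ and $n' = mq^k$.

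For the first step, I would compute the extension class of the sequence \eqref{eq:Edn3} in $\textup{Ext}^1_{G_\Q}(M_{-3d}, M_d) = H^1(G_\Q, \Hom(M_{-3d}, M_d))$. The Galois module $\Hom(M_{-3d}, M_d) \cong M_d \otimes M_{-3d}$ has action via $\chi_d \cdot \chi_{-3d} = \chi_{-3}$, so it is Galois-isomorphic to $\mu_3$. Using \eqref{eq:section_behavior}, one reads off that the extension class is a nonzero $\FFF_3^\times$-multiple of $\chi_{3, dn^2/16}$ in $H^1(G_\Q, \mu_3)$. I absorb this scalar into the choice of the equivariant isomorphism \eqref{eq:sign?}, so that the extension class is literally $\chi_{3, dn^2/16}$.

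For the second step, I invoke the final example of Section 2 with $M_1 = \mu_3$, $M_2 = M_3 = M_{-3d}$, $\phi_1 = \chi_{3, dn^2/16}$, and the pairing \eqref{eq:sign?}. After verifying that the local conditions $\mathscr{W}_v$ from that example agree with the Selmer conditions of \S\ref{ssec:loc_cond} for $\Sel^{\sqrt{-3}} E_{-27d, n}$, the example yields
\[
\exp\!\big(-2\pi i\cdot\CTP_{d, n}(\phi, \psi)\big) \,=\, \Redei{\chi_{3,\,dn^2/16}}{\phi}{\psi}
\]
for every $\phi, \psi \in \Sel^{\sqrt{-3}} E_{-27d, n}$, with the corresponding identity for $n' = mq^k$.

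For the third step, I take the quotient of the two identities. Kummer multiplicativity gives
\[
\chi_{3, dn^2/16} - \chi_{3, dn'^2/16} \,=\, \chi_{3, (p/q)^{2k}} \,=\, 2k\cdot \chi_{3, p/q} \,=\, -k\cdot \chi_{3, p/q}
\]
in $H^1(G_\Q, \mu_3)$, using $2 \equiv -1 \bmod 3$. Since the two right-hand side symbols in the previous display are nonzero unit complex numbers, Proposition \ref{prop:rec} is applicable, and two successive applications of linearity in the first slot give
\[
\exp\!\big(2\pi i (\CTP_{d, mp^k}(\phi, \psi) - \CTP_{d, mq^k}(\phi, \psi))\big)\, = \,\Redei{\chi_{3,\, p/q}}{\phi}{\psi}^{k},
\]
which is the claim. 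The nonvanishing of the exponential on the left forces $\Redei{\chi_{3, p/q}}{\phi}{\psi}$ to be nonzero a posteriori, so no division-by-zero issue arises.

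The main obstacle is the local-conditions verification in Step 2. For each place $v$, the local subspace $\mathscr{W}_v$ defined in \S\ref{ssec:loc_cond} from the image of $E_{-27d, n}(\Q_v)/\sqrt{-3} E_{d, n}(\Q_v)$ in $H^1(G_v, M_{-3d})$ must coincide with the local condition arising from an equivariant local section of the extension $E_{d, n}[3]$ as described in the final example of Section 2. At places of good reduction this is immediate from the unramified description; at places of bad reduction it requires a careful comparison using local Tate duality and the Kummer connecting map, drawing on the computation of \eqref{eq:loc_Tama}. The hypothesis $p/q \in (\Q_r^\times)^3$ for all $r \mid 6dm$ enters exactly to guarantee both that $\mathscr{W}_v(E_{-27d, mp^k}) = \mathscr{W}_v(E_{-27d, mq^k})$ at these places (so the intersection Selmer group is well-described) and that $\chi_{3, p/q}$ is locally trivial at such $v$ (so the symbol condition of Definition \ref{def:Redei} is automatically satisfied at these primes, leaving only the primes $p$ and $q$ where the Selmer triviality of $\phi, \psi$ does the remaining work).
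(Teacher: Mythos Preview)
Your Step 2 contains a genuine gap: the identity
\[
\exp\!\big(-2\pi i\cdot\CTP_{d, n}(\phi, \psi)\big) \,=\, \Redei{\chi_{3,\,dn^2/16}}{\phi}{\psi}
\]
cannot hold for general $\phi, \psi \in \Sel^{\sqrt{-3}} E_{-27d, n}$, because the right side is typically \emph{zero} while the left side is a root of unity. At a prime $r \mid m$ with $r \nmid 6d$, the character $\chi_{3, dn^2/16}$ is ramified and hence locally nontrivial; the Selmer condition at $r$ places $\phi$ and $\psi$ in the image of the connecting map $H^0(G_r, M_d) \to H^1(G_r, M_{-3d})$, which is generically nonzero, so neither $\phi$ nor $\psi$ need be trivial there. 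The first part of the symbol condition in Definition~\ref{def:Redei} then fails at $r$. The hypothesis $p/q \in (\Q_r^\times)^3$ for $r \mid 6dm$ does nothing to help, since it only concerns the \emph{difference} $\chi_{3, dn^2/16} - \chi_{3, dn'^2/16}$, not either character individually. Note also that the final example of Section~\ref{sec:GRS} runs in the direction opposite to what you need: it \emph{assumes} the symbol is nonzero and then builds local conditions (depending on the particular $\phi_2, \phi_3$) realising it as a Cassels--Tate pairing; it does not show that a fixed Cassels--Tate pairing equals a R\'edei symbol.

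The paper circumvents this by never expressing the individual pairings as symbols. It writes explicit cochains for $\CTP_{d, mp^k}$ and $\CTP_{d, mq^k}$ via the Weil-pairing definition, using the same $\overline{\phi}, \overline{\psi}$ and $\epsilon$-cochains related through the isomorphism $\beta = \beta_{d, mq^k}^{-1} \circ \beta_{d, mp^k}$, and subtracts directly. Only the difference $d(\epsilon' - \epsilon) = \chi_{3, q^{2k}/p^{2k}} \cup \overline{\phi} \cup \iota(\overline{\psi})$ appears, and $\chi_{3, p/q}$ \emph{is} locally trivial at every $r \mid 6dm$ by hypothesis; combined with the observation that $\phi, \psi$ are forced to be trivial at $p$ and $q$ (from lying in both Selmer groups), this is exactly what makes $\Redei{\chi_{3, p/q}}{\phi}{\psi}$ nonzero. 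Your a posteriori nonvanishing argument in Step 3 cannot repair the gap, since Proposition~\ref{prop:rec} only splits $\Redei{\phi_1 + \phi_1'}{\phi_2}{\phi_3}$ when one of the factor symbols is already known to be nonzero.
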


\begin{proof}
Given a nonzero integer $n$, we have an identification
\[
\iota: M_{-3d} \to \text{Hom}(M_d, \mu_3)
\]
taking $x$ to $y \mapsto \langle y,\overline{x} \rangle$, where the final pairing is the Weil pairing in $E_{d, n}[3]$ and where $\overline{x}$ is a lift of $x$ to $E_{d, n}[3]$ using the sequence \eqref{eq:Edn3}. From \eqref{eq:beta_relation}, this does not depend on the choice of $n$. Here we used that $\beta_{d, n}$ is an isomorphism of elliptic curves over $\ovQQ$; this implies that the Weil pairing is unchanged when applying $\beta_{d, n}$ to both arguments.
Having made this identification, we take \eqref{eq:sign?} to be the homomorphism taking $\zeta \otimes \alpha_{-3d} \otimes \alpha_{-3d}$ to $\iota(\alpha_{-3d})(\alpha_{d})$.

We first claim that the symbol $\Redei{\chi_{3, p/q}}{\phi}{\psi}$ is nonzero. Note that this is a consequence of three observations:
\begin{itemize}
\item We have $\text{inv}_v\left(x \cup \text{res}_{G_v}\left(\phi \cup \psi\right)\right) = 0$ at all rational places $v$ for all $x \in H^0(G_{v}, \mu_3)$.
\item $\chi_{3, p/q}$ has trivial restriction to $G_v$ for $v$ dividing $6dm$.
\item Either $p = q$ or $\phi$ and $\psi$ have trivial restriction to $G_p$ and $G_q$.
\end{itemize}
The first observation is clear if $G_v$ does not fix $\mu_3$. If $G_v$ instead does fix $\mu_3$, then we have that $\text{ker}(H^1(G_v, E_{d,n}[3]) \to  H^1(G_v, E_{d, n}))$ is stable under the action of $\mu_3$ for $n$ either $mp$ or $mq$, so cup product with an element in $H^0(G_v, \mu_3)$ defines a homomorphism from the local conditions for $E_{-27d, n}[\sqrt{-3}]$ to the local conditions for $E_{d, n}[\sqrt{-3}]$ at $v$. These local conditions are orthogonal to the local conditions for $E_{-27d, n}[\sqrt{-3}]$ by the theory of isogeny Selmer groups \cite[Proposition 6.1]{MS22a}.

The second observation is clear from the assumptions. Finally, if $p \ne q$, neither $\phi$ nor $\psi$ can be ramified at either $p$ or $q$. Since nontrivial multiples of $\chi_{3, dm^2p^{2k}/16}$ and $\chi_{3, dm^2q^{2k}/16}$ 
are ramified at $p$ and $q$, the results in Section  \ref{ssec:loc_cond} show that $\phi$ and $\psi$ must have trivial restriction to both $G_p$ and $G_q$. This gives the third observation.

We now calculate the Cassels--Tate pairings. The result is obvious if $p = q$, so we presume they are distinct.  Fix inclusions $\overline{\Q} \to \overline{\Q_p}$ and $\overline{\Q} \to \overline{\Q_q}$; these correspond to inclusions of $G_p$ and $G_q$ in $G_{\Q}$. Choose cocycles $\overline{\phi}$ and $\overline{\psi}$ representing $\phi$ and $\psi$. We choose these cocycles so they are trivial on $G_p$ and $G_q$, respectively.

Take $\beta: E_{d, \,mp^k} \to E_{d, \, mq^k}$ to be the isomorphism $\beta_{d, \,mq^k}^{-1} \circ \beta_{d,\, mp^k}$. From \eqref{eq:twist}, this corresponds to a choice of $\sqrt[3]{p/q}$ in $\ovQQ$. 


Take $\Vplac$ to be the minimal set of places containing $\infty$ and all primes dividing $6dm$. For $v$ in $\Vplac$ we choose $\ovQQ \hookrightarrow \overline{\QQ_v}$ so $\QQ_v$ contains the image of this choice of $\sqrt[3]{p/q}$. Under the corresponding inclusion of $G_v$ in $G_{\QQ}$, we see that $\beta$ is equivariant over $G_v$ and that it carries the local conditions for $E_{d, \,mp^k}[3]$ to the local conditions for $E_{d,\, mq^k}[3]$ at $v$. 

Take $s: M_{-3d} \to E_{d, mp^k}[3]$ to be the section of \eqref{eq:Edn3}. For each place $v$ in $\Vplac$, choose a cocycle $\Phi_v$ representing a class in the local conditions for $E_{d, mp^k}[3]$ at $v$ that projects to $\text{res}_{G_v}\overline{\phi}$, and choose 
\[
\epsilon  \in C^2\left(G_{\QQ, S}, \mathcal{O}_{\QQ, S}^{\times}\right)
\]
so $d\epsilon = d (s\circ \overline{\phi}) \cup \iota(\overline{\psi})$, where the cup product is defined from the Weil pairing and where $S = \Vplac \cup \{p, q\}$.

Then the Weil pairing definition of the Cassels--Tate pairing gives
\[
\CTP_{d, mp^k}(\phi, \psi) = \sum_{v  \in \Vplac} \text{inv}_v\left(\left(s \circ \textup{res}_{G_v}\overline{\phi} - \overline{\Phi}_v\right) \cup \text{res}_{G_v} \iota\circ \overline{\psi} - \text{res}_{G_v}\epsilon\right) + \sum_{v \in \{p, q\}} \text{inv}_v\left( -\text{res}_{G_v} \epsilon\right).
\]
If we choose $\epsilon'$ in the same space of $2$-cochains as above so $d\epsilon' = d(\beta (s \circ \overline{\phi})) \cup \iota(\overline{\psi})$, we also have
\begin{align*}
&\CTP_{d, mq^k}(\phi, \psi)  \\
&=\,\sum_{v  \in \Vplac} \text{inv}_v\left(\left(\beta s \circ \textup{res}_{G_v}\overline{\phi} - \beta(\overline{\Phi}_v)\right) \cup \text{res}_{G_v} \iota \circ \overline{\psi} - \text{res}_{G_v}\epsilon'\right) + \sum_{v \in \{p, q\}} \text{inv}_v\left( -\text{res}_{G_v} \epsilon'\right).  
\end{align*}
Using that the Weil pairing is unchanged upon applying $\beta$ to both arguments and that $\iota \circ \overline{\psi}$ is fixed by $\beta$, we may take the difference of the above two equations to obtain
\[
\CTP_{d, mp^k}(\phi, \psi) - \CTP_{d, mq^k}(\phi, \psi) = \sum_{v \in \Vplac \cup \{p, q\}} \text{inv}_v\circ \textup{res}_{G_v}(\epsilon' - \epsilon).
\]
From \eqref{eq:section_behavior}, we may calculate that
\[
d(\epsilon' - \epsilon) = \chi_{3, q^{2k}/p^{2k}} \cup \overline{\phi} \cup \iota(\overline{\psi}),
\] 
where the first cup product comes from the bilinear pairing $\mu_3 \otimes M_{-3d} \to M_d$ taking $\zeta \otimes \alpha_{-3d}$ to $\alpha_d$. The result follows from the definition of the symbol.
\end{proof}

\section{Generalized Legendre symbols and grids}
\label{sec:Leg}
As in \cite{Smi22a, Smi22b}, we will study  $\sqrt{-3}$-Selmer groups using the framework to generalized Legendre symbols introduced in \cite[Section 3]{Smi22a}. We recall what we need of this now.

\subsection{Setting up our generalized Legendre symbols}
\begin{setup}
Fix a nonzero integer $d$, and take $K = \QQ(\mu_3, \sqrt{d})$. Fix a finite set of rational places $\Vplac_0$ such that $(K/\Q, \Vplac_0, 3)$ is an unpacked starting tuple, in the sense of \cite[Definition 3.4]{Smi22a}. Finally, fix an \emph{assignment of approximate generators}, as in \cite[Notation 3.8]{Smi22a}. We may choose this assignment so, for any prime $\mfp$ of $K$ not over a prime in $\Vplac_0$ and any primes $\ovp$ and $\ovp'$ of $\ovQQ$ over $\mfp$, the approximate generators for $\ovp$ and $\ovp'$ are the same. We take $\Frob\, \mfp$ to be the image of a Frobenius element in $G_{\mfp \cap \Q}$ in $\Gal(K_{\text{ab}, \mfp}/\Q)$ under some embedding corresponding to $\mfp$, where $K_{\text{ab}, \mfp}$ is the maximal abelian extension of $K$ not ramified over $\mfp \cap \Q$. 

From this, given a finite $\FFF_3[\Gal(K/\QQ)]$-module $M$, we can construct a homomorphism
\[
\mfB_{\mfp, M}: M(-1)^{G_{\mfp}} \to H^1(G_\Q, M)
\]
that serves as a section of the natural ramification-measuring homomorphism \cite[Definition 3.8 and (3.1)]{Smi22a}, where $M(-1)$ is the negative Tate twist $\text{Hom}(\mu_3, M)$. Note that the compatibility assumption we have made for primes over $\mfp$ allows us to index these maps with primes of $K$, rather than primes of $\ovQQ$.

Also from the approximate generators, we may define \emph{classes, spins, and symbols}. Classes are normally defined as collections of primes over $\ovQQ$ \cite[Definition 3.6]{Smi22a}, but the definition is such that all primes over the same prime $\mfp$ of $K$ lie in the same class, so we instead view them as collections of primes over $K$. We write the class of $\mfp$ as $\class{\mfp}$.

Similarly, given primes $\mfp, \mfq$ of $K$ not over $\Vplac_0$ such that $\Frob\, \mfp$ and $\Frob\, \mfq$ either map to different elements of $\Gal(K/\QQ)$ or map to the same element of $\Gal(K/\QQ(\mu_3))$, we define the \emph{generalized Legendre symbol} $\symb{\mfp}{\mfq} \in \mu_3$ as the alternative symbol $\symb{\ovp}{\overline{\mfq}}'$ \cite[Definition 3.21]{Smi22a}, where $\ovp$ and $\overline{\mfq}$ are primes of $\ovQQ$ over $\mfp$ and $\mfq$. 

Take $\tau_{\mfp}$ and $\tau_{\mfq}$ to be shorthand for the images of $\Frob\, \mfp$ and $\Frob\, \mfq$ in $\Gal(K/\Q)$. If neither $d$ nor $-3d$ are squares, we define $\tau_{-3}, \tau_d, \tau_{-3d} \in \Gal(K/\QQ)$ by the relationships
\begin{alignat*}{2}
&\tau_{-3}\left(\sqrt{-3}\right) = \sqrt{-3}\quad&&\text{and}\quad \tau_{-3}\big(\sqrt{d}\big) = -\sqrt{d}, \\
&\tau_d \left(\sqrt{-3}\right) = -\sqrt{-3}\quad&&\text{and}\quad \tau_d \big(\sqrt{d}\big) = \sqrt{d}, \\
&\tau_{-3d} \left(\sqrt{-3}\right) = -\sqrt{-3}\quad&&\text{and}\quad \tau_{-3d} \big(\sqrt{d}\big) = -\sqrt{d}.
\end{alignat*}
If $d$ or $-3d$ is a square, then $\Gal(K/\QQ)$ has order $2$ and we take $\tau_1 \in \Gal(K/\QQ)$ to be the unique non-trivial element.

In the case that $\Gal(K/\QQ)$ has order $4$, we define a subset $B(\mfp, \mfq)$ of $\Gal(K/\QQ)$ by
\begin{align}
\label{eq:Bcase}
B(\mfp, \mfq) = 
\begin{cases} 
\{1\} &\text{ if } \langle \tau_{\mfp}, \tau_{\mfq} \rangle = \Gal(K/\QQ) \\ 
\Gal(K/\QQ) &\text{ if } \tau_{\mfp} = \tau_{\mfq} = 1 \\
\{1, \tau_{-3}\} &\text{ if } \tau_{\mfp} \ne \tau_{\mfq} \text{ and }\langle \tau_{\mfp}, \tau_{\mfq}\rangle = \langle \tau_d\rangle \text{ or } \langle \tau_{-3d}\rangle \\
\{1, \tau_d\} &\text{ if } \langle \tau_{\mfp}, \tau_{\mfq} \rangle = \langle \tau_{-3}\rangle \\
\emptyset &\text{ if } \tau_{\mfp} = \tau_{\mfq} \in \{ \tau_d, \tau_{-3d}\}.
\end{cases}
\end{align}
If $\Gal(K/\QQ)$ has order $2$, we instead define
\begin{equation}
\label{eq:Bcase2}
B(\mfp, \mfq) = \begin{cases} 
\{1\} &\text{ if } \tau_{\mfp} \ne \tau_{\mfq} \\
\Gal(K/\QQ) &\text{ if } \tau_{\mfp} = \tau_{\mfq} = 1\\
\emptyset &\text{ if } \tau_{\mfp} = \tau_{\mfq} = \tau_1.
\end{cases}
\end{equation}
With these set, \cite[Proposition 3.22]{Smi22a} gives
\begin{equation}
\label{eq:double_coset}
\mfB_{\mfq, M}(m)(\Frob\, \mfp) \equiv \sum_{\tau \in B(\mfp, \mfq)} \tau \left(m\left(\symb{\tau\mfp}{\mfq}\right)\right) \quad\text{ mod }  (\tau_{\mfp} - 1)M
\end{equation}
for any $m$ in $M(-1)^{\tau_{\mfq}}$.  
\end{setup}

\subsection{Algebraic theory for grids of twists}
\label{ssec:alg_grids}
As in \cite{Smi22a, Smi22b}, our equidistribution results for Selmer groups will initially be proved for product spaces of twists that we call grids. Our development largely follows \cite[Section 4]{Smi22a}; we may not simply invoke this material because  we are considering twists coming from $H^1(G_{\Q}, \mu_3)$, rather than $H^1(G_{\Q}, \FFF_3)$.

\begin{mydef}
In addition to $d$, fix a nonzero integer $b$ and a positive integer $r$, and take $X_1, \dots, X_r$ to be disjoint sets of rational primes not containing any prime dividing $6db$. For $i \le r$, we fix a bijection $\lambda_i: X_i \isoarrow Y_i$, where $Y_i$ is a collection of primes of $K$ and $\lambda_i(p)$ is a prime over $p$ for each $p$ in $X_i$.  We will assume that, for each $i \le r$ and all $p_i, q_i$ in $X_i$, we have 
\begin{equation}
\label{eq:classy_assumption}
\class{\lambda_i(p_i)} = \class{\lambda_i(q_i)} \quad\text{and}\quad \symb{\lambda_i(p_i)}{\mfp} = \symb{\lambda_i(q_i)}{\mfp}\quad\text{for all } \mfp \mid bd \text{ satisfying } \mfp \cap \Q \not \in \Vplac_0.
\end{equation}
We then call $X = \prod_{i \le r} X_i$ a \emph{grid of twists}.

Given $x = (p_1, \dots, p_r)$ in $X$, we take $n(x) = b\cdot p_1 \cdot \dots \cdot p_r$; the associated cubic twist is then $E_{d,  n(x)}$.

We define the \emph{grid class} $\class{x}$ of $x$ to be the set of $y = (q_1, \dots, q_r) \in X$ for which 
\begin{enumerate}
\item For $\tau \in B(\lambda_i(p_i), \lambda_i(p_i)) \backslash \{1\}$ and $i \le r$, we have 
\[\symb{\tau \lambda_i(p_i)}{\lambda_i(p_i)} =\symb{\tau \lambda_i(q_i)}{\lambda_i(q_i)}\]
\item For $i \ne j$ and $\tau \in B(\lambda_i(p_i), \lambda_j(p_j))$, we have
\[ \symb{\tau \lambda_i(p_i)}{\lambda_j(p_j)} =\symb{\tau \lambda_i(q_i)}{\lambda_j(q_j)}.\]
\end{enumerate}
Take $\Vplac$ to be the set of places in $\Vplac_0$ together with the primes dividing $bd$. For $M$ either $M_d$ or $M_{-3d}$, we define
\[H^1_{\Vplac}(G_{\Q}, M) = \ker\left(H^1(G_{\QQ}, M) \to \prod_{v \not \in \Vplac} H^1(I_v, M)\right),\]
and we define a map
$$
\mfB_{x, M}: H^1_{\Vplac}(G_{\Q}, M) \oplus \bigoplus_{i=1}^r M(-1)^{G_{p_i}} \hookrightarrow H^1(G_{\Q}, M)
$$
by
\[
\mfB_{x, M}(\phi_0, (m_1, \dots, m_r)) = \phi_0 + \sum_{i = 1}^r \mfB_{\lambda_i(p_i), M}(m_i).
\]
We write the domain of $\mfB_{x, M}$ as $\mathcal{M}(M)$. Given $i \le r$ and $p_i, q_i \in X_i$, the elements $\Frob\, \lambda_i(p_i)$ and $\Frob\, \lambda_i(q_i)$ have the same image in $\Gal(K/\Q)$ by \eqref{eq:classy_assumption}; this suffices to show that $\mathcal{M}(M)$ does not depend on $x$. We also note that any element $\phi$ in $\Sel^{\sqrt{-3}} E_{-27d, n(x)}$ is in the image of this map for $M = M_{-3d}$.
\end{mydef}

Following the proof of \cite[Proposition 4.8]{Smi22a} for the theory in Section \ref{ssec:loc_cond} gives the following proposition.

\begin{proposition}
\label{prop:48}
Given $X$ as above and given $x$ and $y$ in the same grid class of $X$, we have
\[
\mfB_{x, M_{-3d}}^{-1} \left(\Sel^{\sqrt{-3}} E_{-27d, n(x)}\right) = \mfB_{y, M_{-3d}}^{-1}\left(\Sel^{\sqrt{-3}} E_{-27d, n(y)}\right)\quad\textup{ in } \mathcal{M}(M_{-3d}).
\]
\end{proposition}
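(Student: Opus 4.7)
My plan is to verify the claim place-by-place: for every rational place $v$ and every tuple $(\phi_0, \vec{m}) \in \mathcal{M}(M_{-3d})$, I want to show
\[
\res_{G_v} \mfB_{x, M_{-3d}}(\phi_0, \vec{m}) \in \mathscr{W}_v(E_{-27d, n(x)}) \Longleftrightarrow \res_{G_v} \mfB_{y, M_{-3d}}(\phi_0, \vec{m}) \in \mathscr{W}_v(E_{-27d, n(y)}),
\]
with $\mathscr{W}_v$ as described in Section \ref{ssec:loc_cond}. Since $\Sel^{\sqrt{-3}} E_{-27d, n}$ is the intersection of these local conditions, this yields the proposition.

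The argument divides into three cases. For $v$ outside $\Vplac \cup \{p_1,\dots,p_r, q_1,\dots,q_r\}$, every summand of both $\mfB_x$ and $\mfB_y$ is unramified at $v$, and \eqref{eq:loc_Tama} identifies $\mathscr{W}_v$ with the unramified subspace on both sides, so both memberships hold automatically. For $v \in \Vplac$, I would first use the compatibility assumption on approximate generators across primes of $K$ over $v$ (for $v$ above a place in $\Vplac_0$) together with \eqref{eq:classy_assumption} (for $v \mid bd$) to show that $n(x)$ and $n(y)$ have the same image in $\Q_v^\times/(\Q_v^\times)^3$, whence $\mathscr{W}_v(E_{-27d, n(x)}) = \mathscr{W}_v(E_{-27d, n(y)})$. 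Next, since each $\mfB_{\lambda_i(p_i), M_{-3d}}(m_i)$ is unramified at $v$, its restriction to $G_v$ is determined by its Frobenius value, which via \eqref{eq:double_coset} is controlled by the symbols $\symb{\tau \lambda_i(p_i)}{\mfp}$ for $\mfp \mid v$; these agree with the corresponding symbols attached to $\lambda_i(q_i)$ by \eqref{eq:classy_assumption}. Combined with the $x$-independence of $\phi_0$, this shows the full restriction at $v$ is identical on both sides.

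The key case, which I expect to be the main technical obstacle, is the diagonal: $v = p_i$ for some $i$ (the case $v = q_i$ is symmetric). Here $p_i \nmid n(y)$ because the $X_j$ are disjoint and $p_i \neq q_i$, so $\mfB_{y, M_{-3d}}(\phi_0, \vec{m})$ is unramified at $p_i$ and $E_{-27d, n(y)}$ has good reduction there; the right-hand condition is therefore automatic. For the left-hand condition, I would decompose the restriction at $p_i$ as a ramified piece from $\mfB_{\lambda_i(p_i), M_{-3d}}(m_i)$, whose ramification is exactly $m_i$ via $\mathfrak{R}_{\lambda_i(p_i), M_{-3d}}$, plus unramified pieces from $\phi_0$ and from $\mfB_{\lambda_j(p_j), M_{-3d}}(m_j)$ for $j \neq i$, whose Frobenius values at $p_i$ are given by \eqref{eq:double_coset} in terms of the off-diagonal symbols $\symb{\tau \lambda_j(p_j)}{\lambda_i(p_i)}$ and self-symbols $\symb{\tau \lambda_i(p_i)}{\lambda_i(p_i)}$. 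The local condition $\mathscr{W}_{p_i}(E_{-27d, n(x)})$ is, by the explicit description in Section \ref{ssec:loc_cond}, the image of $H^0(G_{p_i}, M_d)$ under cup product with (a twist of) $\chi_{3,\, n(x)^2 d / 16}$, and it depends on $p_i$ only through $\class{\lambda_i(p_i)}$. Grid-class conditions (1) and (2) are then precisely what is needed to transport the entire calculation to $y$ with $p_i$ replaced by $q_i$, so that membership in the local condition reduces to a function of grid-class-invariant data. The bookkeeping required to express this membership purely in symbolic terms is the crux; it is a direct adaptation of the computation in \cite[Proposition 4.8]{Smi22a}, the new ingredient being the $\mu_3$-twist coming from $\chi_{3, dn^2/16}$, which is absorbed cleanly into the symbol calculus by the compatibility built into our assignment of approximate generators.
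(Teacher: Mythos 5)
Your stated plan — proving, for each rational place $v$, a biconditional between membership in $\mathscr{W}_v(E_{-27d, n(x)})$ and membership in $\mathscr{W}_v(E_{-27d, n(y)})$ at the \emph{same} $v$ — is not a correct decomposition of the claim, and the error surfaces precisely in the case you flag as the crux. At $v = p_i$ (with $p_i \neq q_i$), your own analysis shows the $y$-side condition is automatic: $\mfB_{y, M_{-3d}}(\phi_0, \vec{m})$ is unramified at $p_i$ and $E_{-27d, n(y)}$ has good reduction there, so $\mathscr{W}_{p_i}(E_{-27d, n(y)})$ is the unramified subspace and contains the restriction trivially. Meanwhile the $x$-side condition at $p_i$ is the genuine constraint \eqref{eq:loc_cond}, which a generic $(\phi_0, \vec m) \in \mathcal{M}(M_{-3d})$ will fail — this is exactly what makes $\Sel^{\sqrt{-3}}$ small. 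So the biconditional at $v = p_i$ is false, and summing the putative place-by-place equivalences does not yield the proposition.

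The correct matching, which the paper's proof carries out, pairs the local condition at $p_i$ on the $x$-side with the local condition at $q_i$ on the $y$-side. One shows that membership of $\mfB_{x}(\phi_0, \vec m)$ in $\mathscr{W}_{p_i}(E_{-27d, n(x)})$ is governed by the expression \eqref{eq:explicit_local_conds}, which by \eqref{eq:classy_assumption} and \eqref{eq:double_coset} is a function only of the grid-class data, and hence agrees with the corresponding expression for $q_i$ and $y$. Your closing remark about "transporting the entire calculation to $y$ with $p_i$ replaced by $q_i$" is in fact the right move — but it contradicts the plan you announce at the outset, which is why the write-up as stated does not hold together. The treatment of $v \in \Vplac$ (compatibility of approximate generators, grid-class invariance of the restrictions), the use of the symbol calculus adapted from \cite[Proposition 4.8]{Smi22a}, and the observation that the $\mu_3$-twist is absorbed by the framework are all consistent with the paper's argument; you only need to fix the framing so that $p_i$ is matched with $q_i$ rather than with itself.
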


\begin{proof}
Choose $x = (p_1, \dots, p_r)$ and $y = (q_1, \dots, q_r)$, and choose $m = (\phi_0, (m_1, \dots, m_r))$ in $\mathcal{M}(M_{-3d})$. We take $\mfp_i = \lambda_i(p_i)$ and $\mfq_i = \lambda_i(q_i)$ for $i \le r$.

For $v$ a rational place not in $\Vplac \cup \{p_1, \dots, p_r\}$, we see that $\mfB_{x, M_{-3d}}(m)$ obeys the local conditions at $v$ by virtue of being unramified. The same holds outside $\Vplac \cup \{q_1, \dots,q_r\}$ for $\mfB_{y, M_{-3d}}(m)$.

For $v \in \Vplac$, \eqref{eq:classy_assumption} forces
\[
\res_{G_v} \chi_{3, p_i} = \res_{G_v} \chi_{3, q_i}\quad\text{and}\quad \res_{G_v} \mfB_{\mfp_i, M_{-3d}}(m_i) = \res_{G_v} \mfB_{\mfq_i, M_{-3d}}(m_i)\quad\text{for } i \le r
\]
by \cite[Definition 3.10 (3) and Proposition 3.23]{Smi22a}. 
The first equality means $M_{-3d}$ has the same local conditions at $v$ when looking at the twists associated either to $x$ or $y$, and the second equality gives that $\mfB_{x, M_{-3d}}(m)$ satisfies these local conditions if and only if $\mfB_{y, M_{-3d}}(m)$ does.

We now wish to prove that $\mfB_{x, M_{-3d}}(m)$ satisfies the local conditions at $p_i$ if and only if $\mfB_{y, M_{-3d}}(m)$ satisfies the local conditions at $q_i$. We focus on the former. From Section \ref{ssec:loc_cond}, the local condition at $v = p_i$ is satisfied if and only if
\[\res_{G_v} \mfB_{x, M_{-3d}}(m) - r \cup \res_{G_v} \chi_{3, dn^2/16} = 0\]
for some $r$ in $H^0(G_v, M_{-3d}(-1)) \cong H^0(G_v, M_{d})$, where the cup product is associated to the evaluation pairing. 
This is only unramified if $r = -m_i$, leaving the condition
\begin{equation}
\label{eq:loc_cond}
\mfB_{x, M_{-3d}}(m)(\Frob\, \mfp_i) + m_i \cdot \chi_{3, dn^2/16}(\Frob\,\mfp_i) \equiv 0\,\text{ mod }\, (\Frob\, \mfp_i - 1) M_{-3d}.
\end{equation}

By \eqref{eq:classy_assumption} and \eqref{eq:double_coset}, there is some $a \in \mu_3$ independent of the choice of $x$ such that the left side of \eqref{eq:loc_cond} equals
\begin{equation}
\label{eq:explicit_local_conds}
m_i(a) + \sum_{j \le r} \sum_{\tau \in B(\mfp_i, \mfp_j)} \tau\left( m_j\left(\symb{\tau \mfp_i}{\mfp_j}\right)\right) - m_i\left(\tau \left(\symb{\tau \mfp_i}{\mfp_j}\right)\right) \quad\text{in } M_{-3d}/(\Frob\, \mfp_i -1)M_{-3d}.
\end{equation}
This can be calculated from from the symbols used to define a grid class. Since $y$ shares this grid class, we find that $ \mfB_{x, M_{-3d}}(m)$ obeys the local condition at $p_i$ if and only if $ \mfB_{y, M_{-3d}}(m)$ obeys the local condition at $q_i$. This gives the result.
\end{proof}

Given some $x \in X$, take $\mathcal{M}_0(M_{-3d})$ to be the subgroup of $m \in \mathcal{M}(M_{-3d})$ such that $\mfB_{x, M_{-3d}}(m)$ maps into $\mathscr{W}_v(E_{-27d, n(x)})$ for all $v \in \Vplac$. As shown in the proof of Proposition \ref{prop:48}, this does not depend on $x$. The typical cardinality of this set is also not hard to calculate.

\begin{lemma}
\label{lem:nosmallramonly}
With all notation as above, take $K(\Vplac)$ to be the maximal abelian extension of $K$ of exponent $3$ ramified only at places in $\Vplac$. Choose some $x = (p_1, \dots, p_r)$ in $X$, and suppose there is a subset $S$ of $\{1, \dots, r\}$ such that $p_i$ splits in $K/\QQ$ for each $i \in S$ and the normal subgroup of $\Gal(K(\Vplac)/\QQ)$ generated by
$$
\{ \Frob\, p_i\,:\,\, i \in S\}
$$
is $\Gal(K(\Vplac)/K)$. We note that this condition does not depend on the choice of $x$ by \eqref{eq:classy_assumption}.

Then
\begin{equation}
\label{eq:nosmallramonly}
H^1_{\Vplac}(G_{\Q}, M_d) \,\cap\, \Sel^{\sqrt{-3}} E_{d, n(x)}\, =\, 0.
\end{equation}
Equivalently, we have
\begin{equation}
\label{eq:M0_count}
\left| \mathcal{M}_0(M_{-3d})\right| = \mathcal{T}(E_{-27d, n(x)}) \cdot \prod_{i \le r} \left|H^0(G_{p_i}, M_{-3d})\right|.
\end{equation}
\end{lemma}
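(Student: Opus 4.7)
The plan is to establish the vanishing \eqref{eq:nosmallramonly} directly and then to deduce the equivalent count \eqref{eq:M0_count} by a Greenberg--Wiles (Poitou--Tate) calculation.

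\emph{Equivalence.} I would first identify $\mathcal{M}(M_{-3d})$ with its image in $H^1(G_\Q, M_{-3d})$ under $\mfB_{x, M_{-3d}}$; this map is injective because the ramification measurement at each $p_i$ recovers $m_i$, after which $\phi_0 \in H^1_\Vplac(G_\Q, M_{-3d})$ is determined. Viewed this way, $\mathcal{M}_0(M_{-3d})$ becomes the Selmer group $\Sel'(M_{-3d}) \subseteq H^1(G_\Q, M_{-3d})$ cut out by the local conditions $\mathscr{W}_v(E_{-27d, n(x)})$ at $v \in \Vplac$, the full $H^1(G_{p_i}, M_{-3d})$ at each $p_i$, and unramified classes elsewhere. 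The claim at $p_i$ follows from the decomposition $H^1(G_{p_i}, M_{-3d}) = H^1_{\mathrm{unr}}(G_{p_i}, M_{-3d}) \oplus \mfB_{\lambda_i(p_i), M_{-3d}}(M_{-3d}(-1)^{G_{p_i}})$: the two summands meet trivially because $\mfB$ sections the ramification-measuring map, and their combined size equals $|H^1(G_{p_i}, M_{-3d})|$ by local Tate duality, local Euler--Poincar\'e, and the canonical identification $M_{-3d}(-1) \cong M_d$ of $G_\Q$-modules. Consequently the dual Selmer group $\Sel'^\perp(M_d)$ is cut out by $\mathscr{W}_v(E_{d, n(x)})$ at $v \in \Vplac$, $0$ at each $p_i$, and unramified elsewhere; the vanishing argument below gives $\mathscr{W}_{p_i}(E_{d, n(x)}) \cap H^1_{\mathrm{unr}}(G_{p_i}, M_d) = 0$, which allows the condition ``trivial at $p_i$'' to be replaced by ``unramified at $p_i$'' and identifies $\Sel'^\perp(M_d)$ with $H^1_\Vplac(G_\Q, M_d) \cap \Sel^{\sqrt{-3}} E_{d, n(x)}$. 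Greenberg--Wiles \cite[Theorem 8.7.9]{Neuk07}, combined with the formula \eqref{eTamagawa} for $\mathcal{T}(E_{-27d, n(x)})$ and \eqref{eq:loc_Tama} at each $p_i$, then yields $|\mathcal{M}_0(M_{-3d})| = |\Sel'^\perp(M_d)| \cdot \mathcal{T}(E_{-27d, n(x)}) \cdot \prod_i |H^0(G_{p_i}, M_{-3d})|$, giving the equivalence.

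\emph{Vanishing.} I take $\phi$ in the intersection of \eqref{eq:nosmallramonly}. For each $i \in S$, $p_i$ divides $n(x)$ exactly once and is coprime to $6db$, so $dn(x)^2$ has valuation $2$ at $p_i$; by Section \ref{ssec:loc_cond}, $\mathscr{W}_{p_i}(E_{d, n(x)})$ is the image of cup product with the ramified character $\chi_{3, dn(x)^2/16}$, so every nonzero class in it is ramified. Since $\phi$ is unramified at $p_i$, the local Selmer condition forces $\res_{G_{p_i}}\phi = 0$, and in particular $\phi(\Frob\, p_i) = 0$ in $M_d$. Restricting $\phi$ to $G_K$ yields a genuine homomorphism $\phi|_{G_K}: G_K \to M_d \cong \FFF_3$ (since $M_d$ is trivial over $G_K$) that is unramified outside primes above $\Vplac$, hence factors through $\Gal(K(\Vplac)/K)$. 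For $i \in S$, $p_i$ splits completely in $K$, so $\phi|_{G_K}(\Frob\, \mfp_i) = \phi(\Frob\, p_i) = 0$ for any prime $\mfp_i \mid p_i$ of $K$. Because $\phi|_{G_K}$ lies in the $\Gal(K/\Q)$-fixed part of $H^1(G_K, M_d)$ (with the conjugation action on $G_K$ twisted by $\chi_{2, d}$ on $M_d$), this vanishing propagates to every $\Gal(K(\Vplac)/\Q)$-conjugate of the Frobenii $\Frob\, \mfp_i$ for $i \in S$. By hypothesis these conjugates generate $\Gal(K(\Vplac)/K)$ as a group, so $\phi|_{G_K} = 0$. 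Inflation--restriction then places $\phi$ in $H^1(\Gal(K/\Q), M_d^{G_K})$, a group annihilated by $|\Gal(K/\Q)| \in \{1, 2, 4\}$; since $\phi$ is $3$-torsion, $\phi = 0$.

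The main technical input is the identity $\mathscr{W}_{p_i}(E_{d, n(x)}) \cap H^1_{\mathrm{unr}}(G_{p_i}, M_d) = \{0\}$, which converts the local Selmer condition into the pointwise Frobenius vanishing $\phi(\Frob\, p_i) = 0$; this rests entirely on the explicit description of $\mathscr{W}_p$ for primes of multiplicative-type reduction in Section \ref{ssec:loc_cond}. Once that is in place, the remaining steps are formal Galois-cohomological bookkeeping together with Greenberg--Wiles.
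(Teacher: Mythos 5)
Your proof is correct and follows essentially the same route as the paper's: both establish the vanishing \eqref{eq:nosmallramonly} by passing via inflation--restriction to a homomorphism $\Gal(K(\Vplac)/K) \to M_d$, using that the local condition at $p_i$ meets the unramified classes trivially to force $\phi(\Frob\, p_i) = 0$, and then invoking the generating hypothesis; and both deduce \eqref{eq:M0_count} from \eqref{eq:nosmallramonly} via Greenberg--Wiles applied to the auxiliary Selmer structure on $\mathcal{M}_0(M_{-3d})$. Your version supplies more detail (notably the explicit dual local condition check, for which you should note that $\mathscr{W}_{p_i}(E_{d,n(x)}) \cap H^1_{\mathrm{unr}}(G_{p_i}, M_d) = 0$ holds for \emph{all} $i \le r$, not only $i \in S$ as your phrasing suggests, since every $p_i$ divides $n(x)$ exactly once and is prime to $6bd$), and runs the vanishing step directly rather than by contradiction as in the paper; these are cosmetic rather than structural differences.
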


\begin{proof}
Given $\phi$ in $H^1_{\Vplac}(G_{\QQ}, M_{d})$, we note that the minimal field containing $K$ over which $\phi$ becomes trivial is an abelian extension of $K$ of exponent dividing $3$ ramified only at places in $\Vplac$. So the inflation map defines an isomorphism
\[
H^1_{\Vplac}(G_{\QQ}, M_d) \cong H^1(\Gal(K(\Vplac)/\Q), M_d).
\]
Note that the restriction map
\[
H^1(\Gal(K(\Vplac)/\Q), M_d)\xrightarrow{\,\,\text{res}\,\,}H^1(\Gal(K(\Vplac)/K), M_d) \cong \Hom(\Gal(K(\Vplac)/K), M_d) 
\]
is injective since $|\Gal(K/\Q)|$ and $|M_d|$ are coprime.

Take $\psi$ to be the image of $\phi$ in $\Hom(\Gal(K(\Vplac)/K), M_d)$. If $\phi$ was nonzero, $\psi$  is nonzero, so we have
\[
\psi(\sigma \cdot \Frob\, \lambda_i(p_i) \cdot \sigma^{-1}) \ne 0
\]
for some $p_i$ and some $\sigma \in G_{\Q}$. Since $\psi$ is the restriction of a cocycle, we have 
\[
\psi(\sigma \cdot \Frob\, \lambda_i(p_i) \cdot \sigma^{-1}) = \phi(\sigma \cdot \Frob\, \lambda_i(p_i) \cdot \sigma^{-1}) = \sigma \cdot \phi(\Frob\, \lambda_i(p_i)) = \sigma \cdot \psi(\Frob\, \lambda_i(p_i)) \ne 0.
\]
Because $\psi$ is a homomorphism, we conclude that $\psi$ has nontrivial restriction to $G_{p_i}$. It is also unramified at $p_i$, so $\phi$ cannot satisfy the local conditions at $p_i$.

This gives \eqref{eq:nosmallramonly}. Using $\mfB_{x, M_{-3d}}$, we may identify $\mathcal{M}_0(M_{-3d})$ with the Selmer group on $M_{-3d}$ whose local conditions at $v = p_1, \dots, p_r$ are $H^1(G_v, M_{-3d})$ and whose local conditions equal $\mathscr{W}_v(E_{-27d, n(x)})$ for other $v$. The dual Selmer group to this is the left hand side of \eqref{eq:nosmallramonly}, so the Greenberg--Wiles formula \cite[Theorem 8.7.9]{Neuk07} shows the equivalence of \eqref{eq:nosmallramonly} and \eqref{eq:M0_count}.
\end{proof}

If we replace $d$ with $-27d$, the hypothesis of Lemma \ref{lem:nosmallramonly} does not shift. So we note that this same hypothesis is enough to also conclude
\begin{equation}
\label{eq:nosmallramonly2}
H^1_{\Vplac}(G_{\Q}, M_{-3d}) \,\cap\, \Sel^{\sqrt{-3}} E_{-27d, n(x)}\, =\, 0.
\end{equation}

\begin{mydef}
Still working with a grid $X \ni x$ as above, take $\mathcal{M}_{0, \Vplac}(M_{-3d})$ to be the subset of $\mathcal{M}_0(M_{-3d})$ of elements of the form $(\phi_0, 0, \dots, 0) \in \mathcal{M}_0(M_{-3d})$ with $\phi_0$ nonzero. In addition, take $\mathcal{D}(M_{-3d})$ to be the subgroup of $\mathcal{M}_0(M_{-3d})$ identified with 
the image of the connecting map
\[
H^0(G_{\Q}, M_{d}) \to \Sel^{\sqrt{-3}} E_{-27d, n(x)} 
\]
under $\mfB_{x, M_{-3d}}$. Note that this subgroup does not depend on the choice of $x$, and that it is $0$ unless $d$ is square.

We then define
\[\mathcal{M}_1(M_{-3d}) = \mathcal{M}_0(M_{-3d}) \big\backslash (\mathcal{M}_{0, \Vplac}(M_{-3d}) + \mathcal{D}(M_{-3d})).\]
Note that, if $x$ satisfies the hypothesis of Lemma \ref{lem:nosmallramonly}, the preimage of the Selmer group $\Sel^{\sqrt{-3}} E_{-27d, n(x)}$ automatically lies in $\mathcal{M}_1(M_{-3d})$ by \eqref{eq:nosmallramonly2}.
\end{mydef}

\subsection{\texorpdfstring{$\sqrt{-3}$-Selmer moments over a grid}{Sqrt(-3)-Selmer moments over a grid}}
Our next steps follow the basic track of \cite[Sections 5 and 6]{Smi22b}.

Take $S$ to be the subset of $i \le r$ such that $H^0(G_{p_i}, M_{-3d})$ is nonzero for any/every $p_i$ in $X_i$, and take
\[
\mathcal{R} = \bigoplus_{i \in S} \Hom\left(M_{-3d}, \,\tfrac{1}{3}\Z/\Z\right).
\]
As a Galois module, this is isomorphic to the direct sum of $|S|$ copies of $M_{-3d}$.

Given $x = (p_1, \dots, p_r)$ in $X$, $\mu = (\phi_0, (m_1, \dots, m_r))$ in $\mathcal{M}_0(M_{-3d})$ and $i \in S$, take
$L_{i, x}(\mu)$ to be the expression \eqref{eq:explicit_local_conds}. Given $\gamma = (\gamma_i)_{i \in S}$ in $\mathcal{R}$, we then define
\[
(\gamma \cdot \mu)_x = \sum_{i \in S} \gamma_i \cdot L_{i, x}(\mu).
\]
Then Proposition \ref{prop:48} shows that $\mfB_{x, M_{-3d}}(\mu)$ lies in the $\sqrt{-3}$-Selmer group of $E_{d, n(x)}$ if and only if $(\gamma \cdot \mu)_x = 0$ for all $\gamma$ in $\mathcal{R}$.

As a consequence, taking $e(t)$ as notation for $\exp(2\pi i \cdot t)$, we have the key identity
\[\left|\Sel^{\sqrt{-3}} E_{-27d, n(x)}\right| = \frac{1}{\left| \mathcal{R}\right|} \sum_{\mu \in \mathcal{M}_0(M_{-3d})} \sum_{\gamma \in \mathcal{R}} e\left( (\gamma \cdot \mu)_x\right)\quad\text{for all } x \in X,\]
which shows that the average Selmer size over $X$ equals
\begin{equation}
\label{eq:average}
\frac{1}{|X|} \sum_{x \in X} \left|\Sel^{\sqrt{-3}} E_{-27d, n(x)}\right| = \frac{1}{\left| \mathcal{R}\right| \cdot |X|} \sum_{\mu \in \mathcal{M}_0(M_{-3d})} \,\sum_{\gamma \in \mathcal{R}} \, \sum_{x \in X}  e\left((\gamma \cdot \mu)_x \right).
\end{equation}
Our approach is to consider the inner summand for a fixed choice of $(\mu, \gamma)$. The following result helps us keep track of how this inner summand changes as $x$ changes.

\begin{lemma}
\label{lem:bilinear_vary}
In the above situation, choose $\mu = (\phi_0, (m_1, \dots, m_r))$ in $\mathcal{M}_0(M_{-3d})$, choose $\gamma = (\gamma_i)_{i \in S}$ in $\mathcal{R}$, and choose $x = (p_1, \dots, p_r) \in X$. Also choose distinct integers $j, k \le r$, and take $Z$ to be the subset of $y = (q_1, \dots, q_r) \in X$ such that $q_i = p_i$ for $i \ne j, k$. Take $\gamma_i = 0$ for $i$ outside $S$, and take $\mfp_i = \lambda_i(p_i)$ for all $i \le r$.

Then there exist functions $a_j: X_j \to \tfrac{1}{3}\Z/\Z$ and $a_k: X_k \to \tfrac{1}{3}\Z/\Z$ such that
\[
(\gamma \cdot \mu)_y = a_j(q_j) + a_k(q_k) + \sum_{\tau \in B(\mfp_j, \mfp_k)} b_{\tau}\left(\symb{\tau\lambda_j(q_j)}{\lambda_k(q_k)}\right)
\quad\textup{for } y = (q_1, \dots, q_r) \in Z,
\]
where $b_{\tau} \in \Hom\left(\mu_3, \tfrac{1}{3}\Z/\Z\right)$ is given by
\[
b_{\tau} \,=\, - (\tau \gamma_j - \gamma_k) \cdot (\tau m_j - m_k) \,=\, \chi_{2, -3d}(\tau)\gamma_j \cdot  m_k  - \chi_{2, -3}(\tau) \gamma_j \cdot m_j + \chi_{2, d}(\tau) \gamma_k  \cdot m_j - \gamma_k \cdot m_k.
\]
\end{lemma}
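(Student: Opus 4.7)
The plan is to expand $(\gamma \cdot \mu)_y = \sum_{i \in S} \gamma_i(L_{i, y}(\mu))$ explicitly, with $L_{i, y}(\mu)$ given by \eqref{eq:explicit_local_conds} after setting $\mfq_\ell = \lambda_\ell(q_\ell)$ (so $\mfq_\ell = \mfp_\ell$ for $\ell \notin \{j, k\}$), and then to track how each summand varies with $y \in Z$. This produces a double sum indexed by $(i, \ell) \in S \times \{1, \dots, r\}$, whose $(i, \ell)$-term is $\gamma_i$ applied to a combination of $\tau(m_\ell(\symb{\tau \mfq_i}{\mfq_\ell}))$ and $m_i(\tau\symb{\tau \mfq_i}{\mfq_\ell})$ summed over $\tau \in B(\mfq_i, \mfq_\ell)$. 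Note that $B(\mfq_i, \mfq_\ell) = B(\mfp_i, \mfp_\ell)$ since the Frobenius classes at $\lambda_i(p)$ are constant on each $X_i$ by \eqref{eq:classy_assumption}.

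I would then partition the double sum according to whether each of $i, \ell$ lies in $\{j, k\}$. Terms with $i, \ell \notin \{j, k\}$ are constant on $Z$; terms with exactly one of $i, \ell$ in $\{j, k\}$ depend on $y$ only through the corresponding coordinate $q_j$ or $q_k$; and the ``diagonal'' cross terms $(j, j)$ and $(k, k)$ involve only $\symb{\tau \mfq_j}{\mfq_j}$ and $\symb{\tau \mfq_k}{\mfq_k}$, which are also univariate. All of these contributions can be collected into the sought functions $a_j(q_j)$ and $a_k(q_k)$.

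The remaining genuine cross terms are $(i, \ell) = (j, k)$ and $(k, j)$, both depending on the pair $(q_j, q_k)$. For $(j, k)$, using that $\tau$ acts on $M_{-3d}$ by $\chi_{2,-3d}(\tau)$, on $\mu_3$ by $\chi_{2,-3}(\tau)$, and trivially on $\tfrac{1}{3}\Z/\Z$, pulling these actions through $\gamma_j$ yields
\[
\sum_{\tau \in B(\mfp_j, \mfp_k)} \bigl[\chi_{2,-3d}(\tau)\, \gamma_j\bigl(m_k(\symb{\tau \mfq_j}{\mfq_k})\bigr) - \chi_{2,-3}(\tau)\, \gamma_j\bigl(m_j(\symb{\tau \mfq_j}{\mfq_k})\bigr)\bigr],
\]
which matches the first two summands of the explicit form of $b_\tau(\symb{\tau \mfq_j}{\mfq_k})$. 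For the $(k, j)$ piece I would invoke a reciprocity for the generalized Legendre symbol rewriting $\symb{\tau' \mfq_k}{\mfq_j}$ in terms of $\symb{\tau'' \mfq_j}{\mfq_k}$ for a suitable $\tau'' = \tau''(\tau')$; combined with the equality $B(\mfq_k, \mfq_j) = B(\mfq_j, \mfq_k)$ visible from \eqref{eq:Bcase} and \eqref{eq:Bcase2}, this reindexes the sum and produces the remaining two summands $\chi_{2,d}(\tau)\, \gamma_k(m_j(\symb{\tau \mfq_j}{\mfq_k})) - \gamma_k(m_k(\symb{\tau \mfq_j}{\mfq_k}))$ of $b_\tau$.

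The main obstacle will be establishing this symbol reciprocity and tracking the correct $\chi_{2, \cdot}$ factors across the five cases of $B(\mfp, \mfq)$ in \eqref{eq:Bcase}; the combinatorics of matching up $(\tau, \tau', \tau'')$ in each case requires care, but should follow from the setup of \cite[Section 3]{Smi22a}. Once this is done, the four contributions assemble into the expansion $-(\tau\gamma_j)(\tau m_j) + (\tau\gamma_j)(m_k) + \gamma_k(\tau m_j) - \gamma_k(m_k)$, which is exactly the first form of $b_\tau$ after identifying $\Hom(M_{-3d}, \tfrac{1}{3}\Z/\Z) \cong M_{-3d}$ and $M_{-3d}(-1) \cong M_d$ as Galois modules.
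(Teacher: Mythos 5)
Your proposal is correct and follows essentially the same route as the paper: expand $(\gamma\cdot\mu)_y = \sum_{i\in S}\gamma_i\cdot L_{i,y}(\mu)$ via \eqref{eq:explicit_local_conds}, absorb everything univariate in $q_j$ or $q_k$ (including the $i\neq j,k$ terms and the diagonal self-symbols) into $a_j$ and $a_k$, and then combine the two genuine cross terms $(j,k)$ and $(k,j)$ using $B(\mfq_j,\mfq_k)=B(\mfq_k,\mfq_j)$ together with reciprocity for the generalized Legendre symbol. The reciprocity you flag as the ``main obstacle'' is precisely \cite[(5.1)]{Smi22b}, which the paper invokes directly: $\symb{\tau\mfq_k}{\mfq_j}-\tau\left(\symb{\tau\mfq_j}{\mfq_k}\right)$ depends only on $\tau$ (not on $q_j,q_k$), after which the extra constant is absorbed into $a_j,a_k$ and the $\chi_{2,\cdot}$ bookkeeping works out exactly as you describe.
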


\begin{proof}
Choose $y = (q_1, \dots, q_r) \in Z$, and take $\mfq_i$ to be $\lambda_i(q_i)$ for $i \le r$. We see that 
\[L_{j, y}(\mu) - \sum_{\tau \in B(\mfq_j, \mfq_k)} \tau\left( m_k\left(\symb{\tau \mfq_j}{\mfq_k}\right)\right) - m_j\left(\tau \left(\symb{\tau \mfq_j}{\mfq_k}\right)\right)\]
depends only on the choice of $q_j$, and not on $q_k$. Similarly,
\[L_{k, y}(\mu) - \sum_{\tau \in B(\mfq_k, \mfq_j)} \tau\left( m_j\left(\symb{\tau \mfq_k}{\mfq_j}\right)\right) - m_k\left(\tau \left(\symb{\tau \mfq_k}{\mfq_j}\right)\right)\]
depends only on the choice of $q_k$, and not on $q_j$.
For $i \ne j, k$, 
\[ \sum_{\tau \in B(\mfq_i, \mfq_j)} \tau\left( m_j\left(\symb{\tau \mfq_i}{\mfq_j}\right)\right) + m_i\left(\tau \left(\symb{\tau \mfq_i}{\mfq_j}\right)\right)\]
depends only on $q_j$ and not $q_k$; meanwhile, the difference between this expression and $L_{i, y}(\mu)$ depends only on $q_k$ and not on $q_j$.

So we find there are functions $a_{0j}: X_j \to \tfrac{1}{3}\Z/\Z$ and $a_{0k}: X_k \to \tfrac{1}{3}\Z/\Z$ such that
\begin{align*}
(\gamma \cdot \mu)_y = a_{0j}(q_j) + a_{0k}(q_k) \,+ &\sum_{\tau \in B(\mfq_j, \mfq_k)} \gamma_j \cdot \tau\left( m_k\left(\symb{\tau \mfq_j}{\mfq_k}\right)\right) - \gamma_j \cdot m_j\left(\tau \left(\symb{\tau \mfq_j}{\mfq_k}\right)\right) \\
+& \sum_{\tau \in B(\mfq_k, \mfq_j)} \gamma_k \cdot \tau\left( m_j\left(\symb{\tau \mfq_k}{\mfq_j}\right)\right) - \gamma_k \cdot m_k\left(\tau \left(\symb{\tau \mfq_k}{\mfq_j}\right)\right),
\end{align*}
where the products are the evaluation pairing. The sets $B(\mfq_j, \mfq_k)$ and $B(\mfq_k, \mfq_j)$ are equal. Given $\tau$ in $G_{\Q}$, we have $\tau^{-1}\mfq_j = \tau \mfq_j$, so reciprocity \cite[(5.1)]{Smi22b} and \eqref{eq:classy_assumption} give that
\[\symb{\tau \mfq_k}{\mfq_j} - \tau\left(  \symb{\tau \mfq_j}{\mfq_k}\right)\]
depends only on $\tau$ and not on $q_j$ or $q_k$. From this, we find
\begin{align*}
(\gamma \cdot \mu)_y &= a_{0j}(q_j) + a_{0k}(q_k) + a + \sum_{\tau \in B(\mfq_j, \mfq_k)} b_{\tau}\left(\symb{\tau \mfq_j}{\mfq_k}\right),
\end{align*}
for some fixed $a$ in $\tfrac{1}{3}\Z/\Z$, and the result follows.
\end{proof}

Together with Lemma \ref{lem:bilinear_vary} and some results from analytic number theory, the following lemma gives a partial characterization of the $(\mu, \gamma)$ for which the sum over $x \in X$ is significant in \eqref{eq:average}.

\begin{lemma}
\label{lem:bilin_ign}
Choose $\mu = (\phi_0, (m_1, \dots, m_r))$ from $\mathcal{M}_0(M_{-3d})$ and $\gamma = (\gamma_{i})_{i \in S}$ from $\mathcal{R}$. Take $\gamma_i = 0$ for $i$ outside $S$. Choose $(p_1, \dots, p_r)$ in $X$.

Then one of the following holds:
\begin{enumerate}
\item We have $m_i = m$ for all $i \le r$ for some $m$ in $M_{-3d}(-1)$, and $m = 0$ unless $d$ is a square.
\item We have $\gamma_i = r$ for all $i \in S$ for some $r$ in $\Hom\left(M_{-3d}, \tfrac{1}{3}\Z/\Z\right)$, and $r  = 0$ unless $-3d$ is a square.
\item There is some $j, k \le r$ and $\tau \in B(\lambda_j(p_j), \lambda_k(p_k))$ such that
\begin{equation}
\label{eq:ignorable}
\left(\tau \gamma_j - \gamma_k\right) \cdot \left(\tau m_j - m_k\right) \ne 0.
\end{equation}
\end{enumerate}
\end{lemma}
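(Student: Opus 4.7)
The plan is to prove the contrapositive: assuming (3) fails, we show that (1) or (2) must hold. Throughout, extend the definition by setting $\gamma_i = 0$ for $i \notin S$.

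Since $M_{-3d}(-1) \cong M_d$ and $\Hom(M_{-3d}, \tfrac{1}{3}\Z/\Z) \cong M_{-3d}$ are one-dimensional $\FFF_3$-spaces, the factored expression $b_\tau = -(\tau\gamma_j - \gamma_k)(\tau m_j - m_k)$ vanishes exactly when one of its two factors does. Hence the failure of (3) is equivalent to the following condition $(\star)$: for every $j, k \le r$ and every $\tau \in B(\lambda_j(p_j), \lambda_k(p_k))$,
\[
\tau \gamma_j = \gamma_k \quad \text{or} \quad \tau m_j = m_k.
\]
Partition the indices by the image $\tau_{\mfp_i}$ in $\Gal(K/\QQ)$ into blocks $T_\sigma = \{i : \tau_{\mfp_i} = \sigma\}$. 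Reading off \eqref{eq:Bcase}--\eqref{eq:Bcase2}, the element $1$ lies in $B(\mfp_j, \mfp_k)$ except when the pair lies entirely in $T_{\tau_d}$ or entirely in $T_{\tau_{-3d}}$. Applying $(\star)$ with $\tau = 1$ to every non-degenerate pair gives $\gamma_j = \gamma_k$ or $m_j = m_k$; a short pigeonhole argument (if $m_{j_0} \ne m_{k_0}$ for some non-degenerate pair then $\gamma_{j_0} = \gamma_{k_0}$, and any third non-degenerate index $i$ cannot simultaneously have $m_i$ equal to both distinct values $m_{j_0}, m_{k_0}$, so $\gamma_i$ must equal their common $\gamma$-value) forces either all $m_i$ or all $\gamma_i$ to coincide across the non-degenerate part.

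To extend this monochromaticity to the degenerate blocks, observe that $T_{\tau_d} \cap S = \emptyset$ (since $\chi_{2,-3d}(\tau_d) = -1$), so $\gamma_i = 0$ throughout $T_{\tau_d}$; symmetrically, $m_i = 0$ on $T_{\tau_{-3d}}$ because $\chi_{2,d}(\tau_{-3d}) = -1$. For each mixed pair with one leg in a degenerate block and one leg outside, the set $B(\mfp_j, \mfp_k)$ contains both $1$ and $\tau_{-3}$; combining these two instances of $(\star)$ with the automatic vanishings extends the common value of $m$ (respectively $\gamma$) across all indices. The edge case in which every index lies inside a single degenerate block is handled by the vanishings themselves (e.g. all indices in $T_{\tau_d}$ forces all $\gamma_i = 0$, so (2) holds trivially with $r = 0$).

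Finally we verify the square conditions. Suppose all $m_i$ equal a common $m \ne 0$. If no index $j$ has $\tau_{\mfp_j} = 1$, then since $m \ne 0$ forces $\chi_{2,d}(\tau_{\mfp_i}) = 1$ for all $i$, all indices lie in $T_{\tau_d}$; then all $\gamma_i = 0$ and (2) holds. Otherwise pick $j$ with $\tau_{\mfp_j} = 1$, giving $B(\mfp_j, \mfp_j) = \Gal(K/\QQ)$; then $(\star)$ requires $\tau m = m$ or $\tau \gamma_j = \gamma_j$ for every $\tau$. If $d$ is not a square, the element $\tau_{-3}$ acts as $-1$ on $M_d$, hence must fix $\gamma_j$; but $\chi_{2,-3d}(\tau_{-3}) = -1$ unless $-3d$ is a square, so either $\gamma_j = 0$ (and (2) holds with $r = 0$ using Step~3) or $-3d$ is a square (and (2) holds with $r = \gamma_j$). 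The symmetric case in which all $\gamma_i$ coincide is treated identically.

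The main obstacle is the propagation step into the degenerate blocks: one must combine $(\star)$ for $\tau \in \{1, \tau_{-3}\}$ with the automatic vanishings coming from $T_{\tau_d} \cap S = \emptyset$ and $M_d^{\langle \tau_{-3d}\rangle} = 0$ to convert the non-degenerate monochromaticity into genuine global monochromaticity, and then to connect this with the square conditions via the self-pair constraint at a fully split index.
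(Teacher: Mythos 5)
Your approach is a genuine alternative to the paper's: you prove the contrapositive using a pigeonhole on the dichotomy ``$\gamma_j = \gamma_k$ or $m_j = m_k$,'' whereas the paper assumes $\neg(1)$ and $\neg(2)$, makes an explicit choice of indices $j,k$, and runs a direct case analysis on the Frobenius images $\tau_j, \tau_k$. Your route buys more conceptual clarity (the pigeonhole reduces everything to the square-condition verification), while the paper's route is shorter because it only needs to locate a single witness for (3). The pigeonhole step itself is correct once one notes, as you do, that the constraint holds automatically on the degenerate pairs because one factor vanishes on $T_{\tau_d}$ (respectively $T_{\tau_{-3d}}$), so it holds for \emph{every} pair and the pigeonhole applies globally.

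However, there is a genuine gap in the square-condition verification. Suppose the pigeonhole lands in the case ``all $m_i = m \ne 0$'' and $d$ is not a square. You split on whether $-3d$ is a square. If it is, you assert that ``(2) holds with $r = \gamma_j$,'' but this presupposes that \emph{all} $\gamma_i$ coincide, which the pigeonhole gave you only in the other branch. The missing argument is: since $m \ne 0$ forces $\tau_{\mfp_i} = 1$ for every $i$ (here $\Gal(K/\Q) = \{1, \tau_1\}$ with $\tau_1$ negating $\sqrt{d}$ and fixing $\sqrt{-3d}$), each pair has $B = \Gal(K/\Q)$, and applying $(\star)$ at $\tau_1$ gives $\tau_1 m_j = -m \ne m = m_k$ hence $\gamma_j = \tau_1\gamma_j = \gamma_k$, so all $\gamma_i$ coincide. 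A related issue: you invoke ``$\tau_{-3}$'' uniformly, but this element does not exist when $\Gal(K/\Q)$ has order $2$ (precisely the case $-3d$ square), so the phrase ``$\chi_{2,-3d}(\tau_{-3}) = -1$ unless $-3d$ is a square'' is not a statement about a single element but a sign that you have changed which group element you are discussing. Finally, ``the symmetric case is treated identically'' also hides the analogous gap: when all $\gamma_i = r \ne 0$ with $-3d$ not square and $d$ square, you must show all $m_i$ coincide, which again requires applying $(\star)$ at $\tau_1$ to general pairs, not just self-pairs. These gaps are all fillable, but as written the proof does not establish the monochromaticity of the second sequence in the two subcases where a square condition turns off the contradiction.
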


\begin{proof}
Suppose neither of the first two conditions holds. We may then choose integers $j, k$ as follows:
\begin{itemize}
\item If neither $d$ nor $-3d$ are squares, choose $j \in S$ such that $\gamma_j$ is nonzero, and choose $k \le r$ such that $m_k$ is nonzero.
\item If $d$ is a square, choose $j \in S$ such that $\gamma_j$ is nonzero, and choose $k \le r$ such that $m_k \ne m_j$.
\item If $-3d$ is a square, so $S = \{1, \dots, r\}$, choose $k \le r$ such that $m_k$ is nonzero, and choose $j \le r$ such that $\gamma_j \ne \gamma_k$.
\end{itemize}
Take $\tau_j$ and $\tau_k$ to be the respective images of $\Frob\, p_j$ and $\Frob\, p_k$ in $\Gal(K/\Q)$, and take $B = B(\lambda_j(p_j), \lambda_k(p_k))$. To start, we will treat the case that $\Gal(K/\Q)$ has order $4$. Since $\gamma_j \neq 0$ and $m_k \neq 0$, we get in all cases
\[
\tau_j \in \langle \tau_{-3d} \rangle \quad\text{and}\quad \tau_k \in \langle \tau_{d} \rangle. 
\]
We will now distinguish four cases according to the values of $\tau_j$ and $\tau_k$, and one final case when $\Gal(K/\Q)$ has order $2$.

\paragraph{Case I: $\tau_j = \tau_{-3d}, \tau_k = \tau_d$.} This forces $\gamma_k = 0$ and $m_j = 0$. By checking \eqref{eq:Bcase}, we find that $B = \{1\}$, and \eqref{eq:ignorable} holds with $\tau = 1$.

\paragraph{Case II: $\tau_j = \tau_{-3d}, \tau_k = 1$.} This forces $m_j = 0$. By checking \eqref{eq:Bcase} again, we find that $B = \{1, \tau_{-3}\}$. If $\gamma_j \neq \gamma_k$, then \eqref{eq:ignorable} holds with $\tau = 1$, and otherwise it holds with $\tau = \tau_{-3}$.

\paragraph{Case III: $\tau_j = 1, \tau_k = \tau_d$.} In this case we follow exactly the same logic as case II with the roles of $m_j$ and $\gamma_k$ reversed.

\paragraph{Case IV: $\tau_j = \tau_k = 1.$} By checking \eqref{eq:Bcase}, we discover that $B = \Gal(K/\Q)$. Then we may select $\tau \in \Gal(K/\Q)$ such that $\tau \gamma_j \neq \gamma_k$ and $\tau m_j \neq m_k$, and \eqref{eq:ignorable} holds for this choice of $\tau$.

\paragraph{Case V.} It remains to treat the case where $\Gal(K/\Q)$ has order $2$. If $d$ is a square, then $\gamma_j \neq 0$ and $m_j \neq m_k$. This gives $\tau_j = 1$. If $\tau_k \neq 1$, then $\gamma_k = 0$ and \eqref{eq:Bcase2} gives $B = \{1\}$. Selecting $\tau = 1$ verifies \eqref{eq:ignorable}. If instead $\tau_k = 1$, then \eqref{eq:Bcase2} gives $B = \Gal(K/\Q)$. Choosing $\tau$ such that $\tau \gamma_j \neq \gamma_k$ shows that \eqref{eq:ignorable} holds. Finally, if $-3d$ is a square instead, we argue similarly as above.
\end{proof}

\begin{lemma}
\label{lMainTerm}
With all notation as in Lemma \ref{lem:bilin_ign}, if $(\mu, \gamma)$ satisfies either of the first two enumerated conditions, and if $\mu$ lies in $\mathcal{M}_1(M_{-3d})$, we have
\[
(\gamma \cdot \mu)_y = 0 \quad\textup{for all } y \in X.
\] 
\end{lemma}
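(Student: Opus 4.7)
I would handle the two enumerated conditions separately, disposing of the trivial and vacuous sub-cases first. Condition 2 with $r = 0$ gives $(\gamma \cdot \mu)_y = 0$ immediately. Condition 1 with $m = 0$ forces $\mu = (\phi_0, 0, \dots, 0)$ into $\mathcal{M}_{0, \Vplac}(M_{-3d}) \cup \{0\} \subseteq \mathcal{M}_{0, \Vplac}(M_{-3d}) + \mathcal{D}(M_{-3d})$, contradicting $\mu \in \mathcal{M}_1(M_{-3d})$, so this sub-case is vacuous. The most delicate vacuous sub-case is Condition 1 with $m \ne 0$, where $d$ must be a square, $M_d$ is trivial, and $M_{-3d}(-1) \cong \FFF_3$. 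Using the analog of \eqref{eq:section_behavior} for the sequence $0 \to M_{-3d} \to E_{-27d, n}[3] \to M_d \to 0$ (obtained by substituting $-27d$ for $d$ and noting $\chi_{3, -27dn^2/16} = \chi_{3, dn^2/16}$ in $H^1(G_\Q, \mu_3)$ because $-27 = (-3)^3$ is a cube), the connecting map $H^0(G_\Q, M_d) \to H^1(G_\Q, M_{-3d})$ sends $\alpha$ to a cocycle whose ramification at each $p_i$ is a fixed $\FFF_3$-linear function of $\alpha$, independent of $i$ (it depends only on the local cubic character of $p_i^2$, which is constant across $X_i$ by \eqref{eq:classy_assumption}). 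By linearity, every nonzero $m \in M_{-3d}(-1)$ arises as the common $m$-component of some $\delta_\alpha \in \mathcal{D}(M_{-3d})$, so $\mu - \delta_\alpha$ has trivial $m$-components, placing $\mu - \delta_\alpha \in \mathcal{M}_{0, \Vplac}(M_{-3d}) \cup \{0\}$ and hence $\mu \in \mathcal{M}_{0, \Vplac}(M_{-3d}) + \mathcal{D}(M_{-3d})$, again vacuous.

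The one substantive case is Condition 2 with $r \ne 0$. Then $-3d$ is a square, $M_{-3d} \cong \FFF_3$ trivially, $S = \{1, \dots, r\}$, $(\Frob\,\mfp_i - 1)M_{-3d} = 0$, and $r: M_{-3d} \to \tfrac{1}{3}\Z/\Z$ is an isomorphism, so the conclusion reduces to showing $\sum_{i = 1}^r L_{i, y}(\mu) = 0$ in $M_{-3d}$. I would prove this via the global product formula $\sum_v \inv_v = 0$ applied to the cup product $\mfB_{y, M_{-3d}}(\mu) \cup \chi_{3, dn(y)^2/16}$, using the identification $M_{-3d}^\vee \cong \mu_3$ to interpret $\chi_{3, dn(y)^2/16}$ as an element of $H^1(G_\Q, M_{-3d}^\vee)$. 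At each $p_i$, a local tame-symbol calculation extending Lemma \ref{lem:explicit_loc_Tate} to the case where both classes are ramified would identify $\inv_{p_i}(\mfB_{y, M_{-3d}}(\mu) \cup \chi_{3, dn(y)^2/16})$ with a fixed nonzero multiple of $L_{i, y}(\mu)$, using \eqref{eq:double_coset} and the explicit expression \eqref{eq:explicit_local_conds}. At each $v \in \Vplac_0$, the hypothesis $\mu \in \mathcal{M}_0(M_{-3d})$ places $\mfB_{y, M_{-3d}}(\mu)|_{G_v}$ in the Selmer local condition $\mathscr{W}_v(E_{-27d, n(y)})$, whose local Tate-annihilator contains $\chi_{3, dn(y)^2/16}|_{G_v}$ by the self-duality of isogeny Selmer conditions (\cite[Proposition 6.1]{MS22a}); so these contributions vanish and reciprocity delivers $\sum_i L_{i, y}(\mu) = 0$.

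The main obstacle is the local bookkeeping in this final case: aligning the doubly-ramified tame-symbol computation at each $p_i$ with the double-coset expression \eqref{eq:explicit_local_conds} for $L_{i, y}(\mu)$, and checking the Tate-annihilator cancellation at each $v \in \Vplac_0$, particularly at $v = 2$ and $v = 3$, where wild ramification complicates the local pairing. Tracking every sign and scalar factor through the identifications $M_{-3d} = \FFF_3$ and $M_{-3d}^\vee = \mu_3$ will require careful unpacking, but neither piece should require genuinely new input beyond what is developed in Sections \ref{sec:GRS}--\ref{sCubicTwist}.
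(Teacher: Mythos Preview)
Your overall strategy matches the paper's: Condition~1 is handled by showing $\mu$ lands in $\mathcal{D}(M_{-3d})$ or in $\mathcal{M}_{0,\Vplac}(M_{-3d}) + \mathcal{D}(M_{-3d})$, and Condition~2 (with $-3d$ a square) is handled by a global reciprocity/Poitou--Tate argument. Your explicit realization of the latter via pairing against the global Selmer element $\chi_{3,\,dn(y)^2/16}$ is exactly the content the paper packages into its citation of \cite[Lemma~7.5]{Smi22b}, so there is no real divergence there.

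There is, however, a genuine gap in your treatment of Condition~1. You assert that the case is \emph{vacuous} once $\mu \in \mathcal{M}_1(M_{-3d})$, via the chain $\mu - \delta_\alpha \in \mathcal{M}_{0,\Vplac}(M_{-3d}) \cup \{0\} \subseteq \mathcal{M}_{0,\Vplac}(M_{-3d}) + \mathcal{D}(M_{-3d})$. But this last inclusion is false: by definition $\mathcal{M}_{0,\Vplac}(M_{-3d})$ consists of $(\phi_0,0,\dots,0)$ with $\phi_0 \ne 0$, while nonzero elements of $\mathcal{D}(M_{-3d})$ have nonzero $m_i$-components (your own connecting-map calculation shows this). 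Hence $\mathcal{D} \cap \mathcal{M}_{0,\Vplac} = \emptyset$, so $0 \notin \mathcal{M}_{0,\Vplac} + \mathcal{D}$, and more importantly the nonzero elements of $\mathcal{D}$ lie in $\mathcal{M}_1$. Concretely, $\mu = \delta_\alpha \in \mathcal{D} \setminus \{0\}$ satisfies Condition~1 with $m \ne 0$, lies in $\mathcal{M}_1$, and is not excluded by your argument. The paper handles this residual case with one extra sentence: such $\mu$ parameterizes an honest element of $\Sel^{\sqrt{-3}} E_{-27d,\,n(y)}$, whence $L_{i,y}(\mu) = 0$ for all $i$ and so $(\gamma \cdot \mu)_y = 0$. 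The same remark covers $\mu = 0$. So the fix is easy, but as written your Condition~1 argument is incomplete.
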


\begin{proof}
We see $\mu$ satisfies the first condition of Lemma \ref{lem:bilin_ign} only if it either lies in $\mathcal{D}(M_{-3d})$ or in $\mathcal{M}_{0, \Vplac}(M_{-3d}) + \mathcal{D}(M_{-3d})$. If it lies in the former, $\mu$ parameterizes a Selmer element and the result follows. And it cannot lie in the latter by the assumption that $\mu$ lies in $\mathcal{M}_1(M_{-3d})$. So, if the first condition is satisfied, the result follows.

Now suppose $\gamma$ satisfies the second condition. In this case, the result is clear unless $-3d$ is a square. In this case, we may use the argument of \cite[Lemma 7.5]{Smi22b}, which invokes Poitou--Tate duality, to show that $(\gamma \cdot \mu)_y = 0$.
\end{proof}

\section{\texorpdfstring{The distribution of $\sqrt{-3}$-Selmer groups}{The distribution of sqrt(-3)-Selmer groups}}
\label{sFinal}
After the algebraic setup in the last section, we are ready to do the analysis necessary to find the distribution of $\sqrt{-3}$-Selmer groups. This starts by constructing the grids of Section \ref{ssec:alg_grids} more explicitly. The $\sqrt{-3}$-Selmer group was previously studied from a statistical perspective by Chan \cite{Chan} and Fouvry \cite{Fouvry}. 

\subsection{Analytic formulation of grids}
Our setup is similar to \cite[Section 8]{Smi22b}, which we still cannot invoke directly since our twists are parameterized by $H^1(G_{\Q}, \mu_3)$ rather than $H^1(G_{\Q}, \mathbb{F}_3)$. We fix a nonzero integer $d$, the field extension $K/\Q$, and the set of places $\Vplac_0$ as in Section \ref{sec:Leg}.

\begin{mydef}
Let $H \geq 20$ be a real number. We define
\[
\alpha_0(H) := \exp^{(3)}\left(\frac{1}{3} \log^{(3)} H\right), \quad \alpha(H) := \exp\left(\exp^{(3)}\left(\frac{1}{4} \log^{(3)} H\right)^{-1}\right).
\]
Take $b$ to be a nonzero integer with all prime divisors smaller than $\alpha_0(H)$, and take $\Vplac$ to be the set of places in $\Vplac_0$ together with the primes dividing $b$. We will take $K(\Vplac)$ to be the maximal abelian extension of $K$ of exponent $3$ that is ramified only at the places in $\Vplac$.

For $i \in \Z_{\geq 0}$, we define the corresponding interval of primes
\[
\mathcal{P}_i(H) := \{p \not \in \Vplac_0 : \alpha_0(H) \alpha(H)^i \leq p < \alpha_0(H) \alpha(H)^{i + 1}\}.
\]
For each prime $p \ge \alpha_0(H)$ outside $\Vplac_0$, we make a choice of a prime $\lambda(p)$ of $K$ above $p$. We will select these primes so that, if we have two primes $p_1, p_2$ greater that $\alpha_0(H)$ such that $\Frob\, p_1$ and $\Frob\, p_2$ are the same conjugacy class in $\Gal(K(\Vplac)/\Q)$, then $\Frob\,\lambda(p_1)$ and $\Frob\,\lambda(p_2)$ have the same image in $\Gal(K(\Vplac)/\Q)$.

Then, given a positive integer $r$ and an increasing sequence $k_1 < \dots < k_r$ of nonnegative integers, and given a sequence $C_1, \dots, C_r$ of conjugacy classes of $\Gal(K(\Vplac)/\Q)$, we define a grid $(b, X_1, \dots, X_r)$ by taking
\[X_i = \{p_i \in \mathcal{P}_{k_i}(H)\,:\,\, \Frob\, p_i \text{ maps to } C_i\}.\]
Together with the map $\lambda$, this data defines a grid in the sense of Section \ref{ssec:alg_grids}. We also call it a \emph{filtered grid}.

Ignoring the $C_i$, we may define an \emph{unfiltered grid} by taking
\[X' = \prod_{i \le r} X'_i \quad\text{with}\quad X'_{i} = \mathscr{P}_{k_i}(H).\]
Given $x = (p_1, \dots, p_r)$ in $X'$, we take $n(x) = b \cdot p_1 \cdot \dots \cdot p_r$.

Each positive integer $n \le H$ is equal to $n(x)$ for at most one unfiltered grid constructed this way, and it lies in a unique filtered subgrid of this unfiltered grid.
\end{mydef}

We isolate some of the other properties we need from our grid.

\begin{mydef}
Let $H \ge 20$. Take $X'$ to be an unfiltered grid corresponding to $(b, k_1, \dots, k_r)$. Call $X'$ \emph{good of height $\le H$ } if
\begin{enumerate}
\item[(1)] \emph{(Reasonable amount of primes)} We have 
$$
\log \log H - (\log \log H)^{3/4} \leq r \leq \log \log H + (\log \log H)^{3/4}.
$$
\item[(2)] \emph{(Height of the grid)} We have $|n(x)| \le H$ for all $x$ in $X'$.
\item[(3)] \emph{(Few small primes)} The integer $b$ has at most $(\log \log H)^{\frac{1}{3} + \frac{1}{100}}$ distinct prime factors, and we have
\[\min \{ p \in \mathscr{P}_{k_i}(H)\} \ge \exp^{(3)}\left(\tfrac{1}{2}\log^{(3)}H\right) \quad\text{if}\quad i > (\log \log H)^{\tfrac{1}{2} + \tfrac{1}{100}}.\]
\end{enumerate}
\end{mydef}

\begin{lemma}
\label{lem:unfiltered_good}
There is some $C > 0$ depending only on $\Vplac_0$ such that, given $H \ge 20$, the number of positive integers $n \le H$ that are not in a good unfiltered grid of height $\le H$ is at most
\[CH \cdot \exp^{(2)} \left(\tfrac{1}{3} \log^{(3)} H\right)^{-1/3}.\]
\end{lemma}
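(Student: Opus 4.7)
The plan is to classify and bound the failure modes separately. An integer $n \le H$ fails to lie in a good unfiltered grid of height $\le H$ for one of the following reasons: (i) $n$ does not belong to any unfiltered grid, which occurs precisely when $n$ has either a squared prime divisor $p^2$ with $p \ge \alpha_0(H)$ (outside $\Vplac_0$), or two distinct prime divisors both at least $\alpha_0(H)$ lying in the same interval $\mathscr{P}_i(H)$; (ii) the unfiltered grid $X'$ naturally containing $n$ has some element $x$ with $n(x) > H$; (iii) the number $r$ of prime divisors of $n$ above $\alpha_0(H)$ and outside $\Vplac_0$ lies outside $[\log\log H - (\log\log H)^{3/4},\, \log\log H + (\log\log H)^{3/4}]$; (iv) the number of distinct prime divisors of $n$ below $\alpha_0(H)$ exceeds $(\log\log H)^{1/3 + 1/100}$; (v) the number of distinct prime divisors of $n$ in the interval $[\alpha_0(H),\, \exp^{(3)}(\tfrac{1}{2}\log^{(3)} H))$ exceeds $(\log\log H)^{1/2 + 1/100}$.

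For (i), the count of $n$ with some $p \ge \alpha_0(H)$ satisfying $p^2 \mid n$ is at most $H \sum_{p \ge \alpha_0(H)} p^{-2} \ll H / (\alpha_0(H)\log\alpha_0(H))$. For two primes in the same $\mathscr{P}_i(H)$, Mertens' theorem gives $\sum_{p \in \mathscr{P}_i(H)} p^{-1} = O\big(\log\alpha(H)/\log(\alpha_0(H)\alpha(H)^i)\big)$; squaring and summing over $i$ telescopes to $O(\log\alpha(H)/\log\alpha_0(H))$, so the total number of such $n$ is $O(H\log\alpha(H)/\log\alpha_0(H))$. Both quantities are negligible against the target rate $H\exp(-\tfrac{1}{3}(\log\log H)^{1/3})$. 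For (ii), the ratio between the maximum $n(x)$ in a grid and $n$ itself is at most $\alpha(H)^r$; since $\log\alpha(H) = 1/\exp^{(2)}((\log\log H)^{1/4})$ is doubly-exponentially small, this contributes at most $O(Hr/\exp^{(2)}((\log\log H)^{1/4}))$ bad $n$, which is also negligible.

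The remaining failure modes (iii), (iv), (v) are concentration statements for $\omega(n, y)$, the number of distinct prime divisors of $n$ not exceeding $y$. They all follow from the Hardy--Ramanujan inequality
\[
\#\{n \le H : \omega(n, y) = k\} \ll \frac{H \cdot (\log\log y + C)^{k-1}}{(k-1)!}, \qquad k \ge 1,
\]
combined with Stirling's formula for the tails. For (iv), taking $y = \alpha_0(H)$, the mean $\log\log\alpha_0(H) = (\log\log H)^{1/3}$ is far below the threshold $(\log\log H)^{1/3+1/100}$; a direct Stirling calculation shows the total contribution decays as $\exp(-c(\log\log H)^{1/3+1/100}\log^{(3)} H)$. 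Mode (v) is handled analogously with $y = \exp^{(3)}(\tfrac{1}{2}\log^{(3)} H)$ and mean $(\log\log H)^{1/2}$. For (iii), writing $r = \omega(n, H) - \omega(n, \alpha_0(H)) + O(1)$, the relevant deviation of $\omega(n, H)$ from $\log\log H$ is of order $(\log\log H)^{3/4}$, and the sub-Gaussian regime of the Hardy--Ramanujan/Stirling tail yields a bound of $O(H\exp(-c(\log\log H)^{1/2}))$.

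The main technical obstacle is (iii): here the deviation scale $(\log\log H)^{3/4}$ is only mildly large relative to the natural standard deviation $(\log\log H)^{1/2}$, so a second-moment bound of Tur\'an type gives only polynomial savings and one must squeeze the sub-Gaussian regime out of Hardy--Ramanujan via careful Stirling asymptotics on the ratio $(\log\log H)^{k-1}/(k-1)!$ for $k$ near $\log\log H \pm (\log\log H)^{3/4}$. Adding the contributions from (i)--(v) produces a total of $O(H\exp(-\tfrac{1}{3}(\log\log H)^{1/3}))$ bad $n$, matching the required bound, with the implicit constant depending only on $\Vplac_0$.
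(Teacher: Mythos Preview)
Your proof is correct and the decomposition into failure modes (i)--(v) is clean, but it proceeds rather differently from the paper's argument. The paper does not classify failures case by case. Instead it introduces the threshold $B = \exp^{(2)}(\tfrac{1}{3}\log^{(3)} H)^{2/3}$, first discards all $n$ with $n/\mathrm{rad}(n) \ge B$ via the squarefull-number count, and then reduces to controlling the radical $\mathrm{rad}(n)$. For condition (3) (your (iv) and (v)) the paper simply cites \cite[Proposition 8.5]{Smi22b} to bound the exceptional set $T$, and for condition (1) (your (iii)) it invokes the moderate-deviation principle of Mehrdad--Zhu \cite{Mehrdad Zhu 13} rather than extracting a sub-Gaussian tail from Hardy--Ramanujan and Stirling by hand.

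Your route is more self-contained: it avoids both external citations and recovers the required $\exp(-c(\log\log H)^{1/2})$ tail for the Erd\H{o}s--Kac deviation from first principles. You also handle explicitly the event that two large prime factors of $n$ land in the same dyadic-style interval $\mathscr{P}_i(H)$, which the paper's proof does not isolate. The trade-off is that your treatment of (iii) genuinely requires the careful Stirling expansion you flag (a Tur\'an second-moment bound would be too weak), and your bound for (ii) implicitly uses a crude a priori bound $r \le \log H$, which is fine since $\exp^{(2)}((\log\log H)^{1/4})$ dominates any power of $\log H$. Both approaches land well inside the stated error; the paper's is shorter by outsourcing the concentration estimates, yours is more elementary.
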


\begin{proof}
Take $B = \exp^{(2)} \left(\tfrac{1}{3}\log^{(3)} H\right)^{2/3}$. Given a positive integer $n$, take $\text{rad}(n)$ to be the product of the primes dividing $n$. Every positive integers $n \le H$ with $n/\text{rad}(n) \ge B$ is the product of a squarefree number and a squarefull number of size at least $B$. Since the number of squarefull numbers less than $H_0$ for $H_0 \ge 3$ is $\mathcal{O}(H_0^{1/2})$, we find that the number of integers $n \le H$ with $n/\text{rad}(n) \ge B$ is
\[
\mathcal{O}\left(B^{-1/2}H\right).
\]
Here and throughout the proof, all implicit constants depend only on $\Vplac_0$.

Take $T$ to be the set of squarefree positive integers $n \le H$ that either have more than $(\log \log H)^2$ prime factors, or which sit in an unfiltered grid violating condition (3) of being good. We have
\[
|T| = \mathcal{O}\left(B^{-3/2}H\right)
\]
by \cite[Proposition 8.5]{Smi22b}. By separately considering the case that $n/\text{rad}(n) \ge B$ and the case $n/\text{rad}(n) \le B$, we find that the number of positive integers $n \le H$ with $\text{rad}(n)$ in $T$ is $\mathcal{O}\left(B^{-1/2}H\right)$.

Given an integer $n \ge H$ with $\text{rad}(n)$ outside $T$ and $n/\text{rad}(n) \le B$, we find that $n$ is in an unfiltered grid satisfying condition (2) so long as
\[
B < \alpha_0(H) \quad\text{and}\quad \alpha(H)^{(\log  \log H)^2}n \le H.
\]
The former holds so long as $H \gg 1$, and the latter certainly holds for all but at most $\mathcal{O}(B^{-1/2}H)$ positive integers less than $H$.

This leaves condition (1). From the moderate rate deviation principle \cite[Theorem 6]{Mehrdad Zhu 13} applied with $a_n = (\log \log H)^{3/4}$, we find that the number of positive integers $\le H$ with number of distinct prime divisors outside the range
\[
\left[\log \log H - \tfrac{1}{2} (\log \log H)^{3/4}, \, \log \log H + \tfrac{1}{2} (\log \log H)^{3/4}\right]
\]
is at most $H \cdot \exp\left(-\tfrac{1}{10} (\log \log H)^{1/2}\right)$ for sufficiently large $H$. Since we have already excluded integers in a grid not satisfying (3), we find that the result follows.
\end{proof}

\begin{lemma}
\label{lem:filtered_assu}
There is an absolute $c > 0$ such that we have the following:

Take $X'$ to be a good unfiltered grid of ideals of height $\le H$ corresponding to $(b, X'_1, \dots, X'_r)$ with $H \ge 20$. Define $\Vplac$ and $K(\Vplac)$ as from $b$ as above. Choose $\epsilon > 0$.

\begin{enumerate}
\item[(1)] Take $Y'$ to be the subset of $x = (p_1, \dots, p_r) \in X'$ such that, if we take $S(x) $ to be the set of $i \le r$ for which $\Frob\, p_i$ lies in $G_K$, the normal subgroup generated by the set
\[
\{\Frob\, p_i\,:\,\, i \in S(x)\}
\]
does not generate $\Gal(K(\Vplac)/K)$. Then
\[
|Y'| = \mathcal{O}\left(|X'| \cdot (\log H)^{-c}\right).
\]

\item[(2)] Given $\sigma$ in $\Gal(K/\Q)$, the number of $(p_1, \dots, p_r)$ in $X'$ such that we have
\[
\left|\#\{i \le r\,:\,\, \Frob\, p_i = \sigma\} - \frac{r}{\# \Gal(K/\Q)} \right| \ge \epsilon r
\]
is $\mathcal{O}\left(|X'|\cdot\exp\left( -(\log \log H)^{1 - \epsilon}\right)\right)$.

\item[(3)] For any given integer $a$, the number of $(p_1, \dots, p_r)$ in $X'$ such that
\[ 
\sum_{p \mid n(x)} \dim H^0(G_p, M_d) - \dim H^0(G_p, M_{-3d}) = a 
\]
is at most $\mathcal{O}\left(|X'|\cdot (\log \log H)^{-1/2}\right)$. 
\end{enumerate}

\noindent Here, all implicit constants are determined by $K$, $\Vplac$, and $\epsilon$.
\end{lemma}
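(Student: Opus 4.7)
The plan is to prove (2), (3), (1) in that order. All three statements reduce to probabilistic estimates about tuples $x = (p_1, \dots, p_r)$ chosen independently from the intervals $X'_i = \mathscr{P}_{k_i}(H)$. The key analytic input will be the effective Chebotarev density theorem applied to primes in the $X'_i$; each $X'_i$ has length $\alpha_0(H) \alpha(H)^{k_i}(\alpha(H) - 1)$, which is long enough that Chebotarev in short intervals holds with a power-saving error term uniformly over the finite set of bounded-degree extensions of $\Q$ we will need.

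For part (2), I would first note that each $p_i \in X'_i$ has a Frobenius class in $\Gal(K/\Q)$, which is abelian of order at most $4$. Short-interval Chebotarev shows that the proportion of $p \in X'_i$ with $\Frob\, p = \sigma$ is $1/[K:\Q] + o(1)$, uniformly in $i$ and $\sigma$. The count $\#\{i \le r : \Frob\, p_i = \sigma\}$ is then a sum of $r$ independent Bernoulli-type indicators, and a standard Chernoff bound yields that the proportion of $x$ with this count deviating from $r/[K:\Q]$ by at least $\epsilon r$ is $O(\exp(-c\epsilon^2 r))$. Since $r \ge \log\log H - (\log\log H)^{3/4}$, this is $O(\exp(-c \epsilon^2 \log\log H))$, which dominates $\exp(-(\log\log H)^{1-\epsilon})$ as required.

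For part (3), I would observe that the local contribution $\dim H^0(G_p, M_d) - \dim H^0(G_p, M_{-3d})$ at a prime $p \notin \Vplac_0$ depends only on the image of $\Frob\, p$ in $\Gal(K/\Q)$. A direct check using the description $M_d = \FFF_3(\chi_{2,d})$ and $M_{-3d} = \FFF_3(\chi_{2,-3d})$ shows that, in the non-degenerate case where neither $d$ nor $-3d$ is a square, this contribution equals $+1$ when $\Frob\, p = \tau_d$, $-1$ when $\Frob\, p = \tau_{-3d}$, and $0$ when $\Frob\, p \in \{1, \tau_{-3}\}$; the two degenerate cases are analogous and slightly simpler. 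The full sum then takes the form $c_0 + \sum_{i=1}^r Y_i$, where $c_0$ is a constant depending only on $b$ and the $Y_i$ are independent random variables taking each of the values $-1, 0, +1$ with positive probabilities bounded below uniformly in $i$. Elementary anti-concentration --- for instance the local central limit theorem or Esseen's inequality --- gives $\Pr[\sum_i Y_i = n] \ll r^{-1/2}$ uniformly in $n \in \Z$, yielding the bound $O(|X'| / \sqrt{\log\log H})$.

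Part (1) will be the main obstacle, because $K(\Vplac)/K$ is an elementary abelian $3$-extension whose $\FFF_3$-rank $m$ may grow with the number of prime divisors of $b$ to size $O((\log\log H)^{1/3 + 1/100})$, so a direct Chebotarev application in $K(\Vplac)/\Q$ would be too weak. Since $\Gal(K(\Vplac)/K)$ is abelian, the normal subgroup of $\Gal(K(\Vplac)/\Q)$ generated by $\{\Frob\, p_i : i \in S(x)\}$ equals the $\Gal(K/\Q)$-submodule of $\Gal(K(\Vplac)/K)$ generated by $\{\Frob\, \lambda(p_i) : i \in S(x)\}$, and failure of generation is equivalent to the existence of a $\Gal(K/\Q)$-equivariant surjection $\pi \colon \Gal(K(\Vplac)/K) \twoheadrightarrow M$ onto a simple $\FFF_3[\Gal(K/\Q)]$-module $M$ such that $\pi(\Frob\, \lambda(p_i)) = 0$ for all $i \in S(x)$. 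Since $|\Gal(K/\Q)|$ is coprime to $3$, Maschke's theorem shows that every such $M$ is one-dimensional over $\FFF_3$, so the fixed field $L_M = K(\Vplac)^{\ker \pi}$ is a cubic extension of $K$ of degree at most $12$ over $\Q$. For a fixed $\pi$, the condition ``$\pi(\Frob\, \lambda(p_i)) = 0$ whenever $i \in S(x)$'' is equivalent to $\Frob\, p_i$ lying, in $\Gal(L_M/\Q)$, in the set $\{1\} \cup (\Gal(L_M/\Q) \setminus \Gal(L_M/K))$, which has density $1 - 2/(3[K:\Q]) \le 5/6$ by Chebotarev in the bounded-degree extension $L_M/\Q$. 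The proportion of $x$ bad for this particular $\pi$ is therefore $\le (5/6 + o(1))^r \ll (\log H)^{-\log(6/5)}$, and union-bounding over the at most $3^m$ choices of $\pi$ --- which contributes a factor $(\log H)^{o(1)}$ since $m = o(\log\log H)$ --- will yield $|Y'| \ll |X'| \cdot (\log H)^{-c}$ for any $c < \log(6/5)$.
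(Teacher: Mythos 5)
Your strategy mirrors the paper's for all three parts. For (2) and (3) the Chebotarev input is over the fixed field $K$, so there is no uniformity issue, and your Chernoff/local-limit-theorem finishes match the paper's use of Hoeffding and the local limit theorem for the random walk on $\Z[\Gal(K/\Q)]$. For (1), your reduction to $\Gal(K/\Q)$-equivariant surjections onto simple $\FFF_3$-modules is a slightly cleaner packaging of the paper's reduction to proper subgroups of $\Gal(K(\Vplac)/K)$ that are maximal among $\Gal(K(\Vplac)/\Q)$-normal subgroups; your $L_M$ are cubic over $K$, the paper's may have degree up to $3^4$ over $K$, but both routes yield an $O(1)$ density saving per interval and a $(\log H)^{o(1)}$ union bound, so they are equivalent in strength. (One small point: Maschke gives only semisimplicity; one-dimensionality of the simple modules needs the extra observation that $\Gal(K/\Q)$ is an elementary abelian $2$-group and $\FFF_3^\times$ has order $2$, so $\FFF_3$ is a splitting field.)

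The genuine gap is in your Chebotarev application for (1). The fields $L_M$ are not a fixed finite family: they vary with $b$ and hence with $H$, and their discriminants can be as large as roughly $\exp^{(3)}\big(\tfrac{2}{5}\log^{(3)}H\big)$, which exceeds $\alpha_0(H) = \exp^{(3)}\big(\tfrac{1}{3}\log^{(3)}H\big)$. Consequently, for the lowest-indexed intervals $\mathscr{P}_{k_i}(H)$, the primes are \emph{smaller} than the discriminant of $L_M$ and the Lagarias--Odlyzko effective Chebotarev theorem gives nothing; the obstruction is the position of the primes relative to the conductor of $L_M$, not the length of the interval. The paper sidesteps this precisely through condition (3) of the ``good'' definition: all but at most $(\log\log H)^{1/2 + 1/100}$ of the $r$ intervals consist of primes $\geq \exp^{(3)}\big(\tfrac{1}{2}\log^{(3)}H\big)$, safely above every $L_M$'s discriminant, and Chebotarev is applied only on those. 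It also notes that Siegel zeros are not an issue since the real quadratic subfields of $L_M$ all lie in the fixed field $K$. Your sketch asserts uniform short-interval Chebotarev over a ``finite set of bounded-degree extensions'' without addressing either of these points; as written, the density bound for small $i$ is unjustified, and the estimate $|Y'| \ll |X'|(\log H)^{-c}$ does not follow. The fix is to restrict the density argument to the high-indexed intervals, using both halves of condition (3) (the bound on the number of prime divisors of $b$ to control $\log d_{L_M}$, and the lower bound on primes in $\mathscr{P}_{k_i}(H)$ for large $i$), together with condition (1) to ensure the number of usable intervals is still $\sim \log\log H$.
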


\begin{proof}
Every maximal proper subgroup of $\Gal(K(\Vplac)/K)$ has index $3$ in the whole group, so every proper subgroup of $\Gal(K(\Vplac)/K)$ that is maximal among normal subgroups of $\Gal(K(\Vplac)/\Q)$ of this form has index dividng $3^4$ since $\Gal(K/\Q)$ has order at most $4$. We can bound the number of such subgroups by $\#\Gal(K(\Vplac)/K)$, which  is $\mathcal{O}(3^{4w})$, where $w$ is the number of prime divisors of $b$.

So, if $(p_1, \dots, p_r)$ lies in $Y'$, there is some extension  $L/K$ with $[L: K]$ dividing $3^4 = 81$ that is normal over $\Q$ and ramified only at places in $\Vplac$, and there is some conjugacy class $C$ in $\Gal(L/\Q)$, such that $\Frob\, p_i$ lies outside $C$ for each $p_i$. We say that $L$ is then \emph{bad} for $x$. The degree of $L$ is bounded by $2^2 \cdot 3^4 = 324$, and its discriminant is bounded by
\[
\mathcal{O}(\text{rad}(b)^{324}).
\]
Since we bounded the size of the prime divisors of $b$ in Lemma \ref{lem:unfiltered_good}, we find that this is at most
\[
\exp^{(3)}\left(\tfrac{2}{5}\log^{(3)} H\right) \quad\text{for } H \gg 1.
\]
Applying the Chebotarev density theorem \cite{LO} and condition (3) of Lemma \ref{lem:unfiltered_good} then suffices to show that, for $i > (\log  \log H)^{\tfrac{1}{2} + \tfrac{1}{100}}$,
\[
\# \{p \in X'_i\,:\,\, \Frob\, p \ne C \,\,\text{in}\,\,\Gal(L/\Q)\} \le \frac{323 + 1/2}{324} \cdot \# X'_i\quad \text{for }H \gg 1,
\]
where the implicit constant depends on $K$ and $\Vplac_0$. We note that we do not need to worry about Siegel zeros since every real quadratic field contained in $L$ is also contained in $K$, which we have fixed. For $H \gg 1$, the number of $x$ such that $L$ is bad for $x$ is at most
\[
\left(\frac{323 + 1/2}{324}\right)^{r - (\log  \log H)^{\frac{1}{2} + \frac{1}{100}}} \cdot |X'|,
\]
and we may bound $r$ using (1) of Lemma \ref{lem:unfiltered_good}. Meanwhile, the number of $L$ is at most $\mathcal{O}\left(\exp\left(5 (\log\log H)^{\frac{1}{3} + \frac{1}{100}}\right)\right)$. The first part follows.

Using the Chebotarev density theorem, given $\sigma$ in $\Gal(K/\Q)$, we have
\[
\left|\#\{ p \in X'_i\,:\,\, \text{Frob}\, p = \sigma\} - \frac{\# X'_i}{\# \Gal(K/\Q)}\right| \le \frac{\#X'_i}{\exp^{(3)}\left(\tfrac{1}{4} \log^{(3)} H\right)}
\]
so long as $H$ is sufficiently large. Parts (2) and (3) thus follow from consideration of the random variable
\[
V_1 + \dots + V_r \in \Z[\Gal(K/\Q)],
\]
where the $V_i$ are independent variables taking the uniform distribution on $\Gal(K/\Q)$. In particular, (2) follows from Hoeffding's inequality, and (3) follows from the local limit theorem.
\end{proof}

\subsection{The first moment}
Choose a good unfiltered grid $X'$ as above, and choose a point $x = (p_1, \dots, p_r)$ in $X'$. Take $X = X_1 \times \dots \times X_r$ to be the filtered subgrid of $X$ containing $x$. Given $y \in X'$, there are two  clear lower bounds for the size of $\Sel^{\sqrt{-3}} E_{-27d, n(y)}$. First, from the connecting map \eqref{eq:Edn3} applied to this  curve, we have the trivial bound
\[ 
\left|\Sel^{\sqrt{-3}} E_{-27d, n(y)}\right| \,\ge\, \left|H^0(G_{\Q}, M_d)\right|.
\]
In addition, from the Greenberg--Wiles formula \eqref{eq:Tama_relation}, we have the dual trivial bound
\[ 
\left|\Sel^{\sqrt{-3}} E_{-27d, n(y)}\right| \, = \, \left|\Sel^{\sqrt{-3}} E_{d, n(y)}\right| \cdot \mathcal{T}(E_{-27d, n(y)}) \ge \left|H^0(G_{\Q}, M_{-3d})\right| \cdot \mathcal{T}(E_{-27d, n(y)}).
\]
Here, we have used the fact that the Tamagawa ratio is constant across the filtered grid $X$, as follows from \eqref{eq:loc_Tama} together with the fact that the twists $E_{-27d, n(y)}$ are isomorphic at all primes in $\Vplac_0$.

One of these bounds is usually far larger than the other due to Lemma \ref{lem:filtered_assu} (3). If there is such a misbalance, the next proposition shows that most of the twists from the filtered grid have the minimal possible size.

\begin{theorem}
\label{tFirstMoment}
Choose $x$ and $X$ as above in a good unfiltered grid $X'$ of height $\le H$. We assume $x$ is outside the set $Y'$ defined in Lemma \ref{lem:filtered_assu} (1).

Define
\begin{multline*}
\kappa =  \left| H^0(G_{\Q}, M_d)\right| + \left| H^0(G_{\Q}, M_{-3d})\right| \cdot \mathcal{T}(E_{-27d, n(x)}) + \\
- \frac{|H^0(G_\Q, M_d)| \cdot |H^0(G_\Q, M_{-3d})| \cdot |\mathcal{M}_{0, \Vplac}(M_{-3d})|}{|\mathcal{R}|} - \frac{|H^0(G_\Q, M_d)| \cdot |H^0(G_\Q, M_{-3d})|}{|\mathcal{R}|}.
\end{multline*}
We then have
\[
\left|\kappa \cdot |X| \, -\, \sum_{y \in X} \left|\Sel^{\sqrt{-3}} E_{-27d, n(y)}\right| \right| = \mathcal{O}\left(|X| \cdot \alpha_0(H)^{-c}\right),
\]
where $c > 0$ is some absolute constant and the implicit constant depends just on $d$ and $\Vplac_0$.
\end{theorem}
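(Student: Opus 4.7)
The plan is to start from the identity \eqref{eq:average}:
\[
\sum_{y \in X} \left|\Sel^{\sqrt{-3}} E_{-27d, n(y)}\right| \,=\, \frac{1}{|\mathcal{R}|} \sum_{\mu \in \mathcal{M}_0(M_{-3d})} \sum_{\gamma \in \mathcal{R}} \sum_{y \in X} e\bigl((\gamma \cdot \mu)_y\bigr),
\]
and to partition the pairs $(\mu, \gamma)$ according to Lemma \ref{lem:bilin_ign}. I would declare $(\mu, \gamma)$ an \emph{error pair} when condition (3) holds (i.e.\ there exist $j, k, \tau$ with $(\tau \gamma_j - \gamma_k) \cdot (\tau m_j - m_k) \ne 0$), and a \emph{main pair} otherwise, in which case condition (1) or (2) must apply.

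For main pairs with $\mu \in \mathcal{M}_1(M_{-3d})$, Lemma \ref{lMainTerm} gives $(\gamma \cdot \mu)_y = 0$ identically, so each contributes $|X|$ to the inner sum. For main pairs with $\mu$ lying in the complement $\mathcal{M}_{0, \Vplac}(M_{-3d}) + \mathcal{D}(M_{-3d})$, I would treat them by direct computation using \eqref{eq:explicit_local_conds}: pairs with $\mu \in \mathcal{D}$ parametrize Selmer elements by construction and also contribute $|X|$, while pairs with a nontrivial $\mathcal{M}_{0, \Vplac}$ component require separate analysis.  Counting these pairs, using $|\mathcal{D}| = |H^0(G_\Q, M_d)|$, the value of $|\mathcal{M}_0(M_{-3d})| = \mathcal{T}(E_{-27d, n(x)}) \cdot \prod_i |H^0(G_{p_i}, M_{-3d})|$ from Lemma \ref{lem:nosmallramonly}, and the Poitou--Tate/Greenberg--Wiles bound $|\Sel^{\sqrt{-3}} E_{-27d, n(y)}| \ge |H^0(G_\Q, M_{-3d})| \cdot \mathcal{T}(E_{-27d, n(x)})$ coming from the dual trivial injection $H^0(G_\Q, M_{-3d}) \hookrightarrow \Sel^{\sqrt{-3}} E_{d, n(y)}$, a careful inclusion--exclusion should reproduce the four-term expression defining $\kappa$, the two subtracted terms appearing as overlap corrections between the $\mathcal{D}$ family and the dual trivial contribution.

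For error pairs, I would fix a witnessing triple $(j, k, \tau)$ and freeze the coordinates $q_\ell$ with $\ell \ne j, k$. By Lemma \ref{lem:bilinear_vary}, the phase becomes $a_j(q_j) + a_k(q_k) + \sum_\tau b_\tau\bigl(\symb{\tau \lambda_j(q_j)}{\lambda_k(q_k)}\bigr)$ with at least one nonzero $b_\tau$, so the inner sum over $(q_j, q_k) \in X_j \times X_k$ is a bilinear character sum against the generalized Legendre symbol of Section \ref{sec:Leg}. Applying the bilinear large-sieve bound \cite[Theorem 5.2]{Smi22a} yields savings of $\min(|X_j|, |X_k|)^{-c} \le \alpha_0(H)^{-c}$. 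Summing trivially over the $\mathcal{O}(r^2)$ choices of $(j, k)$ and of $\tau \in B$, over the $\le |X|/(|X_j| \cdot |X_k|)$ frozen tuples, and over the $\mathcal{O}(|\mathcal{M}_0(M_{-3d})| \cdot |\mathcal{R}|)$ error pairs --- a total count controlled via Lemma \ref{lem:nosmallramonly} under the hypothesis $x \notin Y'$ --- and absorbing polylogarithmic losses into the super-polynomial smallness of $\alpha_0(H)^{-c}$ yields an error bound of the required form $\mathcal{O}(|X| \cdot \alpha_0(H)^{-c})$.

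The main obstacle is the main-term bookkeeping rather than the analytic estimate. The four-term structure of $\kappa$ reflects an inclusion--exclusion between two trivial Selmer sources (from $\mathcal{D}$ and its Poitou--Tate dual) and two overlap corrections; identifying exactly which $(\mu, \gamma)$ satisfy $(\gamma \cdot \mu)_y \equiv 0$ on all of $X$, and getting the signs, the factor $|\mathcal{R}|^{-1}$, and the cardinality $|\mathcal{M}_{0, \Vplac}(M_{-3d})|$ correct, requires careful manipulation of \eqref{eq:explicit_local_conds}. A secondary obstacle is ensuring that the bilinear estimate delivers \emph{strict} power savings uniformly in the auxiliary data $(\mu, \gamma, (q_\ell)_{\ell \ne j, k})$; here the goodness of $X'$ together with the exclusion of $Y'$ (Lemmas \ref{lem:unfiltered_good} and \ref{lem:filtered_assu}) ensure the hypotheses of the large sieve are met uniformly.
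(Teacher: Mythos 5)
Your overall strategy is the paper's: start from identity \eqref{eq:average}, split the pairs $(\mu, \gamma)$ using Lemma \ref{lem:bilin_ign}, handle the two main cases via Lemma \ref{lMainTerm}, and kill the third case with Lemma \ref{lem:bilinear_vary} plus the bilinear sieve of \cite[Theorem 5.2]{Smi22a}, then sum trivially over the remaining data and absorb everything into the super-polylogarithmic smallness of $\alpha_0(H)^{-c}$. That part matches and works.

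Where you go astray is the main-term bookkeeping, and the misstep is more than cosmetic. You place $\mathcal{D}(M_{-3d})$ in the complement $\mathcal{M}_{0,\Vplac}(M_{-3d}) + \mathcal{D}(M_{-3d})$ of $\mathcal{M}_1(M_{-3d})$ and then treat $\mu \in \mathcal{D}$ as a separate source of $|X|$-contributions needing direct computation. In fact $\mathcal{D} \subseteq \mathcal{M}_1$: since $\mathcal{M}_{0,\Vplac}$ excludes the zero element, $\mathcal{D}$ meets $\mathcal{M}_{0,\Vplac} + \mathcal{D}$ only if $\mathcal{D} \cap \mathcal{M}_{0,\Vplac}$ is nonempty, and equation \eqref{eq:nosmallramonly2} (valid because $x \notin Y'$) forces this intersection to be empty. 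So the $\mu \in \mathcal{D}$ contributions are already covered by Lemma \ref{lMainTerm}, and you would double-count them. The cleaner move, which the paper uses, is: every $\mu \notin \mathcal{M}_1$ is never a Selmer element, so averaging over all $\gamma \in \mathcal{R}$ kills its entire contribution to \eqref{eq:average}; you may then restrict the whole sum to $\mu \in \mathcal{M}_1$ and never have to analyze the complement. Your proposed ``direct computation using \eqref{eq:explicit_local_conds}'' for the complement is unnecessary and, given the $\mathcal{D}$-placement error, would not yield the right constant.

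Relatedly, your explanation of the four-term structure of $\kappa$ (``two trivial Selmer sources and two overlap corrections'') does not describe what is actually happening. The derivation is a direct count of pairs $(\mu,\gamma)$ with $\mu \in \mathcal{M}_1$ satisfying case (1) or case (2): case (1) contributes $|H^0(G_\Q, M_d)| \cdot |\mathcal{R}|$ pairs, case (2) contributes $|H^0(G_\Q, M_{-3d})| \cdot |\mathcal{M}_1|$, and the double-counted intersection is $|H^0(G_\Q, M_d)| \cdot |H^0(G_\Q, M_{-3d})|$. The last two displayed terms in $\kappa$ come not from a new inclusion--exclusion but from expanding $|\mathcal{M}_1| = \mathcal{T}(E_{-27d, n(x)}) \cdot |\mathcal{R}| - |\mathcal{M}_{0,\Vplac}| \cdot |H^0(G_\Q, M_d)|$, using \eqref{eq:M0_count}. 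Finally, the Greenberg--Wiles lower bound you cite (``$|\Sel^{\sqrt{-3}} E_{-27d, n(y)}| \geq |H^0(G_\Q, M_{-3d})| \cdot \mathcal{T}$'') is offered in the paper only as interpretation preceding the theorem; it plays no role in the proof, and trying to feed it into the count distracts from the actual pair-enumeration that produces $\kappa$.
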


\begin{proof}
We return to equation \eqref{eq:average}, so that our aim is to bound
\begin{align}
\label{eSelmerMomentGoal}
\left|\kappa \cdot |X| - \frac{1}{\left| \mathcal{R}\right|} \sum_{\mu \in \mathcal{M}_0(M_{-3d})} \,\sum_{\gamma \in \mathcal{R}} \, \sum_{y \in X} e\left((\gamma \cdot \mu)_y \right)\right|.
\end{align}
We start by applying Lemma \ref{lem:bilin_ign}, and we will analyse the contribution from each pair $(\mu, \gamma)$ depending on the three arising cases. We will see that the first two cases contribute to the main term, while the third case will be absorbed into the error term. Let us first dispel with the pairs $(\mu, \gamma)$ falling in the third case. By definition, we can find integers $j$ and $k$ and $\tau \in B(\lambda_j(p_j), \lambda_k(p_k))$ satisfying \eqref{eq:ignorable}. Fix $q_i \in X_i$ for every $i \not \in \{j, k\}$ and take 
$$
Z = X_j \times X_k \times \prod_{\substack{1 \leq i \leq r \\ i \not \in \{j, k\}}} \{q_i\}.
$$
Then, for pairs $(\mu, \gamma)$ in the third case, we claim the existence of $c > 0$ satisfying
\begin{align}
\label{eDoubleSum}
\left|\sum_{z \in Z} e\left((\gamma \cdot \mu)_z\right)\right| = \mathcal{O}\left(|X_j| \cdot |X_k| \cdot \alpha_0(H)^{-c}\right).
\end{align}
Once the claim is established, we see that the total contribution from the third case may be absorbed into the error term of the proposition by isolating their contribution through an application of the triangle inequality in \eqref{eSelmerMomentGoal}. 

In order to prove the claim \eqref{eDoubleSum}, we apply Lemma \ref{lem:bilinear_vary} to find functions $\alpha: X_j \rightarrow \mathbb{C}^\ast$ and $\beta: X_k \rightarrow \mathbb{C}^\ast$ both of magnitude at most $1$ such that
\[
e\left((\gamma \cdot \mu)_z\right) = \alpha(q_j) \beta(q_k) \prod_{\tau \in B(\lambda_j(q_j), \lambda_k(q_k))} e\left(b_{\tau}\left(\symb{\tau\lambda_j(q_j)}{\lambda_k(q_k)}\right)\right)
\]
for all $z = (q_1, \dots, q_r) \in Z$, where $b_{\tau} \in \Hom\left(\mu_3, \tfrac{1}{3}\Z/\Z\right)$ is given by
\[
b_{\tau} \,=\, - (\tau \gamma_j - \gamma_k) \cdot (\tau m_j - m_k).
\]
By \eqref{eq:ignorable}, we know that at least one $b_\tau$ is nonzero.

At this stage, we would like to apply the bilinear sieve for symbols as presented in \cite[Theorem 5.2]{Smi22a}. This equidistribution result uses a more general symbol $\symb{\mfp}{\mfq}_{\text{gen}}$, which is a function from $B(\mfp, \mfq)$ to the roots of unity. To convert between the symbol $\symb{\mfp}{\mfq}_{\text{gen}}$ and our symbol $\symb{\mfp}{\mfq}$, we use the isomorphism $\varphi$ on top of page 19 of \cite{Smi22a} to get an identification $\symb{\mfp}{\mfq} = \varphi(\symb{\mfp}{\mfq}_{\text{gen}}(1))$. Then \cite[Proposition 3.16(3)]{Smi22a} yields
\[
\symb{\tau \mfp}{\mfq} = \varphi(\symb{\tau \mfp}{\mfq}_{\text{gen}}(1)) = \varphi(\tau(\symb{\mfp}{\mfq}_{\text{gen}}(\tau))).
\]
The bilinear sieve \cite[Theorem 5.2]{Smi22a} shows that the vector $\symb{\mfp}{\mfq}_{\text{gen}}(\tau)$ as $\tau$ ranges over $B(\mfp, \mfq)$ is equidistributed. Therefore, by the above identity, the symbols $\symb{\tau \mfp}{\mfq}$ are independent and distributed uniformly at random as $\tau$ ranges over $B(\mfp, \mfq)$. Hence our claim \eqref{eDoubleSum} is a consequence of our assumption that at least one $b_\tau$ is nonzero.

It remains to handle the pairs $(\mu, \gamma)$ belonging to the first two cases. By virtue of $x$ lying outside the set $Y'$ defined in Lemma \ref{lem:filtered_assu} (1), we are allowed to apply Lemma \ref{lem:nosmallramonly}. In particular, equation \eqref{eq:M0_count} of that lemma gives
\begin{align}
\label{eM0count}
\left|\mathcal{M}_0(M_{-3d})\right| = \mathcal{T}(E_{-27d, n(x)}) \cdot \prod_{i \le r} \left|H^0(G_{p_i}, M_{-3d})\right| = \mathcal{T}(E_{-27d, n(x)}) \cdot |\mathcal{R}|.
\end{align}
Furthermore, if $\mu$ is not in $\mathcal{M}_1(M_{-3d})$, then it cannot parameterize a Selmer element due to equation \eqref{eq:nosmallramonly}. Therefore we may restrict the summation in \eqref{eDoubleSum} to $\mathcal{M}_1(M_{-3d})$. For the remaining pairs $(\mu, \gamma)$, all the conditions of Lemma \ref{lMainTerm} are satisfied, so we conclude that $e\left((\gamma \cdot \mu)_y \right) = 1$ for all $y$ and all such pairs. Therefore to finish the proof of the proposition, we need to count the total number of pairs $(\mu, \gamma)$ left, i.e. lying in the first or second case of Lemma \ref{lem:bilin_ign} and satisfying $\mu \in \mathcal{M}_1(M_{-3d})$.

From the first case we get a total of $|H^0(G_\Q, M_d)| \cdot |\mathcal{R}|$ pairs, while the second case gives a total of $|H^0(G_\Q, M_{-3d})| \cdot |\mathcal{M}_1(M_{-3d})|$. By equation \eqref{eM0count}, this last number is equal to
$$
|H^0(G_\Q, M_{-3d})| \cdot (\mathcal{T}(E_{-27d, n(x)}) \cdot |\mathcal{R}| - |\mathcal{M}_{0, \Vplac}(M_{-3d})| \cdot |H^0(G_\Q, M_d)|).
$$
From this count, we have to subtract pairs $(\mu, \gamma)$ that are in both cases. This amounts to subtracting $|H^0(G_\Q, M_{-3d})| \cdot |H^0(G_\Q, M_d)|$. Adding these counts together and dividing through $|\mathcal{R}|$ ends the proof of the proposition.
\end{proof}

Let us immediately take advantage of Theorem \ref{tFirstMoment} to deduce Proposition \ref{prop:blowup} from the introduction.

\begin{proof}[Proof of Proposition \ref{prop:blowup}]
If neither $d$ nor $-3d$ is a square, then Proposition \ref{prop:blowup} is a direct consequence of Theorem \ref{tFirstMoment}, the definition of the Tamagawa ratio and Lemma \ref{lem:filtered_assu} (3). If $d$ is a square, then we have
$$
\sum_{p \mid n(x)} \dim H^0(G_p, M_d) - \dim H^0(G_p, M_{-3d}) = \sum_{p \mid n(x)} \mathbf{1}_{p \equiv 1 \bmod 3}.
$$
In this case an application of Turan's trick yields Proposition \ref{prop:blowup}. A similar argument works if $-3d$ is a square.
\end{proof}

\subsection{Equidistribution of the Cassels--Tate pairing}
We shall now prove Theorem \ref{thm:main}. It is at this stage that we shall benefit from our work in Section \ref{sec:trilinear}, which we will eventually combine with Proposition \ref{pCT}. Recall that we introduced a real number $\alpha_0$ in the statement of Theorem \ref{thm:main}. 

\begin{theorem}
\label{tCTReduction1}
Fix $W \in \pm 1$ and fix a nonzero integer $d$. There exists $C > 0$ such that the following holds. 
Let $X$ be a good unfiltered grid of height $H > C$. Then, for any integer $r \ge 0$ satisfying $(-1)^r = W$, we have
\[
\left|\frac{|\{x \in X : r_3(E_{d, n(x)}) = r, w(E_{d, n(x)}) = W\}|}{|\{x \in X : w(E_{d, n(x)}) = W\}|} - \alpha_0 \cdot 3^{\frac{-r(r-1)}{2}} \cdot \prod_{k=1}^r \left(1 - 3^{-k}\right)^{-1}\right| \leq \frac{C}{(\log^{(2)} H)^{\frac{1}{4}}}.
\]
\end{theorem}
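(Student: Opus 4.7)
The first step is to pass from the unfiltered grid $X$ to its filtered subgrids. On each such filtered subgrid $X_f \subseteq X$, the algebraic data of Section \ref{ssec:alg_grids} is constant: Proposition \ref{prop:48} identifies $\Sel^{\sqrt{-3}} E_{-27d, n(x)}$ with an explicit subgroup of $\mathcal{M}(M_{-3d})$ that is the same for every $x$ in a given grid class, and by Lemma \ref{lem:filtered_assu} (1) we may discard the grid classes corresponding to the exceptional set $Y'$. By Lemma \ref{lem:filtered_assu} (3) and the Tamagawa-ratio formula \eqref{eTamagawa}, on the overwhelming majority of filtered grids one of the two $\sqrt{-3}$-Selmer groups of an isogenous pair is forced to be small while the other is ``large,'' and in either case the exact sequence \eqref{eq:Sel3decomp} identifies $r_3(E_{d, n(x)})$, up to a bounded correction coming from $\Sha^1$-type terms, with $\dim_{\FFF_3} \ker \CTP_{d, n(x)}$ (or the analogous pairing on $\Sel^{\sqrt{-3}} E_{d, n(x)}$) acting on this large-side Selmer group. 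The root number $w(E_{d, n(x)})$ is determined by local data at primes in $\Vplac_0 \cup \{p_1, \dots, p_r\}$, and on each filtered grid it is constant; moreover, because the Cassels--Tate pairing is alternating, the parity of $\dim \ker \CTP_{d,n(x)}$ is rigidly tied to the root number, so the condition $(-1)^r = w$ is automatically the only parity accessible on each filtered grid.

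Fix such a good filtered grid $X_f$ and a parameterization $\Sel^{\sqrt{-3}} E_{-27d, n(x)} \hookrightarrow \mathcal{M}(M_{-3d})$ that is independent of $x \in X_f$. To compute the distribution of $\dim_{\FFF_3} \ker \CTP_{d, n(x)}$, we compute, for each fixed tuple $(\phi_1, \psi_1, \dots, \phi_k, \psi_k)$ of Selmer classes, the mixed moments
\[
M_k(X_f) \,=\, \sum_{x \in X_f} \exp\!\left(2\pi i \sum_{j=1}^{k} \CTP_{d, n(x)}(\phi_j, \psi_j)\right).
\]
Fixing a basepoint $x_0 \in X_f$ and applying Proposition \ref{pCT} coordinate-by-coordinate, we may write $\CTP_{d, n(x)}(\phi_j, \psi_j) - \CTP_{d, n(x_0)}(\phi_j, \psi_j)$ as a sum of generalized R\'{e}dei symbols $\tfrac{1}{2\pi i}\log \Redei{\chi_{3, p_i/p_{i,0}}}{\phi_j}{\psi_j}$, where $i$ ranges over the coordinate axes where $x$ and $x_0$ disagree. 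Expanding each exponential and using trilinearity of the symbol (Proposition \ref{prop:rec}), $M_k(X_f)$ becomes a bounded linear combination of sums of products of R\'{e}dei symbols in which the first argument varies in one designated group of grid coordinates while the ramification of the Selmer classes $\phi_j, \psi_j$ is supported in two further disjoint groups of coordinates. This is precisely the trilinear structure of Theorem \ref{thm:trilinear} applied with $M_1 = \mu_3$, $M_2 = M_3 = M_{-3d}$, after one or two Cauchy--Schwarz decouplings parallel to those in Proposition \ref{prop:CS}. The theorem then yields a power-saving bound for every non-degenerate moment, which means that, averaged over $x \in X_f$, the Cassels--Tate pairing behaves as a uniformly random alternating $\FFF_3$-pairing on $\Sel^{\sqrt{-3}} E_{-27d, n(x)}$ modulo its algebraic ``kernel'' contribution $\mathcal{M}_{0, \Vplac}(M_{-3d}) + \mathcal{D}(M_{-3d})$.

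The proof then finishes with the standard Cohen--Lenstra-type computation, identical in form to \cite[Section 8]{Smi22b}: for a uniformly random alternating $\FFF_3$-valued pairing on an $\FFF_3$-vector space of dimension $n$, the probability that its kernel has dimension $r$, with $(-1)^r = (-1)^n$, converges to $\alpha_0 \cdot 3^{-r(r-1)/2} \prod_{k=1}^{r}(1 - 3^{-k})^{-1}$ as $n \to \infty$, with convergence rate $O(3^{-n})$. Combined with Lemma \ref{lem:filtered_assu} (3), which forces the ambient dimension $n$ to grow like $\log^{(2)} H$ on all but a negligible fraction of grids, this produces the claimed main term with an error of size $O((\log^{(2)} H)^{-1/4})$ after summing the contributions of all filtered grids and of the exceptional set $Y'$. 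The main obstacle is the organization of step two: one must arrange the expansion of $M_k(X_f)$ so that the three classes of variables fed into Theorem \ref{thm:trilinear}, namely the prime argument of $\chi_{3, p_i/p_{i,0}}$ and the ramification-parameters of $\phi_j$ and $\psi_j$, are drawn from three genuinely independent pools of grid coordinates. Achieving this requires partitioning the coordinate axes of $X_f$ into three groups and proving that, after Cauchy--Schwarz, the dominant contributions come from tuples for which this partitioning is possible; this is the higher-dimensional analogue of the ``cube structure'' used in \cite[Section 5]{Smi22a} and is where the whole trilinear character-sum apparatus of Section \ref{sec:trilinear} is brought to bear.
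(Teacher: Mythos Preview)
Your outline captures the right high-level strategy and invokes the correct ingredients (Proposition~\ref{prop:48}, Lemma~\ref{lem:filtered_assu}, the Tamagawa ratio, Proposition~\ref{pCT}, Theorem~\ref{thm:trilinear}, and the alternating-form kernel statistics), and is essentially the same approach as the paper. There are, however, two organizational points on which the paper is sharper and on which your description is imprecise enough to be a potential gap.

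First, the Selmer group is constant not on a filtered subgrid but only on a \emph{grid class} inside it (Proposition~\ref{prop:48}). The paper therefore works with a fixed reasonable grid class $[y]$, and uses Theorem~\ref{tFirstMoment} together with the Tamagawa bound to show that the union of the unreasonable grid classes is negligible. Your plan of fixing the Selmer parameterization on all of $X_f$ and then telescoping $\CTP_{d,n(x)}-\CTP_{d,n(x_0)}$ coordinate-by-coordinate needs the intermediate twists to have the same Selmer group, which is only guaranteed within a grid class; you should insert this extra layer of restriction.

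Second, the application of the trilinear sieve is not done by ``partitioning the coordinate axes into three groups.'' The paper fixes a single test vector $\omega=\sum c_{i,j}\phi_i\wedge\phi_j\in\wedge^2 V$, puts the basis $\phi_1,\dots,\phi_t$ in row echelon form to obtain an injection $s:\{1,\dots,t\}\to\{1,\dots,r\}$, picks \emph{three specific} coordinate indices $i_1,i_2,i_3$ (with $i_2=s(j_1)$, $i_3=s(j_2)$ for some $c_{j_1,j_2}\neq 0$, and $i_1$ carrying the cubic character), freezes every other coordinate at a ``large'' point $P$, and then applies Cauchy--Schwarz three times to replace $\phi_{j_1},\phi_{j_2}$ and the cubic character by genuine differences $\mfB_{\lambda(y_k)}(m)-\mfB_{\lambda(y_k')}(m)$ and $\chi_{3,y_1/y_1'}$. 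Only after this does the sum match the hypotheses of Theorem~\ref{thm:trilinear}. The reason the paper insists on this tight three-coordinate setup is error management: the trilinear bound must survive multiplication by $|\wedge^2 V|$, which is roughly $3^{\binom{t}{2}}$ with $t\asymp(\log\log H)^{1/2}$, and the power-saving from Theorem~\ref{thm:trilinear} in $\alpha_0(H)$ is exactly what makes this work. A looser ``three groups'' organization would not obviously give you the needed uniformity in $\omega$.
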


Note that Theorem \ref{tCTReduction1} readily implies Theorem \ref{thm:main} thanks to Lemma \ref{lem:unfiltered_good}. It remains to prove Theorem \ref{tCTReduction1}. The principal bottlenecks for improving our error term in Theorem \ref{tCTReduction1} stem from Lemma \ref{lem:unfiltered_good} and Lemma \ref{lem:filtered_assu}.

We prove Theorem \ref{tCTReduction1} by reducing to grids consisting of three intervals where we are able to apply our trilinear sieve result. In order to achieve such a setup, we perform two successive reduction steps. 

From now on, we let $X$ be a good unfiltered grid of height $H$. We take $x = (p_1, \dots, p_r) \in X$ and we let $Y$ be the filtered subgrid of $X$ corresponding to $x$. We consider the following properties of a filtered subgrid $Y$ (compare with Lemma \ref{lem:filtered_assu}):
\begin{itemize}
\item[(G1)] the normal subgroup generated by $\{\mathrm{Frob} \ p_i : i \in S(x)\}$ equals $\Gal(K(\Vplac)/K)$,
\item[(G2)] for every $\sigma \in \Gal(K/\Q)$
$$
\left|\#\{i \leq r : \mathrm{Frob} \ p_i = \sigma\} - \frac{r}{\#\Gal(K/\Q)}\right| < \frac{r}{100},
$$
\item[(G3)] we have
$$
\left|\sum_{i \leq r} \dim H^0(G_{p_i}, M_d) - \dim H^0(G_{p_i}, M_{-3d})\right| \geq (\log \log H)^{1/4}.
$$
\end{itemize}
Observe that the root number is constant on $Y$, and write $W$ for this number. We write $G$ for the number of grid classes of $Y$, and we take representatives $y_1, \dots, y_G$ for the grid classes of $Y$. Attached to $Y$, we define two distinct integers $d_1, d_2 \in \{d, -27d\}$ in such a way that 
$$
\sum_{i \leq r} \dim H^0(G_{p_i}, M_{d_2}) \leq \sum_{i \leq r} \dim H^0(G_{p_i}, M_{d_1}). 
$$
We say that a grid class $[y_j]$ is a \emph{reasonable} grid class of $Y$ if
\[
|[y_j]| \geq \frac{|Y|}{G \cdot (\log H)} \quad \text{ and } \quad \Sel^{\sqrt{-3}} E_{d_1, n(y)} = 0 \text{ for all } y \in [y_j].
\]
Note that the latter condition does not depend on the choice of $y \in [y_j]$ by Proposition \ref{prop:48}. In order to perform our first reduction step, we will fix a (reasonable) grid class. The benefit of doing so is that the Cassels--Tate pairings are then defined on the same space.

\begin{mydef}
\label{dV}
Fix a grid class $[y_j]$ and some element $y \in [y_j]$. Write 
$$
V := \mfB_{y, M_{d_2}}^{-1} \left(\Sel^{\sqrt{-3}} E_{d_2, n(y)}\right) \subseteq H^1_\Vplac(G_{\Q}, M_{d_2}) \oplus \bigoplus_{i=1}^r M_{d_2}(-1)^{G_{p_i}},
$$
which does not depend on $y \in [y_j]$ thanks to Proposition \ref{prop:48}. Write $t := \dim_{\FF_3} V$ and let $\phi_1, \dots, \phi_t$ be a basis of $V$. Since the pairing $\CTP_{d_1, n(y)}$ is alternating, we can naturally attach an element of $(\wedge^2 V)^\vee := \Hom(\wedge^2 V, \Q/\Z)$ to it.

If $d_1$ is a square, then $\chi_{3, d_1 n(y)^2/16}$ is always a Selmer element and we may assume without loss of generality that this is $\phi_1$. This element is also always in the kernel of $\CTP_{d_1, n(y)}$.

We say that an element $\omega \in \wedge^2 V$ is a test vector if
\begin{itemize}
\item in case $d_1$ is not a square, then we demand that $\omega$ is not zero,
\item in case $d_1$ is a square, then we demand that $\omega = \sum_{2 \leq i < j \leq t} c_{i, j} \phi_i \wedge \phi_j$ for some coefficients $c_{i, j} \in \mathbb{F}_3$ not all equal to $0$.
\end{itemize}
\end{mydef}

We are now ready to state our first reduction step. One highlight of Theorem \ref{tCTReduction2} is the exceptionally strong error term, which is due to our trilinear sieve result. Having access to such a strong error term is in fact of vital importance, as we have to multiply this error by $|\wedge^2 V|$ when we reduce from Theorem \ref{tCTReduction2} to Theorem \ref{tCTReduction1}. Since $\dim V$ is roughly of order $(\log^{(2)} H)^{1/2}$ in our situation, this necessitates stronger analytic tools than the bilinear sieve results previously employed in \cite{Smi22a, Smi22b} or the Chebotarev density theorem \cite{KoyPag22}.

\begin{theorem}
\label{tCTReduction2}
Fix a nonzero integer $d$. There exists $C > 0$ such that the following holds. 
Let $X$ be a good unfiltered grid of height at most $H > C$, and let $Y$ be a filtered subgrid satisfying the conditions (G1)-(G3). Let $[y]$ be a reasonable grid class of $Y$. Write $V$ for the vector space associated to $[y]$ as in Definition \ref{dV}. Then we have for all test vectors $\omega \in \wedge^2 V$
\[
\left|\sum_{z \in [y]} \exp(2\pi i \cdot \CTP_{d_1, n(z)}(\omega))\right| \leq \frac{C \cdot |[y]|}{\exp^{(3)}\left(\frac{1}{5} \log^{(3)} H\right)}.
\]
\end{theorem}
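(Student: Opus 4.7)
The plan is to convert differences of Cassels--Tate pairings into generalized R\'edei symbols using Proposition \ref{pCT}, and then to bound the resulting trilinear character sum using Theorem \ref{thm:trilinear}.

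First I would fix a basepoint $y_0 = (p_1, \dots, p_r) \in [y]$ and decompose the test vector as $\omega = \sum_{a < b} c_{ab}\, \phi_a \wedge \phi_b$ with $\phi_a \in V$. For each $z = (q_1, \dots, q_r) \in [y]$, I would walk from $y_0$ to $z$ along a telescoping path $y_0 = z^{(0)}, z^{(1)}, \dots, z^{(r)} = z$ that swaps a single coordinate per step. The grid-class structure of $[y]$, combined with the reciprocity properties of the generalized Legendre symbols from Section \ref{sec:Leg}, should ensure that each single-prime swap $p_i \mapsto q_i$ satisfies the hypothesis of Proposition \ref{pCT}; iterating that proposition would yield
\[
\exp(2\pi i \CTP_{d_1, n(z)}(\phi_a, \phi_b)) = \exp(2\pi i \CTP_{d_1, n(y_0)}(\phi_a, \phi_b)) \cdot \prod_{i=1}^{r} \Redei{\chi_{3, q_i/p_i}}{(\phi_a)_{z^{(i)}}}{(\phi_b)_{z^{(i)}}},
\]
where $(\phi_a)_{z^{(i)}} = \mfB_{z^{(i)}, M_{d_2}}(\phi_a)$. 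Substituting the explicit decomposition $(\phi_a)_{z^{(i)}} = \phi_a^{\mathrm{glob}} + \sum_{j \le i} \mfB_{\lambda_j(q_j), M_{d_2}}(m_j^a) + \sum_{j > i} \mfB_{\lambda_j(p_j), M_{d_2}}(m_j^a)$ and expanding each R\'edei symbol by trilinearity (Proposition \ref{prop:rec}), I would rewrite the target sum as a constant times
\[
\sum_{z \in [y]} \prod_{(i,j,k)} \Redei{\chi_{3, q_i/p_i}}{\mfB_{\lambda_j(q_j), M_{d_2}}(\mu_j)}{\mfB_{\lambda_k(q_k), M_{d_2}}(\nu_k)}^{e_{ijk}}
\]
plus lower-order terms in which at most two of the coordinates $i, j, k$ remain free.

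The test-vector hypothesis on $\omega$ should guarantee that some exponent $e_{i_0 j_0 k_0}$ with three distinct indices is nonzero: when $d_1$ is not a square this would follow from generic nondegeneracy of the ramification data, and when $d_1$ is a square the exclusion of $\phi_1 = \chi_{3, d_1 n^2/16}$ from $\omega$ should ensure that the surviving coefficients produce a genuine triple interaction. For such a triple I would apply Cauchy--Schwarz in the style of Proposition \ref{prop:CS} to treat the factors indexed by the other coordinates as coefficient functions of modulus at most $1$, reducing to a trilinear sum covered by Theorem \ref{thm:trilinear} over $F = \Q$ with $M_1 = M_2 = M_3 = \mu_3$. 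Restricting the variables to the prime intervals $\mathcal{P}_{k_{i_0}}(H), \mathcal{P}_{k_{j_0}}(H), \mathcal{P}_{k_{k_0}}(H)$ via indicator coefficient functions and using that each such interval has minimum at least $\exp^{(3)}(\tfrac{1}{2}\log^{(3)} H)$ by goodness of the grid, the theorem will yield a relative bound $\ll (\log H)^C \cdot H_{\min}^{-c}$, which comfortably dominates the target $\exp^{(3)}(\tfrac{1}{5}\log^{(3)} H)^{-1}$. Lower-order contributions from triples with coinciding indices can be dispatched either by the bilinear large sieve \cite[Theorem 5.2]{Smi22a} or by trivial estimates, and the total number of expansion terms should remain sub-polynomial in $H$ thanks to the bound $\dim V \ll (\log^{(2)} H)^{1/2}$ implicit in Theorem \ref{tFirstMoment}.

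The hard part will be verifying that the hypothesis of Proposition \ref{pCT} is actually met along the telescoping path: the grid-class definition only controls the pairwise symbols $\symb{\lambda_i(\cdot)}{\lambda_j(\cdot)}$, whereas the telescoping demands cubic-residue compatibility of each $q_i/p_i$ against the \emph{other} primes presently appearing in $z^{(i-1)}$. Translating between these two conditions will require a careful application of cubic reciprocity within the generalized Legendre symbol framework of Section \ref{sec:Leg}, and a separate bookkeeping argument will be needed to confirm that some coefficient $e_{i_0 j_0 k_0}$ with three distinct indices is actually nonzero whenever $\omega$ is a test vector.
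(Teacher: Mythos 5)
Your high-level plan --- convert differences of Cassels--Tate pairings into generalized R\'edei symbols via Proposition \ref{pCT} and feed the result into the trilinear sieve of Theorem \ref{thm:trilinear} --- is exactly the paper's strategy. But the specific mechanism you propose, telescoping from a fixed basepoint $y_0$ to $z$ by one-coordinate swaps and applying Proposition \ref{pCT} at every step, has a gap that you yourself flag and which is, in fact, fatal rather than merely technical. Proposition \ref{pCT} comparing $E_{d_1,mp^k}$ and $E_{d_1,mq^k}$ needs $p/q\in(\Q_r^\times)^3$ for every $r\mid 6dm$; here $m$ is the product of \emph{all the other currently-installed coordinates}. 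The grid-class conditions~(1)--(2) only assert $\symb{\tau\lambda_i(p_i)}{\lambda_j(p_j)}=\symb{\tau\lambda_i(q_i)}{\lambda_j(q_j)}$, i.e.\ compatibility when \emph{both} endpoint coordinates are replaced simultaneously. On your mixed intermediate point $z^{(i)} = (q_1,\dots,q_i,p_{i+1},\dots,p_r)$ you would need equalities like $\symb{\tau\lambda_1(q_1)}{\lambda_{i+1}(p_{i+1})}=\symb{\tau\lambda_1(p_1)}{\lambda_{i+1}(p_{i+1})}$, which the grid class simply does not control --- in general $z^{(i)}$ is not in $[y]$ at all, and there is no reason for the cubic-residue hypothesis of Proposition \ref{pCT} to hold there. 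Cubic reciprocity within the generalized Legendre framework will not rescue this, because the issue is not a sign or permutation subtlety but a genuine lack of information: the grid class never constrains the cubic residue of $q_1$ modulo $p_{i+1}$.

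The paper sidesteps this entirely by a two-stage reduction. It first introduces an intermediate statement (Theorem \ref{tCTReduction3}) in which all coordinates except a carefully chosen triple $S=\{i_1,i_2,i_3\}$ of ``variable indices'' are frozen at a fixed large point $P$; the index $i_1$ is chosen so that $\chi_{3,y_1/y_1'}$ plays the role of the R\'edei symbol's first argument, and $i_2,i_3$ are selected from the row-echelon data of the test vector $\omega$ so that the triple interaction is provably nondegenerate. Then --- and this is the key structural difference from your proposal --- the paper applies Cauchy--Schwarz \emph{before} invoking Proposition \ref{pCT}, isolating the $y_1$-coordinate so that the proposition is only ever used to compare $E_{d_1, bmy_2y_3y_1}$ with $E_{d_1, bmy_2y_3y_1'}$, i.e.\ two points of the same grid class $[y]$ differing in exactly one coordinate. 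For such a pair, conditions~(1)--(2) of the grid-class definition give precisely the cubic-residue compatibility that Proposition \ref{pCT} demands. Two more Cauchy--Schwarz applications then produce the trilinear sum in the differences $\chi_{3,y_1/y_1'}$, $\mfB_{\lambda_{i_2}(y_2),M_{d_2}}(m_1)-\mfB_{\lambda_{i_2}(y_2'),M_{d_2}}(m_1)$, $\mfB_{\lambda_{i_3}(y_3),M_{d_2}}(m_2)-\mfB_{\lambda_{i_3}(y_3'),M_{d_2}}(m_2)$, which Theorem \ref{thm:trilinear} controls. So while your instinct about where the triple interaction comes from is right, you should replace the telescoping-path computation with the ``freeze all but three coordinates, then Cauchy--Schwarz before Proposition \ref{pCT}'' architecture to make the hypothesis of Proposition \ref{pCT} checkable.

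Two smaller points. First, the test-vector nondegeneracy is handled in the paper not by a generic-position argument but by an explicit row-echelon normalization of the basis $\phi_1,\dots,\phi_t$ of $V$ together with the definition of the variable indices; in the square-$d_1$ case the exclusion of $\phi_1=\chi_{3,d_1n^2/16}$ is exactly what guarantees some $c_{j_1,j_2}\neq 0$ with $j_1\geq 2$. Second, your use of $M_1=M_2=M_3=\mu_3$ in the trilinear sieve is a simplification: the pairing \eqref{eq:sign?} uses $\mu_3\otimes M_{-3d}\otimes M_{-3d}$, so $M_2=M_3=M_{d_2}$ (a quadratic twist of $\FF_3$) rather than $\mu_3$; this matters for the unpackedness hypotheses and for the constants in Theorem \ref{thm:trilinear}.
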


\begin{proof}[Proof that Theorem \ref{tCTReduction2} implies Theorem \ref{tCTReduction1}] 
Let $X$ be a good unfiltered grid of height at most $H$. We split $X$ as the disjoint union over its filtered subgrids $Y$. We recall that the root number is constantly equal to some number $W$ on $Y$. If $Y$ satisfies the conditions (G1)-(G3), we claim that for all $r$ satisfying $(-1)^r = W$
\begin{align}
\label{eReduceToFiltered}
\left|\frac{|\{y \in Y : r_3(E_{d, n(y)}) = r\}|}{|Y|} - \alpha_0 \cdot 3^{\frac{-r(r-1)}{2}} \cdot \prod_{k=1}^r \left(1 - 3^{-k}\right)^{-1}\right| \leq \frac{C}{(\log^{(2)} H)^{1/4}}
\end{align}
for some constant $C > 0$ depending only on $d$. Note that the claim readily implies Theorem \ref{tCTReduction1} by virtue of Lemma \ref{lem:filtered_assu}, so it suffices to prove the claim.

Recall that we attached two distinct integers $d_1, d_2 \in \{d, -27d\}$ to $Y$ in such a way that 
$$
\sum_{i \leq r} \dim H^0(G_{p_i}, M_{d_2}) \leq \sum_{i \leq r} \dim H^0(G_{p_i}, M_{d_1}), 
$$
hence
$$
\sum_{i \leq r} \dim H^0(G_{p_i}, M_{d_2}) + (\log^{(2)} H)^{1/4} \leq \sum_{i \leq r} \dim H^0(G_{p_i}, M_{d_1})
$$
by condition (G3). With these choices, we return to equation \eqref{eTamagawa} to deduce that 
\begin{align}
\label{eTamagawaControl}
\mathcal{T}(E_{d_2, n(y)}) \gg 3^{(\log^{(2)} H)^{1/4}}, \quad \quad \mathcal{T}(E_{d_1, n(y)}) \ll 3^{-(\log^{(2)} H)^{1/4}}.
\end{align}
Therefore we expect $\Sel^{\sqrt{-3}} E_{d_2, n(y)}$ to be large and $\Sel^{\sqrt{-3}} E_{d_1, n(y)}$ to be the minimal possible size. To get a handle on the $3$-Selmer group of $E_{d_1, n(y)}$, our goal will be to calculate the Cassels--Tate pairing $\CTP_{d_1, n(y)}$ on $\Sel^{\sqrt{-3}} E_{d_2, n(y)}$ (recall the exact sequence \eqref{eq:Sel3decomp}). 

In order to do so, we recall that $G$ equals the number of grid classes for $Y$ so that
$$
G \leq e^{C_1 r^2} \leq e^{C_2 (\log^{(2)} H)^2}
$$
for some constants $C_1, C_2 > 0$ depending only on the starting tuple. By construction we have
\begin{align}
\label{eGridClassesCover}
\bigcup_{j = 1}^G [y_j] = \{y \in Y : w(E_{d, n(y)}) = W\} = Y.
\end{align}
Thanks to equation (\ref{eGridClassesCover}), Theorem \ref{tFirstMoment} and equation \eqref{eTamagawaControl}, the union of the grid classes that are not reasonable is negligible compared to $|Y|$. Therefore it suffices to prove that there exist constants $c, C > 0$ such that for all reasonable grid classes $[y]$
$$
\left|\frac{|\{z \in [y] : r_3(E_{d, n(z)}) = r\}|}{|[y]|} - \alpha_0 \cdot 3^{\frac{-r(r-1)}{2}} \cdot \prod_{k=1}^r \left(1 - 3^{-k}\right)^{-1}\right| \leq \frac{C}{\exp\left(c \log^{(2)} H\right)}.
$$
Indeed, this readily implies the inequality \eqref{eReduceToFiltered}.

We now analyze the distribution of the Cassels--Tate pairing on reasonable grid classes $[y]$. Then the formula for $r_3(E_{d, n(z)})$ becomes
\begin{align}
\label{er3Formula}
r_3(E_{d, n(z)}) = s(d) + \dim_{\FF_3} \Sel^3 E_{d_1, n(z)} = s(d) + \dim_{\FF_3}(\ker \CTP_{d_1, n(z)}),
\end{align}
where $s(d) = -1$ if $d_1$ is a square and $s(d) = 0$ otherwise. One benefit of working with grid classes is that we are able to fix a uniform basis for $\Sel^{\sqrt{-3}} E_{d_2, n(z)}$ thanks to Proposition \ref{prop:48}. Let $V$ be the vector space from Definition \ref{dV} and $\phi_1, \dots, \phi_t$ be the corresponding basis. Since $[y]$ is reasonable, we have
$$
t \gg (\log^{(2)} H)^{1/4}
$$
by equations \eqref{eq:Tama_relation} and \eqref{eTamagawaControl}. Recall that the $3$-parity conjecture is known over $\Q$, hence $(-1)^{t + s(d)} = W$. Therefore $t + s(d)$ has the same parity as $r$ by our assumption on $W$. Writing $P^{\text{Alt}}(n | t)$ for the probability that a uniformly at random selected $t \times t$ alternating matrix with coefficients in $\mathbb{F}_3$ has kernel of dimension $n$, we see that it suffices to show that
\begin{align}
\label{eFinalReduction}
\left||\{z \in [y] : r_3(E_{d, n(z)}) = r\}| - P^{\text{Alt}}(r | t + s(d)) \cdot |[y]|\right| \leq \frac{C \cdot |[y]|}{\exp^{(3)}\left(\frac{1}{6} \log^{(3)} H\right)}
\end{align}
for some constant $C > 0$ depending only on $d$. Indeed, to prove this reduction step, we use the explicit formulas for $P^{\text{Alt}}(n | t)$ in \cite[p. 9]{Smi22b} (the formula there is stated for $\mathbb{F}_2$ but this is readily adapted to $\mathbb{F}_q$ for any $q$).

We are now ready to establish that equation \eqref{eFinalReduction} is a consequence of Theorem \ref{tCTReduction2}, which would complete the proof of our reduction step. In order to prove equation \eqref{eFinalReduction}, we employ orthogonality of characters on $\wedge^2 V$ to rewrite the left hand side as
$$
\left|\sum_{z \in [y]} \sum_{\substack{\chi \in (\wedge^2 V)^\vee \\ \dim(\chi) = r - s(d) \\ s(d) = -1 \Rightarrow \phi_1 \in \mathrm{LKer}(\chi)}} \hspace{-0.55cm} \frac{1}{|\wedge^2 V|} \sum_{\omega \in \wedge^2 V} \overline{\chi(\omega)} \exp(2 \pi i \cdot \CTP_{d_1, n(z)}(\omega)) - \sum_{z \in [y]} P^{\text{Alt}}(r | t + s(d))\right|,
$$
where $\mathrm{LKer}(\chi)$ is the left kernel of $\chi$ viewed as a pairing $V \times V \rightarrow \mathbb{C}^\ast$, and $\dim(\chi)$ is the dimension of $\mathrm{LKer}(\chi)$.

If $s(d) = 0$, then the contribution from $\omega = 0$ cancels out with the term $P^{\text{Alt}}(r | t + s(d)) = P(r | t)$. Then, by the triangle inequality, it suffices to bound
$$
\sum_{\substack{\chi \in (\wedge^2 V)^\vee \\ \dim(\chi) = r}} \hspace{-0.1cm} \frac{1}{|\wedge^2 V|} \sum_{\substack{\omega \in \wedge^2 V \\ \omega \neq 0}} \left|\sum_{z \in [y]} \exp(2 \pi i \cdot \CTP_{d_1, n(z)}(\omega))\right| \leq \sum_{\substack{\omega \in \wedge^2 V \\ \omega \neq 0}} \left|\sum_{z \in [y]} \exp(2 \pi i \cdot \CTP_{d_1, n(z)}(\omega))\right|.
$$
This sum is over all test vectors $\omega$, so we can apply Theorem \ref{tCTReduction2}. Summing the error from Theorem \ref{tCTReduction2} over all choices of $\omega$ shows that equation \eqref{eFinalReduction} holds.

If $s(d) = -1$, then we recall that $\phi_1$ is in the left kernel of $\CTP_{d_1, n(z)}$. In this case, we see that the contribution from those $\omega$ of the shape
\begin{align}
\label{eTrivialOmega}
\omega = \sum_{i = 2}^t c_i \phi_1 \wedge \phi_i \quad \text{ for some } c_i \in \mathbb{F}_3
\end{align}
cancels out the contribution from the term $P^{\text{Alt}}(r | t + s(d)) = P^{\text{Alt}}(r | t - 1)$.

For all $\omega$ not of the shape \eqref{eTrivialOmega}, we may add some $\omega'$ of the shape \eqref{eTrivialOmega} so that $\omega + \omega'$ is a test vector without affecting $\CTP_{d_1, n(z)}$ as $\phi_1$ is in the left and right kernel. Then proceeding as the case where $d_1$ is not a square finishes the proof of the reduction step.
\end{proof}

Inspired by equation (\ref{er3Formula}), we shall start calculating the Cassels--Tate pairing on reasonable grid classes. To do so, we will fix all our primes $Y_i$ for all $i \in \{1 \leq j \leq r\} - S$, except for the precisely three indices in $S$ to which we shall ultimately apply the trilinear sieve. Let us now say more about these indices $S$.

\begin{mydef}
Let $[y]$ be a reasonable grid class, write $V$ for the vector space from Definition \ref{dV} and take a test vector $\omega \in \wedge^2 V$. Thanks to our assumption (G1), it follows from Lemma \ref{lem:nosmallramonly} that equation \eqref{eq:nosmallramonly} holds. By putting our basis $\phi_1, \dots, \phi_t$ of $V$ in row echelon form and using equation \eqref{eq:nosmallramonly}, we find an injective map $s: \{1, \dots, t\} \rightarrow \{1, \dots, r\}$ such that 
$$
\pi_{s(i)}(\phi_j) \neq 0 \Longleftrightarrow i = j
$$
for all $1 \leq i, j \leq t$. We will abuse notation by continuing to write $\phi_1, \dots, \phi_t$ for this new basis. We will now construct three integers $i_1, i_2, i_3$ and we will put $S := \{i_1, i_2, i_3\}$.

We uniquely expand $\omega$ as
\[
\omega = \sum_{1 \leq i < j \leq t} c_{i, j} \phi_i \wedge \phi_j
\]
with $c_{i, j} \in \mathbb{F}_3$, and with $c_{1, j} = 0$ if $d_1$ is a square. By definition of a test vector, we may find some $j_1 < j_2$ with $c_{j_1, j_2} \neq 0$. We take $i_2 = s(j_1)$ and $i_3 = s(j_2)$.

If $d_1$ is a square, recall that $\chi_{3, d_1 n^2/16}$ is always a Selmer element and we assumed that it equals $\phi_1$. In this case, we define $i_1 = s(1)$. If $d_1$ is not a square, we take $i_1$ in such a way that $C_{i_1}$ projects non-trivially in $\Gal(\Q(\sqrt{d_1})/\Q)$. This is possible thanks to condition (G2). Note that this forces $\pi_{i_1}(\phi_j) = 0$ for all $j$ by the Selmer conditions.

If $i_1, i_2, i_3$ are constructed as above and $S = \{i_1, i_2, i_3\}$, then we say that $S$ is a set of \emph{variable indices} for $([y], \omega)$.
\end{mydef}

Our goal is to fix elements $y_i \in Y_i$ for all $i \not \in S$, and then get equidistribution of the Cassels--Tate pairing by varying over the intervals $Y_i$ with $i \in S$. Given some point $P \in \prod_{i \not \in S} Y_i$, we say that $P$ is large if 
\[
|[y] \cap \pi^{-1}(P)| \geq \frac{\left|[y]\right|}{\exp^{(3)}\left(\frac{1}{4} \log^{(3)} H\right) \cdot \prod_{i \not \in S} |Y_i|},
\]
where $\pi$ is the projection map from $Y$ to $\prod_{i \not \in S} Y_i$. Bounding the points $P$ that are not large trivially, it remains to prove equidistribution of the Cassels--Tate pairing on $[y] \cap \pi^{-1}(P)$. We will phrase this as our final reduction step.

\begin{theorem}
\label{tCTReduction3}
Fix a nonzero integer $d$. There exist $c, C > 0$ such that the following holds. 
Let $X$ be a good unfiltered grid of height at most $H > C$, and let $Y$ be a filtered subgrid satisfying the conditions (G1)-(G3). Let $[y]$ be a reasonable grid class of $Y$. Write $V$ for the vector space associated to $[y]$ as in Definition \ref{dV}. Let $\omega \in \wedge^2 V$ be a test vector and let $S$ be a set of variable indices for $([y], \omega)$. Then for all large points $P \in \prod_{i \not \in S} Y_i$
\[
\left|\sum_{\substack{z \in [y] \\ \pi(z) = P}} \exp(2\pi i \cdot \CTP_{d_1, n(z)}(\omega))\right| \leq \frac{C \cdot |[y] \cap \pi^{-1}(P)|}{\left(\exp^{(3)}\left(\frac{1}{3} \log^{(3)} H\right)\right)^c}.
\]
\end{theorem}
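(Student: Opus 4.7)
The plan is to reduce the sum $\sum_{z \in [y] \cap \pi^{-1}(P)} \exp(2\pi i \cdot \CTP_{d_1, n(z)}(\omega))$ to a trilinear generalized R\'{e}dei character sum and apply Theorem \ref{thm:trilinear}. I would start by fixing a reference point $z^0 \in [y] \cap \pi^{-1}(P)$ with coordinates $p^0_{i_k}$ at the variable positions, and for $z$ with coordinates $p_{i_k}$, defining, for each $T \subseteq \{i_1, i_2, i_3\}$, the intermediate point $z^T$ obtained from $z^0$ by replacing just the coordinates indexed by $T$ with those of $z$. Inclusion--exclusion then gives
\[
\CTP_{d_1, n(z)}(\omega) - \CTP_{d_1, n(z^0)}(\omega) = \sum_{\emptyset \neq T \subseteq \{i_1, i_2, i_3\}} \Delta_T(z),
\]
where each $\Delta_T(z)$ depends only on the coordinates of $z$ at positions in $T$. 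The terms with $|T| \leq 2$ depend on at most two of the three variable primes and contribute bounded-magnitude factors that can be absorbed into coefficients $a_{12}, a_{13}, a_{23}$ in the trilinear sieve.

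Next I would compute the triple difference $\Delta_{\{i_1, i_2, i_3\}}(z)$ via three iterated applications of Proposition \ref{pCT}, expressing its exponential as a product over the pairs $(i, j)$ appearing in $\omega = \sum c_{i, j}\, \phi_i \wedge \phi_j$. By the row-echelon structure of $\phi_1, \dots, \phi_t$ and the injectivity of $s$, the dominant contribution comes from the pair $(j_1, j_2)$ (where $c_{j_1, j_2} \neq 0$) and takes the form of a single trilinear generalized R\'{e}dei symbol
\[
\Redei{\chi_{3, p_{i_1}/p^0_{i_1}}}{\mfB_{\lambda_{i_2}(p_{i_2}), M_{d_2}}(m_{j_1})}{\mfB_{\lambda_{i_3}(p_{i_3}), M_{d_2}}(m_{j_2})}^{c_{j_1, j_2}},
\]
where $m_{j_k}$ is the $i_{k+1}$-component of $\phi_{j_k}$. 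The symbols arising from the other pairs $(i, j) \neq (j_1, j_2)$ depend on at most two of the three variable primes, again by the row-echelon structure, and are also absorbed into the bilinear coefficients.

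With this decomposition in hand, the sum in question matches the shape of the character sum in Theorem \ref{thm:trilinear}, applied with $F = \Q$, $M_1 = \mu_3$, $M_2 = M_3 = M_{d_2}$, and the pairing \eqref{eq:sign?}. The sets $X_j$ for the trilinear sieve would be selected from our grid intervals $Y_{i_k}$, restricted further by the Frobenius conjugacy classes defining the grid class. The acceptable-ramification hypothesis follows from the construction of the $\mfB$ section, and the largeness assumption on $P$ ensures that the fiber is long enough in each of the three directions for the $H_i^{-c}$ factor in Theorem \ref{thm:trilinear} to produce the advertised bound on $|[y] \cap \pi^{-1}(P)|$.

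The main obstacle lies in the second step: the careful cocycle-level computation showing that the iterated CTP difference collapses to the single trilinear R\'{e}dei symbol above, with all secondary terms genuinely depending on at most two variables. This will require tracking cocycle representatives through the explicit definition of $\mfB$ and the proof of Proposition \ref{pCT}, invoking Proposition \ref{prop:48} to guarantee that the Selmer classes $\mfB_z(\phi_{j_k})$ lie in a common space across the grid class (so that the hypothesis of Proposition \ref{pCT} is met), and using the specific choices $c_{j_1, j_2} \neq 0$, $i_2 = s(j_1)$, $i_3 = s(j_2)$ together with the splitting condition on $C_{i_1}$ to confirm that the core trilinear symbol is non-degenerate.
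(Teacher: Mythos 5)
Your high-level plan matches the paper's: identify three variable indices governed by $\omega$, reduce the Cassels--Tate sum over $[y]\cap\pi^{-1}(P)$ to a trilinear generalized R\'edei character sum, and apply Theorem~\ref{thm:trilinear}. The ingredients you flag at the end---Proposition~\ref{prop:48} for commonality of the Selmer space, the row-echelon structure, the non-vanishing of $c_{j_1,j_2}$, and the splitting of $C_{i_1}$---are exactly the ones the paper uses. But the central step, passing from the CTP difference to a single trilinear symbol, has a genuine gap as stated.

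The specific problem is the claim that $\Delta_{\{i_1,i_2,i_3\}}(z)$ can be computed ``via three iterated applications of Proposition~\ref{pCT}''. Proposition~\ref{pCT} compares $\CTP_{d,mp^k}(\phi,\psi)$ with $\CTP_{d,mq^k}(\phi,\psi)$ for a \emph{fixed} pair of Selmer classes $(\phi,\psi)$ lying in both Selmer groups. This applies cleanly only in the $i_1$-direction, because $\pi_{i_1}(\phi_j)=0$ for every $\phi_j$ appearing in $\omega$, so the Selmer arguments $\mfB_z(\phi_{j_1}),\mfB_z(\phi_{j_2})$ are unchanged when $p_{i_1}$ varies. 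In the $i_2$- and $i_3$-directions the arguments themselves move (by design: $i_2=s(j_1)$, $i_3=s(j_2)$ force $\pi_{i_2}(\phi_{j_1})\neq 0$ and $\pi_{i_3}(\phi_{j_2})\neq 0$), so Proposition~\ref{pCT} cannot be iterated. What actually propagates the $y_2,y_3$-dependence is the arguments of the R\'edei symbol produced by the \emph{single} application of Proposition~\ref{pCT}, and to expose a genuinely trilinear symbol you must then invoke trilinearity (Proposition~\ref{prop:rec}). That linearity only holds when the relevant symbols are nonzero, and your formula $\Redei{\chi_{3,p_{i_1}/p^0_{i_1}}}{\mfB_{\lambda_{i_2}(p_{i_2}),M_{d_2}}(m_{j_1})}{\mfB_{\lambda_{i_3}(p_{i_3}),M_{d_2}}(m_{j_2})}^{c_{j_1,j_2}}$ (with undifferenced arguments) can vanish, since the un-paired $\mfB$ classes need not satisfy the symbol condition. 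The paper sidesteps this by never choosing a reference point $z^0$: it introduces $y_1'$ via Cauchy--Schwarz, applies Proposition~\ref{pCT} once, then runs two more Cauchy--Schwarz steps to place $\phi_2,\phi_3$ into differenced form $\mfB_{\lambda_{i_2}(y_2)}(m_1)-\mfB_{\lambda_{i_2}(y_2')}(m_1)$, where the potential vanishing of symbols is automatically accounted for by the averaging (cf.\ Lemma~\ref{lem:trilinear_positivity}). Your inclusion--exclusion decomposition of the CTP difference into $\Delta_T$'s is correct as an identity of numbers, but converting $\exp(2\pi i\,\Delta_{\{i_1,i_2,i_3\}}(z))$ into a product of a trilinear symbol and two-variable factors needs the nonvanishing and linearity hygiene the paper's iterated Cauchy--Schwarz supplies; as it stands the reduction is not carried out.
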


\begin{proof}[Proof that Theorem \ref{tCTReduction3} implies Theorem \ref{tCTReduction2}] 
We split the sum in Theorem \ref{tCTReduction2} over all $P \in \prod_{i \not \in S} Y_i$. If $P$ is not large, we bound trivially. After summing up the contribution of such $P$ we remain within the error term of Theorem \ref{tCTReduction2}. For the large $P$, we apply Theorem \ref{tCTReduction3}.
\end{proof}

We are now ready to embark on the proof of Theorem \ref{tCTReduction3} with the aid of the trilinear sieve.

\begin{proof}[Proof of Theorem \ref{tCTReduction3}]
Write $m$ for the integer obtained by multiplying out the coordinates of $P$ and recall that $S = \{i_1, i_2, i_3\}$. We isolate the sum over $y_1$ to get that the sum in the theorem is bounded by
\[
\sum_{y_2 \in Y_{i_2}} \sum_{y_3 \in Y_{i_3}} \left|\sum_{\substack{y_1 \in Y_{i_1} \\ \pi^{-1}(y_1, y_2, y_3) \in [y]}} \exp(2\pi i \cdot \CTP_{d_1, b m y_2y_3y_1}(\omega))\right|.
\]
After applying the Cauchy--Schwarz inequality, we see that it suffices to bound
\[
\sum_{y_2 \in Y_{i_2}} \sum_{y_3 \in Y_{i_3}} \sum_{\substack{y_1 \in Y_{i_1}, y_1' \in Y_{i_1} \\ \pi^{-1}(y_1, y_2, y_3) \in [y] \\ \pi^{-1}(y_1', y_2, y_3) \in [y]}} \exp\left(2\pi i \cdot \left(\CTP_{d_1, b m y_2y_3y_1}(\omega) - \CTP_{d_1, b m y_2y_3y_1'}(\omega)\right)\right).
\]
We now apply Proposition \ref{pCT}. Taking $T = \{1, j_1, j_2\}$ if $d_1$ is square and $T = \{j_1, j_2\}$ otherwise, we may write
\[
\omega = c_{j_1, j_2} \phi_{j_1} \wedge \phi_{j_2} + \sum_{k \not \in T} c_{j_1, k} \phi_{j_1} \wedge \phi_k + \sum_{k \not \in T} c_{k, j_2} \phi_k \wedge \phi_{j_2} + \sum_{k, \ell \not \in T, k < \ell} c_{k, \ell} \phi_k \wedge \phi_\ell,
\]
where we have taken $c_{j, k} = -c_{k, j}$ in the case that $j > k$. By the choice of the variable indices, we have
\[
\pi_{i_k}(\phi_j) = 0 
\]
for $j$ outside $T$ and $k = 1, 2, 3$, for $j = j_1$ and $k = 1,3$, and for $j = j_2$ and $k = 1, 2$.

Write $m_1$ for $c_{j_1, j_2}\pi_{i_2}(\phi_{j_1})$ and $m_2$ for $\pi_{i_3}(\phi_{j_2})$. These are both nonzero. Hiding all the other Cassels--Tate pairings in coefficients $a_{123}$, we thus rewrite the above as
\[
\sum_{y_2 \in Y_{i_2}} \sum_{y_3 \in Y_{i_3}} \sum_{\substack{y_1 \in Y_{i_1}, y_1' \in Y_{i_1} \\ \pi^{-1}(y_1, y_2, y_3) \in [y] \\ \pi^{-1}(y_1', y_2, y_3) \in [y]}} a_{123} \Redei{\chi_{3, y_1/y_1'}}{\phi + \mfB_{\lambda_{i_2}(y_2), M_{d_2}}(m_1)}{\phi' + \mfB_{\lambda_{i_3}(y_3), M_{d_2}}(m_2)},
\]
where $\phi$ and $\phi'$ are fixed cocycles and where $a_{123} = a_{12} \cdot a_{13} \cdot a_{23}$ with $a_{12}$ depending only on $y_1$, $y_1'$ and $y_2$, with $a_{13}$ depending only on $y_1$, $y_1'$ and $y_3$, and with $a_{23}$ depending only on $y_2$ and $y_3$. Furthermore, the condition of being in a grid class only depends on two coordinates at a time so we can absorb it in $a_{123}$. Applying Cauchy--Schwarz twice more, it therefore suffices to bound
\begin{align}
\label{eFinalSum}
\sum_{\substack{y_1 \in Y_{i_1} \\ y_1' \in Y_{i_1}}} \sum_{\substack{y_2 \in Y_{i_2} \\ y_2' \in Y_{i_2}}} \sum_{\substack{y_3 \in Y_{i_3} \\ y_3' \in Y_{i_3}}} a_{123}' \Redei{\chi_{3, y_1/y_1'}}{\phi_2(y_2, y_2')}{\phi_3(y_3, y_3')}
\end{align}
with 
\begin{gather*}
\phi_2(y_2, y_2') = \mfB_{\lambda_{i_2}(y_2), M_{d_2}}(m_1) - \mfB_{\lambda_{i_2}(y_2'), M_{d_2}}(m_1) \\
\phi_3(y_3, y_3') = \mfB_{\lambda_{i_3}(y_3), M_{d_2}}(m_2) - \mfB_{\lambda_{i_3}(y_3'), M_{d_2}}(m_2)
\end{gather*}
and with $a_{123}' = a_{12}'(y_1, y_1', y_2, y_2') a_{13}'(y_1, y_1', y_3, y_3') a_{23}'(y_2, y_2', y_3, y_3')$. The desired oscillation of equation \eqref{eFinalSum} is now a consequence of Theorem \ref{thm:trilinear}.
\end{proof}

\end{document}